\numberwithin{equation}{section}
\theoremstyle{plain}
\newtheorem{proposition}{Proposition}[section]
\newtheorem{theorem}[proposition]{Theorem}		
\newtheorem{corollary}[proposition]{Corollary}
\newtheorem{lemma}[proposition]{Lemma}
\theoremstyle{definition}
\newtheorem{definition}[proposition]{Definition}
\newtheorem{remark}[proposition]{Remark}
\newcommand{\C}{\mathbb C}
\newcommand{\R}{\mathbb R}
\newcommand{\YMH}{\mathop{\rm YMH}\nolimits}
\newcommand{\End}{\mathop{\rm End}\nolimits}
\newcommand{\coker}{\mathop{\rm coker}\nolimits}
\newcommand{\Lie}{\mathop{\rm Lie}\nolimits}
\newcommand{\Hom}{\mathop{\rm Hom}\nolimits}
\newcommand{\Ext}{\mathop{\rm Ext}\nolimits}
\newcommand{\vol}{\mathop{\rm vol}\nolimits}
\newcommand{\ad}{\mathop{\rm ad}\nolimits}
\newcommand{\Ad}{\mathop{\rm Ad}\nolimits}
\newcommand{\tr}{\mathop{\rm Tr}\nolimits}
\newcommand{\GL}{\mathsf{GL}}
\def\overlaprel{\mathrel{\mkern-4mu}}
\newcommand{\longlonglongrightarrow}{\ensuremath{\relbar\overlaprel\relbar\overlaprel\longrightarrow}}
\DeclareMathOperator{\id}{id}
\DeclareMathOperator{\im}{im}
\DeclareMathOperator{\rank}{rank}
\DeclareMathOperator{\slope}{slope}
\DeclareMathOperator{\grad}{grad}
\DeclareMathOperator{\vecspan}{span}
\DeclareMathOperator{\Sec}{Sec}
\begin{document}

% Topmatter

\title{The reverse Yang-Mills-Higgs flow in a neighbourhood of a critical point}

\author{Graeme Wilkin}
\address{Department of Mathematics, National University of Singapore, Singapore 119076}
\email{graeme@nus.edu.sg}

\date{\today}

\begin{abstract}
The main result of this paper is a construction of solutions to the reverse Yang-Mills-Higgs flow converging in the $C^\infty$ topology to a critical point. The construction uses only the complex gauge group action, which leads to an algebraic classification of the isomorphism classes of points in the unstable set of a critical point in terms of a filtration of the underlying Higgs bundle. 

Analysing the compatibility of this filtration with the Harder-Narasimhan-Seshadri double filtration gives an algebraic criterion for two critical points to be connected by a flow line. As an application, we can use this to construct Hecke modifications of Higgs bundles via the Yang-Mills-Higgs flow. When the Higgs field is zero (corresponding to the Yang-Mills flow), this criterion has a geometric interpretation in terms of secant varieties of the projectivisation of the underlying bundle inside the unstable manifold of a critical point, which gives a precise description of broken and unbroken flow lines connecting two critical points. For non-zero Higgs field, at generic critical points the analogous interpretation involves the secant varieties of the spectral curve of the Higgs bundle.
\end{abstract}

\maketitle

%\tableofcontents

\thispagestyle{empty}

\baselineskip=16pt
%\thispagestyle{plain}
%\setcounter{page}{1}

%\setcounter{section}{-1}

%%%%%%%%%%%%%%%%%%%%%%%%%%%%%%%%%%%%%%%%%%%%%%%%

\section{Introduction}

There is a well-known relationship between the Yang-Mills heat flow on a Riemann surface and the notion of stability from algebraic geometry. This began with work of Atiyah and Bott \cite{AtiyahBott83} and continued with Donaldson's proof \cite{Donaldson83} of the Narasimhan-Seshadri theorem \cite{NarasimhanSeshadri65} and subsequent work of Daskalopoulos \cite{Daskal92} and Rade  \cite{Rade92}, which shows that the Yang-Mills flow converges to a unique critical point which is isomorphic to the graded object of the Harder-Narasimhan-Seshadri double filtration of the initial condition. In the setting of Higgs bundles, a theorem of Hitchin \cite{Hitchin87} and Simpson \cite{Simpson88} shows that a polystable Higgs bundle is gauge equivalent to the minimum of the Yang-Mills-Higgs functional and that this minimum is achieved by the heat flow on the space of metrics. The results of \cite{Wilkin08} show that the theorem of Daskalopoulos and Rade described above extends to the Yang-Mills-Higgs flow on the space of Higgs bundles over a compact Riemann surface. More generally, when the base manifold is compact and K\"ahler, then these results are due to \cite{Donaldson85}, \cite{Donaldson87-2}, \cite{UhlenbeckYau86}, \cite{Simpson88}, \cite{DaskalWentworth04}, \cite{Sibley15}, \cite{Jacob15} and \cite{LiZhang11}.

Continuing on from these results, it is natural to investigate flow lines between critical points. Naito, Kozono and Maeda \cite{NaitoKozonoMaeda90} proved the existence of an unstable manifold of a critical point for the Yang-Mills functional, however their method does not give information about the isomorphism classes in the unstable manifold, and their proof requires a manifold structure on the space of connections (which is not true for the space of Higgs bundles). Recent results of Swoboda \cite{Swoboda12} and Janner-Swoboda  \cite{JannerSwoboda15} count flow lines for a perturbed Yang-Mills functional, however these perturbations destroy the algebraic structure of the Yang-Mills flow, and so there does not yet exist an algebro-geometric description of the flow lines in the spirit of the results described in the previous paragraph. Moreover, one would also like to study flow lines for the Yang-Mills-Higgs functional, in which case the perturbations do not necessarily preserve the space of Higgs bundles, which is singular. 

The purpose of this paper is to show that in fact there is an algebro-geometric description of the flow lines connecting given critical points of the Yang-Mills-Higgs functional over a compact Riemann surface. As an application, we show that the Hecke correspondence for Higgs bundles studied by Witten in \cite{witten-hecke} has a natural interpretation in terms of gradient flow lines. Moreover, for the Yang-Mills flow, at a generic critical point there is a natural embedding of the projectivisation of the underlying bundle inside the unstable set of the critical point, and the results of this paper show that the isomorphism class of the limit of the downwards flow is determined if the initial condition lies in one of the secant varieties of this embedding, giving us a geometric criterion to distinguish between broken and unbroken flow lines. For the Yang-Mills-Higgs flow the analogous picture involves the secant varieties of the space of Hecke modifications compatible with the Higgs field. At generic critical points of the Yang-Mills-Higgs functional this space of Hecke modifications is the spectral curve of the Higgs bundle.

The basic setup for the paper is as follows. Let $E \rightarrow X$ be a smooth complex vector bundle over a compact Riemann surface with a fixed Hermitian metric and let $\mathcal{B}$ denote the space of Higgs pairs on $E$. The \emph{Yang-Mills-Higgs functional} is
\begin{equation*}
\YMH(\bar{\partial}_A, \phi) := \| F_A + [\phi, \phi^*] \|_{L^2}^2
\end{equation*}
and the \emph{Yang-Mills-Higgs flow} is the downwards gradient flow of $\YMH$ given by the equation \eqref{eqn:YMH-flow}. This flow is generated by the action of the complex gauge group $\mathcal{G}^\C$. Equivalently, one can fix a Higgs pair and allow the Hermitian metric on the bundle to vary in which case the flow becomes a nonlinear heat equation on the space of Hermitian metrics (cf. \cite{Donaldson85}, \cite{Simpson88}). At a critical point for this flow the Higgs bundle splits into Higgs subbundles and on each subbundle the Higgs structure minimises $\YMH$. The \emph{unstable set} of a critical point $(\bar{\partial}_A, \phi)$ consists of all Higgs pairs for which a solution to the $\YMH$ flow \eqref{eqn:YMH-flow} exists for all negative time and converges in the smooth topology to $(\bar{\partial}_A, \phi)$ as $t \rightarrow - \infty$. The first theorem of the paper gives an algebraic criterion for a complex gauge orbit to intersect the unstable set for the Yang-Mills-Higgs flow. 

\begin{theorem}[Criterion for convergence of reverse heat flow]\label{thm:unstable-set-intro}
Let $E$ be a complex vector bundle over a compact Riemann surface $X$, and let $(\bar{\partial}_A, \phi)$ be a Higgs bundle on $E$. Suppose that $E$ admits a filtration $(E^{(1)}, \phi^{(1)}) \subset \cdots \subset (E^{(n)}, \phi^{(n)}) = (E, \phi)$ by Higgs subbundles such that the quotients $(Q_k, \phi_k) := (E^{(k)}, \phi^{(k)}) / (E^{(k-1)}, \phi^{(k-1)})$ are Higgs polystable and $\slope(Q_k) < \slope(Q_j)$ for all $k < j$. Then there exists $g \in \mathcal{G}^\C$ and a solution to the reverse Yang-Mills-Higgs heat flow equation with initial condition $g \cdot (\bar{\partial}_A, \phi)$ which converges to a critical point isomorphic to $(Q_1, \phi_1) \oplus \cdots \oplus (Q_n, \phi_n)$.

Conversely, if there exists a solution of the reverse heat flow from the initial condition $(\bar{\partial}_A, \phi)$ converging to a critical point $(Q_1, \phi_1) \oplus \cdots \oplus (Q_n, \phi_n)$ where each $(Q_j, \phi_j)$ is polystable with $\slope(Q_k) < \slope(Q_j)$ for all $k < j$, then $(E, \phi)$ admits a filtration $(E^{(1)}, \phi^{(1)}) \subset \cdots \subset (E^{(n)}, \phi^{(n)}) = (E, \phi)$ whose graded object is isomorphic to $(Q_1, \phi_1) \oplus \cdots \oplus (Q_n, \phi_n)$.
\end{theorem}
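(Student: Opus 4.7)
The plan is to produce the desired reverse trajectory inside the complex gauge orbit of $(\bar{\partial}_A, \phi)$, using the filtration to put the initial condition in a convenient normal form. Fix a $C^\infty$-orthogonal splitting $E = Q_1 \oplus \cdots \oplus Q_n$ compatible with the given filtration, so that $E^{(k)} = Q_1 \oplus \cdots \oplus Q_k$; in this splitting $(\bar{\partial}_A, \phi)$ is block upper-triangular, with diagonal equal to the critical point $p_\infty := (Q_1, \phi_1) \oplus \cdots \oplus (Q_n, \phi_n)$ and strictly upper-triangular part $\beta$ encoding the successive extension classes. Introduce the one-parameter subgroup $g_s := \diag(e^{\alpha_1 s}\id_{Q_1}, \ldots, e^{\alpha_n s}\id_{Q_n}) \in \G^\C$ with real $\alpha_1 > \cdots > \alpha_n$ chosen so that $\diag(\alpha_k\id_{Q_k})$ is a positive multiple of the moment-map value $F_{A_\infty} + [\phi_\infty, \phi_\infty^*]$ at $p_\infty$ --- which, by Hitchin--Simpson applied to each polystable summand, is block-scalar with entries proportional to the slopes. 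Under $g_s$ the $(j,i)$-block of $\beta$ rescales by $e^{(\alpha_j - \alpha_i)s}$, so $g_s \cdot (\bar{\partial}_A, \phi) \to p_\infty$ smoothly as $s \to -\infty$.

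\textbf{The main obstacle} is that the scaling curve $s \mapsto g_s \cdot (\bar{\partial}_A, \phi)$ is not itself a reverse $\YMH$ flow line. I would upgrade it to one via a local unstable-manifold analysis at $p_\infty$: the strict inequalities $\slope(Q_k) < \slope(Q_j)$ for $k < j$ force every off-diagonal block of $\beta$ to lie strictly in the negative eigenspace of the Hessian of $\YMH$ at $p_\infty$, because extensions of strictly higher-slope summands by strictly lower-slope summands are exactly the destabilising first-order deformations in the Atiyah--Bott sense. A contraction-mapping argument on an unstable slice --- carried out inside the single complex gauge orbit $\G^\C \cdot (\bar{\partial}_A, \phi)$ so as to bypass both the singular structure of $\B$ and the kernel of the gauge action --- then gives, for $|s|$ sufficiently large, a reverse $\YMH$ trajectory which starts near $g_s \cdot (\bar{\partial}_A, \phi)$ and converges in $C^\infty$ to $p_\infty$.

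\textbf{Converse direction.} Writing the reverse trajectory as $(\bar{\partial}_t, \phi_t) = h_t \cdot (\bar{\partial}_A, \phi)$ with $h_t \in \G^\C$, each $(\bar{\partial}_t, \phi_t)$ is isomorphic to $(\bar{\partial}_A, \phi)$ as a Higgs bundle. For $t$ sufficiently close to $-\infty$ the trajectory lies in an arbitrarily small $C^\infty$-neighbourhood of $p_\infty := (Q_1, \phi_1) \oplus \cdots \oplus (Q_n, \phi_n)$, so the holomorphic Higgs subbundles $E^{(k)}_\infty = Q_1 \oplus \cdots \oplus Q_k \subset p_\infty$ deform, by the implicit function theorem applied to the $\bar{\partial}$-equation on the Grassmannian of rank-$(\rk E^{(k)}_\infty)$ subbundles of $E$, to holomorphic Higgs subbundles $E^{(k)}_t \subset (\bar{\partial}_t, \phi_t)$; the strict slope inequalities make the relevant obstruction vanish. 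Pulling the filtration $E^{(*)}_t$ back to $(E, \phi)$ via $h_t^{-1}$ produces the required filtration. Its graded object is an isomorphism invariant of $(\bar{\partial}_A, \phi)$, since filtrations with polystable quotients of strictly increasing slopes are unique (by dualising and applying HN/HNS uniqueness to the dual Higgs bundle), and by continuity in $t$ this invariant must agree with its $t \to -\infty$ limit $(Q_1, \phi_1) \oplus \cdots \oplus (Q_n, \phi_n)$.
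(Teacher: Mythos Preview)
Your forward direction correctly identifies the scaling $g_s$ that places $(\bar{\partial}_A,\phi)$ in the negative eigenspace at $p_\infty$, which is essentially the paper's Lemma~\ref{lem:classify-neg-slice}. But the crucial step---upgrading the scaling curve to a genuine reverse $\YMH$ trajectory---is where almost all the work lies, and ``contraction mapping on an unstable slice'' does not suffice. The paper explicitly notes that standard unstable-manifold constructions (Naito--Kozono--Maeda for Yang--Mills, or the finite-dimensional version) fail because $\mathcal{B}$ is singular; working inside a single $\G^\C$-orbit does not obviously help, since the orbit need not be a manifold near $p_\infty$ (the isotropy $\G_{p_\infty}^\C$ is non-trivial and non-reductive at a non-minimal critical point). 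The paper's substitute is a \emph{scattering construction}: flow up along the slice by $e^{i\beta t}$, then down by the actual $\YMH$ flow, and prove the composite converges by combining Donaldson's distance-decreasing formula on the space of metrics with the \L ojasiewicz inequality and interior estimates. This is the main analytic content of the theorem and is not a routine fixed-point argument.

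Your converse direction contains a genuine error. Filtrations by Higgs subbundles with polystable quotients of strictly \emph{increasing} slope are \emph{not} unique, and the dualising trick does not repair this: if $E^{(1)}\subset\cdots\subset E^{(n)}=E$ has quotients of slopes $\mu_1<\cdots<\mu_n$, the induced filtration $(E/E^{(n-j)})^*\subset E^*$ has quotients $Q_n^*,\ldots,Q_1^*$ of slopes $-\mu_n<\cdots<-\mu_1$, still strictly increasing, so HN uniqueness is not available on the dual. Indeed non-uniqueness is essential to the theorem: the same Higgs bundle can lie on reverse flow lines to many different upper critical points (this is exactly the content of Corollary~\ref{cor:algebraic-flow-line-intro}). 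The paper instead runs an inverse scattering argument, using a modified flow and the distance-decreasing formula again, to show that any $y_0\in W_x^-$ is complex-gauge-equivalent to a point of the negative slice $S_x^-$; the required filtration with the correct graded object then follows directly from the block-triangular description of $S_x^-$, with no uniqueness needed.
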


A key difficulty in the construction is the fact that the space of Higgs bundles is singular, and so the existing techniques for constructing unstable sets (see for example \cite{NaitoKozonoMaeda90} for the Yang-Mills flow or \cite[Sec. 6]{Jost05} in finite dimensions) cannot be directly applied since they depend on the manifold structure of the ambient space. One possibility is to study the unstable set of the function $\| F_A + [\phi, \phi^*] \|_{L^2}^2 + \| \bar{\partial}_A \phi \|_{L^2}^2$ on the space of all pairs $(\bar{\partial}_A, \phi)$ without the Higgs bundle condition $\bar{\partial}_A \phi = 0$, however one would then need a criterion to determine when a point in this unstable set is a Higgs bundle and one would also need a method to determine the isomorphism classes of these points. 

The construction in the proof of Theorem \ref{thm:unstable-set-intro} is intrinsic to the singular space since it uses the action of the complex gauge group to map the unstable set for the linearised $\YMH$ flow (for which we can explicitly describe the isomorphism classes) to the unstable set for the Yang-Mills-Higgs flow. The method used here to compare the flow with its linearisation is called the ``scattering construction'' in \cite{Hubbard05} and \cite{Nelson69} since it originates in the study of wave operators in quantum mechanics (see \cite{ReedSimonVol3} for an overview). The method in this paper differs from \cite{Hubbard05} and \cite{Nelson69} in that (a) the construction here is done using the gauge group action in order to preserve the singular space and (b) the distance-decreasing formula for the flow on the space of metrics \cite{Donaldson85} is used here in order to avoid constructing explicit local coordinates as in \cite{Hubbard05} (the construction of \cite{Hubbard05} requires a manifold structure around the critical points). 

As a consequence of Theorem \ref{thm:unstable-set-intro}, we have an algebraic criterion for critical points to be connected by flow lines.

\begin{corollary}[Algebraic classification of flow lines]\label{cor:algebraic-flow-line-intro}
Let $x_u = (\bar{\partial}_{A_u}, \phi_u)$ and $x_\ell = (\bar{\partial}_{A_\ell}, \phi_\ell)$ be critical points with $\YMH(x_u) > \YMH(x_\ell)$. Then $x_u$ and $x_\ell$ are connected by a flow line if and only if there exists a Higgs pair $(E, \phi)$ which has Harder-Narasimhan-Seshadri double filtration whose graded object is isomorphic to $x_\ell$, and which also admits a filtration $(E^{(1)}, \phi^{(1)}) \subset \cdots \subset (E^{(n)}, \phi^{(n)}) = (E, \phi)$ by Higgs subbundles such that the quotients $(Q_k, \phi_k) := (E^{(k)}, \phi^{(k)}) / (E^{(k-1)} \phi^{(k-1)})$ are polystable and satisfy $\slope(Q_k) < \slope(Q_j)$ for all $k < j$ and the graded object $(Q_1, \phi_1) \oplus \cdots \oplus (Q_n, \phi_n)$ is isomorphic to $x_u$.
\end{corollary}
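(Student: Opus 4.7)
The plan is to combine Theorem~\ref{thm:unstable-set-intro} with the forward convergence theorem for the Yang-Mills-Higgs flow (due to \cite{Wilkin08}, extending \cite{Daskal92} and \cite{Rade92}), which asserts that the forward $\YMH$ flow from any Higgs bundle initial condition converges in $C^\infty$ to a critical point isomorphic to the graded object of its Harder-Narasimhan-Seshadri double filtration. The two filtrations in the corollary will correspond respectively to the behaviour of a flow line as $t \to -\infty$ and $t \to +\infty$ through a suitable representative $(E,\phi)$.

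For the forward implication, suppose $x_u$ and $x_\ell$ are connected by a flow line, and fix a Higgs pair $(E,\phi)$ lying on that line. The negative-time half of the trajectory solves the reverse $\YMH$ equation from $(E,\phi)$ and converges to $x_u$, so the converse statement of Theorem~\ref{thm:unstable-set-intro} produces a filtration of $(E,\phi)$ by Higgs subbundles with polystable quotients of strictly increasing slope and graded object isomorphic to $x_u$. The positive-time half of the trajectory converges to $x_\ell$, so the forward convergence theorem identifies $x_\ell$ with the graded object of the HNS double filtration of $(E,\phi)$, as required.

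For the reverse implication, let $(E,\phi)$ be a Higgs pair equipped with both filtrations as in the statement. Theorem~\ref{thm:unstable-set-intro} provides some $g \in \G^\C$ such that the reverse $\YMH$ flow from $g \cdot (E,\phi)$ converges to a critical point isomorphic to $(Q_1,\phi_1) \oplus \cdots \oplus (Q_n,\phi_n) \cong x_u$. Applying the forward convergence theorem to the same initial condition $g \cdot (E,\phi)$ shows that the forward flow through $g \cdot (E,\phi)$ converges to the critical point given by the HNS graded object of $g \cdot (E,\phi)$. Concatenating the two half-trajectories then produces a single flow line with the prescribed $t \to -\infty$ limit $x_u$, and it remains to identify the $t \to +\infty$ limit with $x_\ell$.

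The main step requiring care is precisely this last identification. The Harder-Narasimhan-Seshadri double filtration is defined intrinsically from the holomorphic and Higgs structures through slope inequalities on Higgs subbundles and polystability of successive Seshadri quotients, all of which are holomorphic invariants; hence complex gauge transformations act by Higgs bundle isomorphisms, and the isomorphism class of the graded object of the HNS double filtration is invariant under the $\G^\C$-action. Therefore the HNS graded object of $g \cdot (E,\phi)$ is isomorphic to that of $(E,\phi)$, which by hypothesis is $x_\ell$. This $\G^\C$-invariance of the HNS graded object is the only nontrivial point beyond the direct assembly of Theorem~\ref{thm:unstable-set-intro} with the forward convergence theorem.
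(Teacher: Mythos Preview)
Your proof is correct and follows precisely the approach the paper intends: the corollary is stated immediately after Theorem~\ref{thm:unstable-set-intro} as a direct consequence, without a separate proof, because it is exactly the combination you describe of Theorem~\ref{thm:unstable-set-intro} (governing the $t\to-\infty$ behaviour) with the forward convergence result of \cite[Thm.~4.1]{Wilkin08} (governing the $t\to+\infty$ behaviour). Your observation that the $\mathcal{G}^\C$-invariance of the Harder--Narasimhan--Seshadri graded object is the only point requiring comment is apt, and the justification you give is the standard one.
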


As an application of the previous theorem, we can construct Hecke modifications of Higgs bundles via Yang-Mills-Higgs flow lines. First consider the case of a Hecke modification at a single point (miniscule Hecke modifications in the terminology of \cite{witten-hecke}).

\begin{theorem}\label{thm:hecke-intro}

\begin{enumerate}

\item Let $0 \rightarrow (E', \phi') \rightarrow (E, \phi) \stackrel{v}{\rightarrow} \C_p \rightarrow 0$ be a Hecke modification such that $(E, \phi)$ is stable and $(E', \phi')$ is semistable, and let $L_u$ be a line bundle with $\deg L_u + 1 < \slope(E') < \slope(E)$. Then there exist sections $\phi_u, \phi_\ell \in H^0(K)$, a line bundle $L_\ell$ with $\deg L_\ell = \deg L_u + 1$ and a metric on $E \oplus L_u$ such that $x_u = (E, \phi) \oplus (L_u, \phi_u)$ and $x_\ell = (E_{gr}', \phi_{gr}') \oplus (L_\ell, \phi_\ell)$ are critical points connected by a $\YMH$ flow line, where $(E_{gr}', \phi_{gr}')$ is isomorphic to the graded object of the Seshadri filtration of $(E', \phi')$.

\item Let $x_u = (E, \phi) \oplus (L_u, \phi_u)$ and $x_\ell = (E', \phi') \oplus (L_\ell, \phi_\ell)$ be critical points connected by a $\YMH$ flow line such that $L_u, L_\ell$ are line bundles with $\deg L_\ell = \deg L_u + 1$, $(E, \phi)$ is stable and $(E', \phi')$ is polystable with $\deg L_u + 1 < \slope(E') < \slope(E)$. Then $(E', \phi')$ is the graded object of the Seshadri filtration of a Hecke modification of $(E, \phi)$. If $(E', \phi')$ is Higgs stable then it is a Hecke modification of $(E, \phi)$. 

\end{enumerate}

\end{theorem}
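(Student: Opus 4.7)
The idea is to apply Corollary \ref{cor:algebraic-flow-line-intro} to a carefully constructed Higgs bundle $(F,\Phi)$ admitting both a filtration whose graded object is $x_u$ and a Harder-Narasimhan-Seshadri double filtration whose graded object is $x_\ell$. I would set $L_\ell := L_u(p)$, which gives a natural evaluation $e:L_\ell\to\C_p$ with kernel $L_u$, and choose Higgs fields $\phi_u, \phi_\ell \in H^0(K)$ vanishing at $p$ (or simply zero when no nonzero such section exists) so that both $v:(E,\phi)\to(\C_p,0)$ and $e:(L_\ell,\phi_\ell)\to(\C_p,0)$ are morphisms of Higgs sheaves. Then take
\[
(F,\Phi) := \ker\!\bigl((v,-e):(E,\phi)\oplus(L_\ell,\phi_\ell)\to(\C_p,0)\bigr),
\]
a Higgs bundle of rank $\rank E + 1$ and degree $\deg E + \deg L_u$. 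The two projections of the fibre product yield the Higgs short exact sequences
\begin{align*}
0 &\to (L_u,\phi_u)\to(F,\Phi)\to(E,\phi)\to 0,\\
0 &\to (E',\phi')\to(F,\Phi)\to(L_\ell,\phi_\ell)\to 0.
\end{align*}
The first is a filtration whose graded object is $x_u$ with the slope ordering $\slope(L_u)<\slope(E)$ required by Corollary \ref{cor:algebraic-flow-line-intro}. In the second, the inequality $\slope(L_\ell)=\deg L_u+1<\slope(E')$ together with semistability of $E'$ and $L_\ell$ identifies it as the Harder-Narasimhan filtration of $F$; refining $E'$ by its Seshadri filtration produces the HNS double filtration with graded object $x_\ell$. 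Corollary \ref{cor:algebraic-flow-line-intro} then produces the desired flow line.

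\textbf{Plan for Part (2).} Conversely, I would apply Corollary \ref{cor:algebraic-flow-line-intro} to extract from the flow line a Higgs bundle $(F,\Phi)$ together with the two filtrations; polystability of $(E',\phi')$ collapses the Seshadri refinement, leaving two short exact sequences exactly as in Part (1). The crux is then to analyse the composition $\psi:L_u\hookrightarrow F\twoheadrightarrow L_\ell$. In the non-degenerate case $\psi$ is a nonzero morphism of line bundles, hence injective as a sheaf map with cokernel $\C_p$ for a unique $p\in X$, and so induces a surjection $u:F\to\C_p$ killing $L_u$. This descends to a Higgs surjection $v':(E,\phi)\to(\C_p,0)$, and a rank and degree count forces $L_u\cap E'=0$ in $F$, so $\ker u\cong L_u\oplus E'$; quotienting by $L_u$ identifies $\ker v'$ with $(E',\phi')$. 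When $(E',\phi')$ is Higgs stable this exhibits it directly as a Hecke modification of $(E,\phi)$; in the merely polystable case the Seshadri filtration of $(E',\phi')$ pulls back to a filtration of the Hecke modification whose graded object is $(E',\phi')$.

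\textbf{Main obstacle.} The chief technical difficulty is maintaining Higgs compatibility throughout: in Part (1) this forces $\phi_u, \phi_\ell$ to vanish at $p$ and one must verify that $\phi\oplus\phi_\ell$ really does restrict to a well-defined Higgs field on the subsheaf $F\subset E\oplus L_\ell$. In Part (2) the delicate point is excluding the degenerate case $L_u\subset E'$ inside $F$, where $\psi$ vanishes and the surjection $u$ is undefined. I expect to resolve this by exploiting the flexibility in the choice of representative $(F,\Phi)$ within the complex gauge orbit provided by Corollary \ref{cor:algebraic-flow-line-intro}: different representatives lie at different points of the unstable set of $x_\ell$, and the strict slope inequalities together with the stability of $(E,\phi)$ should force the existence of a representative for which $\psi$ is nonzero.
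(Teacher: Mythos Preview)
Your Part (1) construction via the fibre product $F = E \times_{\C_p} L_\ell$ is a genuinely different and more elementary route than the paper's. The paper instead builds $(F,\tilde\phi)$ as an extension of $(E,\phi)$ by $(L_u,\phi_u)$ using the one--dimensional subspace of $\mathcal{H}^1(E^*L_u)$ determined by $v$ (via an exact sequence $0 \to \C \to \mathcal{H}^1(E^*L_u) \to \mathcal{H}^1((E')^*L_u)$), and then invokes a Higgs analogue of the Narasimhan--Ramanan lemma to exhibit $(E',\phi')$ as a subsheaf. Your construction produces both short exact sequences simultaneously, which is cleaner. However, your Higgs compatibility condition is misidentified: the induced Higgs field on the quotient $\C_p$ of $(E,\phi)$ is not zero but the eigenvalue $\mu \in K_p$ of $\phi(p)$ associated to $v$ (this is precisely the compatibility condition for the modification to preserve holomorphicity of $\phi$). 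Thus you need $\phi_\ell(p) = \mu$, not $\phi_\ell(p) = 0$; such a section of $K$ always exists since the canonical system is basepoint--free. With that correction your fibre product is a Higgs subsheaf of $(E,\phi)\oplus(L_\ell,\phi_\ell)$ and the rest of your argument for Part (1) goes through.

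For Part (2) the paper is more direct: rather than analysing $\psi: L_u \to L_\ell$, it composes $(E',\phi') \hookrightarrow (F,\tilde\phi) \to (E,\phi)$ and argues this is injective, so $(E',\phi')$ is already a Higgs subsheaf of $(E,\phi)$ of the correct rank and degree, hence a Hecke modification. Your ``main obstacle'' --- the degenerate case $L_u \subset E'$, equivalently the vanishing of $\psi$ --- is in fact ruled out by the hypotheses alone, with no need to vary the representative $(F,\Phi)$. If $K := E' \cap L_u \neq 0$ inside $F$, then $K$ is a $\tilde\phi$--invariant line subsheaf of $L_u$, so $\deg K \leq \deg L_u$; meanwhile $E'/K$ injects into $(E,\phi)$ as a Higgs subsheaf of rank $\rank E - 1$, and stability of $(E,\phi)$ forces $\deg E' - \deg K < \deg E - \slope(E)$, i.e.\ $\deg K > \slope(E) - 1$. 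But the hypothesis $\deg L_u + 1 < \slope(E') < \slope(E)$ gives $\deg L_u < \slope(E) - 1$, a contradiction. So $E' \cap L_u = 0$, your map $\psi$ is automatically nonzero, and both approaches conclude.
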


For Hecke modifications defined at multiple points, we can inductively apply the above theorem to obtain a criterion for two critical points to be connected by a broken flow line. For non-negative integers $m, n$, the definition of $(m, n)$ stability is given in Definition \ref{def:m-n-stable}. The space $\mathcal{N}_{\phi, \phi_u}$ denotes the space of Hecke modifications compatible with the Higgs fields $\phi$ and $\phi_u$ (see Definition \ref{def:Hecke-compatible}).

\begin{corollary}\label{cor:broken-hecke-intro}
Consider a Hecke modification $0 \rightarrow (E', \phi') \rightarrow (E, \phi) \rightarrow \oplus_{j=1}^n \C_{p_j} \rightarrow 0$ defined by $n > 1$ distinct points $\{ v_1, \ldots, v_n \} \in \mathbb{P} E^*$, where $(E, \phi)$ is $(0,n)$ stable. If there exists $\phi_u \in H^0(K)$ such that $v_1, \ldots, v_n \in \mathcal{N}_{\phi, \phi_u}$, then there is a broken flow line connecting $x_u = (E, \phi) \oplus (L_u, \phi_u)$ and $x_\ell = (E_{gr}', \phi_{gr}') \oplus (L_\ell, \phi_\ell)$, where $(E_{gr}', \phi_{gr}')$ is the graded object of the Seshadri filtration of the semistable Higgs bundle $(E', \phi')$.
\end{corollary}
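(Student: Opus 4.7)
The plan is to decompose the $n$-point Hecke modification as a chain of $n$ single-point Hecke modifications and apply Theorem \ref{thm:hecke-intro}(1) inductively, then concatenate the resulting Yang-Mills-Higgs flow lines into a single broken flow line.

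First, since the $n$ points $v_1,\ldots,v_n$ are distinct, the Hecke modification factors through any partial subset, giving a chain
\[
(E', \phi') = (E_n, \phi_n) \subset (E_{n-1}, \phi_{n-1}) \subset \cdots \subset (E_1, \phi_1) \subset (E_0, \phi_0) = (E, \phi),
\]
in which $(E_{k-1}, \phi_{k-1}) / (E_k, \phi_k) \cong \C_{p_k}$. The hypothesis $v_k \in \mathcal{N}_{\phi, \phi_u}$ restricts to $v_k \in \mathcal{N}_{\phi_{k-1}, \phi_u}$ (the point $v_k$ still lies in $\mathbb{P} E_{k-1}^*$ away from the previously modified points), so each single-point modification in the chain is simultaneously compatible with $\phi_{k-1}$ and the globally fixed $\phi_u$.

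Next, the $(0,n)$ stability of $(E,\phi)$ (Definition \ref{def:m-n-stable}) is precisely the algebraic input that forces every $(E_k, \phi_k)$ with $k < n$ to be Higgs stable and $(E_n, \phi_n) = (E',\phi')$ to be Higgs semistable, while also providing enough slope room so that one can choose a line bundle $L_u$ satisfying $\deg L_u + k + 1 < \slope(E_k)$ for every $k = 1,\ldots,n$. Define $L_u^{(k)}$ to be a line bundle of degree $\deg L_u + k$, so $L_u^{(0)} = L_u$ and $L_u^{(n)} = L_\ell$. For each $k = 1,\ldots,n$, apply Theorem \ref{thm:hecke-intro}(1) to the single-point Hecke modification $(E_k, \phi_k) \subset (E_{k-1}, \phi_{k-1})$ with upper line bundle $L_u^{(k-1)}$ and Higgs field $\phi_u$; this produces a $\YMH$ flow line connecting
\[
(E_{k-1}, \phi_{k-1}) \oplus (L_u^{(k-1)}, \phi_u) \longrightarrow (E_{gr,k}, \phi_{gr,k}) \oplus (L_u^{(k)}, \phi_u^{(k)}).
\]
Since $(E_k, \phi_k)$ is stable for $k < n$, its Seshadri graded object coincides with itself, so the lower endpoint of the $k$-th flow line agrees with the upper endpoint of the $(k+1)$-st, and concatenation yields a broken flow line from $x_u = x^{(0)}$ to $x^{(n)} = (E_{gr}', \phi_{gr}') \oplus (L_\ell, \phi_\ell) = x_\ell$.

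The main obstacle is the consistent choice of the Higgs field on the line bundle factor across the $n$ flow segments. Theorem \ref{thm:hecke-intro}(1) as stated only asserts \emph{existence} of some $(\phi_u, \phi_\ell)$ realising the flow line, but for concatenation one needs the Higgs field on $L_u^{(k)}$ at the bottom of the $k$-th segment to match the one at the top of the $(k+1)$-st. The hypothesis that a single $\phi_u \in H^0(K)$ makes every $v_j$ lie in $\mathcal{N}_{\phi, \phi_u}$ is designed exactly to let one fix $\phi_u$ globally at the outset and invoke the constructive content of Theorem \ref{thm:hecke-intro}(1) (whose proof, given the compatibility datum $(v_k, \phi_u)$, produces a flow line realising that specific $\phi_u$) uniformly across all $k$. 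A secondary verification, somewhat routine but essential, is that Definition \ref{def:m-n-stable} genuinely guarantees both the stability of every intermediate $(E_k, \phi_k)$ and the slope inequality $\deg L_u + k + 1 < \slope(E_k)$; this is the step where the full strength of the $(0,n)$ stability hypothesis, rather than mere stability of $(E, \phi)$, is used.
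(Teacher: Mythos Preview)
Your proposal is correct and follows the same approach as the paper, whose proof reads in its entirety: ``Inductively apply Theorem \ref{thm:flow-hecke}.'' You have simply spelled out the induction explicitly, including the chain of intermediate Hecke modifications and the role of $(0,n)$ stability in guaranteeing stability of the intermediate bundles; you have also correctly flagged the Higgs-field matching issue on the line bundle factor, which the paper leaves implicit.
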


For any gradient flow, given upper and lower critical sets $C_u$ and $C_\ell$, one can define the spaces $\mathcal{F}_{\ell, u}$ (resp. $\mathcal{BF}_{\ell,u}$) of unbroken flow lines (resp. broken or unbroken flow lines) connecting these sets, and the spaces $\mathcal{P}_{\ell, u}$ (resp. $\mathcal{BP}_{\ell, u}$) of pairs of critical points connected by an unbroken flow line (resp. broken or unbroken flow line). These spaces are correspondences with canonical projection maps to the critical sets given by the projection taking a flow line to its upper and lower endpoints.
\begin{equation*}
\xymatrix{
 & \mathcal{F}_{\ell, u} \ar[d] \ar@/_/[ddl] \ar@/^/[ddr] &  &  & \mathcal{BF}_{\ell, u} \ar[d] \ar@/_/[ddl] \ar@/^/[ddr] \\
 & \mathcal{P}_{\ell, u} \ar[dl] \ar[dr] & &  & \mathcal{BP}_{\ell, u} \ar[dl] \ar[dr] & \\
C_\ell & & C_u & C_\ell & & C_u
}
\end{equation*}

In the setting of Theorem \ref{thm:hecke-intro}, let $d = \deg E$ and $r = \rank(E)$ and let $C_u$ and $C_\ell$ be the upper and lower critical sets. There are natural projection maps to the moduli space of semistable Higgs bundles $C_u \rightarrow \mathcal{M}_{ss}^{Higgs}(r, d)$ and $C_\ell \rightarrow \mathcal{M}_{ss}^{Higgs}(r, d-1)$. Suppose that $\gcd(r,d) = 1$ so that $\mathcal{M}_{ss}^{Higgs}(r, d)$ consists solely of stable Higgs pairs and hence any Hecke modification is semistable. Since the flow is $\mathcal{G}$-equivariant, then there is an induced correspondence variety, denoted $\mathcal{M}_{\ell, u}$ in the diagram below. 
\begin{equation*}
\xymatrix{
 & \mathcal{F}_{\ell, u} \ar[d] \ar@/_/[ddl] \ar@/^/[ddr] &  \\
 & \mathcal{P}_{\ell, u} \ar[dl] \ar[dr] \ar[d] & \\
C_\ell \ar[d] & \mathcal{M}_{\ell,u} \ar[dl] \ar[dr] & C_u \ar[d] \\
\mathcal{M}_{ss}^{Higgs}(r, d-1) & & \mathcal{M}_{ss}^{Higgs}(r,d)
}
\end{equation*}

As a consequence of Theorem \ref{thm:hecke-intro}, we have the following result.

\begin{corollary}\label{cor:hecke-correspondence-intro}
$\mathcal{M}_{\ell,u}$ is the Hecke correspondence.
\end{corollary}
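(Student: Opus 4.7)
The plan is to identify $\mathcal{M}_{\ell,u}$ with the Hecke correspondence by constructing mutually inverse maps out of the two directions of Theorem~\ref{thm:hecke-intro}. The coprimality hypothesis $\gcd(r,d) = 1$ (which also forces $\gcd(r,d-1)=1$) is what makes the identification clean: every semistable Higgs bundle of rank $r$ and degree $d$ or $d-1$ is automatically stable, so there are no Seshadri-filtration subtleties, and the projections $C_u \to \mathcal{M}_{ss}^{\mathrm{Higgs}}(r,d)$ and $C_\ell \to \mathcal{M}_{ss}^{\mathrm{Higgs}}(r,d-1)$ just extract the principal Higgs summand of a critical point.

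First, I would unpack the projections explicitly. A critical point in $C_u$ has the form $(E,\phi) \oplus (L_u,\phi_u)$ with $(E,\phi)$ polystable; by coprimality it is in fact stable, and sending it to $[(E,\phi)]$ gives the map to $\mathcal{M}_{ss}^{\mathrm{Higgs}}(r,d)$. Similarly $(E',\phi') \oplus (L_\ell,\phi_\ell) \in C_\ell$ maps to $[(E',\phi')] \in \mathcal{M}_{ss}^{\mathrm{Higgs}}(r,d-1)$. By construction $\mathcal{M}_{\ell,u}$ is the image of $\mathcal{P}_{\ell,u}$ under the composite $\mathcal{P}_{\ell,u} \to C_\ell \times C_u \to \mathcal{M}_{ss}^{\mathrm{Higgs}}(r,d-1) \times \mathcal{M}_{ss}^{\mathrm{Higgs}}(r,d)$.

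Next, I would define a map from the Hecke correspondence to $\mathcal{M}_{\ell,u}$. Given an isomorphism class of a Hecke modification $0 \to (E',\phi') \to (E,\phi) \to \C_p \to 0$, Theorem~\ref{thm:hecke-intro}(1) produces Higgs fields $\phi_u, \phi_\ell \in H^0(K)$, line bundles $L_u, L_\ell$ satisfying the slope inequalities, and a metric making $x_u = (E,\phi) \oplus (L_u,\phi_u)$ and $x_\ell = (E',\phi') \oplus (L_\ell,\phi_\ell)$ into critical points joined by an unbroken $\YMH$ flow line. Passing to $\mathcal{G}$-equivalence classes yields an element of $\mathcal{M}_{\ell,u}$ projecting to $([(E',\phi')],[(E,\phi)])$. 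Conversely, a point of $\mathcal{M}_{\ell,u}$ lifts to a flow line in $\mathcal{F}_{\ell,u}$; by Theorem~\ref{thm:hecke-intro}(2) the endpoints have precisely the shape above, and since coprimality forces $(E',\phi')$ to be Higgs stable, the upgraded conclusion of part~(2) gives a genuine Hecke modification $0 \to (E',\phi') \to (E,\phi) \to \C_p \to 0$ rather than only a modification of the associated graded.

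The main obstacle will be to show that these two assignments are mutually inverse and well-defined at the moduli level. Well-definedness amounts to the statement that the Hecke datum (i.e.\ the class of the exact sequence, or equivalently the line $v \in \mathbb{P}E^*_p$) extracted from a flow line depends only on the $\mathcal{G}$-orbit of the flow line, and that in the forward direction the $\mathcal{G}$-class of the constructed flow line is independent of the auxiliary choices of $L_u, L_\ell, \phi_u, \phi_\ell$ up to the equivalence used to define $\mathcal{M}_{\ell,u}$. For the inverse property, one checks that the two constructions in Theorem~\ref{thm:hecke-intro} are genuinely complementary: the sub-object $(E',\phi') \subset (E,\phi)$ is recovered from the flow line as the kernel of the limiting map at $t \to +\infty$ onto the top-slope summand of $x_\ell$, while the flow line produced from a Hecke modification manifestly recovers that same short exact sequence. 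Once these compatibilities are established, the resulting bijection between $\mathcal{M}_{\ell,u}$ and the Hecke correspondence is the required identification.
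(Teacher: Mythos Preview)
Your core argument is correct and matches the paper's: both directions of Theorem~\ref{thm:hecke-intro} together with the coprimality hypothesis give exactly the set-level identification. The paper's own proof is in fact just two sentences: Theorem~\ref{thm:flow-hecke} (the restatement of Theorem~\ref{thm:hecke-intro}) shows that a pair $([(E',\phi')],[(E,\phi)])$ lies in $\mathcal{M}_{\ell,u}$ if and only if $(E',\phi')$ is a Hecke modification of $(E,\phi)$ with both semistable, and coprimality of $(r,d)$ forces every Hecke modification of a stable $(E,\phi)$ to be semistable.

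Where you diverge is in the ``main obstacle'' paragraph, which worries about more than is needed. By definition $\mathcal{M}_{\ell,u}$ is a \emph{subset} of $\mathcal{M}_{ss}^{\mathrm{Higgs}}(r,d-1)\times\mathcal{M}_{ss}^{\mathrm{Higgs}}(r,d)$, and the Hecke correspondence here is likewise the subset of pairs related by some Hecke modification. So you are proving an equality of subsets, not constructing an isomorphism of richer objects. In particular you do not need to recover the Hecke datum $v\in\mathbb{P}E_p^*$ from the flow line, nor to check that the $\mathcal{G}$-class of the constructed flow line is independent of the auxiliary choices $L_u,\phi_u,L_\ell,\phi_\ell$: different choices may give different flow lines, but they all project to the same pair $([(E',\phi')],[(E,\phi)])$, which is all that $\mathcal{M}_{\ell,u}$ records. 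Once you drop those concerns, your two ``maps'' collapse to the two containments $\mathcal{M}_{\ell,u}\subseteq\{\text{Hecke pairs}\}$ and $\{\text{Hecke pairs}\}\subseteq\mathcal{M}_{\ell,u}$, and the proof is complete.
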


A natural question from Floer theory is to ask whether a pair of critical points connected by a broken flow line can also be connected by an unbroken flow line, i.e whether $\mathcal{BP}_{\ell, u} = \mathcal{P}_{\ell, u}$. The methods used to prove the previous theorems can be used to investigate this question using the geometry of secant varieties of the space of Hecke modifications inside the unstable set of a critical point. For critical points of the type studied in Theorem \ref{thm:hecke-intro}, generically this space of Hecke modifications is the spectral curve of the Higgs field, and so the problem reduces to studying secant varieties of the spectral curve. This is explained in detail in Section \ref{sec:secant-criterion}. In particular, Corollary \ref{cor:rank-2-classification} gives a complete classification of the unbroken flow lines on the space of rank $2$ Higgs bundles.

The paper is organised as follows. In Section \ref{sec:preliminaries} we set the notation for the paper, prove a slice theorem around the critical points and derive some preliminary estimates for the $\YMH$ flow near a critical point. Section \ref{sec:local-analysis} contains the main part of the analysis of the $\YMH$ flow around a critical point, which leads to the proof of Theorem \ref{thm:unstable-set-intro} and Corollary \ref{cor:algebraic-flow-line-intro}. In Section \ref{sec:hecke} we interpret the analytic results on flow lines in terms of the Hecke correspondence, leading to the proof of Theorem \ref{thm:hecke-intro}, Corollary \ref{cor:broken-hecke-intro} and Corollary \ref{cor:hecke-correspondence-intro}. Appendix \ref{sec:uniqueness} contains a proof that a solution to the reverse $\YMH$ flow with a given initial condition is necessarily unique.

{\bf Acknowledgements.} I would like to thank George Daskalopoulos, M.S. Narasimhan and Richard Wentworth for their interest in the project, as well as George Hitching for useful discussions about \cite{ChoeHitching10} and \cite{Hitching13}. 

\section{Preliminaries}\label{sec:preliminaries}

\subsection{The Yang-Mills-Higgs flow on a compact Riemann surface}

Fix a compact Riemann surface $X$ and a smooth complex vector bundle $E \rightarrow X$. Choose a normalisation so that $\vol(X) = 2\pi$. Fix $\bar{\partial}_{A_0} : \Omega^0(E) \rightarrow \Omega^{0,1}(E)$ such that $\bar{\partial}_{A_0}$ is $\C$-linear and satisfies the Leibniz rule $\bar{\partial}_{A_0}(fs) = (\bar{\partial} f) s + f (\bar{\partial}_{A_0} s)$ for all $f \in \Omega^0(X)$ and $s \in \Omega^0(E)$. Let $\mathcal{A}^{0,1}$ denote the affine space $\bar{\partial}_{A_0} + \Omega^{0,1}(\End(E))$. A theorem of Newlander and Nirenberg identifies $\mathcal{A}^{0,1}$ with the space of holomorphic structures on $E$. The \emph{space of Higgs bundles on $E$} is 
\begin{equation}
\mathcal{B} := \{ (\bar{\partial}_A, \phi) \in \mathcal{A}^{0,1} \times \Omega^{1,0}(\End(E)) \, : \, \bar{\partial}_A \phi = 0 \}
\end{equation}
The complex gauge group is denoted $\mathcal{G}^\C$ and acts on $\mathcal{B}$ by $g \cdot (\bar{\partial}_A, \phi) = (g \bar{\partial}_A g^{-1}, g \phi g^{-1})$. If $X$ is a complex manifold with $\dim_\C X > 1$ then we impose the extra integrability conditions $(\bar{\partial}_A)^2 = 0$ and $\phi \wedge \phi = 0$. Given a Hermitian metric on $E$, let $\mathcal{A}$ denote the space of connections on $E$ compatible with the metric, and let $\mathcal{G} \subset \mathcal{G}^\C$ denote the subgroup of unitary gauge transformations. The Chern connection construction defines an injective map $\mathcal{A}^{0,1} \hookrightarrow \mathcal{A}$ which is a diffeomorphism when $\dim_\C X = 1$. Given $\bar{\partial}_A \in \mathcal{A}^{0,1}$, let $F_A$ denote the curvature of the Chern connection associated to $\bar{\partial}_A$ via the Hermitian metric. The metric induces a pointwise norm $| \cdot | : \Omega^2(\End(E)) \rightarrow \Omega^0(X, \R)$ and together with the Riemannian structure on $X$ an $L^2$ norm $\| \cdot \|_{L^2} : \Omega^2(\End(E)) \rightarrow \R$. The \emph{Yang-Mills-Higgs functional} $\YMH : \mathcal{B} \rightarrow \mathbb{R}$ is defined by
\begin{equation}\label{eqn:YMH-def}
\YMH(\bar{\partial}_A, \phi) = \| F_A + [\phi, \phi^*] \|_{L^2}^2 = \int_X | F_A  + [ \phi, \phi^*] |^2 \, dvol
\end{equation}

When $\dim_\C = 1$, the Hodge star defines an isometry $* : \Omega^2(\End(E)) \rightarrow \Omega^0(\End(E)) \cong \Lie \mathcal{G}^\C$. For any initial condition $(A_0, \phi_0)$, the following equation for $g_t \in \mathcal{G}^\C$ has a unique solution on the interval $t \in [0, \infty)$ (cf. \cite{Donaldson85}, \cite{Simpson88})
\begin{equation}\label{eqn:gauge-flow}
\frac{\partial g}{\partial t} g_t^{-1} = - i * ( F_{g_t \cdot A_0} + [g_t \cdot \phi_0, (g_t \cdot \phi_0)^*]) , \quad g_0 = \id .
\end{equation}
This defines a unique curve $(A_t, \phi_t) = g_t \cdot (A_0, \phi_0) \in \mathcal{B}$ which is a solution to the downwards Yang-Mills-Higgs gradient flow equations 
\begin{align}\label{eqn:YMH-flow}
\begin{split}
\frac{\partial A}{\partial t} & = i \bar{\partial}_A * (F_A + [\phi, \phi^*]) \\
\frac{\partial \phi}{\partial t} & = i \left[ \phi, *(F_A + [\phi, \phi^*]) \right] .
\end{split}
\end{align}
for all $t \in [0, \infty)$. The result of \cite[Thm 3.1]{Wilkin08} shows that the solutions converge to a unique limit $(A_\infty, \phi_\infty)$ which is a critical point of $\YMH$. Moreover \cite[Thm. 4.1]{Wilkin08} shows that the isomorphism class of this limit is determined by the graded object of the Harder-Narasimhan-Seshadri double filtration of the initial condition $(A_0, \phi_0)$.

\begin{remark}
Since the space $\mathcal{B}$ of Higgs bundles is singular, then we define the gradient of $\YMH$ as the gradient of the function $\| F_A + [\phi, \phi^*] \|_{L^2}^2$ defined on the ambient smooth space $T^* \mathcal{A}^{0,1}$, which contains the space $\mathcal{B}$ as a singular subset. When the initial condition is a Higgs bundle, then a solution to \eqref{eqn:YMH-flow} is generated by the action of the complex gauge group $\mathcal{G}^\C$ which preserves $\mathcal{B}$. Therefore the solution to \eqref{eqn:YMH-flow} is contained in $\mathcal{B}$ and so from now on we can consider the flow \eqref{eqn:YMH-flow} as a well-defined gradient flow on the singular space $\mathcal{B}$. Throughout the paper we define a critical point to be a stationary point for the Yang-Mills-Higgs flow. 
\end{remark}

\begin{definition}\label{def:critical-point}
A \emph{critical point} for $\YMH$ is a pair $(A, \phi) \in \mathcal{B}$ such that 
\begin{equation}\label{eqn:critical-point}
\bar{\partial}_A * (F_A + [\phi, \phi^*]) = 0, \quad  \text{and} \quad \left[ \phi, *(F_A + [\phi, \phi^*]) \right] = 0 .
\end{equation}
%Let $\mathcal{C} \subset \mathcal{B}$ denote the space of all critical points.
\end{definition}

%{\bf Classification of critical points.}

The critical point equations \eqref{eqn:critical-point} imply that the bundle $E$ splits into holomorphic $\phi$-invariant sub-bundles $E_1 \oplus \cdots \oplus E_n$, such that the induced Higgs structure $(\bar{\partial}_{A_j}, \phi_j)$ on the bundle $E_j$ minimises the Yang-Mills-Higgs functional on the bundle $E_j$ (cf. \cite[Sec. 5]{AtiyahBott83} for holomorphic bundles and \cite[Sec. 4]{Wilkin08} for Higgs bundles). In particular, each Higgs pair $(\bar{\partial}_{A_j}, \phi_j)$ is polystable. The decomposition is not necessarily unique due to the possibility of polystable bundles with the same slope, however it is unique if we impose the condition that $(E_1, \phi_1) \oplus \cdots \oplus (E_n, \phi_n)$ is the graded object of the socle filtration of the Higgs bundle $(E, \phi)$ (see \cite{HuybrechtsLehn97} for holomorphic bundles and \cite[Sec. 4]{BiswasWilkin10} for Higgs bundles). With respect to this decomposition the curvature $*(F_A + [\phi, \phi^*]) \in \Omega^0(\ad(E)) \cong \Lie(\mathcal{G}$ has the following block-diagonal form
\begin{equation}\label{eqn:critical-curvature}
i * (F_A + [\phi, \phi^*]) = \left( \begin{matrix} \lambda_1 \cdot \id_{E_1} & 0 & \cdots & 0 \\ 0 & \lambda_2 \cdot \id_{E_2} & \cdots & 0 \\ \vdots & \vdots & \ddots & \vdots \\ 0 & 0 & \cdots & \lambda_n \cdot \id_{E_n} \end{matrix} \right) 
\end{equation}
where $\lambda_j =\slope(E_j)$ and we order the eigenvalues by $\lambda_j < \lambda_k$ for all $j < k$. 

\begin{definition}
A \emph{Yang-Mills-Higgs flow line} connecting an upper critical point $x_u = (\bar{\partial}_{A_u}, \phi_u)$ and a lower critical point $x_\ell = (\bar{\partial}_{A_\ell}, \phi_\ell)$ is a continuous map $\gamma : \mathbb{R} \rightarrow \mathcal{B}$ such that 
\begin{enumerate}

\item $\frac{d\gamma}{dt}$ satisfies the Yang-Mills-Higgs flow equations \eqref{eqn:YMH-flow}, and

\item $\lim_{t \rightarrow - \infty} \gamma(t) = x_u$ and $\lim_{t \rightarrow \infty} \gamma(t) = x_\ell$, where the convergence is in the $C^\infty$ topology on $\mathcal{B}$.

\end{enumerate}

\end{definition}

\begin{definition}\label{def:unstable-set}
The \emph{unstable set} $W_{x_u}^-$ of a non-minimal critical point $x_u = (\bar{\partial}_{A_u}, \phi_u)$ is defined as the set of all points $y_0 \in \mathcal{B}$ such that a solution $y_t$ to the Yang-Mills-Higgs flow equations \eqref{eqn:YMH-flow} exists for all $(-\infty, 0]$ and $y_t \rightarrow x$ in the $C^\infty$ topology on $\mathcal{B}$ as $t \rightarrow - \infty$.
\end{definition}

\subsection{A local slice theorem}\label{subsec:local-slice}

In this section we define local slices around the critical points and describe the isomorphism classes in the negative slice.

\begin{definition}\label{def:slice}
Let $x = (\bar{\partial}_A, \phi) \in \mathcal{B}$. The \emph{slice} through $x$ is the set of deformations orthogonal to the $\mathcal{G}^\C$ orbit at $x$.
\begin{equation}\label{eqn:slice-def}
S_x = \{ (a, \varphi) \in \Omega^{0,1}(\End(E)) \oplus \Omega^{1,0}(\End(E)) \mid \bar{\partial}_A^* a -*[*\phi^*, \varphi] = 0, (\bar{\partial}_A + a, \phi + \varphi) \in \mathcal{B} \} .
\end{equation}
If $x$ is a critical point of $\YMH$ with $\beta = *(F_A + [\phi, \phi^*])$, then the \emph{negative slice} $S_x^-$ is the subset
\begin{equation}\label{eqn:neg-slice-def}
S_x^- = \{ (a, \varphi) \in S_x \mid \lim_{t \rightarrow \infty} e^{i \beta t} \cdot (a, \varphi) = 0 \} .
\end{equation}
\end{definition}

To prove Lemma \ref{lem:slice-theorem} and Proposition \ref{prop:filtered-slice-theorem} below, one needs to first define the slice on the $L_1^2$ completion of the space of Higgs bundles with the action of the $L_2^2$ completion of the gauge group. The following lemma shows that if the critical point $x$ is $C^\infty$ then the elements in the slice $S_x$ are also $C^\infty$.

\begin{lemma}\label{lem:slice-smooth}
Let $x = (\bar{\partial}_A, \phi)$ be a critical point of $\YMH$ in the space of $C^\infty$ Higgs bundles, let $S_x$ be the set of solutions to the slice equations in the $L_1^2$ completion of the space of Higgs bundles and let $\delta x = (a, \varphi) \in S_x$. Then $\delta x$ is $C^\infty$. 
\end{lemma}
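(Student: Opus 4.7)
The plan is an elliptic bootstrap on the slice equations. First I would expand $(\bar\partial_A + a)(\phi + \varphi) = 0$ using $\bar\partial_A \phi = 0$ and combine with the Coulomb-type gauge condition $\bar\partial_A^* a - *[*\phi^*,\varphi] = 0$ to rewrite the defining system of $S_x$ as
\begin{align*}
\bar\partial_A^* a - *[*\phi^*, \varphi] &= 0, \\
\bar\partial_A \varphi + [a, \phi] + [a, \varphi] &= 0.
\end{align*}
Denote the linear part by $L(a,\varphi) := (\bar\partial_A^* a - *[*\phi^*, \varphi],\ \bar\partial_A \varphi + [a, \phi])$ and the quadratic remainder by $Q(a,\varphi) := (0, -[a,\varphi])$, so an element of $S_x$ is exactly a solution of $L(a,\varphi) = Q(a,\varphi)$. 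The operator $L$ has smooth coefficients since $\bar\partial_A$ and $\phi$ are smooth by hypothesis.

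Next I would check that $L : \Omega^{0,1}(\End E) \oplus \Omega^{1,0}(\End E) \to \Omega^0(\End E) \oplus \Omega^{1,1}(\End E)$ is elliptic. The zeroth-order contributions involving $\phi$ and $\phi^*$ do not enter the principal symbol, so at $\xi \ne 0$ the symbol decouples into those of $\bar\partial_A^* : \Omega^{0,1}(\End E) \to \Omega^0(\End E)$ and $\bar\partial_A : \Omega^{1,0}(\End E) \to \Omega^{1,1}(\End E)$. Both of these are pointwise isomorphisms on a Riemann surface: the symbol of $\bar\partial_A$ is wedge with $\xi^{0,1}$, and a dimension count in real dimension two gives bijectivity. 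Hence $L$ is a first-order elliptic operator with smooth coefficients.

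With ellipticity established, the bootstrap is routine. Starting from $(a,\varphi) \in L_1^2$, the Sobolev embedding $L_1^2 \hookrightarrow L^q$ for every $q < \infty$ (valid since $\dim_\R X = 2$) together with H\"older's inequality gives $[a,\varphi] \in L^p$ for every $p < \infty$. Thus $L(a,\varphi) = Q(a,\varphi) \in L^p$, and the $L^p$ elliptic regularity theorem upgrades $(a,\varphi)$ to $L_1^p$ for every $p < \infty$. Choosing any $p > 2$ places $(a,\varphi)$ in $C^0$, and since $L_k^p$ is a Banach algebra under multiplication for $kp > \dim_\R X$, the equation $L(a,\varphi) = Q(a,\varphi)$ allows an induction on $k$, yielding $(a,\varphi) \in L_k^p$ for every $k$; Sobolev embedding then gives $(a,\varphi) \in C^\infty$.

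The only mild obstacle is the initial jump across the critical Sobolev exponent: the product $[a,\varphi]$ is not \emph{a priori} in $L_1^2$, so one cannot directly gain a full derivative in the first step. This is handled by first integrating in the $L^p$ scale (where the quadratic nonlinearity is harmless) and only afterwards returning to the $L^2_k$ scale via Sobolev embedding. Once that first step is done, the rest of the argument is the standard elliptic bootstrap for nonlinear equations with quadratic nonlinearity.
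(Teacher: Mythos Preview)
Your argument is correct, but it takes a different route from the paper's. You package the two equations into a single elliptic system $L(a,\varphi)=Q(a,\varphi)$ and cross the critical Sobolev threshold via the $L^p$ elliptic estimate, using $L_1^2 \hookrightarrow L^q$ for all $q<\infty$ in real dimension two. The paper instead exploits the asymmetry between the two equations: the Coulomb-type condition $\bar\partial_A^* a = *[\phi^*,*\varphi]$ is \emph{linear} in $(a,\varphi)$, so from $\varphi\in L_1^2$ one gets $\bar\partial_A^* a\in L_1^2$ and hence $a\in L_2^2$ immediately by $L^2$ elliptic regularity for $\bar\partial_A^*$ alone. With $a\in L_2^2\hookrightarrow C^0$, the nonlinearity $[a,\varphi]$ is then tame and the Higgs equation yields $\bar\partial_A\varphi\in L^4$, so $\varphi\in L_1^4$; iterating this two-step staircase gives smoothness. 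The paper's approach is more elementary in that it only needs scalar elliptic regularity for $\bar\partial_A$ and $\bar\partial_A^*$ separately and avoids invoking the $L^p$ theory for the coupled system, at the cost of relying on the particular feature that one of the two slice equations has no quadratic term. Your approach is more systematic and would survive unchanged if both equations were genuinely nonlinear, but requires the $L^p$ elliptic machinery for a first-order system.
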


\begin{proof}
The slice equations are
\begin{align*}
\bar{\partial}_A \varphi + [a, \phi] + [a, \varphi] & = 0 \\
\bar{\partial}_A^* a - *[\phi^*, *\varphi] & = 0
\end{align*}
Since $(a, \varphi) \in L_1^2$ and $(\bar{\partial}_A, \phi)$ is $C^\infty$, then the second equation above implies that $\bar{\partial}_A^* a \in L_1^2$ and so $a \in L_2^2$ by elliptic regularity. After applying Sobolev multiplication $L_2^2 \times L_1^2 \rightarrow L^4$, then $[a, \varphi] \in L^4$ and so the first equation above implies that $\bar{\partial}_A \varphi \in L^4$, hence $\varphi \in L_1^4$. Repeating this again shows that $\varphi \in L_2^2$, and then one can repeat the process inductively to show that $\delta x = (a, \varphi)$ is $C^\infty$.
\end{proof}

The following result gives a local description of the space of Higgs bundles in terms of the slice. The infinitesimal action of $\mathcal{G}^\C$ at $x \in \mathcal{B}$ is denoted by $\rho_x : \Lie(\mathcal{G}^\C) \cong \Omega^0(\End(E)) \rightarrow \Omega^{0,1}(\End(E)) \oplus \Omega^{1,0}(\End(E))$. Explicitly, for $x = (\bar{\partial}_A, \phi)$ and $u \in \Omega^0(\End(E))$, we have $\rho_x(u) = -(\bar{\partial}_A u, [\phi, u])$. The $L^2$-orthogonal complement of $\ker \rho_x \subseteq \Omega^0(\End(E))$ is denoted $(\ker \rho_x)^\perp$. 

\begin{lemma}\label{lem:slice-theorem}
Fix $x \in \mathcal{B}$. Then the map $\psi : (\ker \rho_x)^\perp \times S_x \rightarrow \mathcal{B}$ given by $\psi(u, \delta x) = \exp(u) \cdot (x + \delta x)$ is a local homeomorphism.
\end{lemma}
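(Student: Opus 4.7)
The strategy is to first prove an ambient analogue of the slice theorem on the smooth Banach manifold $\mathcal{A}^{0,1} \times \Omega^{1,0}(\End(E))$, and then restrict to the $\mathcal{G}^\C$-invariant subspace $\mathcal{B}$. Following the remark preceding Lemma \ref{lem:slice-smooth}, I would work with $L_1^2$ completions of Higgs pairs and $L_2^2$ gauge transformations so that standard Banach-manifold techniques apply; smoothness of the resulting slice elements is then guaranteed by Lemma \ref{lem:slice-smooth}.

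More precisely, define the ambient slice
\[
\widetilde{S}_x = \{ (a, \varphi) \in \Omega^{0,1}(\End(E)) \oplus \Omega^{1,0}(\End(E)) \mid \bar\partial_A^* a - *[*\phi^*, \varphi] = 0 \},
\]
dropping the integrability condition, and consider the smooth map $\widetilde{\psi} : (\ker \rho_x)^\perp \times \widetilde{S}_x \to \mathcal{A}^{0,1} \times \Omega^{1,0}(\End(E))$ given by the same formula $\widetilde{\psi}(u, \delta x) = \exp(u) \cdot (x + \delta x)$. Its derivative at the origin is the linear map $(u, \delta x) \mapsto \rho_x(u) + \delta x$. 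The slice equation in the definition of $\widetilde S_x$ is precisely the equation $\rho_x^*(a, \varphi) = 0$ for the $L^2$-formal adjoint of $\rho_x$, so $\widetilde S_x = \ker \rho_x^*$. Since the composition $\rho_x^* \rho_x$ is a second-order elliptic operator on $\Omega^0(\End(E))$ (its principal symbol coincides with that of $\bar\partial_A^* \bar\partial_A$), Fredholm theory yields the orthogonal Hodge-type decomposition
\[
\Omega^{0,1}(\End(E)) \oplus \Omega^{1,0}(\End(E)) = \image \rho_x \oplus \ker \rho_x^*
\]
at the $L_1^2$ level, and $\rho_x$ restricts to a topological isomorphism from $(\ker \rho_x)^\perp$ (in $L_2^2$) onto its image. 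Hence $d\widetilde\psi_{(0,0)}$ is a Banach-space isomorphism, and the inverse function theorem gives that $\widetilde\psi$ is a local diffeomorphism onto a neighbourhood $U$ of $x$ in the ambient space.

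To conclude, I would restrict to $\mathcal{B}$. Because the $\mathcal{G}^\C$-action preserves the integrability condition $\bar\partial_A \phi = 0$, the point $\widetilde\psi(u, \delta x)$ lies in $\mathcal{B}$ if and only if $x + \delta x$ does; combined with the slice equation, this is precisely the condition $\delta x \in S_x$. Thus $\widetilde\psi$ restricts to a bijection from a neighbourhood of the origin in $(\ker \rho_x)^\perp \times S_x$ onto $U \cap \mathcal{B}$, and since the ambient map is a homeomorphism and $\mathcal{B}$ carries the subspace topology, this restriction $\psi$ is a local homeomorphism.

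The main technical obstacle I anticipate is verifying the Hodge decomposition carefully in the correct Sobolev setting, in particular the closedness of $\image \rho_x$ in $L_1^2$ and the fact that at a general (possibly polystable) $x$ the stabiliser need not be trivial, so $\ker \rho_x$ is nonzero (though finite-dimensional). Care is also needed to check that the complex gauge action $(g, y) \mapsto g \cdot y$ is smooth enough between the chosen Sobolev completions for the inverse function theorem to apply; this is standard provided $\mathcal{G}^\C$ is taken to be $L_2^2$ acting on $L_1^2$ Higgs pairs, since then Sobolev multiplication $L_2^2 \times L_1^2 \to L_1^2$ is continuous on a Riemann surface.
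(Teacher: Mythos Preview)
Your proposal is correct and follows essentially the same route as the paper. The paper's own proof simply cites \cite[Prop.~4.12]{Wilkin08} for the Sobolev-completion statement and then invokes elliptic regularity (as in \cite[Cor.~4.17]{Wilkin08}) to pass back to $C^\infty$; your argument unpacks exactly that cited result---the Hodge decomposition $\im\rho_x \oplus \ker\rho_x^*$ (which the paper later cites as \cite[Lem.~4.9]{Wilkin08}), the inverse function theorem on the ambient affine space, and the restriction to $\mathcal{B}$ via $\mathcal{G}^\C$-invariance---together with Lemma~\ref{lem:slice-smooth} for the regularity step.
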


\begin{proof}
The result of \cite[Prop. 4.12]{Wilkin08} shows that the statement is true for the $L_1^2$ completion of the space of Higgs bundles and the $L_2^2$ completion of the gauge group, and so it only remains to show that it remains true on restricting to the space of $C^\infty$ Higgs bundles with the action of the group of $C^\infty$ gauge transformations. The proof of this statement follows from elliptic regularity using the same method as \cite[Cor. 4.17]{Wilkin08}.
\end{proof}

Now let $x = (\bar{\partial}_A, \phi)$ be a critical point and let $\beta = \mu(x) := *(F_A + [\phi, \phi^*])$. The Lie algebra $\Lie(\mathcal{G}^\C) \cong \Omega^0(\End(E))$ decomposes into eigenbundles for the adjoint action of $e^{i \beta}$. We denote the positive, zero and negative eigenspaces respectively by $\Omega^0(\End(E)_+)$, $\Omega^0(\End(E)_0)$ and $\Omega^0(\End(E)_-)$. The positive and negative eigenspaces are nilpotent Lie algebras with associated unipotent groups $\mathcal{G}_+^\C$ and $\mathcal{G}_-^\C$. The subgroups of $\mathcal{G}$ and $\mathcal{G}^\C$ consisting of elements commuting with $e^{i \beta}$ are denoted $\mathcal{G}_\beta$ and $\mathcal{G}_\beta^\C$ respectively. Since $\Omega^0(\End(E)_0) \oplus \Omega^0(\End(E)_+)$ is also a Lie algebra then there is a corresponding subgroup denoted $\mathcal{G}_*^\C$.

Let $\mathcal{G}_x$ and $\mathcal{G}_x^\C$ denote the respective isotropy groups of $x$ in $\mathcal{G}$ and $\mathcal{G}^\C$. There is an inclusion $(\mathcal{G}_x)^\C \subseteq \mathcal{G}_x^\C$, however at a non-minimal critical point the two groups may not be equal (in the context of reductive group actions on finite-dimensional affine spaces, this question has been studied by Sjamaar in \cite[Prop. 1.6]{Sjamaar95}). At a general critical point, the Higgs bundle $(E, \phi)$ splits into polystable Higgs sub-bundles $(E_1, \phi_1) \oplus \cdots \oplus (E_n, \phi_n)$, where we order by increasing slope. Then a homomorphism $u \in \Hom(E_j, E_k)$ satisfying $u \phi_j = \phi_k u$ will be zero if $j > k$ since $(E_j, \phi_j)$ and $(E_k, \phi_k)$ are polystable and $\slope(E_j) > \slope(E_k)$, however if $j < k$ then the homomorphisms do not necessarily vanish in which case $(\mathcal{G}_x)^\C \subsetneq \mathcal{G}_x^\C$. Therefore $\ker \rho_x = \Lie(\mathcal{G}_x^\C) \subset \Omega^0(\End(E)_+) \oplus \Omega^0(\End(E)_0)$, and so $\mathcal{G}_x^\C \subset \mathcal{G}_*^\C$.

The result of \cite[Thm. 2.16]{Daskal92} shows that the $L_2^2$ completion of the gauge group satisfies $\mathcal{G}^\C \cong \mathcal{G}_*^\C \times_{\mathcal{G}_\beta} \mathcal{G}$. We will use $(\ker \rho_x)_*^\perp$ to denote $(\ker \rho_x)^\perp \cap (\Omega^0(\End(E))_+ \oplus \Omega^0(\End(E)_0)$. At a critical point $x$, the above argument shows that isotropy group $\mathcal{G}_x^\C$ is contained in $\mathcal{G}_*^\C$, and so we have the following refinement of Lemma \ref{lem:slice-theorem}.
\begin{proposition}\label{prop:filtered-slice-theorem}
Let $x \in \mathcal{B}$ be a critical point of $\YMH$. Then there exists a $\mathcal{G}$-invariant neighbourhood $U$ of $x$ and a neighbourhood $U'$ of $[\id, 0, 0]$ in $\mathcal{G} \times_{\mathcal{G}_\beta} \left( (\ker \rho_x)_*^\perp \times S_x \right)$ such that $\psi : U' \rightarrow U$ is a $\mathcal{G}$-equivariant homeomorphism.
\end{proposition}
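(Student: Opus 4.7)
The plan is to refine the $\mathcal{G}^\C$-equivariant slice theorem (Lemma \ref{lem:slice-theorem}) into a $\mathcal{G}$-equivariant statement by using the polar-type decomposition $\mathcal{G}^\C \cong \mathcal{G}_*^\C \times_{\mathcal{G}_\beta} \mathcal{G}$ from \cite[Thm.\ 2.16]{Daskal92} to extract a unitary factor from the complex gauge slice. First I would invoke Lemma \ref{lem:slice-theorem} at $x$ to obtain the local homeomorphism $\psi_0 : (\ker \rho_x)^\perp \times S_x \to U_0$. Since $i\beta$ is Hermitian, $\ad(i\beta)$ is self-adjoint on $\Omega^0(\End(E))$ with respect to the $L^2$ inner product, so its eigenspaces are mutually orthogonal. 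Combined with the inclusion $\ker \rho_x \subset \Omega^0(\End(E)_0) \oplus \Omega^0(\End(E)_+)$ observed in the paragraph preceding the proposition, this yields the $L^2$-orthogonal splitting
\begin{equation*}
(\ker \rho_x)^\perp = \Omega^0(\End(E)_-) \oplus (\ker \rho_x)_*^\perp .
\end{equation*}

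The central step is to absorb the $\Omega^0(\End(E)_-)$ direction into a unitary factor. For $u = u_- + u_*$ small, I would apply the inverse function theorem to the smooth multiplication map $\mathcal{G}_*^\C \times \mathcal{G} \to \mathcal{G}^\C$ provided by \cite[Thm.\ 2.16]{Daskal92} to factorise $\exp(u) = g_*(u) \cdot k(u)$ smoothly in $u$, uniquely modulo the relation $(g_*, k) \sim (g_* h^{-1}, hk)$ for $h \in \mathcal{G}_\beta$, with $g_*(u) = \exp(\tilde u_*)$ for some $\tilde u_* = \tilde u_*(u) \in (\ker \rho_x)_*^\perp$ depending smoothly on $u$ with $\tilde u_*(0) = 0$, and with $k(u) \in \mathcal{G}$ close to the identity. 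Substituting into $\psi_0$ and absorbing $k(u)$ into a free $\mathcal{G}$-variable acting from the left reparametrises the slice map as
\begin{equation*}
\psi(k, u_*, \delta x) = k \cdot \exp(u_*) \cdot (x + \delta x),
\end{equation*}
defined on a neighbourhood of $(\id, 0, 0)$ in $\mathcal{G} \times (\ker \rho_x)_*^\perp \times S_x$, with image the $\mathcal{G}$-invariant open set $U = \mathcal{G} \cdot U_0$ and $\mathcal{G}$-equivariant under left translation.

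To descend $\psi$ to the fibre product $\mathcal{G} \times_{\mathcal{G}_\beta} ((\ker \rho_x)_*^\perp \times S_x)$, the indeterminacy in the factorisation $\exp(u) = g_*(u) k(u)$ translates into the diagonal $\mathcal{G}_\beta$-action $(k, u_*, \delta x) \mapsto (kh^{-1}, \Ad(h) u_*, h \cdot \delta x)$, under which $\psi$ is invariant by construction. This action is compatible with $(\ker \rho_x)_*^\perp \times S_x$ because $\mathcal{G}_\beta$ commutes with $e^{i\beta}$, so $\Ad(h)$ preserves the eigenspace decomposition and in particular $(\ker \rho_x)_*^\perp$, while the slice equations are $\mathcal{G}_\beta$-equivariant via the canonical identification of $S_x$ with $S_{h \cdot x}$ under the gauge action. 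The local homeomorphism property is inherited from that of $\psi_0$ in Lemma \ref{lem:slice-theorem} together with the smoothness of the group decomposition.

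The principal obstacle is the smooth factorisation $\exp(u) = g_*(u) k(u)$ in the infinite-dimensional setting: carrying this out rigorously requires working on appropriate Sobolev completions (as in the proof of Lemma \ref{lem:slice-smooth} and in \cite[Thm.\ 2.16]{Daskal92}), applying the inverse function theorem for smooth maps between Banach manifolds, and then recovering $C^\infty$ regularity via elliptic bootstrapping analogous to that of Lemma \ref{lem:slice-smooth}. A secondary bookkeeping task is to verify the precise form of the $\mathcal{G}_\beta$-equivariance on the slice factor that makes the fibre product well-defined, in view of the fact that $\mathcal{G}_\beta$ does not in general fix $x$.
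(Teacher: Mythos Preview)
Your approach is essentially the same as the paper's: the paper does not give a separate proof but treats the proposition as an immediate refinement of Lemma~\ref{lem:slice-theorem} once one has the decomposition $\mathcal{G}^\C \cong \mathcal{G}_*^\C \times_{\mathcal{G}_\beta} \mathcal{G}$ from \cite[Thm.~2.16]{Daskal92} together with the observation $\mathcal{G}_x^\C \subset \mathcal{G}_*^\C$. You have simply fleshed out the details of how the Daskalopoulos factorisation interacts with the slice map, which is exactly the intended argument.
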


The results of Section \ref{sec:local-analysis} show that the negative slice $S_x^-$ is complex gauge-equivalent to the unstable set $W_x^-$ of a critical point. The next lemma gives a complete classification of the isomorphism classes in $S_x^-$. Together with the results of Section \ref{sec:local-analysis}, this is used in Section \ref{sec:hecke} to classify critical points connected by flow lines.

\begin{lemma}\label{lem:classify-neg-slice}
Let $x = (E_1, \phi_1) \oplus \cdots \oplus (E_n, \phi_n)$ be a critical point of $\YMH$ with curvature as in \eqref{eqn:critical-curvature} with the Higgs polystable subbundles ordered so that $\slope(E_j) < \slope(E_k)$ iff $j < k$. If $\delta x \in S_x^- \cap U$ then $x + \delta x$ has a filtration $(E^{(1)}, \phi^{(1)}) \subset \cdots \subset (E^{(n)}, \phi^{(n)})$ by Higgs subbundles such that the successive quotients are $(E^{(k)}, \phi^{(k)}) / (E^{(k-1)}, \phi^{(k-1)}) = (E_k, \phi_k)$. Conversely, there exists a neighbourhood $U$ of $x$ such that if a Higgs bundle $y = (E, \phi) \in U$ admits such a filtration then it is gauge equivalent to $x + \delta x$ for some $\delta x \in S_x^-$. 
\end{lemma}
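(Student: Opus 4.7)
The strategy is to exploit the $\beta$-weight decomposition of $\End(E)$ induced by the splitting $E = E_1 \oplus \cdots \oplus E_n$. Writing $i\beta = \diag(\lambda_j \id_{E_j})$, the adjoint action of $e^{i\beta t}$ scales the summand $\Hom(E_j, E_k) \subset \End(E)$ by $e^{(\lambda_k - \lambda_j)t}$. Let $V^- := \bigoplus_{k < j} \bigl( \Omega^{0,1}(\Hom(E_j, E_k)) \oplus \Omega^{1,0}(\Hom(E_j, E_k)) \bigr)$ denote the strict negative-weight subspace. For the forward direction, the defining vanishing condition of $S_x^-$ forces a deformation $(a, \varphi)$ to be supported in $V^-$: zero-weight (block-diagonal) components are fixed by the flow while positive-weight components diverge, so both must vanish. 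Hence the smooth subbundles $E^{(m)} := E_1 \oplus \cdots \oplus E_m$ are preserved by $\bar{\partial}_A + a$ and $\phi + \varphi$, yielding a Higgs filtration of $x + \delta x$, and the vanishing of the diagonal blocks identifies the successive quotient $E^{(k)}/E^{(k-1)} \cong E_k$ with exactly the original Higgs structure $(\bar{\partial}_{A_k}, \phi_k)$.

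For the converse, given $y \in U$ admitting such a filtration, I first produce a complex gauge transformation $g \in \mathcal{G}^\C$ close to the identity with $g \cdot y = x + \tilde\delta x \in x + V^-$. To build $g$, choose $C^\infty$ splittings of the filtration together with Higgs bundle isomorphisms of each successive quotient with the fixed $(E_k, \phi_k)$; continuity of these choices in $y$ near $x$ shows that $g$ can be taken close to the identity. Collectively these determine a smooth bundle isomorphism $E \cong \bigoplus_k E_k$ under which invariance of the filtration makes $y$'s Higgs data block upper triangular and the chosen isomorphisms make the diagonal blocks coincide with those of $x$, forcing $\tilde\delta x \in V^-$.

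It remains to move $x + \tilde\delta x$ into the slice $S_x$ while staying inside $V^-$. Since $\bar{\partial}_A$ and $\phi$ are block diagonal, the slice equation, the infinitesimal gauge action $\rho_x$, and the Higgs integrability condition all respect the $\beta$-weight grading; in particular $S_x \cap V^- = S_x^-$. Consider the map
\begin{equation*}
F : \Omega^0(\End(E)_-) \times S_x^- \longrightarrow \mathcal{B}, \qquad F(w, \delta x) = \exp(w) \cdot (x + \delta x) .
\end{equation*}
Its derivative at $(0, 0)$ is $(w, \delta x) \mapsto \rho_x(w) + \delta x$, which is an isomorphism onto $V^- \cap T_x\mathcal{B}$: injectivity follows from $\ker \rho_x \subset \Omega^0(\End(E)_+ \oplus \End(E)_0)$, and surjectivity from the weight-graded slice decomposition $V^- \cap T_x\mathcal{B} = \rho_x(\Omega^0(\End(E)_-)) \oplus T_0 S_x^-$. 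The implicit function theorem then provides unique small $w \in \Omega^0(\End(E)_-)$ and $\delta x \in S_x^-$ with $\exp(w)(x + \delta x) = x + \tilde\delta x$, so $y = g^{-1}\exp(w) \cdot (x + \delta x)$ is gauge equivalent to $x + \delta x$ with $\delta x \in S_x^-$ as required. I expect the main obstacle to be executing the IFT step within the singular space $\mathcal{B}$; this is handled by observing that the nonlinear Higgs term $[a, \varphi]$ for $(a, \varphi) \in V^-$ remains in $V^-$, so the integrability equation closes on the negative-weight subspace.
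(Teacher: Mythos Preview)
Your proof is correct and follows the same strategy as the paper: the forward direction is immediate from the weight decomposition, and the converse hinges on a local diffeomorphism for $(w,\delta x)\mapsto e^{w}\cdot(x+\delta x)$ restricted to the negative-weight subspace. The paper handles your flagged concern about the singular domain $S_x^-$ by first establishing the local diffeomorphism between the ambient linear spaces $(\ker\rho_x^-)^\perp\times\ker(\rho_x^-)^*$ and $V^-$, and only afterwards restricting to $\mathcal{B}$ via $\mathcal{G}^\C$-invariance of the Higgs condition.
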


\begin{proof}
The first statement follows directly from the definition of the negative slice in \eqref{eqn:neg-slice-def}.

Let $\End(E)_-$ be the subbundle of $\End(E)$ corresponding to the negative eigenspaces of $i \beta$ and let $\rho_x^- : \Omega^0(\End(E)_-) \rightarrow \Omega^{0,1}(\End(E)_-) \oplus \Omega^{1,0}(\End(E)_-)$ be the restriction of the infinitesimal action to the negative eigenspaces. Then
\begin{equation*}
\im \rho_x^- = \im \rho_x \cap \Omega^{0,1}(\End(E)_-) \oplus \Omega^{1,0}(\End(E)_-)
\end{equation*}
and
\begin{equation}\label{eqn:negative-orthogonal}
\ker (\rho_x^-)^*  \supseteq \ker \rho_x^* \cap \Omega^{0,1}(\End(E)_-) \oplus \Omega^{1,0}(\End(E)_-)
\end{equation}
Since $\im \rho_x \oplus \ker \rho_x^* \cong \Omega^{0,1}(\End(E)) \oplus \Omega^{1,0}(\End(E))$ by \cite[Lem. 4.9]{Wilkin08} then
\begin{align*}
\Omega^{0,1}(\End(E)_-) \oplus \Omega^{1,0}(\End(E)_-) & = \left( \im \rho_x \oplus \ker \rho_x^* \right) \cap \left( \Omega^{0,1}(\End(E)_-) \oplus \Omega^{1,0}(\End(E)_-) \right) \\
 & \subseteq \left( \im \rho_x^- \oplus \ker (\rho_x^-)^* \right) \subseteq \Omega^{0,1}(\End(E)_-) \oplus \Omega^{1,0}(\End(E)_-)
\end{align*}
and so \eqref{eqn:negative-orthogonal} must be an equality, therefore $\Omega^{0,1}(\End(E)_-) \oplus \Omega^{1,0}(\End(E)_-) \cong \im \rho_x^- \oplus \ker (\rho_x^-)^*$. Therefore the function
\begin{align*}
\psi^- : (\ker \rho_x^-)^\perp \times \ker (\rho_x^-)^* & \rightarrow \Omega^{0,1}(\End(E)_-) \oplus \Omega^{1,0}(\End(E)_-)  \\
 (u, \delta x) & \mapsto e^u \cdot (x + \delta x)
\end{align*}
is a local diffeomorphism at $0$. If $\delta x \in S_x^-$ then $x + \delta x \in \mathcal{B}$, and so $e^u \cdot (x + \delta x) \in \mathcal{B}$, since the complex gauge group preserves the space of Higgs bundles. Conversely, if $e^u \cdot (x + \delta x) \in \mathcal{B}$ then $x + \delta x \in \mathcal{B}$ and so $\delta x \in S_x^-$. Therefore $\psi$ restricts to a local homeomorphism $(\ker \rho_x^-)^\perp \times S_x^- \rightarrow \mathcal{B} \cap \left( \Omega^{0,1}(\End(E)_-) \oplus \Omega^{1,0}(\End(E)_-) \right)$.
\end{proof}

The next two results concern a sequence of points $g_t \cdot z$ in a $\mathcal{G}^\C$ orbit which approach a critical point $x$ in the $L_k^2$ norm and for which $\YMH(z) < \YMH(x)$. Since $x$ is critical and $\YMH(z) < \YMH(x)$ then $x \in \overline{\mathcal{G}^\C \cdot z} \setminus \mathcal{G}^\C \cdot z$, and therefore $\| g_t \|_{L_{k+1}^2} \rightarrow \infty$. The result below shows that the $C^0$ norm of the function $\sigma(h_t) = \tr(h_t) + \tr(h_t^{-1}) - 2 \rank (E)$ must also blow up.

\begin{lemma}\label{lem:GIT-C0-norm-blows-up}
Let $x \in \mathcal{B}$ be a critical point of $\YMH$ and let $z \in \mathcal{B}$ be any point such that $\YMH(z) < \YMH(x)$. Suppose that there exists a sequence of gauge transformations $g_t \in \mathcal{G}^\C$ such that $g_t \cdot z \rightarrow x$ in $L_k^2$. Then the change of metric $h_t = g_t^* g_t$ satisfies $\sup_X \sigma(h_t) \rightarrow \infty$.
\end{lemma}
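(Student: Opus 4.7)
The plan is to argue by contradiction. Suppose to the contrary that $\sup_X \sigma(h_t)$ remains bounded along some subsequence (which we continue to denote $h_t$), say $\sup_X \sigma(h_t) \leq C$. At each point $p \in X$, the positive eigenvalues $\lambda_1(p), \ldots, \lambda_r(p)$ of the self-adjoint operator $h_t(p)$ satisfy $\sum_{i=1}^r (\lambda_i(p) + \lambda_i(p)^{-1} - 2) = \sigma(h_t)(p) \leq C$. Since each summand is non-negative, this forces every $\lambda_i(p)$ to lie in a fixed compact subset of $(0, \infty)$ depending only on $C$ and $r = \rank(E)$. Via the polar decomposition $g_t = u_t h_t^{1/2}$ with $u_t \in \mathcal{G}$ unitary, this yields uniform $C^0$ bounds on both $g_t$ and $g_t^{-1}$.

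I would then bootstrap these $C^0$ bounds to a uniform bound on $g_t$ in a Sobolev space of high enough regularity to extract a limit. Writing the convergence $g_t \cdot z \to x$ in the form $g_t \cdot (\bar{\partial}_{A_z}, \phi_z) = (\bar{\partial}_{A_z} + a_t, \phi_z + \varphi_t)$, we have $a_t$ and $\varphi_t$ uniformly bounded in $L_k^2$. From the identity $a_t = -(\bar{\partial}_{A_z} g_t) g_t^{-1}$ and the $C^0$ bound on $g_t$, the equation $\bar{\partial}_{A_z} g_t = -a_t g_t$ gives $\bar{\partial}_{A_z} g_t$ uniformly bounded in $L^2$, and elliptic regularity for $\bar{\partial}_{A_z}$ then yields a uniform $L_1^2$ bound on $g_t$. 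Iterating this scheme, using the Sobolev multiplication theorems on the Riemann surface $X$ together with the pointwise lower bound on $g_t^{-1}$, improves this to a uniform bound in $L_{k+1}^2$. By Banach-Alaoglu a subsequence converges weakly in $L_{k+1}^2$, and by Rellich-Kondrachov strongly in $L_k^2$ (and in $C^0$), to a limit $g_\infty$. Because $|\det g_t|$ is uniformly bounded below, the same holds for $g_\infty$, so $g_\infty \in \mathcal{G}^\C$.

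Passing to the limit in $g_t \cdot z \to x$ gives $g_\infty \cdot z = x$, contradicting the inclusion $x \in \overline{\mathcal{G}^\C \cdot z} \setminus \mathcal{G}^\C \cdot z$ noted in the paragraph preceding the lemma statement (which itself follows from criticality of $x$ together with the inequality $\YMH(z) < \YMH(x)$). The main technical obstacle is the Sobolev bootstrap for $g_t$ in the second paragraph: on a real two-dimensional base the multiplication theorems are borderline at the lowest regularity step, so the regularity of $g_t$ must be improved one Sobolev degree at a time, with the uniform pointwise bound on $g_t^{-1}$ playing an essential role to close the iteration at each stage.
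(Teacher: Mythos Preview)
Your argument is correct and takes a genuinely different route from the paper's. The paper proves the lemma via the slice theorem: once $g_t \cdot z$ enters a slice neighbourhood of $x$, one writes $f_t \cdot g_t \cdot z \in S_x$ with $f_t$ near the identity, and uniqueness of the slice decomposition forces the discrepancy between two such representatives to lie in the isotropy group $\mathcal{G}_x^\C$. Using the based-gauge-group exact sequence $1 \to \mathcal{G}_0^\C \to \mathcal{G}^\C \to \GL(n,\C) \to 1$ and freeness of the $\mathcal{G}_0^\C$-action, $\mathcal{G}_x^\C$ is identified with a subgroup of $\GL(n,\C)$, so divergence of these isotropy elements forces divergence of $g_t$ in $C^0$. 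Your approach is instead a direct compactness argument: a $C^0$ bound on $\sigma(h_t)$ gives pointwise two-sided bounds on $g_t$, and an elliptic bootstrap on $\bar{\partial}_{A_z} g_t = -a_t g_t$ upgrades this to Sobolev bounds sufficient to extract a limit $g_\infty \in \mathcal{G}^\C$ with $g_\infty \cdot z = x$, contradicting $x \notin \mathcal{G}^\C \cdot z$. The paper's route is more structural and entirely sidesteps the borderline Sobolev multiplication issue you flag; your route is more self-contained analytically and does not rely on the slice theorem, at the cost of having to handle that borderline step (which can indeed be closed by passing through $L_1^p$ regularity for $p>2$ before returning to the $L^2$ scale). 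Both reach the same contradiction against the fact, noted just before the lemma, that criticality of $x$ together with $\YMH(z) < \YMH(x)$ excludes $x$ from the orbit of $z$.
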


\begin{proof}
Let $U$ be the neighbourhood of $x$ from Lemma \ref{lem:slice-theorem}. Since $g_t \cdot z \rightarrow x$, then there exists $T$ such that $g_t \cdot z \in U$ for all $t \geq T$. Therefore there exists $f_t$ in a neighbourhood of the identity in $\mathcal{G}^\C$ such that $f_t \cdot g_t \cdot z \in S_x$. The uniqueness of the decomposition from the slice theorem shows that if $t > T$, then $f_{t} \cdot g_{t} \cdot z = f_{t, T} \cdot f_{T} \cdot g_{T} \cdot z$ with $f_{t,T} \in \mathcal{G}_x^\C$. Therefore $t \rightarrow \infty$ implies that $f_{t,T}$ diverges in $\mathcal{G}_x^\C$. Fix a point $p$ on the surface $X$, and let $\mathcal{G}_0^\C$ be the normal subgroup of complex gauge transformations that are the identity at $p$, as in \cite[Sec. 13]{AtiyahBott83}. We have the following short exact sequence of groups
\begin{equation*}
1 \rightarrow \mathcal{G}_0^\C \rightarrow \mathcal{G}^\C \rightarrow \GL(n, \C) \rightarrow 1 .
\end{equation*}
Since $\mathcal{G}_0^\C$ acts freely on the space of connections (and hence on $\mathcal{B}$), then restriction to the fibre over $p$ defines a bijective correspondence between $\mathcal{G}_x^\C \subset \mathcal{G}^\C$ and a subgroup of $\GL(n, \C)$ via the exact sequence above. Therefore $f_{t,T}$ diverges in $\mathcal{G}_x^\C$ implies that the restriction of $f_{t,T}$ to the fibre over $p$ diverges in $\GL(n, \C)$, and so the $C^0$ norm of $f_{t,T}$ diverges to $\infty$, and hence the same is true for $g_t = f_t^{-1} \cdot f_{t, T} \cdot f_T \cdot g_T \cdot z$ since $g_T$ is fixed and both $f_t$ and $f_T$ are contained in a fixed neighbourhood of the identity in $\mathcal{G}^\C$. Therefore $\sup_X \sigma(h_t) \rightarrow \infty$.
\end{proof}

\begin{corollary}\label{cor:bounded-metric-away-from-critical}
Let $x$ be a critical point of $\YMH$. Then for each neighbourhood $V$ of $x$ in the $L_k^2$ topology on $\mathcal{B}$ and each constant $C > 0$, there exists a neighbourhood $U$ of $x$ such that if $z \notin V$ and $\YMH(z) < \YMH(x)$, then $y = g \cdot z$ with $h = g^* g$ satisfying $\sup_X \sigma(h) \leq C$ implies that $y \notin U$.
\end{corollary}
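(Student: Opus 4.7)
The argument is by contradiction, reducing to Lemma \ref{lem:GIT-C0-norm-blows-up} via a compactness argument for the sequence of gauge transformations. Suppose the conclusion fails: then there exist a neighbourhood $V$ of $x$ and a constant $C > 0$ such that for every neighbourhood $U_n$ in a fixed decreasing basis shrinking to $\{x\}$, one can find $z_n \in \mathcal{B} \setminus V$, $g_n \in \mathcal{G}^\C$ with $h_n = g_n^* g_n$ satisfying $\sup_X \sigma(h_n) \leq C$ and $\YMH(z_n) < \YMH(x)$, and such that $y_n := g_n \cdot z_n \in U_n$. In particular $y_n \to x$ in the $L_k^2$ topology.

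Since $\sigma(h) = \tr(h) + \tr(h^{-1}) - 2 \rank(E)$ controls the eigenvalues of $h$ away from $0$ and $\infty$, the bound $\sup_X \sigma(h_n) \leq C$ gives uniform $C^0$ bounds on $g_n, g_n^{-1}, h_n, h_n^{-1}$. To upgrade to higher regularity, I would use the gauge action equations
\begin{equation*}
\bar{\partial} g_n = g_n a_{z_n} - a_{y_n} g_n, \qquad \phi_{y_n} g_n = g_n \phi_{z_n},
\end{equation*}
where $a_{y_n}$ is uniformly bounded in $L_k^2$ (from $y_n \to x$) and where the $\YMH$ energy bound $\YMH(z_n) < \YMH(x)$, combined with Uhlenbeck's compactness theorem applied in a Coulomb gauge for $a_{z_n}$, yields uniform $L_k^2$ control on a gauge-equivalent representative of $a_{z_n}$. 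Elliptic bootstrapping on the first-order equation above then produces a uniform $L_{k+1}^2$ bound on $g_n$, after passing to a subsequence.

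By weak compactness extract a further subsequence with $g_n \rightharpoonup g_\infty$ in $L_{k+1}^2$, hence (for $k \geq 1$) strongly in $C^0$ and in $L_k^2$. The limit $g_\infty$ is continuous and invertible since $\sup_X \sigma(h_\infty) \leq C$, so it defines a complex gauge transformation. Then $z_n = g_n^{-1} \cdot y_n$ converges in $L_k^2$ to $z_\infty := g_\infty^{-1} \cdot x$. Lower semicontinuity of $\YMH$ gives $\YMH(z_\infty) \leq \liminf \YMH(z_n) \leq \YMH(x)$; on the other hand $z_\infty$ lies in the complex gauge orbit of the critical point $x$, so $\YMH(z_\infty) \geq \YMH(x)$. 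Hence $z_\infty$ is itself a minimiser of $\YMH$ on $\mathcal{G}^\C \cdot x$, and the classification of critical points on a complex gauge orbit forces $z_\infty = u \cdot x$ for some $u \in \mathcal{G}$. Taking $V$ to be $\mathcal{G}$-invariant (which is the natural setting since $\YMH$ is $\mathcal{G}$-invariant; if not, replace $V$ by a smaller $\mathcal{G}$-invariant neighbourhood of $x$), we have $z_\infty = u \cdot x \in V$, so the $L_k^2$ convergence $z_n \to z_\infty$ forces $z_n \in V$ for large $n$, contradicting $z_n \notin V$.

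The main obstacle is the regularity bootstrap that upgrades the pointwise $C^0$ bound on $g_n$ to an $L_{k+1}^2$ bound; this is the step that genuinely uses the $\YMH$ energy control on $z_n$ (through Uhlenbeck compactness) rather than only the metric bound on $g_n$. The remaining steps are soft: weak compactness, continuity of the gauge action on the appropriate Sobolev spaces, and the characterisation of the $\YMH$-minimising set on a fixed complex gauge orbit as a single unitary orbit.
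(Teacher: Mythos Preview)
Your approach is quite different from the paper's. The paper gives essentially a two-line argument: if the conclusion fails one obtains a sequence $y_n = g_n \cdot z \to x$ in $L_k^2$ with $\sup_X \sigma(h_n) \leq C$, and this directly contradicts Lemma~\ref{lem:GIT-C0-norm-blows-up}. There is no compactness extraction for $g_n$, no Uhlenbeck theory, and no analysis of a limiting point $z_\infty$; the contradiction comes straight from the slice-theoretic content of the preceding lemma, which already shows that $\sup_X\sigma(h_n)\to\infty$ whenever such a sequence exists.

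Your route has a genuine gap at the regularity bootstrap. The hypothesis $\YMH(z_n) < \YMH(x)$ bounds only $\|F_{A_{z_n}} + [\phi_{z_n}, \phi_{z_n}^*]\|_{L^2}$; it does not even bound $F_{A_{z_n}}$ and $\phi_{z_n}$ separately, let alone their higher derivatives. Uhlenbeck compactness, applied on a surface from an $L^2$ curvature bound, yields at most $L_1^p$ control on the connection in Coulomb gauge, not the $L_k^2$ control you assert for arbitrary $k$. Without $L_k^2$ bounds on $a_{z_n}$ the elliptic bootstrap on $\bar{\partial} g_n = g_n a_{z_n} - a_{y_n} g_n$ stalls at low regularity, so you cannot conclude that $g_n$ is bounded in $L_{k+1}^2$ or that $z_n = g_n^{-1}\cdot y_n$ converges in $L_k^2$. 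There is also a secondary loss of one derivative: weak $L_{k+1}^2$ convergence of $g_n$ gives only strong $L_k^2$ convergence, while continuity of the action $(g,y)\mapsto g^{-1}\cdot y$ into $L_k^2$ Higgs pairs requires $g$ in $L_{k+1}^2$. These issues are not easily repaired within your framework; the paper's route via Lemma~\ref{lem:GIT-C0-norm-blows-up} sidesteps all of this analysis.
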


\begin{proof}
If no such neighbourhood $U$ exists, then we can construct a sequence $y_t = g_t \cdot z$ converging to $x$ in $L_k^2$ such that $h_t = g_t^* g_t$ satisfies $\sup_X \sigma(h_t) \leq C$ for all $t$, however this contradicts the previous lemma.
\end{proof}

\subsection{Modifying the $\YMH$ flow in a neighbourhood of a critical point}

Let $x$ be a critical point, let $\beta = \mu(x) = *(F_A + [\phi, \phi^*])$, and let $\mathcal{G}_*^\C$ be the subgroup defined in the previous section. In this section we explain how to modify the $\YMH$ flow near $x$ so that the gauge transformation generating the flow is contained in $\mathcal{G}_*^\C$. The reason for modifying the flow is so that we can apply the distance-decreasing formula of Lemma \ref{lem:modified-distance-decreasing}, which is used for the convergence result of Section \ref{subsec:inverse-construction}.

Let $U$ be a $\mathcal{G}$-invariant neighbourhod of $x$ such that $U$ is homeomorphic to a neighbourhood of $[\id, 0, 0]$ in $\mathcal{G} \times_{\mathcal{G}_\beta} \left(  (\ker \rho_x)_*^\perp \times S_x \right)$ by Proposition \ref{prop:filtered-slice-theorem}. Let $V \subset U$ be the image of $(\ker \rho_x)_*^\perp \times S_x$ under the homeomorphism from Proposition \ref{prop:filtered-slice-theorem}. For each $y \in V$, let $\gamma_-(y)$ be the component of $\mu(y)$ in $\Omega^0(\End(E)_-)$. Since $\mu$ is $\mathcal{G}$-equivariant then we can extend $\gamma_-$ equivariantly from $V$ to all of $U$ using the action of $\mathcal{G}$. Define the map $\gamma : U \rightarrow \Lie(\mathcal{G})$ by
\begin{equation}\label{eqn:def-gamma}
\gamma(y) = \gamma_-(y) - \gamma_-(y)^*
\end{equation}

\begin{definition}\label{def:modified-flow}
The \emph{modified flow} with initial condition $y_0 \in U$ is the solution to 
\begin{equation}\label{eqn:modified-flow}
\frac{dy}{dt} = - I \rho_x(\mu(y)) + \rho_x(\gamma(y)) .
\end{equation}
More explicitly, on the space of Higgs bundles $y = (\bar{\partial}_A, \phi)$ satisfies
\begin{align*}
\frac{\partial A}{\partial t} & = i \bar{\partial}_A * (F_A + [\phi, \phi^*]) - \bar{\partial}_A \gamma(\bar{\partial}_A, \phi) \\
\frac{\partial \phi}{\partial t} & = i [\phi, *(F_A + [\phi, \phi^*])] - [\phi, \gamma(\bar{\partial}_A, \phi)] 
\end{align*}
\end{definition}

In analogy with \eqref{eqn:gauge-flow}, the modified flow is generated by the action of the gauge group $y_t = g_t \cdot y_0$, where $g_t$ satisfies the equation
\begin{equation}\label{eqn:modified-gauge-flow}
\frac{\partial g_t}{\partial t} g_t^{-1} = -i \mu(g_t \cdot y_0) + \gamma(g_t \cdot y_0), \quad g_0 = \id .
\end{equation}
As before, let $V \subset U$ be the image of $(\ker \rho_x)_*^\perp \times S_x$ under the homeomorphism from the slice theorem (Proposition \ref{prop:filtered-slice-theorem}). Note that if $y_0 \in V$ then $\frac{\partial g_t}{\partial t} g_t^{-1} \in \Lie(\mathcal{G}_*^\C)$, so $g_t \in \mathcal{G}_*^\C$ and the solution to the modified flow remains in $V$ for as long as it remains in the neighbourhood $U$.

\begin{lemma}\label{lem:relate-flows}
Let $y_t = g_t \cdot y_0$ be the solution to the $\YMH$ flow \eqref{eqn:gauge-flow} with initial condition $y_0$. Then there exists $s_t \in \mathcal{G}$ solving the equation 
\begin{equation}\label{eqn:unitary-modification}
\frac{ds}{dt} s_t^{-1} = \gamma(s_t \cdot y_t), \quad s_0 = \id
\end{equation}
such that $\tilde{y}_t = s_t \cdot y_t$ is a solution to the modified flow equation \eqref{eqn:modified-flow} with initial condition $y_0$.
\end{lemma}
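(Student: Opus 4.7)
The plan is to write $\tilde{y}_t = (s_t g_t)\cdot y_0$ and compute the logarithmic derivative $\dot{(s_t g_t)}(s_t g_t)^{-1}$ using the product rule, then verify that it equals $-i\mu(\tilde{y}_t) + \gamma(\tilde{y}_t)$, so that $\tilde{y}_t$ satisfies \eqref{eqn:modified-flow} via the infinitesimal action $\rho_{\tilde{y}_t}$. This splits the task into two essentially independent pieces: (i) existence of $s_t$ as a solution to \eqref{eqn:unitary-modification}, and (ii) a direct differentiation argument verifying the flow equation.

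For (i), I would observe that \eqref{eqn:unitary-modification} is an ODE on $\mathcal{G}$ (in a suitable Sobolev completion) of the form $\dot{s}_t = \gamma(s_t \cdot y_t)\, s_t$ with smooth, locally Lipschitz right-hand side, and that $\gamma(y) \in \Lie(\mathcal{G})$ takes values in the skew-Hermitian endomorphisms (since $\gamma(y) = \gamma_-(y) - \gamma_-(y)^*$), so $s_t$ remains in the unitary gauge group. Standard ODE theory gives short-time existence; moreover $U$ is $\mathcal{G}$-invariant, so $s_t \cdot y_t \in U$ for as long as $y_t \in U$, which allows the solution to be extended on the maximal interval where the $\YMH$ flow stays in $U$.

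For (ii), using $\tilde{y}_t = (s_t g_t)\cdot y_0$ together with the identity $\frac{d}{dt}(h_t \cdot y_0) = \rho_{h_t\cdot y_0}(\dot{h}_t h_t^{-1})$ for any smooth curve $h_t$ in $\mathcal{G}^\C$, I would expand
\begin{equation*}
\dot{(s_t g_t)}(s_t g_t)^{-1} = \dot{s}_t s_t^{-1} + \Ad(s_t)(\dot{g}_t g_t^{-1}).
\end{equation*}
Substituting \eqref{eqn:gauge-flow} and \eqref{eqn:unitary-modification} gives $\dot{s}_t s_t^{-1} = \gamma(s_t\cdot y_t) = \gamma(\tilde{y}_t)$ and $\dot{g}_t g_t^{-1} = -i\mu(y_t)$. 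The key step is the equivariance of the moment map under unitary gauge transformations, $\mu(s_t \cdot y_t) = \Ad(s_t)\mu(y_t)$, combined with the fact that $\Ad(s_t)$ is complex-linear (because $s_t$ is unitary, so $\Ad(s_t)$ commutes with multiplication by $i$ on $\Omega^0(\End(E))$). Together these yield $\Ad(s_t)(-i\mu(y_t)) = -i\mu(\tilde{y}_t)$, and hence $\dot{(s_t g_t)}(s_t g_t)^{-1} = -i\mu(\tilde{y}_t) + \gamma(\tilde{y}_t)$. Applying $\rho_{\tilde{y}_t}$ and using $\rho_{\tilde{y}_t}(-iu) = -I\rho_{\tilde{y}_t}(u)$ gives exactly \eqref{eqn:modified-flow}.

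The main subtlety is being careful about the order of $\Ad(s_t)$ and multiplication by $i$: the argument only works because $s_t$ is constrained to the unitary group $\mathcal{G}$ (not the full $\mathcal{G}^\C$), which is precisely why $\gamma$ was defined in \eqref{eqn:def-gamma} as the skew-Hermitian combination $\gamma_-(y) - \gamma_-(y)^*$ rather than just $\gamma_-(y)$. Everything else is routine: the existence of $s_t$ is standard ODE theory, and the flow equation check is a straightforward calculus computation once the logarithmic derivative is expanded correctly.
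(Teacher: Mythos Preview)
Your proposal is correct and follows essentially the same route as the paper: both compute the logarithmic derivative of $\tilde g_t = s_t g_t$ via the product rule, invoke $\mathcal{G}$-equivariance of $\mu$ to turn $\Ad_{s_t}(-i\mu(y_t))$ into $-i\mu(\tilde y_t)$, and appeal to ODE theory for the existence of $s_t$. The only minor difference is that the paper first uses the $\mathcal{G}$-equivariance of $\gamma$ to rewrite \eqref{eqn:unitary-modification} as $\dot s_t s_t^{-1} = \Ad_{s_t}\gamma(y_t)$, which is a fibrewise linear ODE (since $\gamma(y_t)$ is already determined), making the existence argument immediate; also note that $\Ad(s_t)$ commutes with $i$ for any $s_t\in\mathcal{G}^\C$, so the place unitarity is genuinely needed is the moment-map equivariance $\mu(s_t\cdot y_t)=\Ad_{s_t}\mu(y_t)$, not the $\C$-linearity of $\Ad$.
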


\begin{proof}
Since $\gamma$ is $\mathcal{G}$-equivariant then \eqref{eqn:unitary-modification} reduces to 
\begin{equation*}
\frac{ds}{dt} s_t^{-1} = \Ad_{s_t} \gamma(y_t) .
\end{equation*}
Since $\gamma(y_t) \in \Lie(\mathcal{G})$ is already defined by the gradient flow $y_t$, then this equation reduces to solving an ODE on the fibres of the bundle, and therefore existence of solutions follows from ODE existence theory. Let $\tilde{g}_t = s_t \cdot g_t$. A calculation shows that
\begin{align*}
\frac{d \tilde{g}_t}{dt} \tilde{g}_t^{-1} & = \frac{ds}{dt} s_t^{-1} + \Ad_{s_t} \left( \frac{dg}{dt} g_t^{-1} \right) \\
 & = \gamma(s_t \cdot y_t) - i \Ad_{s_t} \mu(y_t) \\
 & = \gamma(\tilde{y}_t) - i \mu(\tilde{y}_t) \\
 & = \gamma(\tilde{g}_t \cdot y_0) - i \mu(\tilde{g}_t \cdot y_0) ,
\end{align*}
and so $\tilde{y}_t = \tilde{g}_t \cdot y_0 = s_t \cdot y_t$ is a solution to the modified flow \eqref{eqn:modified-flow} with initial condition $y_0$.
\end{proof}

As a corollary, we see that the change of metric is the same for the YMH flow \eqref{eqn:gauge-flow} and the modified flow \eqref{eqn:modified-gauge-flow}.

\begin{corollary}\label{cor:metrics-same}
Let $y_t = g_t \cdot y_0$ be a solution to the Yang-Mills-Higgs flow equation \eqref{eqn:gauge-flow} and $\tilde{y}_t = \tilde{g}_t \cdot y_0$ be a solution to the modified flow equation \eqref{eqn:modified-gauge-flow}. Then $h_t = g_t^* g_t = \tilde{g}_t^* \tilde{g}_t$.
\end{corollary}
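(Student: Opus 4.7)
The plan is to deduce the equality of metrics directly from Lemma \ref{lem:relate-flows}, using the fact that the modification $s_t$ produced there takes values in the unitary gauge group $\mathcal{G}$. First, I would apply Lemma \ref{lem:relate-flows} to obtain a family $s_t \in \mathcal{G}$ satisfying \eqref{eqn:unitary-modification} such that $\tilde{y}_t = s_t \cdot y_t$ solves the modified flow. By uniqueness of solutions to \eqref{eqn:modified-gauge-flow} with initial condition $y_0$ and $\tilde{g}_0 = \id$, this forces $\tilde{g}_t = s_t \, g_t$.

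Next, I would verify that $s_t$ genuinely lies in the unitary group $\mathcal{G}$ rather than the larger group $\mathcal{G}^{\mathbb{C}}$. The generator in \eqref{eqn:unitary-modification} is $\gamma(s_t \cdot y_t)$, and by construction \eqref{eqn:def-gamma} we have $\gamma(y) = \gamma_-(y) - \gamma_-(y)^*$, which is skew-Hermitian. Hence $\frac{ds}{dt}\, s_t^{-1} \in \Lie(\mathcal{G})$, and since $s_0 = \id \in \mathcal{G}$, the standard ODE argument shows $s_t \in \mathcal{G}$ for all $t$ where the solution exists. In particular, $s_t^* = s_t^{-1}$.

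Finally, I would simply compute
\begin{equation*}
\tilde{g}_t^* \tilde{g}_t = (s_t g_t)^* (s_t g_t) = g_t^*\, s_t^* s_t\, g_t = g_t^*\, g_t = h_t,
\end{equation*}
which is the desired equality. The argument is essentially a one-line verification once Lemma \ref{lem:relate-flows} is in hand; there is no real obstacle beyond confirming that the correction factor is unitary, which is immediate from the skew-Hermitian form of $\gamma$. Conceptually, the content of the corollary is that the ambiguity between the $\YMH$ flow and the modified flow is purely a unitary gauge transformation, and hence invisible to the associated change-of-metric $h_t = g_t^* g_t$, which is the natural object on the space of Hermitian metrics.
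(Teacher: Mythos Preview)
Your proposal is correct and is essentially the argument the paper has in mind: the paper states the result as an immediate corollary of Lemma \ref{lem:relate-flows}, the point being that $\tilde{g}_t = s_t g_t$ with $s_t \in \mathcal{G}$ unitary, so $\tilde{g}_t^*\tilde{g}_t = g_t^* s_t^* s_t g_t = g_t^* g_t$. Your explicit invocation of uniqueness for the modified flow \eqref{eqn:modified-gauge-flow} to pin down $\tilde{g}_t$ as $s_t g_t$ is a reasonable extra step of care that the paper leaves implicit.
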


Finally, we prove that convergence for the upwards $\YMH$ flow implies convergence for the modified flow.
\begin{lemma}\label{lem:unstable-sets-same}
Let $x$ be a critical point and let $y_0 \in W_x^-$. Then the modified flow with initial condition $y_0$ exists for all $t \in (-\infty, 0]$ and converges in the $C^\infty$ topology to a point in $\mathcal{G} \cdot x$. 
\end{lemma}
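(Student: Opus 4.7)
The plan is to derive the modified flow from the YMH flow using Lemma~\ref{lem:relate-flows}, then deduce convergence from integrability of the unitary correction $\gamma$ along the trajectory. Since $y_0 \in W_x^-$, the YMH flow $y_t = g_t \cdot y_0$ exists on $(-\infty, 0]$ with $y_t \to x$ in $C^\infty$. Lemma~\ref{lem:relate-flows} produces $s_t \in \mathcal{G}$ with $s_0 = \id$ solving \eqref{eqn:unitary-modification} so that $\tilde{y}_t := s_t \cdot y_t$ is a solution of \eqref{eqn:modified-flow} with $\tilde{y}_0 = y_0$. It therefore suffices to prove that $s_t$ extends to all of $(-\infty, 0]$ and has a $C^\infty$ limit $s_{-\infty} \in \mathcal{G}$, for then $\tilde{y}_t \to s_{-\infty} \cdot x \in \mathcal{G} \cdot x$ in $C^\infty$.

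For existence, the $C^\infty$-convergence $y_t \to x \in U$ supplies $T_0 \leq 0$ with $y_t \in U$ for all $t \leq T_0$; since $U$ is $\mathcal{G}$-invariant and $s_t \in \mathcal{G}$, the evaluation point $s_t \cdot y_t$ of $\gamma$ in \eqref{eqn:unitary-modification} also lies in $U$. Using the $\mathcal{G}$-equivariance $\gamma(s_t \cdot y_t) = \Ad_{s_t} \gamma(y_t)$, equation \eqref{eqn:unitary-modification} reduces to the left-trivialised ODE $s_t^{-1} \dot s_t = \gamma(y_t)$, which is linear in $s_t$ with time-dependent coefficient $\gamma(y_t) \in \Lie \mathcal{G}$. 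The short-time existence argument from the proof of Lemma~\ref{lem:relate-flows} handles $[T_0, 0]$, while on $(-\infty, T_0]$ the uniform $C^k$-bounds on $\gamma(y_t)$ (inherited from $C^\infty$-closeness of $y_t$ to $x$) give global existence by ODE theory on the fibres of $\Aut(E)$.

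The main work is convergence. The vanishing $\gamma(x) = 0$ is immediate from \eqref{eqn:critical-curvature}: the curvature $\beta = \mu(x)$ is block-diagonal and hence lies in $\Omega^0(\End(E)_0)$, so its component $\gamma_-(x) \in \Omega^0(\End(E)_-)$ vanishes. Smoothness of $\gamma$ then yields a local linear estimate $\|\gamma(y)\|_{C^k} \leq C_k \|y - x\|_{C^k}$ near $x$, reducing the problem to integrability of $\|y_t - x\|_{C^k}$ on $(-\infty, T_0]$. I expect to obtain this from a Lojasiewicz--Simon gradient inequality applied to $\YMH$ on the smooth ambient space $T^* \mathcal{A}^{0,1}$ near $x$, giving exponential (or integrable polynomial) $L^2$-decay, upgraded to $C^\infty$ by parabolic regularity for the YMH equations. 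Integrability of $\|\gamma(y_t)\|_{C^k}$ then forces the ODE for $s_t$ to converge in every $C^k$, producing $s_{-\infty} \in \mathcal{G}$ and the desired $C^\infty$-convergence $\tilde{y}_t \to s_{-\infty} \cdot x$. The principal obstacle is securing decay of $\|y_t - x\|$ in $C^\infty$ (not merely in $L_k^2$) strong enough to be integrable on a half-line; the rest of the argument is a direct consequence of Lemma~\ref{lem:relate-flows}.
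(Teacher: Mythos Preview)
Your proposal is correct and follows essentially the same strategy as the paper. Both arguments hinge on the exponential decay of $\gamma(y_t)$ along the backward YMH flow, which the paper obtains from Proposition~\ref{prop:exponential-convergence} (itself proved via the Lojasiewicz inequality you anticipate); the paper then bounds the $L_k^2$-length of the modified flow line directly, whereas you instead integrate the left-trivialised ODE $s_t^{-1}\dot s_t = \gamma(y_t)$ to produce a limit $s_{-\infty}\in\mathcal{G}$ and conclude $\tilde y_t \to s_{-\infty}\cdot x$. These two formulations are equivalent, and your concern about $C^\infty$ versus $L_k^2$ decay is moot on a compact base since $L_k^2$ for all $k$ already gives $C^\infty$.
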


\begin{proof}
Let $y_t$ be the $\YMH$ flow with initial condition $y_0$ and $\tilde{y_t} = s_t \cdot y_t$ the modified flow. By the definition of $W_x^-$ the $\YMH$ flow exists for all $t \in (-\infty, 0]$ and $y_t \rightarrow x$ in the $C^\infty$ topology. Existence of the modified flow then follows from Lemma \ref{lem:relate-flows}. Proposition \ref{prop:exponential-convergence} shows that $y_t \rightarrow x$ exponentially in $L_k^2$ for all $k$, and so the same is true for $\gamma(y_t)$. Therefore the length of the modified flow line satisfies 
\begin{align*}
\int_{-\infty}^0 \| I \rho_{\tilde{y}_t}(\mu(\tilde{y}_t)) - \rho_{\tilde{y}_t}(\gamma(\tilde{y}_t)) \|_{L_k^2} \, dt & = \int_{-\infty}^0 \| I \rho_{y_t}(\mu(y_t)) - \rho_{y_t}(\gamma(y_t)) \|_{L_k^2} \, dt \\
 & \leq \int_{-\infty}^0 \| \rho_{y_t}(\mu(y_t)) \|_{L_k^2} \, dt + \int_{-\infty}^0 \| \rho_{y_t}(\gamma(y_t)) \|_{L_k^2} \, dt
\end{align*}
which is finite since the length $\int_{-\infty}^0 \| \rho_{y_t}(\mu(y_t)) \|_{L_k^2} \, dt$ of the $\YMH$ flow line is finite, $y_t$ is bounded and $\gamma(y_t) \rightarrow 0$ exponentially. This is true for all $k$, and so the modified flow converges in the $C^\infty$ topology.
\end{proof}

\subsection{Preliminary estimates for the $\YMH$  flow in a neighbourhood of a critical point}

% Quadratic estimate for action of $e^{i \beta t}$ on the negative slice.

Given eigenvalues for $i \beta$ labelled by $\lambda_1 \leq \cdots \leq \lambda_k < 0 \leq \lambda_{k+1} \leq \cdots$, for any $y \in S_x^-$ and any norm, we have the Lipschitz bounds
\begin{equation}\label{eqn:lipschitz-slice}
e^{\lambda_1 t} \| y - x \| \leq \| e^{i \beta t} \cdot y - x \| \leq e^{\lambda_k t} \| y - x \|  .
\end{equation}

\begin{lemma}\label{lem:moment-map-quadratic}
For any critical point $x$ there exists $C>0$ such that for any $y \in S_{x}^-$, we have $\| \mu(y) - \beta \|_{C^0} \leq C \| y - x \|_{C^0}^2$. 
\end{lemma}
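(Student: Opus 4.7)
The plan is to Taylor-expand $\mu(y)$ at $y = x$ in the direction $\delta y := y - x = (a, \varphi) \in S_x^-$, observe that the expansion terminates at degree two, and identify the linear term as $d\mu_x(\delta y)$. The slice condition then forces the linear term to vanish, leaving a purely algebraic quadratic expression that is controlled pointwise by $|a|^2 + |\varphi|^2$.

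Writing $y = (\bar\partial_A + a, \phi + \varphi)$ with $(a, \varphi) \in S_x^-$, the Chern connection of $\bar\partial_A + a$ is $A + a - a^*$; since $a \wedge a = a^* \wedge a^* = 0$ on a Riemann surface, the curvature expands as $F_{A+a-a^*} = F_A + \partial_A a - \bar\partial_A a^* - a \wedge a^* - a^* \wedge a$. Combining this with $[\phi+\varphi, (\phi+\varphi)^*] = [\phi,\phi^*] + [\phi,\varphi^*] + [\varphi,\phi^*] + [\varphi,\varphi^*]$ gives the exact identity $\mu(y) - \beta = L(a, \varphi) + Q(a, \varphi)$, where
\begin{align*}
L(a, \varphi) &= *\bigl(\partial_A a - \bar\partial_A a^* + [\phi, \varphi^*] + [\varphi, \phi^*]\bigr), \\
Q(a, \varphi) &= *\bigl(-a \wedge a^* - a^* \wedge a + [\varphi, \varphi^*]\bigr).
\end{align*}
No higher-order terms occur because both the curvature and the Higgs commutator are at most quadratic in $(a, \varphi)$.

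The linear operator $L$ is exactly $d\mu_x$. Since $\mu$ is the moment map for the $\mathcal{G}$-action on the K\"ahler space $\Omega^{0,1}(\End E) \oplus \Omega^{1,0}(\End E)$ (with complex structure $J$ given by multiplication by $i$), the standard moment map identity gives $d\mu_x(\delta y) = -\rho_x^*(J \delta y)$. The infinitesimal action $\rho_x$ extends to a $\C$-linear map on $\Lie \mathcal{G}^\C = \Omega^0(\End E)$, hence so does its adjoint, and thus $\rho_x^*(J\delta y) = i \rho_x^*(\delta y)$. The slice equation \eqref{eqn:slice-def} is exactly the vanishing of $\rho_x^*(\delta y) \in \Omega^0(\End E)$ interpreted as a $\C$-valued equation; therefore $d\mu_x(\delta y) = 0$, so $L(a, \varphi) \equiv 0$ on $S_x$.

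Each summand of $Q(a, \varphi)$ is a pointwise algebraic expression in $a$ and $\varphi$ with no derivatives. Hence there is a constant $C > 0$, depending only on the Hermitian metric on $E$ and the Riemannian metric on $X$, such that $|Q(a, \varphi)|(p) \leq C(|a(p)|^2 + |\varphi(p)|^2)$ for every $p \in X$. Taking the supremum over $X$ yields $\|\mu(y) - \beta\|_{C^0} = \|Q(a, \varphi)\|_{C^0} \leq C \|y - x\|_{C^0}^2$. The main technical point is the vanishing of $L$ on $S_x$: it hinges on recognizing the single complex slice equation as orthogonality to the entire $\mathcal{G}^\C$-orbit, equivalently simultaneous orthogonality to $\rho_x(\Lie \mathcal{G})$ and to $J \rho_x(\Lie \mathcal{G})$. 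Once this vanishing is granted, the pointwise quadratic bound is routine.
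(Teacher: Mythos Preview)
Your proof is correct and follows essentially the same strategy as the paper's: both identify the linear term as $d\mu_x(\delta y)$, kill it via the moment map identity together with the slice condition (orthogonality to the $\mathcal{G}^\C$-orbit), and then observe that the remainder is an algebraic quadratic expression in $(a,\varphi)$ controlled pointwise. Your version is simply more explicit, writing out the curvature expansion and the adjoint formula $d\mu_x = -\rho_x^* \circ J$, whereas the paper argues directly from $\langle d\mu_x(\delta y), v\rangle = \langle I\rho_x(v), \delta y\rangle$ and the phrase ``since the moment map is quadratic''; the content is the same.
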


\begin{proof}
Let $y \in S_x^-$ and define $\delta y := y-x \in V \cong T_x V$. Then the defining equation for the moment map shows that for all $v \in \mathfrak{k}$, we have 
\begin{equation*}
d \mu_x (\delta y) \cdot v = \omega(\rho_x(v), \delta y) = \left< I \rho_x(v), \delta y \right>
\end{equation*}
By the definition of the slice, each $\delta y \in S_x^-$ is orthogonal to the infinitesimal action of $\mathcal{G}^\C$ at $x$, and so $\left< I \rho_x(v), \delta y \right>=0$ for all $v \in \mathfrak{k}$. Therefore $d \mu_x(\delta y) = 0$. Since the moment map $\mu(\bar{\partial}_A, \phi) = F_A + [\phi, \phi^*]$ is quadratic, then we have
\begin{equation*}
\| \mu(y) - \mu(x) \|_{C^0} \leq \| d\mu_x(\delta y) \|_{C^0} + C \| \delta y \|_{C^0}^2 = C \| \delta y \|_{C^0}^2 .
\end{equation*}
Since the moment map is $\mathcal{G}$-equivariant and the norms above are all $\mathcal{G}$-invariant, then the constant $C$ is independent of the choice of critical point in the orbit $\mathcal{G} \cdot x$.
\end{proof}
%More explicitly, for any $(\bar{\partial}_A, \phi) \in \mathcal{B}$ and $(a, \varphi) \in \Omega^{0,1}(\End(E)) \oplus \Omega^{1,0}(\End(E))$ such that $(\bar{\partial}_A + a, \phi + \varphi) \in \mathcal{B}$ the moment map at $(\bar{\partial}_A + a, \phi + \varphi)$ has the expression
%\begin{multline*}
%F_{A+a} + [\phi + \varphi, (\phi + \varphi)^*] = F_A + [\phi, \phi^*] \\ 
%+ d_A (a+a^*) + [\phi, \varphi^*] + [\varphi, \phi^*] + [a, a^*] + [\varphi, \varphi^*]
%\end{multline*}
%Writing $x = (\bar{\partial}_A, \phi)$ and $y = (\bar{\partial}_A + a, \phi + \varphi) = x + \delta y$, this gives us $d\mu_x(\delta y) = d_A (a+a^*) + [\phi, \varphi^*] + [\varphi, \phi^*]$. 

Given $g \in \mathcal{G}^\C$, let $g^*$ denote the adjoint with respect to the Hermitian metric on $E$ and let $\mathcal{G}$ act on $\mathcal{G}^\C$ by left multiplication. In every equivalence class of the space of metrics $\mathcal{G}^\C/ \mathcal{G}$ there is a unique positive definite self-adjoint section $h$, which we use from now on to represent elements of $\mathcal{G}^\C/ \mathcal{G}$. Given $h = g^* g \in \mathcal{G}^\C/ \mathcal{G}$, define $\mu_h : \mathcal{B} \rightarrow \Omega^0(\End(E)) \cong \Lie (\mathcal{G}^\C)$ by
\begin{equation}\label{eqn:def-muh}
\mu_h(y) = \Ad_{g^{-1}} \left( \mu(g\cdot y) \right) .
\end{equation}
Since the moment map is $\mathcal{G}$-equivariant, then for any $k \in \mathcal{G}$ we have 
\begin{equation*}
\Ad_{g^{-1}} \Ad_{k^{-1}} \left( \mu(k \cdot g \cdot y) \right) = \Ad_{g^{-1}} \left( \mu(g\cdot y) \right)
\end{equation*}
and so $\mu_h$ is well-defined on $\mathcal{G}^\C/ \mathcal{G}$. The length of a geodesic in the space of positive definite Hermitian matrices is computed in \cite[Ch. VI.1]{Kobayashi87}. Following \cite[Prop. 13]{Donaldson85} (see also \cite[Prop. 6.3]{Simpson88}), it is more convenient to define the distance function $\sigma : \mathcal{G}^\C/ \mathcal{G}\rightarrow \R$ 
\begin{equation}\label{eqn:def-sigma}
\sigma(h) = \tr h + \tr h^{-1} - 2 \rank (E) .
\end{equation}
As explained in \cite{Donaldson85}, the function $\sup_X \sigma$ is not a norm in the complete metric space $\mathcal{G}^\C/ \mathcal{G}$, however we do have $h_t \stackrel{C^0}{\longrightarrow} h_\infty$ in $\mathcal{G}^\C/ \mathcal{G}$ if and only if $\sup_X \sigma(h_t h_\infty^{-1}) \rightarrow 0$. Note that if $h_1 = g_1^* g_1$ and $h_2 = g_2^* g_2$, then
\begin{equation}\label{eqn:metric-difference}
\sigma(h_1 h_2^{-1}) = \sigma \left( g_1^* g_1 g_2^{-1} (g_2^*)^{-1} \right) = \sigma \left( (g_1 g_2^{-1})^* g_1 g_2^{-1} \right) .
\end{equation}

Recall from \cite{Donaldson85}, \cite{Simpson88} that we have the following distance-decreasing formula for a solution to the downwards $\YMH$ flow. Since the change of metric is the same for the modified flow by Corollary \ref{cor:metrics-same}, then \eqref{eqn:distance-decreasing} is also valid for the modified flow.

\begin{lemma}\label{lem:distance-decreasing}
Let $y_1, y_2 \in \mathcal{B}$ and suppose that $y_1 = g_0 \cdot y_2$ for $g \in \mathcal{G}^\C$. For $j = 1,2$, define $y_j(t)$ to be the solution of the $\YMH$ flow \eqref{eqn:YMH-flow} with initial condition $y_j$. Define $g_t$ by $y_1(t) = g_t \cdot y_2(t)$ and let $h_t = g_t^* g_t$ be the associated change of metric. Then 
\begin{equation}\label{eqn:distance-decreasing}
\left( \frac{\partial}{\partial t} + \Delta \right) \sigma(h_t) \leq 0 .
\end{equation}
\end{lemma}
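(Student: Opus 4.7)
The plan is to follow the classical argument of Donaldson \cite{Donaldson85} (for holomorphic bundles) and Simpson \cite{Simpson88} (for Higgs bundles). The proof decomposes naturally into three steps: derive the ODE governing $h_t$ from the two gauge flows, compute $\partial_t \sigma(h_t)$ pointwise from this ODE, and then apply a Bochner-Kodaira identity to relate $\Delta \sigma(h_t)$ to the same moment-map data so that the two contributions telescope into a non-positive quantity.

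First, I would derive the evolution equation for $h_t$. Writing $y_j(t) = g_j(t) \cdot y_j$ with each $g_j(t)$ solving \eqref{eqn:gauge-flow}, the relation $y_1(t) = g_t \cdot y_2(t)$ forces $g_t = g_1(t)\, g_0\, g_2(t)^{-1}$. Differentiating and using $\partial_t g_j\, g_j^{-1} = -i\mu(y_j(t))$ gives
$$\frac{\partial g_t}{\partial t} = -i\mu(y_1(t))\, g_t + i g_t\, \mu(y_2(t)),$$
and, using $\mu^* = -\mu$,
$$\frac{\partial h_t}{\partial t} = -2i g_t^* \mu(y_1(t)) g_t + i\mu(y_2(t)) h_t + i h_t \mu(y_2(t)).$$
From \eqref{eqn:def-muh} and $y_1 = g_t \cdot y_2$, I have the key identity $\mu_{h_t}(y_2(t)) = g_t^{-1} \mu(y_1(t)) g_t$, together with the elementary relation $g_t h_t^{\pm 1} g_t^* = \Ad_{g_t}(h_t^{\pm 1})$. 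Substituting into $\partial_t \sigma(h_t) = \tr(\partial_t h_t) - \tr(h_t^{-1} \partial_t h_t\, h_t^{-1})$ and applying cyclicity of the trace collapses everything to
$$\partial_t \sigma(h_t) = 2i\, \tr\!\bigl[(\mu(y_2(t)) - \mu_{h_t}(y_2(t)))\,(h_t - h_t^{-1})\bigr].$$

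Second, I would invoke the Bochner-Kodaira identity for the pair of metrics on the Higgs bundle $y_2(t)$, interpreting $\mu(y_2(t))$ and $\mu_{h_t}(y_2(t))$ as $(-i)$ times the contractions with the K\"ahler form of $F_A + [\phi,\phi^*]$ and $F_{A_{h_t}} + [\phi, \phi^{*_{h_t}}]$ respectively. Following Simpson \cite[Prop.~5.1]{Simpson88} (the Higgs-bundle version of Donaldson's \cite[\S 2]{Donaldson85} computation), this identity takes the form
$$\Delta \sigma(h_t) + 2i \,\tr\!\bigl[(\mu(y_2(t)) - \mu_{h_t}(y_2(t)))\,(h_t - h_t^{-1})\bigr] \;=\; -\,Q(h_t) \;\leq\; 0,$$
where $Q(h_t)$ is a non-negative expression built from $L^2$-densities of $\bar{\partial}_A h_t^{\pm 1}$ and the commutators $[\phi, h_t^{\pm 1}]$. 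Adding this identity to the formula derived in the previous step produces an exact cancellation of the moment-map terms and yields $(\partial_t + \Delta)\sigma(h_t) \leq 0$, as required.

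The substantive content is step two: the pure bundle contribution to $Q(h_t)$ is exactly Donaldson's \cite{Donaldson85} computation, but the Higgs contribution requires the additional identity $[\phi, \phi^{*_{h_t}}] - \Ad_{g_t^{-1}}[\phi_1, \phi_1^*] = [\phi, h_t^{-1}[\phi^*, h_t]]$, which follows from $\phi_1 = \Ad_{g_t}\phi$ and puts the extra term in manifestly commutator form. Pairing this with $h_t - h_t^{-1}$ under the trace and using the K\"ahler identity $[\Lambda, \bar{\partial}_A] = i\partial_A^*$ (and its Higgs analogue) produces the non-negative Higgs part of $Q(h_t)$. Once this is in place, the inequality is immediate. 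I expect this step to be the main technical obstacle, but it is entirely parallel to Simpson's argument and so requires only a careful bookkeeping of the Higgs-field contribution to the Chern curvature of $h_t$.
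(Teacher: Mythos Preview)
The paper does not supply its own proof of this lemma; it is stated as a recalled fact with citations to Donaldson \cite{Donaldson85} and Simpson \cite{Simpson88}, and the accompanying Lemma~\ref{lem:metric-inequalities} (the two pointwise inequalities for $\tr h$ and $\tr h^{-1}$) is likewise quoted from Donaldson's proof rather than reproved. Your proposal correctly reconstructs exactly that cited argument: the derivation of $\partial_t h_t$ from the two gauge flows, the trace identity $\partial_t\sigma(h_t)=2i\,\tr[(\mu(y_2)-\mu_{h_t}(y_2))(h_t-h_t^{-1})]$, and the Bochner--Kodaira/Simpson step giving non-positivity are all as in \cite{Donaldson85}, \cite{Simpson88}. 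So there is nothing to compare beyond noting that you have filled in what the paper leaves as a citation, and your outline is sound.
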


Since $\Lie(\mathcal{G}_*^\C) = \Omega^0(\End(E))_0 \oplus \Omega^0(\End(E))_+$ and the adjoint action of $e^{-i \beta t}$ is the identity on $\Omega^0(\End(E))_0$ and strictly contracting on $\Omega^0(\End(E))_+$, then we have the following lemma which is used in Section \ref{subsec:inverse-construction}.

\begin{lemma}\label{lem:modified-distance-decreasing}
Given any $g_0 \in \mathcal{G}_*^\C$, let $g_t = e^{-i \beta t} g_0 e^{i \beta t}$ and $h_t = g_t^* g_t$. Then $\frac{\partial}{\partial t} \sigma(h_t) \leq 0$.
\end{lemma}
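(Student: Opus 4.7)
The plan is to reduce the inequality to a pointwise statement on $X$ and then diagonalise $i\beta$ to make the exponentials in $g_t = e^{-i\beta t} g_0 e^{i\beta t}$ explicit. Since $i\beta = i*(F_A + [\phi, \phi^*])$ is Hermitian by \eqref{eqn:critical-curvature}, the operators $e^{\pm i \beta t}$ are pointwise self-adjoint positive-definite. Using $g_t^* = e^{i\beta t} g_0^* e^{-i\beta t}$ one obtains
\[
\tr(h_t) = \tr(g_t^* g_t) = \tr\bigl(e^{2i\beta t}\, g_0^*\, e^{-2i\beta t}\, g_0\bigr).
\]
At each $p \in X$ I would then choose an orthonormal basis of $E_p$ diagonalising $i\beta(p)$ with real eigenvalues $\lambda_1 \leq \cdots \leq \lambda_n$; in this basis the right-hand side reduces to the finite sum $\sum_{j,k} e^{2(\lambda_j - \lambda_k)t}\,|(g_0)_{kj}(p)|^2$.

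The key step is to identify $\mathcal{G}_*^\C$ with the pointwise stabiliser of the descending filtration $F^{\geq}_\lambda := \bigoplus_{\mu \geq \lambda} E^{\mu}$ by $i\beta$-eigenspaces. Since $\ad(i\beta)$ acts on the elementary matrix $E_{kj} : E_p^{\lambda_j} \to E_p^{\lambda_k}$ by the scalar $\lambda_k - \lambda_j$ and $\Lie(\mathcal{G}_*^\C) = \Omega^0(\End(E)_0) \oplus \Omega^0(\End(E)_+)$ is the sum of non-negative $\ad(i\beta)$-eigenspaces, the Lie-algebra elements of $\mathcal{G}_*^\C$ preserve this filtration. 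Exponentiating through the decomposition $\mathcal{G}_*^\C = \mathcal{G}_\beta^\C \cdot \mathcal{G}_+^\C$ (block-diagonal centraliser times unipotent group) then forces $(g_0)_{kj}(p) = 0$ whenever $\lambda_k < \lambda_j$, so every surviving exponent in the sum above satisfies $2(\lambda_j - \lambda_k) \leq 0$. Each surviving term is therefore non-increasing in $t$, hence so is $\tr(h_t)(p)$. Applying the same argument to $g_0^{-1} \in \mathcal{G}_*^\C$ yields $\partial_t \tr(h_t^{-1})(p) = \partial_t \tr\bigl((g_t^{-1})^* g_t^{-1}\bigr)(p) \leq 0$, and summing the two inequalities gives the pointwise inequality $\partial_t \sigma(h_t) \leq 0$.

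The only subtlety I foresee is upgrading the Lie-algebra description of $\mathcal{G}_*^\C$ to the group-level statement that its elements stabilise $F^\geq_\lambda$, since we do not have a global matrix logarithm. This is handled cleanly by the semidirect-product description: $\mathcal{G}_\beta^\C$ is literally block diagonal with respect to the eigenspace decomposition, while $\mathcal{G}_+^\C$ is unipotent with strictly filtration-preserving Lie algebra, so its elements are of the form $\id + N$ with $N$ nilpotent and filtration-preserving. With this bookkeeping in place, the rest of the argument is essentially a one-line exponential estimate.
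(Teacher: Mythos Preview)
Your proof is correct and follows the same idea the paper sketches in the sentence preceding the lemma: the paper simply notes that $\Lie(\mathcal{G}_*^\C) = \Omega^0(\End(E)_0) \oplus \Omega^0(\End(E)_+)$ and that $\Ad_{e^{-i\beta t}}$ is the identity on the first summand and contracting on the second, leaving the details implicit. Your pointwise diagonalisation and the identification of $\mathcal{G}_*^\C$ with the stabiliser of the descending eigenspace filtration is exactly the natural way to make that sentence rigorous, and your handling of the group-versus-Lie-algebra issue via the block-triangular (parabolic) description is the standard and correct one.
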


As part of the proof of the distance-decreasing formula in \cite{Donaldson85} we also have the following inequalities. This result is used in the proof of Lemma \ref{lem:uniform-bound-sigma}.
\begin{lemma}\label{lem:metric-inequalities}
For any metric $h \in \mathcal{G}^\C / \mathcal{G}$ and any $y \in \mathcal{B}$, we have
\begin{align*}
-2i \tr \left( (\mu_h(y) - \mu(y)) h \right) + \Delta \tr(h) & \leq 0 \\
2i \tr \left( (\mu_h(y) - \mu(y)) h^{-1} \right) + \Delta \tr(h) & \leq 0 .
\end{align*}
\end{lemma}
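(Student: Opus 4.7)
The plan is to derive both pointwise inequalities from the Bochner-type identity that underlies the Donaldson--Simpson distance-decreasing formula. Writing a representative $h = g^* g$ with $g \in \mathcal{G}^\C$, the equivariance definition \eqref{eqn:def-muh} gives $\mu_h(y) - \mu(y) = \mathrm{Ad}_{g^{-1}} \mu(g \cdot y) - \mu(y)$. Using the gauge-transformation laws for $F_A$ and $[\phi, \phi^*]$, this difference can be expressed as a second-order differential operator applied to $h$, which pairs naturally under the trace either against $h$ or (on the dual bundle) against $h^{-1}$.

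First I would compute $\Delta \tr(h)$ directly. In complex dimension one we have $\Delta = 2i \Lambda \bar\partial \partial$, and since $\tr$ commutes with derivatives, $\Delta \tr(h) = 2i \Lambda \tr(\bar\partial \partial h)$. Expanding $\bar\partial \partial h$ using the Chern connection $D_A = \partial_A + \bar\partial_A$ on sections of $\End(E)$ and grouping the resulting curvature and Higgs contributions (cf.\ the proof of Proposition~12 in \cite{Donaldson85} and Proposition~6.3 in \cite{Simpson88}) produces an identity of the schematic form
\begin{equation*}
\Delta \tr(h) + 2i \tr\bigl((\mu_h(y) - \mu(y))\, h\bigr) = - \bigl| (\bar\partial_A g)\, g^{-1} \bigr|_h^2 - \bigl| g\, [\phi, g^{-1}] \bigr|_h^2 \;\leq\; 0,
\end{equation*}
which is the first stated inequality. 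The key point is that the $[\phi,\phi^*]$ piece of $\mu$ combines with the $[\phi, g^{-1}]$ commutator from the Chern-connection expansion to produce the manifestly nonnegative second term on the right-hand side.

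For the second inequality I would run the parallel computation on the dual bundle $E^*$ equipped with the induced metric $h^{-1}$ and induced connection, or equivalently replace $g$ by $(g^{-1})^*$ throughout the argument. The resulting identity has the pairing against $h^{-1}$ in place of $h$, with exactly the sign appearing in the statement and a similarly nonnegative remainder on the right-hand side.

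The main obstacle is the bookkeeping of signs and the careful expansion of $\bar\partial \partial h$ in the presence of the Higgs field: one must verify that the cross terms between $F_A$ and $[\phi, \phi^*]$ reassemble precisely into $\mu_h - \mu$ paired with $h$ (resp.\ $h^{-1}$) together with the two nonnegative squares, rather than producing extra indefinite terms. Since the inequalities are pointwise, one can work in a local holomorphic frame at a chosen point in which $h$ is diagonal, reducing the verification to an explicit matrix calculation that mirrors Donaldson's original argument in the pure Yang--Mills setting.
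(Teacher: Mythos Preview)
Your proposal is correct and aligns with what the paper does: the paper does not spell out a proof but simply records these inequalities as being ``part of the proof of the distance-decreasing formula in \cite{Donaldson85}'' (with the Higgs extension in \cite{Simpson88}), and your plan is precisely to unpack that Bochner--Weitzenb\"ock computation. One small remark: the second displayed inequality in the statement almost certainly has a typo and should read $\Delta \tr(h^{-1})$ rather than $\Delta \tr(h)$ (this is how it is actually used in the proof of Lemma~\ref{lem:uniform-bound-sigma}); your dual-bundle argument handles that case correctly, and your caveat about sign bookkeeping is well placed, since the schematic identity you wrote carries the opposite sign from the lemma as stated and will need to be reconciled with the paper's conventions for $\mu$ and $\Delta$.
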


\subsection{Exponential convergence of the backwards flow}

In this section we prove that if a solution to the backwards $\YMH$ flow converges to a critical point, then it must do so exponentially in each Sobolev norm.

\begin{proposition}\label{prop:exponential-convergence}
Let $y_t$ be a solution to the $\YMH$ flow \eqref{eqn:YMH-flow} such that $\lim_{t \rightarrow -\infty} y_t = x$. Then for each positive integer $k$ there exist positive constants $C_1$ and $\eta$ such that $\| y_t - x \|_{L_k^2} \leq C_1 e^{\eta t}$ for all $t \leq 0$.
\end{proposition}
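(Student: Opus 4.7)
The plan is to first establish exponential decay of $\|y_t-x\|_{L^2}$ and then bootstrap to every $L_k^2$ via the parabolic nature of the Yang--Mills--Higgs flow equations. Since $y_t\to x$ in the $C^\infty$ topology, there exists $T\leq 0$ such that $y_t$ lies in the slice neighbourhood of $x$ from Proposition \ref{prop:filtered-slice-theorem} for all $t\leq T$; writing $y_t=\exp(u_t)\cdot(x+\delta x_t)$ via Lemma \ref{lem:slice-theorem}, it suffices to control $\|\delta x_t\|_{L_k^2}$ together with the change of metric generated by the gauge action.

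For the $L^2$ step, I would derive a \L ojasiewicz-type inequality at $x$ of the form
\begin{equation*}
\|\grad\YMH(y)\|_{L^2}^2 \;\geq\; c\bigl(\YMH(x)-\YMH(y)\bigr)
\end{equation*}
valid for all $y$ in a small enough neighbourhood of $x$ with $\YMH(y)\leq\YMH(x)$. The main input is Lemma \ref{lem:moment-map-quadratic}: on the slice $d\mu_x$ vanishes so $\mu(y)=\beta+O(\|\delta x\|^2)$, the quadratic form $\YMH(y)-\YMH(x)$ is governed by the Hessian $H_x$ acting on $T_x S_x$, and the spectrum of $H_x$ is determined by the adjoint action of $i\beta$, hence is bounded away from $0$ on the negative eigenspaces $S_x^-$ because the slopes $\lambda_j$ in \eqref{eqn:critical-curvature} are all distinct. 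Setting $G(t):=\YMH(x)-\YMH(y_t)\geq 0$, the downwards flow \eqref{eqn:YMH-flow} gives $G'(t)=\|\grad\YMH(y_t)\|_{L^2}^2\geq cG(t)$ for $t\leq T$, so $(Ge^{-ct})$ is nondecreasing and one gets $G(t)\leq G(T)e^{c(t-T)}$ for all $t\leq T$. The two-sided comparison $G(t)\sim\|y_t-x\|_{L^2}^2$ near $x$ then yields $\|y_t-x\|_{L^2}\leq C_1 e^{\eta t}$ for $\eta=c/2$ and all $t\leq T$, and extending to $t\in[T,0]$ costs only an adjustment of the constant.

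To promote $L^2$ decay to $L_k^2$ decay, I would bootstrap using standard parabolic regularity. After a Coulomb-type gauge fixing (or, equivalently, working with the modified flow, which by Corollary \ref{cor:metrics-same} produces the same change of metric) the flow equations become strictly parabolic, and interior estimates on time intervals of unit length give
\begin{equation*}
\|y_t-x\|_{L_{k+1}^2} \;\leq\; C_k \,\|y-x\|_{L^2([t-1,t]\times X)}
\end{equation*}
for $t\leq T-1$. Combining this with the $L^2$ exponential decay already established and iterating in $k$ yields $\|y_t-x\|_{L_k^2}\leq C_{1,k}e^{\eta_k t}$ for some $\eta_k>0$ (one may lose a little of $\eta$ at each step, but only finitely many iterations are needed for any fixed $k$).

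The hard part is establishing the \L ojasiewicz inequality on the singular space $\mathcal{B}$. In the smooth case it is immediate from nondegeneracy of the Hessian transverse to the critical set. Here one must combine Lemma \ref{lem:moment-map-quadratic}, the refined slice from Proposition \ref{prop:filtered-slice-theorem}, and the observation that although $\mathcal{G}_x^\C$ may strictly contain $(\mathcal{G}_x)^\C$, the restriction of $\YMH-\YMH(x)$ to $(\ker\rho_x)_*^\perp\times S_x^-$ remains nondegenerate with a uniform lower bound on its eigenvalues determined by the gaps $\lambda_k-\lambda_j$ appearing in \eqref{eqn:critical-curvature}. The $\mathcal{G}$-invariance of $\YMH$ and $\mathcal{G}$-equivariance of $\mu$ then give a uniform constant $c$ along the entire $\mathcal{G}$-orbit of $x$, which is what makes the argument above unambiguously produce the single exponent $\eta$.
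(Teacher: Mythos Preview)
Your approach has a genuine gap at the step where you pass from exponential decay of $G(t)=\YMH(x)-\YMH(y_t)$ to exponential decay of $\|y_t-x\|$. You claim a two-sided comparison $G(t)\sim\|y_t-x\|_{L^2}^2$, but this fails in general: the slice $S_x$ contains not only the negative eigenspace $S_x^-$ but also the zero eigenspace of $\ad(i\beta)$, corresponding to deformations of the polystable summands $(E_j,\phi_j)$ within their moduli. On that zero eigenspace the Hessian of $\YMH$ vanishes, so $G$ sees only the $z_-$ component to quadratic order and tells you nothing about the size of the $z_{\geq 0}$ component. For the same reason your claimed \L ojasiewicz inequality with exponent $\theta=1/2$ (i.e.\ $\|\grad\YMH\|^2\geq cG$) is not justified; the critical set near $x$ need not be a smooth Morse--Bott manifold, and the general \L ojasiewicz--Simon inequality only gives some $\theta\in(0,\tfrac12]$.

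The paper's argument handles exactly these issues. It decomposes $y_t=e^u\cdot(x+z_{\geq 0}+z_-)$ and first proves directly that $\|z_-\|_{L_1^2}$ decays exponentially, using that the linearised flow on $S_x^-$ has a spectral gap \emph{and} that the nonlinear flow preserves the set $\{z_-=0\}$ (this is the Morse--Kirwan lemma: $\YMH(e^u\cdot(x+z_{\geq 0}))\geq\YMH(x)$ and the gradient is tangent there). The Morse--Kirwan inequality then gives $G(t)\leq\varepsilon\|z_-\|$, hence $G(t)$ decays exponentially. Only at this point does the \L ojasiewicz inequality enter, and with an arbitrary $\theta$: combined with the interior estimate it yields $\|y_t-x\|_{L_k^2}\leq C\int_{-\infty}^t\|\grad\YMH\|_{L^2}\leq C'G(t)^\theta$, which is exponentially small. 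So the paper never needs $\theta=1/2$ and never needs $G$ to control the full distance directly; it bootstraps through the length of the flow line instead. Your parabolic-regularity bootstrap from $L^2$ to $L_k^2$ is in the right spirit (the paper uses the R\aa de-type interior estimate for the same purpose), but it cannot be reached until the gap above is closed.
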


The proof of the proposition reduces to the following lemmas. First recall from the slice theorem that there is a unique decomposition
\begin{equation*}
y = e^u \cdot (x + z)
\end{equation*}
for $u \in (\ker \rho_x)^\perp$ and $z \in S_x$. We can further decompose $z = z_{\geq 0} + z_-$, where $z_- \in S_x^-$ is the component of $z$ in the negative slice and $z_{\geq 0} = z - z_-$. At the critical point $x$ we have the decomposition $\End(E) \cong \End(E)_+ \oplus \End(E)_0 \oplus \End(E)_-$ according to the eigenspaces of $i \beta$ (cf. Sec. \ref{subsec:local-slice}). Then with respect to this decomposition $z_{\geq 0}$ is the component of $z$ in $\Omega^{0,1}(\End(E))_+ \oplus \End(E)_0) \oplus \Omega^{1,0}(\End(E)_+ \oplus \End(E)_0)$ and $z_-$ is the component in $\Omega^{0,1}(\End(E)_-) \oplus \Omega^{1,0}(\End(E)_-)$. In terms of the action of $\beta = \mu(x)$ we have $\lim_{t \rightarrow \infty} e^{i \beta t} \cdot z_- = 0$ and $\lim_{t \rightarrow \infty} e^{- i \beta t} \cdot z_{\geq 0} = z_0$, where $z_0$ is the  component of $z$ in $\Omega^{0,1}(\End(E)_0) \oplus \Omega^{1,0}(\End(E)_0)$. Note that if $y = e^u \cdot (x + z)$ is a Higgs bundle, then $x+z$ is a Higgs bundle since $e^u \in \mathcal{G}^\C$ preserves the space of Higgs bundles, however $x+z_{\geq 0}$ may not be a Higgs bundle as the pair $(\bar{\partial}_{A_{\geq 0}}, \phi_{\geq 0})$ representing $x + z_{\geq 0}$ may not satisfy $\bar{\partial}_{A_{\geq 0}} \phi_{\geq 0} = 0$. Even though $\phi_{\geq 0}$ may not be holomorphic, we can still apply the principle that curvature decreases in subbundles and increases in quotient bundles and follow the same idea as \cite[Sec. 8 \& 10]{AtiyahBott83} to prove the following lemma.

\begin{lemma}\label{lem:non-holomorphic-extensions}
\begin{enumerate}

\item $\YMH(e^u \cdot (x + z_{\geq 0})) \geq \YMH(x)$.

\item $\grad \YMH(e^u \cdot (x + z_{\geq 0}))$ is tangent to the set $\{z_- = 0\}$.
\end{enumerate}
\end{lemma}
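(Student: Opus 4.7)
The plan is to first identify the filtration structure carried by $y := e^u \cdot (x + z_{\geq 0})$, then apply Chern-Weil arguments for (1) and a $\mathcal{G}^\C$-invariance argument for (2). Consider the smooth filtration
\begin{equation*}
F_k := E_n \oplus E_{n-1} \oplus \cdots \oplus E_{n-k+1}, \qquad k = 1, \ldots, n,
\end{equation*}
of $E$, whose successive quotients $F_k/F_{k-1} \cong E_{n-k+1}$ have strictly decreasing slopes. Since $z_{\geq 0}$ takes values in $\Omega^{0,1}(\End(E)_0 \oplus \End(E)_+) \oplus \Omega^{1,0}(\End(E)_0 \oplus \End(E)_+)$ and $u \in (\ker \rho_x)_*^\perp \subseteq \Omega^0(\End(E)_0 \oplus \End(E)_+)$, a direct check on the $\ad(i\beta)$-eigenspaces shows that both the deformation $z_{\geq 0}$ and conjugation by $e^u$ preserve $F_\bullet$; therefore $F_\bullet$ is $\bar{\partial}_{A_y}$-holomorphic and $\phi_y$-invariant even though $(\bar{\partial}_{A_y}, \phi_y)$ itself need not be a Higgs bundle.

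For part (1), I would apply the Atiyah-Bott Chern-Weil identity from \cite[Sec. 8 \& 10]{AtiyahBott83}, adapted to the Higgs setting as in \cite{Wilkin08}. Choosing the orthogonal splitting $E = \bigoplus_k Q_k$ with $Q_k := F_k/F_{k-1}$ and letting $(\bar{\partial}_{Q_k^y}, \phi_{Q_k^y})$ denote the induced operators, the orthogonal decomposition yields the pointwise and $L^2$ inequality
\begin{equation*}
\|F_{A_y} + [\phi_y, \phi_y^*]\|_{L^2}^2 \geq \sum_{k=1}^n \|F_{A_{Q_k^y}} + [\phi_{Q_k^y}, \phi_{Q_k^y}^*]\|_{L^2}^2,
\end{equation*}
with the difference accounted for by the $L^2$ norms of the second fundamental forms. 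For each quotient, the Chern-Weil identity $\int_X \tr F_{A_{Q_k^y}} = -2\pi i \deg(E_{n-k+1})$ combined with $\int_X \tr[\phi_{Q_k^y}, \phi_{Q_k^y}^*] = 0$ and Cauchy-Schwarz yields $\|F_{A_{Q_k^y}} + [\phi_{Q_k^y}, \phi_{Q_k^y}^*]\|_{L^2}^2 \geq 2\pi \lambda_{n-k+1}^2 \rank(E_{n-k+1})$. Summing gives $\YMH(y) \geq \sum_{k} 2\pi \lambda_k^2 \rank(E_k) = \YMH(x)$, the final equality following from the polystability of each $(E_k, \phi_k)$.

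For part (2), I would argue that $\{z_- = 0\}$ locally coincides (modulo the $\mathcal{G}$-action from the refined slice theorem) with the set $\mathcal{F}$ of pairs admitting a smooth, $\bar{\partial}$-holomorphic, $\phi$-invariant filtration of the same type as $F_\bullet$. This identification is parallel to Lemma \ref{lem:classify-neg-slice} applied to the non-negative eigenspace slice $S_x \cap \{z_- = 0\}$. The set $\mathcal{F}$ is manifestly $\mathcal{G}^\C$-invariant, since $g \in \mathcal{G}^\C$ carries a filtration of $y$ to a filtration of $g \cdot y$ of the same type. Because $\grad \YMH(y) = -I \rho_y(\mu(y))$ lies in $\rho_y(\Lie \mathcal{G}^\C)$, which is tangent to the $\mathcal{G}^\C$-orbit through $y$ and hence to $\mathcal{F} = \{z_- = 0\}$, the gradient is tangent to $\{z_- = 0\}$. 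The main technical hurdle is part (1): even though $\bar{\partial}_{A_y}\phi_y$ need not vanish on $E$, both the orthogonal block decomposition of $\|F_{A_y}+[\phi_y,\phi_y^*]\|_{L^2}^2$ and the Chern-Weil computation on each $Q_k^y$ depend only on the filtration structure and topological data, so the standard proof adapts with only notational changes.
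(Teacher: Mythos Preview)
Your proposal is correct and matches the paper's approach: the paper does not give a detailed proof but simply remarks that ``we can still apply the principle that curvature decreases in subbundles and increases in quotient bundles and follow the same idea as \cite[Sec.~8 \& 10]{AtiyahBott83},'' which is exactly the Chern--Weil/second-fundamental-form argument you have written out for (1), and your $\mathcal{G}^\C$-invariance argument for (2) is the natural companion. One small correction: in the paper's setup for this lemma the slice decomposition is $y = e^u \cdot (x+z)$ with $u \in (\ker \rho_x)^\perp$, not $(\ker \rho_x)_*^\perp$, so $u$ may have a component in $\End(E)_-$ and $e^u$ need not preserve the fixed filtration $F_\bullet$; however, your argument goes through unchanged if you replace $F_\bullet$ by the transported smooth filtration $e^u(F_\bullet)$, which has the same topological type and is preserved by $e^u \cdot (x+z_{\geq 0})$.
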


The next lemma shows that the component in the negative slice is decreasing exponentially.

\begin{lemma}
Let $y_t = e^u \cdot (x + z_{\geq 0} + z_-)$ be a solution to the $\YMH$ flow such that $\lim_{t \rightarrow -\infty} y_t = x$. Then there exist positive constants $K_1$ and $K_2$ such that $\| z_- \|_{L_1^2}^2 \leq K_1 e^{K_2 t}$ for all $t \leq 0$.
\end{lemma}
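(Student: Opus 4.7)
The plan is to combine part (b) of Lemma \ref{lem:non-holomorphic-extensions} with a one-sided Gronwall argument. Using the slice homeomorphism of Lemma \ref{lem:slice-theorem}, write $y_t = e^{u(t)} \cdot (x + z_{\geq 0}(t) + z_-(t))$ in the local coordinates $(u, z_{\geq 0}, z_-)$ on $(\ker \rho_x)^\perp \times (S_x \cap (S_x^-)^\perp) \times S_x^-$. Pulling the $\YMH$ vector field back through this diffeomorphism produces an ODE $(\dot u, \dot z_{\geq 0}, \dot z_-) = (U, V, W)(u, z_{\geq 0}, z_-)$ on a neighborhood of the origin. Part (b) of Lemma \ref{lem:non-holomorphic-extensions} says precisely that the third component $W$ vanishes identically on $\{ z_- = 0 \}$, so Taylor expanding at $z_- = 0$ yields
\begin{equation*}
W(u, z_{\geq 0}, z_-) = A(u, z_{\geq 0})\, z_- + R(u, z_{\geq 0}, z_-),
\end{equation*}
with $A(u, z_{\geq 0})$ a bounded linear operator on $S_x^-$ depending continuously on $(u, z_{\geq 0})$, and $R$ at least quadratic in $z_-$.

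The next step is the key positivity statement: $A(0, 0)$ is minus the restriction of the Hessian of $\YMH$ at $x$ to $S_x^-$, and it is strictly positive. Indeed, by the Lipschitz bound \eqref{eqn:lipschitz-slice} the linearised downwards flow on the slice coincides on each eigenbundle of $\mathrm{ad}(i\beta)$ with the corresponding $e^{\alpha t}$-action, and $S_x^-$ is spanned by the strictly negative eigenspaces, so $A(0,0) \geq 2 \lambda \cdot \mathrm{id}_{S_x^-}$ for $\lambda = \min\{|\lambda_1|, \ldots, |\lambda_k|\} > 0$. Since $y_t \to x$ as $t \to -\infty$, the coordinates $(u(t), z_{\geq 0}(t), z_-(t))$ tend to zero, so we can fix $T \leq 0$ such that for every $t \leq T$ we have both $A(u(t), z_{\geq 0}(t)) \geq \lambda \cdot \mathrm{id}$ and $\|R(u(t), z_{\geq 0}(t), z_-(t))\|_{L_1^2} \leq \tfrac{\lambda}{2} \|z_-(t)\|_{L_1^2}$. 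Combining these bounds yields the differential inequality
\begin{equation*}
\frac{d}{dt} \| z_-(t) \|_{L_1^2}^2 \;=\; 2 \langle W, z_- \rangle_{L_1^2} \;\geq\; \lambda \, \| z_-(t) \|_{L_1^2}^2 \qquad \text{for all } t \leq T,
\end{equation*}
and a one-sided Gronwall gives $\| z_-(t) \|_{L_1^2}^2 \leq e^{\lambda (t - T)} \| z_-(T) \|_{L_1^2}^2$ on $(-\infty, T]$. On the compact interval $[T, 0]$ the quantity $\| z_-(t) \|_{L_1^2}^2$ is bounded by continuity of the flow, so enlarging $K_1$ produces the stated inequality on $(-\infty, 0]$ with $K_2 = \lambda$.

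The main obstacle is making the Taylor expansion of $W$ rigorous in the $L_1^2$-norm, since the $\YMH$ gradient is only formally smooth on the $L_1^2$-completion of $\mathcal{B}$. One controls this using the fact that $\YMH$ is polynomial (of degree four) in $(\bar\partial_A, \phi)$, so both $A$ and $R$ have explicit closed-form expressions, and the nonlinear terms can be estimated in $L_1^2$ via the Sobolev multiplication $L_1^2 \otimes L_1^2 \hookrightarrow L^p$ available for every $p < \infty$ on the Riemann surface $X$. A secondary technical point is rigorously identifying $A(0,0)$ with the Hessian restriction; this follows from differentiating the formula $\grad \YMH = I \rho(\mu)$ at the critical point $x$ and restricting to the $L^2$-orthogonal summand $S_x^- \subset T_x \mathcal{B}$, which decouples the Hessian from the gauge directions $(\ker \rho_x)^\perp$.
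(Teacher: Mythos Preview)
Your proof is correct and follows essentially the same strategy as the paper's: both use part (2) of Lemma \ref{lem:non-holomorphic-extensions} to conclude that the $z_-$-component of the flow vanishes on $\{z_-=0\}$, identify the linear part on $S_x^-$ with the strictly positive operator coming from $-\mathrm{ad}(i\beta)$, and finish with a Gronwall inequality. The only cosmetic difference is that the paper writes the equation as $\dot z_- = L z_- + N_-(u,z_{\geq 0},z_-)$ with $L$ the fixed linear part at the origin and $N_-$ the full remainder (so that smallness of $N_-$ comes jointly from $N_-(u,z_{\geq 0},0)=0$ and the vanishing of $dN_-$ at the origin), whereas you split further as $A(u,z_{\geq 0})z_- + R$ and control the two pieces separately; the estimates are equivalent.
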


\begin{proof}
The proof follows the idea of \cite[Sec. 10]{Kirwan84}. The downwards gradient flow equation for $z_-$ is
\begin{equation*}
\frac{\partial z_-}{\partial t} = L z_- + N_-(u, z_{\geq 0}, z_-) 
\end{equation*}
where $L$ is a linear operator and the derivative of $N_-$ vanishes at the origin. Since $z_-$ is orthogonal to the $\mathcal{G}^\C$ orbit through $x$, then the Laplacian term in $\grad \YMH$ vanishes on $z_-$ and so the linear part satisfies $e^{Lt} z_- = e^{-i \beta t} \cdot z_-$. Since $z_-$ is in the negative slice then there exists $\lambda_{min} > 0$ such that $\left< L z_- , z_- \right>_{L_1^2} \geq \lambda_{min} \| z_- \|_{L_1^2}$. Now Lemma \ref{lem:non-holomorphic-extensions} shows that the $\YMH$ flow preserves the set $\{ z_- = 0 \}$, and so $N_-(u, z_{\geq 0}, 0) = 0$. Since $N_-$ is $C^1$ with vanishing derivative at the origin then for all $\varepsilon > 0$ there exists $\delta > 0$ such that if $\| y_t - x \|_{L_1^2} < \delta$ then 
\begin{equation*}
\| N_-(u, z_{\geq 0}, z_-) \|_{L_1^2} \leq \varepsilon \| z_- \|_{L_1^2}
\end{equation*}
Therefore
\begin{equation*}
\frac{1}{2} \frac{\partial}{\partial t} \| z_- \|_{L_1^2}^2 = \left< L z_-, z_- \right>_{L_1^2} + \left< N_-(u, z_{\geq 0}, z_- ), z_- \right>_{L_1^2} \geq (\lambda_{min} - \varepsilon) \| z_- \|_{L_1^2}^2 ,
\end{equation*}
and so if $\varepsilon > 0$ is small enough (e.g. $\varepsilon < \frac{1}{2} \lambda_{min}$) then there exist positive constants $K_1$ and $K_2$ such that $\| z_- \|_{L_1^2}^2 \leq K_1 e^{K_2 t}$ for all $t \leq 0$.
\end{proof}

The next lemma shows that the difference $\YMH(x) - \YMH(y_t)$ is decreasing exponentially.

\begin{lemma}\label{lem:f-exponential}
Let $y_t = e^u \cdot (x+z_{\geq 0} + z_-)$ be a solution to the $\YMH$ flow such that $\lim_{t \rightarrow -\infty} y_t = x$. Then there exist positive constants $K_1'$ and $K_2'$ such that
\begin{equation*}
 \YMH(x) - \YMH(e^u \cdot (x + z_{\geq 0} + z_-)) \leq K_1' e^{K_2' t} 
\end{equation*}
for all $t \leq 0$.
\end{lemma}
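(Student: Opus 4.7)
The plan is to compare $\YMH(y_t)$ with the intermediate value $\YMH(w_t)$ at the ``horizontal projection'' $w_t := e^u \cdot (x + z_{\geq 0})$. Part (1) of Lemma \ref{lem:non-holomorphic-extensions} yields $\YMH(w_t) \geq \YMH(x)$, and since $\YMH$ is non-increasing along the $\YMH$ flow and $\YMH(y_t) \to \YMH(x)$ as $t \to -\infty$, one also has $\YMH(y_t) \leq \YMH(x)$ for $t \leq 0$. Consequently
\begin{equation*}
0 \leq \YMH(x) - \YMH(y_t) \leq \YMH(w_t) - \YMH(y_t),
\end{equation*}
and it suffices to bound the right-hand side exponentially in $t$.

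To that end, I would Taylor expand $\YMH$ along the affine path $\gamma(s) := e^u \cdot (x + z_{\geq 0} + s\, z_-)$, so that $\gamma(0) = w_t$, $\gamma(1) = y_t$, and the tangent vector $\gamma'(s) = \Ad_{e^u} z_-$ is independent of $s$. Because $\YMH$ is a polynomial of degree four in $(\bar\partial_A, \phi)$, the expansion of $\YMH(\gamma(1)) - \YMH(\gamma(0))$ is an exact finite sum of terms of orders one through four in $z_-$. The first-order term $D\YMH_{w_t}(\Ad_{e^u} z_-)$ vanishes by part (2) of Lemma \ref{lem:non-holomorphic-extensions}: the gradient at $w_t$ is tangent to the set $\{z_- = 0\}$, so its $L^2$-inner product with any perturbation in the $z_-$-direction is zero. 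The remaining quadratic, cubic and quartic terms are controlled by $C \|z_-\|_{L_1^2}^2$ using Sobolev multiplication on the Riemann surface together with the uniform bounds on $u$ and $z_{\geq 0}$ supplied by the $C^\infty$ convergence $y_t \to x$.

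Combining this with the exponential bound $\|z_-\|_{L_1^2}^2 \leq K_1 e^{K_2 t}$ from the preceding lemma yields
\begin{equation*}
\YMH(x) - \YMH(y_t) \leq C \|z_-\|_{L_1^2}^2 \leq C K_1 e^{K_2 t},
\end{equation*}
so $K_1' = C K_1$ and $K_2' = K_2$ suffice. The delicate step is the vanishing of the first-order Taylor term: it requires identifying the ambient direction $\Ad_{e^u} z_-$ at $w_t$ with an infinitesimal variation in the $z_-$-component of the slice decomposition at $w_t$. This identification relies on the $\mathcal{G}^\C$-equivariance of the local slice structure from Proposition \ref{prop:filtered-slice-theorem}, and is valid for all $t$ sufficiently negative since $u$, $z_{\geq 0}$ and $z_-$ are then uniformly small.
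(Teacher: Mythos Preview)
Your overall strategy is sound and matches the paper: reduce to bounding $\YMH(w_t) - \YMH(y_t)$ via Lemma~\ref{lem:non-holomorphic-extensions}(1), then feed in the exponential decay of $\|z_-\|_{L_1^2}$. The gap is in your treatment of the first-order Taylor term. Lemma~\ref{lem:non-holomorphic-extensions}(2) says $\grad\YMH(w_t)$ lies in the tangent space $T_{w_t}\{z_-=0\}$; to conclude that its $L^2$-pairing with $\gamma'(0)=\Ad_{e^u}z_-$ vanishes you would need $\Ad_{e^u}z_-$ to be $L^2$-\emph{normal} to $\{z_-=0\}$ at $w_t$. This holds at $x$ itself (where the slice is orthogonal to the orbit and $\End(E)_-\perp\End(E)_{\geq 0}$), but away from $x$ the slice diffeomorphism $\psi$ of Proposition~\ref{prop:filtered-slice-theorem} is not an isometry, $e^u\in\mathcal{G}^\C$ is not unitary, and $\Ad_{e^u}$ need not preserve the orthogonal splitting $\End(E)_-\oplus\End(E)_{\geq 0}$. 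The $\mathcal{G}^\C$-equivariance you invoke identifies $\Ad_{e^u}z_-$ as the correct \emph{coordinate} direction, but it does not supply the metric orthogonality you need.

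The paper sidesteps this entirely: rather than arguing that the first-order term vanishes, it uses only that $x$ is critical, so $\|\grad\YMH(w_t)\|$ is small when $w_t$ is close to $x$, and hence
\[
\YMH(y_t)-\YMH(w_t)\ \geq\ -\varepsilon\,\|z_-\|_{L_1^2}
\]
for $t$ sufficiently negative. Combined with Lemma~\ref{lem:non-holomorphic-extensions}(1) this yields $\YMH(x)-\YMH(y_t)\leq\varepsilon\sqrt{K_1}\,e^{\frac12 K_2 t}$, so one may take $K_1'=\varepsilon\sqrt{K_1}$ and $K_2'=\tfrac12 K_2$. Your route would give the better exponent $K_2'=K_2$ if the orthogonality held, but since only \emph{some} exponential rate is needed downstream, the paper's cruder linear bound on the first-order term is both simpler and sufficient. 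Your argument is easily repaired by making the same substitution.
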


\begin{proof}
Recall that the Morse-Kirwan condition from Lemma \ref{lem:non-holomorphic-extensions} implies
\begin{equation*}
\YMH(e^u \cdot (x + z_{\geq 0})) - \YMH(x) \geq 0
\end{equation*}
Since $x$ is a critical point of $\YMH$, then for all $\varepsilon > 0$ there exists $\delta > 0$ such that if $\| y_t - x \|_{L_1^2} < \delta$ we have 
\begin{equation*}
\YMH(e^u \cdot (x + z_{\geq 0} + z_-)) - \YMH(e^u \cdot (x + z_{\geq 0})) \geq -\varepsilon \| z_- \|_{L_1^2} .
\end{equation*}
Therefore
\begin{align*}
\YMH(e^u \cdot (x + z_{\geq 0} + z_-)) - \YMH(x) & = \YMH(e^u \cdot (x + z_{\geq 0} + z_-)) - \YMH(e^u \cdot (x + z_{\geq 0})) \\
 & \quad \quad + \YMH(e^u \cdot (x + z_{\geq 0})) - \YMH(x) \\
 &  \geq - \varepsilon \|z_- \|_{L_1^2} \geq - \varepsilon \sqrt{K_1} e^{\frac{1}{2} K_2 t}
\end{align*}
Since $\YMH(e^u \cdot (x + z_{\geq 0} + z_-))$ is monotone decreasing with $t$ and $\lim_{t \rightarrow -\infty} \YMH(e^u \cdot (x+z_{\geq 0} + z_-)) = \YMH(x)$, then $\YMH(e^u \cdot (x + z_{\geq 0} + z_-)) \leq \YMH(x)$, and so the above equation implies that
\begin{equation*}
\left|  \YMH(y_t) - \YMH(x) \right| \leq K_1' e^{K_2' t} 
\end{equation*}
for positive constants $K_1' = \varepsilon \sqrt{K_1}$ and $K_2' = \frac{1}{2} K_2$.
\end{proof}

\begin{lemma}\label{lem:interior-bound}
Let $y_t$ be a solution to the $\YMH$ flow such that $y_t \rightarrow x$ as $t \rightarrow -\infty$. Then for each positive integer $k$ there exists a constant $C$ and a constant $\tau_0 \in \R$ such that 
\begin{equation*}
\| y_\tau - x \|_{L_k^2} \leq C \int_{-\infty}^\tau \| \grad \YMH(y_s) \|_{L^2} \, ds
\end{equation*}
for all $\tau \leq \tau_0$.
\end{lemma}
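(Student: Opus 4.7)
My plan is to split the proof into the base case $k = 0$, which follows from the flow equation directly, and the higher Sobolev norms $k \geq 1$, which follow by combining the $L^2$ case with parabolic interior regularity near the critical point.

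For the $L^2$ case, since $y_s$ satisfies $\frac{\partial y}{\partial s} = -\grad \YMH(y_s)$, integration from $T$ to $\tau$ gives
\[
y_\tau - y_T = -\int_T^\tau \grad \YMH(y_s) \, ds,
\]
and letting $T \to -\infty$ (using the hypothesis $y_T \to x$) yields the desired bound for $k = 0$.

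For higher $k$, choose $\tau_0$ sufficiently negative that for all $\tau \leq \tau_0$ the solution $y_\tau$ lies in the slice neighbourhood of $x$ from Proposition \ref{prop:filtered-slice-theorem}. After a suitable gauge fix---for instance, by passing to the modified flow of Definition \ref{def:modified-flow}, which preserves the slice $S_x$ (or by a Coulomb-type slice condition)---the evolution of $V := y - x$ satisfies a genuinely parabolic system
\[
\partial_t V + L V = -N(V),
\]
where $L$ is the second-order self-adjoint elliptic Hessian of $\YMH$ at $x$ and $N(V) = O(|V|^2)$ is a smooth nonlinearity. The standard interior parabolic regularity estimate for this equation on the cylinder $(\tau - 1, \tau] \times X$ gives, for each positive integer $k$,
\[
\| V(\tau) \|_{L_k^2} \leq C_k \Big( \| V \|_{L^2((\tau - 1, \tau) \times X)} + \| N(V) \|_{L^2((\tau - 1, \tau); L_{k-2}^2)} \Big),
\]
where $L_{k-2}^2$ is read as $L^2$ when $k \leq 2$. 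By the $L^2$ estimate of the first step, the first term on the right is bounded by $\int_{-\infty}^\tau \| \grad \YMH(y_u) \|_{L^2} \, du$. For the nonlinear term, shrink $\tau_0$ so that $\| V(s) \|_{L_{k-1}^2}$ is as small as desired on $(-\infty, \tau_0]$, which is possible since $y_t \to x$ smoothly; Sobolev multiplication on the Riemann surface, together with the quadratic vanishing of $N$, then yields $\| N(V(s)) \|_{L_{k-2}^2} \leq C_k' \, \varepsilon \, \| V(s) \|_{L^2}$, which after absorbing the small factor into the constant is again controlled by the same integral.

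The main obstacle will be the gauge-invariance of the Yang-Mills-Higgs flow, which means that the interior parabolic estimate is not immediately applicable to the equations \eqref{eqn:YMH-flow}: one must first reduce to the slice so that the linearisation becomes manifestly parabolic. A secondary technical point is the inductive control of the nonlinear term $N(V)$ in progressively higher Sobolev norms, which is resolved either by induction on $k$ or by invoking a priori smoothness bounds on $V$ from the smooth convergence $y_t \to x$ to close the estimate in a single step.
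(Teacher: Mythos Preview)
Your $k=0$ step is fine, and your overall strategy---integrate the flow in $L^2$, then boost to $L_k^2$ via parabolic smoothing---is reasonable in spirit, but the gauge-fixing step has a real gap. The modified flow of Definition \ref{def:modified-flow} does \emph{not} preserve the slice $S_x$; it preserves the larger set $V$ (the image of $(\ker \rho_x)_*^\perp \times S_x$ under the slice homeomorphism), and by Lemma \ref{lem:relate-flows} it differs from the $\YMH$ flow only by a \emph{unitary} gauge transformation. Hence its linearisation at $x$ has exactly the same gauge-degenerate directions as the original flow, and the equation $\partial_t V + LV = -N(V)$ with $L$ the full Hessian is \emph{not} genuinely parabolic. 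A Coulomb-type fix would require continuously projecting to the slice (a DeTurck-style modification by a term in $\Lie(\mathcal{G}^\C)$, not in $\Lie(\mathcal{G})$), which introduces an additional nonlocal pseudodifferential term that must itself be estimated; you have not addressed this. The nonlinear bound $\|N(V)\|_{L_{k-2}^2} \leq C_k'\varepsilon\|V\|_{L^2}$ is also too optimistic without an induction that you only gesture at.

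The paper takes a cleaner route that avoids gauge-fixing the solution altogether: it invokes the interior estimate for the \emph{gradient} along the flow,
\[
\int_1^T \|\grad \YMH(y_t)\|_{L_k^2}\,dt \leq C \int_0^T \|\grad \YMH(y_t)\|_{L^2}\,dt,
\]
which is already established for the $\YMH$ flow (cf.\ \cite[Lem.\ 7.3]{Rade92}, \cite[Prop.\ 3.6]{Wilkin08}) and hence has the gauge degeneracy built in. A short covering argument makes $C$ uniform in $T$, and then one simply integrates the flow equation in $L_k^2$ from $-\infty$ to $\tau$ to get $\|y_\tau - x\|_{L_k^2} \leq \int_{-\infty}^\tau \|\grad \YMH\|_{L_k^2}\,ds$, which the interior estimate converts to the desired $L^2$ integral. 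This is morally the same parabolic smoothing you want, but packaged as a statement about $\grad \YMH$ rather than about $y-x$, so no slice projection is needed.
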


\begin{proof}
Recall the interior estimate from \cite[Lem. 7.3]{Rade92}, \cite[Prop. 3.6]{Wilkin08} which says that for all positive integers $k$ there exists a neighbourhood $U$ of $x$ in the $L_k^2$ topology and a constant $C$ such that if $y_t \in U$ for all $t \in [0, T]$ then
\begin{equation*}
\int_1^T \| \grad \YMH(y_t) \|_{L_k^2} \, dt \leq C \int_0^T \| \grad \YMH(y_t) \|_{L^2} \, dt .
\end{equation*}
The constant $C$ is uniform as long as the initial condition satisfies a uniform bound on the derivatives of the curvature of the underlying holomorphic bundle and the flow line $y_t$ remains in the fixed neighbourhood $U$ of the critical point $x$ (cf. \cite[Prop. A]{Rade92}). In particular, the estimates of \cite[Lem. 3.14, Cor 3.16]{Wilkin08} show that this bound on the curvature is satisfied for any initial condition along a given flow line $y_t$. \emph{A priori} the constant depends on $T$, however it can be made uniform in $T$ using the following argument. Let $C$ be the constant for $T = 2$. For any $T \geq 2$, let $N$ be an integer greater than $T$ such that $y_t \in U$ for all $t \in [0, N]$. We then have
\begin{align*}
\int_1^T \| \grad \YMH(y_t) \|_{L_k^2} \, dt & \leq \sum_{n=1}^{N-1} \int_n^{n+1} \| \grad \YMH(y_t) \|_{L_k^2} \, dt \\
 & \leq C \sum_{n=1}^{N-1} \int_{n-1}^{n+1} \| \grad \YMH(y_t) \|_{L^2} \, dt \\
 & \leq 2 C \int_0^N \| \grad \YMH(y_t) \|_{L^2} \, dt
\end{align*}
Since $\lim_{t \rightarrow - \infty} y_t = x$ in the $C^\infty$ topology, then for any $\varepsilon > 0$ there exists $\tau_0$ such that $\tau \leq \tau_0$ implies that $\| y_t - x \|_{L_k^2} < \varepsilon$ for all $t \leq \tau$ and therefore by choosing $\varepsilon$ small we can apply the above interior estimate on any interval $[t, \tau]$ for $\tau \leq \tau_0$. Therefore we have the bound
\begin{equation*}
\int_t^\tau \| \grad \YMH(y_s) \|_{L_k^2} \, ds \leq 2C \int_{-\infty}^\tau \| \grad \YMH(y_s) \|_{L^2} \, ds  
\end{equation*}
For fixed $\tau$ the right-hand side of the above inequality is constant, and so
\begin{equation*}
\| y_\tau - x \|_{L_k^2} \leq \int_{-\infty}^\tau \| \grad \YMH(y_s) \|_{L_k^2} \, ds \leq 2C \int_{-\infty}^\tau \| \grad \YMH(y_s) \|_{L^2} \, ds 
\end{equation*}
\end{proof}

\begin{proof}[Proof of Proposition \ref{prop:exponential-convergence}]
After possibly shrinking the neighbourhood $U$ from the previous lemma, we can apply the Lojasiewicz inequality (cf. \cite[Prop. 3.5]{Wilkin08}) which implies that
\begin{equation*}
\int_{-\infty}^\tau \| \grad \YMH(y_s) \|_{L^2} \, ds \leq \frac{1}{C\theta} \left( f(x) - f(y_\tau) \right)^\theta
\end{equation*}
for constants $C > 0$ and $\theta \in (0, \frac{1}{2}]$. Lemma \ref{lem:f-exponential} shows that
\begin{equation*}
\left( f(x) - f(y_\tau) \right)^\theta \leq (K_1')^\theta e^{\theta K_2' t} 
\end{equation*}
for all $t \leq 0$. These two estimates together with the result of Lemma \ref{lem:interior-bound} show that
\begin{equation*}
\| y_t - x \|_{L_k^2} \leq C_1 e^{\eta t}
\end{equation*}
for some positive constants $C_1, \eta$ and all $t \leq 0$.
\end{proof}

\section{The isomorphism classes in the unstable set}\label{sec:local-analysis}

Given a critical point $x \in \mathcal{B}$, in this section we show that for each $y \in S_x^-$ there exists a smooth gauge transformation $g \in \mathcal{G}^\C$ such that $g \cdot y \in W_x^-$ (Proposition \ref{prop:convergence-group-action}), and conversely for each $y \in W_x^-$ there exists $g \in \mathcal{G}^\C$ such that $g \cdot y \in S_x^-$ (Proposition \ref{prop:unstable-maps-to-slice}). As a consequence, the isomorphism classes in the unstable set are in bijective correspondence with the isomorphism classes in the negative slice, and so we have a complete description of these isomorphism classes by Lemma \ref{lem:classify-neg-slice}. This leads to Theorem \ref{thm:algebraic-flow-line} which gives an algebraic criterion for two points to be connected by a flow line. 

\subsection{Convergence of the scattering construction}\label{sec:scattering-convergence}

The goal of this section is to prove Proposition \ref{prop:convergence-group-action}, which shows that every point in the negative slice $S_x^-$ is complex gauge equivalent to a point in the unstable set $W_x^-$. 

The construction involves flowing up towards the critical point on the slice using the linearisation of the $\YMH$ flow and then flowing down using the $\YMH$ flow. A similar idea is used by Hubbard in \cite{Hubbard05} for analytic flows around a critical point in $\C^n$, where the flow on the slice is defined by projecting the flow from the ambient space.  Hubbard's construction uses the fact that the ambient space is a manifold to (a) define this projection to the negative slice, and (b) define local coordinates in which the nonlinear part of the gradient flow satisfies certain estimates in terms of the eigenvalues for the linearised flow \cite[Prop. 4]{Hubbard05}, which is necessary to prove convergence. This idea originated in the study of the existence of scattering states in classical and quantum mechanics. In the context of this paper, one can think of the linearised flow and the YMH flow as two dynamical systems and the goal is to compare their behaviour as $t \rightarrow - \infty$ (see \cite[Ch. XI.1]{ReedSimonVol3} for an overview). As noted in \cite{Hubbard05}, \cite{Nelson69} and \cite{ReedSimonVol3}, the eigenvalues of the linearised flow play an important role in comparing the two flows. 

The method of this section circumvents the need for a local manifold structure by defining the flow on the slice using the linearised flow and then using the distance-decreasing property of the flow on the space of metrics from \cite{Donaldson85}, \cite{Simpson88} (cf. Lemma \ref{lem:distance-decreasing}) in place of the estimate of \cite[Prop. 4]{Hubbard05} on the nonlinear part of the flow. The entire construction is done in terms of the complex gauge group, and so it is valid on any subset preserved by $\mathcal{G}^\C$, thus avoiding any problems associated with the singularities in the space of Higgs bundles. Moreover, using this method it follows naturally from the Lojasiewicz inequality and the smoothing properties of the heat equation that the backwards $\YMH$ flow with initial condition in the unstable set converges in the $C^\infty$ topology.

\subsubsection{A $C^0$ bound on the metric}

First we derive an \emph{a priori} estimate on the change of metric along the flow. Fix an initial condition $y_0 \in S_x^-$ and let $\beta = \mu(x) = \Lambda(F_A + [\phi, \phi^*]) \in \Omega^0(\ad(E)) \cong \Lie(\mathcal{G})$. In this section we also use the function $\mu_h(y) = \Ad_{g^{-1}} \left( \mu(g\cdot y) \right)$ from \eqref{eqn:def-muh}. The linearised flow with initial condition $y_0$ has the form $e^{- i \beta t} \cdot y_0$, and the $\YMH$ flow \eqref{eqn:gauge-flow} has the form $g_t \cdot y_0$. Let $f_t = g_t \cdot e^{i \beta t}$ and define $h_t = f_t^* f_t \in \mathcal{G}^\C / \mathcal{G}$. This is summarised in the diagram below. 

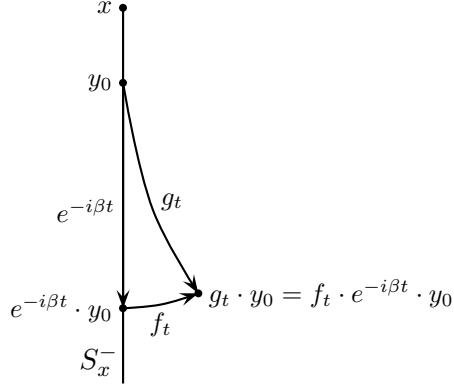
\begin{figure}[ht]
\begin{center}
\begin{pspicture}(2,0)(10,5)
\psline(4,5)(4,0)
\psline[arrowsize=5pt]{->}(4,4)(4,1)
\pscurve[arrowsize=5pt]{->}(4,4)(4.4,2.3)(5,1.2)
\pscurve[arrowsize=5pt]{->}(4,1)(4.5,1.05)(5,1.2)
\psdots[dotsize=3pt](4,5)(4,4)(4,1)(5,1.2)
\uput{4pt}[180](4,5){\small{$x$}}
\uput{4pt}[180](4,4){\small{$y_0$}}
\uput{4pt}[180](4,1){\small{$e^{-i \beta t} \cdot y_0$}}
\uput{4pt}[0](5,1.2){\small{$g_t \cdot y_0 = f_t \cdot e^{-i \beta t} \cdot y_0$}}
\uput{2pt}[180](4,0.3){$S_x^-$}
\uput{3pt}[270](4.5,1.05){\small{$f_t$}}
\uput{3pt}[30](4.4,2.3){\small{$g_t$}}
\uput{3pt}[180](4,2.3){\small{$e^{-i \beta t}$}}
\end{pspicture}
\caption{Comparison of the gradient flow and the linearised flow.}
\end{center}
\end{figure}

\begin{lemma}\label{lem:derivative-difference}
For any initial condition $y_0 \in S_x^-$, the induced flow on $\mathcal{G}^\C / \mathcal{G}$ satisfies
\begin{equation*}
\frac{dh_t}{dt} = -2 i h_t \, \mu_h(e^{-i \beta t} \cdot y_0) + i \beta h_t + h_t (i\beta)
\end{equation*}
\end{lemma}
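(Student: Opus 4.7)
The approach is a direct differentiation of $h_t = f_t^* f_t$ with $f_t = g_t e^{i\beta t}$, combined with the gauge-flow equation \eqref{eqn:gauge-flow} and the definition \eqref{eqn:def-muh} of $\mu_h$. The crucial preliminary observation is that $\beta = \mu(x) \in \Omega^0(\ad(E))$ is skew-Hermitian, so $i\beta$ is Hermitian and $(e^{i\beta t})^* = e^{i\beta t}$. Introducing the auxiliary metric $k_t := g_t^* g_t$ (the change of metric generated directly by the $\YMH$ flow), this yields the identification $h_t = e^{i\beta t}\, k_t\, e^{i\beta t}$, and differentiating produces
\[
\frac{dh_t}{dt} \;=\; (i\beta)\, h_t + h_t\, (i\beta) + e^{i\beta t}\,\frac{dk_t}{dt}\,e^{i\beta t}.
\]
The first two terms are already what the lemma predicts, so the problem reduces to showing that $e^{i\beta t}\,\frac{dk_t}{dt}\,e^{i\beta t} = -2i\, h_t\, \mu_h(e^{-i\beta t}\cdot y_0)$.

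For this, I would first compute $\frac{dk_t}{dt}$ from the gauge-flow equation $\frac{dg_t}{dt} = -i\,\mu(g_t\cdot y_0)\, g_t$ together with the skew-Hermiticity of $\mu$. The two contributions arising from differentiating $g_t^* g_t$ reinforce (rather than cancel) to give $\frac{dk_t}{dt} = -2i\, g_t^*\, \mu(g_t\cdot y_0)\, g_t$, which via \eqref{eqn:def-muh} simplifies to $-2i\, k_t\, \mu_{k_t}(y_0)$. I would then use the observation that $f_t\cdot (e^{-i\beta t}\cdot y_0) = g_t\cdot y_0$, together with the $\mathcal{G}^\C/\mathcal{G}$-well-definedness of $\mu_h$ verified just after \eqref{eqn:def-muh}, to obtain the conjugation identity
\[
\mu_{h_t}(e^{-i\beta t}\cdot y_0) \;=\; \Ad_{f_t^{-1}}\mu(g_t\cdot y_0) \;=\; e^{-i\beta t}\,\mu_{k_t}(y_0)\,e^{i\beta t}.
\]
Substituting back, the factors $e^{\pm i\beta t}$ telescope against $h_t = e^{i\beta t}\, k_t\, e^{i\beta t}$ to deliver the required term $-2i\, h_t\, \mu_h(e^{-i\beta t}\cdot y_0)$.

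No deeper issue is hidden in this calculation; it is essentially bookkeeping between two equivalent descriptions of the same curve in $\mathcal{B}$. The only care required is consistent adjoint conventions — one needs $i\beta$ to be Hermitian so that the same exponential sandwiches $k_t$ on both sides, and one needs $\mu$ to be skew-Hermitian so that differentiating $g_t^* g_t$ produces a factor of $-2i$ rather than $0$. Once these are fixed, the identity in the lemma is simply the gauge-flow equation translated to the trajectory $f_t = g_t e^{i\beta t}$ living over $\mathcal{G}^\C/\mathcal{G}$.
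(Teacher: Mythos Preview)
Your proof is correct and essentially the same as the paper's: both are direct differentiation of $h_t=f_t^*f_t$ using the gauge-flow equation, the Hermiticity of $i\beta$, the skew-Hermiticity of $\mu$, and the identity $f_t\cdot(e^{-i\beta t}\cdot y_0)=g_t\cdot y_0$. The only cosmetic difference is that you factor through the auxiliary metric $k_t=g_t^*g_t$ and the relation $h_t=e^{i\beta t}k_t e^{i\beta t}$, whereas the paper first computes $\frac{df_t}{dt}f_t^{-1}=-i\mu(g_t\cdot y_0)+f_t(i\beta)f_t^{-1}$ and then expands $\frac{dh_t}{dt}=f_t^*\bigl(\frac{df_t}{dt}f_t^{-1}\bigr)^*f_t+f_t^*\bigl(\frac{df_t}{dt}f_t^{-1}\bigr)f_t$; the underlying calculation is identical.
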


\begin{proof}
First compute 
\begin{equation}\label{eqn:group-scattering}
\frac{df}{dt} f_t^{-1} = \frac{dg}{dt} g_t^{-1} + g_t (i\beta) e^{i\beta t} f_t^{-1} = -i \mu(g_t \cdot y_0) + f_t (i\beta) f_t^{-1}
\end{equation}
Then 
\begin{align*}
\frac{dh}{dt} & = \frac{df^*}{dt} f_t + f_t^* \frac{df}{dt} \\
 & = f_t^* \left( \frac{df}{dt} f_t^{-1} \right)^* f_t + f_t^* \left( \frac{df}{dt} f_t^{-1} \right) f_t \\
 & = - f_t^* i \mu(g_t \cdot y_0) f_t + i \beta h_t - f_t^* i \mu(g_t \cdot y_0) f_t + h_t (i\beta) \\
 & = -2 f_t^* i \mu(g_t \cdot y_0) f_t + i \beta h_t + h_t (i\beta) \\
 & = -2 i h_t \Ad_{f_t^{-1}} \left( \mu(g_t \cdot y_0) \right) + i \beta h_t + h_t (i\beta) \\
 & = -2 i h_t \, \mu_h(e^{-i \beta t} \cdot y_0) + i \beta h_t + h_t (i\beta)
\end{align*}
where the last step follows from the definition of $\mu_h$ in \eqref{eqn:def-muh} and the fact that $e^{-i \beta t} = f_t^{-1} \cdot g_t$.
\end{proof}

The next estimate gives a bound for $\sup_X \sigma(h_t)$ in terms of $\| y_0 - x \|_{C^0}$.

\begin{lemma}\label{lem:uniform-bound-sigma}
For every $\varepsilon > 0$ there exists a constant $C > 0$ such that for any initial condition $y_0 \in S_{x}^-$ with $\| e^{-i \beta T} \cdot y_0 - x \|_{C^0} < \varepsilon$ we have the estimate $\sup_X \sigma(h_t) \leq C \| e^{-i \beta T} \cdot y_0 - x \|_{C^0}^2$ for all $0 \leq t \leq T$. 
\end{lemma}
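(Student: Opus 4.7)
The plan is to derive a parabolic differential inequality for $\sigma(h_t)$ from Lemma~\ref{lem:derivative-difference}, and then extract the desired $C^0$ bound using the maximum principle together with the quadratic vanishing of the moment map on the slice (Lemma~\ref{lem:moment-map-quadratic}). The crucial structural feature making this work is that $f_0 = g_0 \cdot \id = \id$, hence $h_0 = \id$ and $\phi(0) := \sup_X \sigma(h_0) = 0$ gives initial data of the right order.

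First I would compute $\partial_t \tr(h_t)$ and $\partial_t \tr(h_t^{-1})$ from Lemma~\ref{lem:derivative-difference} (using $\partial_t h_t^{-1} = -h_t^{-1}(\partial_t h_t)h_t^{-1}$ and cyclicity for the second), then eliminate the $\mu_h$ terms via the two inequalities of Lemma~\ref{lem:metric-inequalities} applied with $y = e^{-i\beta t}\cdot y_0$ and $h = h_t$. Adding the resulting bounds, the $\beta$-contributions combine with the $\mu_h \to \mu$ substitution to produce
$$(\partial_t + \Delta)\,\sigma(h_t) \;\leq\; -2i\,\tr\bigl(B_t\,(h_t - h_t^{-1})\bigr), \qquad B_t := \mu(e^{-i\beta t}\cdot y_0) - \beta.$$
To control $B_t$, Lemma~\ref{lem:moment-map-quadratic} gives $\|B_t\|_{C^0} \leq C\|e^{-i\beta t}\cdot y_0 - x\|_{C^0}^2$. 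Rewriting $e^{-i\beta t}\cdot y_0 = e^{-i\beta(t-T)}\cdot(e^{-i\beta T}\cdot y_0)$ and applying the Lipschitz bound \eqref{eqn:lipschitz-slice} to the element $e^{-i\beta T}\cdot y_0 \in S_x^-$, the strict negativity of the adjoint eigenvalues of $i\beta$ on $S_x^-$ produces a constant $c > 0$ with
$$\|e^{-i\beta t}\cdot y_0 - x\|_{C^0} \;\leq\; e^{-c(T-t)}\,\|e^{-i\beta T}\cdot y_0 - x\|_{C^0}, \quad 0 \leq t \leq T.$$
Writing $\varepsilon := \|e^{-i\beta T}\cdot y_0 - x\|_{C^0}$, this yields the $T$-uniform estimate $\int_0^T \|B_t\|_{C^0}\,dt \leq C\varepsilon^2/(2c)$.

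The remaining step is a maximum principle argument. Under the bootstrap hypothesis $\sigma(h_t) \leq M$, diagonalising $h_t = e^{H_t}$ with $H_t$ Hermitian gives the elementary spectral estimate $\|h_t - h_t^{-1}\|_{C^0}^2 \leq C_M\,\sigma(h_t)$, so the right-hand side of the parabolic inequality is dominated by $C_M\|B_t\|_{C^0}\sigma(h_t)^{1/2}$. Setting $\phi(t) := \sup_X \sigma(h_t)$ (locally Lipschitz in $t$) and evaluating at a spatial maximum, where $\Delta\sigma \geq 0$, I obtain the ODE $\phi'(t) \leq C_M\|B_t\|_{C^0}\phi(t)^{1/2}$. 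Integrating from $\phi(0) = 0$ and combining with the uniform bound on $\int_0^T\|B_t\|_{C^0}\,dt$ produces $\phi(t) \leq C'\varepsilon^2$ (in fact $\leq C''\varepsilon^4$) for all $0 \leq t \leq T$.

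The main obstacle is closing the bootstrap $\sigma(h_t) \leq M$. A standard continuation argument handles this: since $\sigma(h_0) = 0$ and $t \mapsto \sigma(h_t)$ is continuous, fixing $M$ (say $M = 1$) and choosing $\varepsilon$ small enough that $C'\varepsilon^2 < M$, the derived bound is strictly below $M$ wherever it is known to hold, ruling out a finite first exit time on $[0,T]$. A secondary subtlety is that a crude pointwise bound $\|B_t\|_{C^0} \leq C\varepsilon^2$ would yield an integral growing linearly in $T$; the $T$-uniformity genuinely relies on the exponential contraction of the linearised flow on $S_x^-$ encoded in~\eqref{eqn:lipschitz-slice}.
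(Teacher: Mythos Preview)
Your proposal is correct and follows the same skeleton as the paper: derive a parabolic inequality for $\sigma(h_t)$ from Lemma~\ref{lem:derivative-difference} and Lemma~\ref{lem:metric-inequalities}, control the source term $B_t = \mu(e^{-i\beta t}\cdot y_0) - \beta$ via Lemma~\ref{lem:moment-map-quadratic} and the exponential contraction \eqref{eqn:lipschitz-slice} on $S_x^-$, and finish with the maximum principle.

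The execution differs in one place. After arriving at $(\partial_t + \Delta)\sigma(h_t) \leq -2i\,\tr\bigl(B_t(h_t - h_t^{-1})\bigr)$, you bound $\|h_t - h_t^{-1}\|^2 \leq C_M\,\sigma(h_t)$ under a bootstrap $\sigma \leq M$, obtaining the square-root inequality $\phi' \leq C_M\|B_t\|_{C^0}\phi^{1/2}$ and hence the sharper bound $\phi \leq C\varepsilon^4$. The paper instead keeps the two traces separate and uses the cruder but bootstrap-free estimate $|{-2i\,\tr(B_t h_t)}| \leq C\|B_t\|_{C^0}\tr(h_t)$ (positive definiteness of $h_t$), and likewise for $h_t^{-1}$; this gives a linear inequality $(\partial_t + \Delta)\sigma \leq C\|B_t\|_{C^0}(\sigma + 2\rank E)$, which is then handled by an explicit integrating factor $\nu_t$ to which the maximum principle applies directly. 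Your route trades the integrating factor for a continuity/bootstrap argument and gains a power of $\varepsilon$; the paper's route is slightly more self-contained since no a priori bound on $\sigma$ is needed along the way. Both are valid and rest on the same structural inputs.
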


\begin{proof}
Taking the trace of the result of Lemma \ref{lem:derivative-difference} gives us
\begin{align*}
\frac{d}{dt} \tr h_t = \tr \left( \frac{dh}{dt} \right) & =  - 2i \tr \left( (\mu_h(e^{-i \beta t} \cdot y_0)  - \beta) h_t \right) \\
\frac{d}{dt} \tr h_t^{-1} = - \tr \left( h_t^{-1} \frac{dh}{dt} h_t^{-1} \right) & = 2i \tr \left( h_t^{-1} (\mu_h(e^{-i \beta t} \cdot y_0) - \beta) \right)
\end{align*}

Therefore
\begin{equation*}
\frac{d}{dt} \tr ( h_t ) = -2i \tr \left( (\mu_h(e^{-i \beta t} \cdot y_0) - \mu(e^{-i\beta t} \cdot y_0) ) h_t \right) - 2i \tr \left( (\mu(e^{-i \beta t} \cdot y_0) - \beta) h_t \right)
\end{equation*}
Lemma \ref{lem:metric-inequalities} together with the fact that $h_t$ is positive definite then shows that
\begin{align*}
\left( \frac{\partial}{\partial t} + \Delta \right) \tr(h_t) & \leq -2i \tr \left( (\mu(e^{-i \beta t} \cdot y_0) - \beta) h_t \right) \\
 & \leq C_1 \| \mu(e^{-i \beta t} \cdot y_0) - \beta \|_{C^0} \tr (h_t) \\
 & \leq C_1 \| e^{-i \beta t} \cdot y_0 - x \|_{C^0}^2 \tr (h_t) \quad \text{(by Lemma \ref{lem:moment-map-quadratic})}
\end{align*}
A similar calculation shows that
\begin{equation*}
\left( \frac{\partial}{\partial t} + \Delta \right) \tr(h_t^{-1}) \leq C_1 \| e^{-i \beta t} \cdot y_0 - x \|_{C^0}^2 \tr (h_t^{-1})
\end{equation*}

If we label the eigenvalues of $i \beta$ as $\lambda_1 \leq \cdots \leq \lambda_k < 0 \leq \lambda_{k+1} \leq \cdots \leq \lambda_n$, then the estimate $\| e^{i \beta s} \cdot (y_0 - x) \|_{C^0}^2 \leq e^{2\lambda_k s} \| y_0 - x \|_{C^0}^2$ from \eqref{eqn:lipschitz-slice} gives us
\begin{align}\label{eqn:sigma-sub-estimate}
\begin{split}
\left( \frac{\partial}{\partial t} + \Delta \right) \sigma(h_t) & = \left( \frac{\partial}{\partial t} + \Delta \right) \left( \tr (h_t) + \tr (h_t^{-1}) \right) \\
 & \leq C_1 \| e^{-i \beta t} \cdot (y_0 - x) \|_{C^0}^2 \left( \tr (h_t) + \tr (h_t^{-1}) \right) \\
 & = C_1 \| e^{i \beta (T-t)} \cdot e^{-i \beta T} \cdot (y_0 - x) \|_{C^0}^2 \left( \tr (h_t) + \tr (h_t^{-1}) \right) \\
 & \leq C_1 e^{2\lambda_k (T-t)} \| e^{-i \beta T} \cdot (y_0 - x) \|_{C^0}^2 \sigma(h_t) + C_1 e^{2 \lambda_k (T-t)} \|  e^{-i \beta T} \cdot (y_0 - x) \|_{C^0}^2 \rank(E)
\end{split}
\end{align}
Let $K_1 = C_1 \| e^{-i \beta T} \cdot y_0 - x  \|_{C^0}^2$ and $K_2 = C_1 \| e^{-i \beta T} \cdot y_0 - x \|_{C^0}^2 \rank(E)$. Define 
\begin{equation*}
\nu_t = \sigma(h_t) \exp\left( \frac{K_1}{2\lambda_k} e^{2 \lambda_k (T-t)} \right) - \int_0^t K_2 e^{2 \lambda_k (T-s)} \exp \left( \frac{K_1}{2\lambda_k} e^{2 \lambda_k (T-s)} \right) \, ds
\end{equation*}
Note that $\nu_0 = 0$ since $h_0 = \id$. A calculation using \eqref{eqn:sigma-sub-estimate} then shows that
\begin{equation*}
\left( \frac{\partial}{\partial t} + \Delta \right) \nu_t \leq 0
\end{equation*}
and so $\sup_X \nu_t \leq \sup_X \nu_0 = 0$ by the maximum principle. Therefore
\begin{align*}
\sup_X \sigma(h_t) & \leq \exp \left(-\frac{K_1}{2\lambda_k} e^{2 \lambda_k (T-t)} \right) \int_0^t K_2 e^{2 \lambda_k (T-s)} \exp \left(\frac{K_1}{2\lambda_k} e^{2 \lambda_k (T-s)} \right) \, ds \\
 & \leq \exp \left( - \frac{K_1}{2\lambda_k} \right) \int_0^t K_2 e^{2\lambda_k (T-s)} \, ds \leq C \| e^{-i \beta T} \cdot y_0 - x  \|_{C^0}^2
\end{align*}
for some constant $C$, since $\lambda_k < 0$, $0 \leq s \leq t < T$, $K_1$ is bounded since $\| e^{-i \beta T} \cdot y_0 - x \|_{C^0} < \varepsilon$ by assumption and $K_2$ is proportional to $\| e^{-i \beta T} \cdot y_0 - x \|_{C^0}^2$.
\end{proof}

\subsubsection{$C^\infty$ convergence in the space of metrics}\label{subsec:metric-convergence}

Now consider the case of a fixed $y_0 \in S_x^-$ and define $y_t = e^{i \beta t} \cdot y_0$. Define $g_s(y_t) \in \mathcal{G}^\C$ to be the unique solution of \eqref{eqn:gauge-flow} such that $g_s(y_t) \cdot y_t$ is the solution to the $\YMH$ flow at at time $s$ with initial condition $y_t$. Let $f_s(y_t) = g_s(y_t) \cdot e^{i \beta s} \in \mathcal{G}^\C$, and define $h_s(y_t) = f_s(y_t)^* f_s(y_t)$ to be the associated change of metric. The estimate from the previous lemma now becomes
\begin{equation}\label{eqn:sigma-C0-estimate}
\sup_X \sigma(h_s(y_t)) \leq C \| y_t - x \|_{C^0}^2 = C \| e^{i \beta t} \cdot (y_0 - x) \|_{C^0}^2 \leq C e^{2 \lambda_k t} \| y_0 - x \|_{C^0}^2 
\end{equation}
This is summarised in the diagram below.

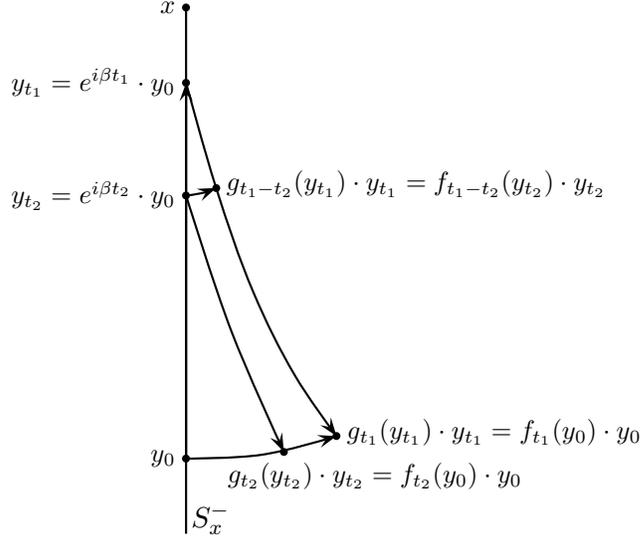
\begin{figure}[ht]\label{fig:flow-up-flow-down}
\begin{center}
\begin{pspicture}(2,-1)(10,6)
\psline(4,6)(4,-1)
\psline[arrowsize=5pt]{->}(4,0)(4,5)
\pscurve[arrowsize=5pt]{->}(4,5)(4.4,3.6)(4.9,2.2)(5.45,1.1)(6,0.3)
\pscurve[arrowsize=5pt]{->}(4,3.5)(4.2,3.53)(4.4,3.6)
\pscurve[arrowsize=5pt]{->}(4,0)(5,0.05)(6,0.3)
\pscurve[arrowsize=5pt]{->}(4,3.5)(4.6,1.7)(5.3,0.09)
\psdots[dotsize=3pt](4,6)(4,5)(4,0)(6,0.3)(4,3.5)(4.4,3.6)(5.3,0.09)
\uput{4pt}[180](4,6){\small $x$}
\uput{4pt}[180](4,5){\small $y_{t_1} = e^{i \beta t_1} \cdot y_0$}
\uput{4pt}[180](4,0){\small $y_0$}
\uput{4pt}[180](4,3.5){\small $y_{t_2}=e^{i \beta t_2} \cdot y_0$}
\uput{4pt}[10](6,0.3){\small $g_{t_1}(y_{t_1}) \cdot y_{t_1} = f_{t_1}(y_0) \cdot y_0$}
\uput{4pt}[300](5.3,0.09){\small{$g_{t_2}(y_{t_2}) \cdot y_{t_2} = f_{t_2}(y_0) \cdot y_0$}}
\uput{4pt}[10](4.4,3.6){\small{$g_{t_1-t_2}(y_{t_1}) \cdot y_{t_1} = f_{t_1-t_2}(y_{t_2}) \cdot y_{t_2}$}}
\uput{2pt}[0](4,-0.8){$S_x^-$}
\end{pspicture}
\end{center}
\caption{Comparison of $f_{t_1}(y_0) \cdot y_0$ and $f_{t_2}(y_0) \cdot y_0$.}
\end{figure}

\begin{proposition}\label{prop:metrics-converge}
$h_t(y_0) \stackrel{C^0}{\longrightarrow} h_\infty(y_0) \in \mathcal{G}^\C / \mathcal{G}$ as $t \rightarrow \infty$. The limit depends continuously on the initial condition $y_0$. The rate of convergence is given by
\begin{equation}\label{eqn:metric-convergence-rate}
\sup_X \sigma(h_t(y_0) (h_\infty(y_0))^{-1}) \leq C_2 e^{2 \lambda_k t} \| y_0 - x \|_{C^0}^2 
\end{equation}
where $C_2 > 0$ is a constant depending only on the orbit $\mathcal{G} \cdot x$.
\end{proposition}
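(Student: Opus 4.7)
The plan is to establish Proposition \ref{prop:metrics-converge} by showing that $\{h_t(y_0)\}_{t \geq 0}$ is Cauchy in the $\sup_X \sigma$-pseudometric on $\mathcal{G}^\C/\mathcal{G}$, then use the completeness of this space (in the $C^0$ topology, as characterised in Section \ref{sec:preliminaries}) to extract the limit $h_\infty(y_0)$ and the rate \eqref{eqn:metric-convergence-rate} by passing to the limit in the Cauchy estimate. Continuous dependence on $y_0$ will follow from the uniformity of the rate on bounded subsets of $S_x^-$.

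The key algebraic step for the Cauchy estimate, with $t_1 > t_2 \geq 0$, is to decompose the ``flow up time $t_1$, flow down time $t_1$'' process into three stages: the linearised flow $y_0 \mapsto y_{t_1}$, the YMH flow of duration $t_1 - t_2$ reaching the intermediate point $z := g_{t_1 - t_2}(y_{t_1}) \cdot y_{t_1}$, and a further YMH flow of duration $t_2$. Since $z = f_{t_1 - t_2}(y_{t_1}) \cdot y_{t_2}$, the two points $z$ and $y_{t_2}$ are related by the complex gauge transformation $f_{t_1 - t_2}(y_{t_1})$ with change of metric $h_{t_1 - t_2}(y_{t_1})$. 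Running the YMH flow from each for a further time $t_2$ produces $f_{t_1}(y_0) \cdot y_0$ and $f_{t_2}(y_0) \cdot y_0$, and by \eqref{eqn:metric-difference} the $\sigma$-distance between the metrics of these two endpoints equals $\sigma(h_{t_1}(y_0) h_{t_2}(y_0)^{-1})$. The distance-decreasing inequality of Lemma \ref{lem:distance-decreasing} together with the maximum principle then yields
\[
\sup_X \sigma\bigl( h_{t_1}(y_0) h_{t_2}(y_0)^{-1} \bigr) \leq \sup_X \sigma\bigl( h_{t_1 - t_2}(y_{t_1}) \bigr).
\]

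To bound the right-hand side with the correct rate I would apply Lemma \ref{lem:uniform-bound-sigma} to the YMH flow starting at $y_{t_1} \in S_x^-$, with the lemma's time parameter set to $T = t_1 - t_2$. The key identity is $e^{-i\beta T} \cdot y_{t_1} = y_{t_2}$, so the Lipschitz bound \eqref{eqn:lipschitz-slice} gives $\|y_{t_2} - x\|_{C^0}^2 \leq e^{2 \lambda_k t_2} \|y_0 - x\|_{C^0}^2$, and hence
\[
\sup_X \sigma\bigl( h_{t_1}(y_0) h_{t_2}(y_0)^{-1} \bigr) \leq C e^{2 \lambda_k t_2} \|y_0 - x\|_{C^0}^2 .
\]
Since $\lambda_k < 0$ this tends to zero as $t_2 \to \infty$, uniformly in $t_1 > t_2$, so the family is Cauchy and converges in $C^0$ to a limit $h_\infty(y_0) \in \mathcal{G}^\C/\mathcal{G}$. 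Letting $t_1 \to \infty$ with $t_2 = t$ fixed gives the rate \eqref{eqn:metric-convergence-rate}, with $C_2$ depending only on the orbit $\mathcal{G} \cdot x$ because that is the only dependence of the constant produced by Lemma \ref{lem:uniform-bound-sigma}. For continuous dependence on $y_0$, each $h_t(y_0)$ is continuous by well-posedness of the YMH flow in a sufficiently regular Sobolev topology, and the rate above is uniform on bounded subsets of $S_x^-$, so $h_\infty$ is a uniform-on-compacts limit of continuous maps and is therefore continuous.

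The main obstacle I anticipate is the choice of parameter in Lemma \ref{lem:uniform-bound-sigma}. Applying the lemma naively with $T = t_1$ (using $\|y_{t_1} - x\|^2 \leq e^{2\lambda_k t_1}\|y_0 - x\|^2$) only produces a bound of the form $C e^{2\lambda_k t_1}$ on the right-hand side; this is enough for Cauchy-ness but yields the wrong limiting behaviour as $t_1 \to \infty$ with $t_2$ fixed. The sharper identification $T = t_1 - t_2$, which exploits $e^{-i\beta T} y_{t_1} = y_{t_2}$ to pin the bound to the smaller time $t_2$, is what produces the rate \eqref{eqn:metric-convergence-rate}. Some additional care is also needed to verify the identification of the time-$t_2$ change of metric with $h_{t_1}(y_0) h_{t_2}(y_0)^{-1}$ modulo the stabiliser of $y_0$ in $\mathcal{G}^\C$, though any such ambiguity is invisible to the $\sigma$-pseudometric via \eqref{eqn:metric-difference}.
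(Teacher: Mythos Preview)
Your proposal is correct and follows essentially the same route as the paper: bound $\sup_X \sigma(h_{t_1}(y_0) h_{t_2}(y_0)^{-1})$ by $\sup_X \sigma(h_{t_1-t_2}(y_{t_2}))$ via \eqref{eqn:metric-difference} and the distance-decreasing formula, then apply Lemma~\ref{lem:uniform-bound-sigma} with $T = t_1 - t_2$ and the Lipschitz bound \eqref{eqn:lipschitz-slice} to obtain the Cauchy estimate $C e^{2\lambda_k t_2}\|y_0 - x\|_{C^0}^2$; the rate and continuous dependence follow exactly as you describe. Two small remarks: your labeling $f_{t_1-t_2}(y_{t_1})$, $h_{t_1-t_2}(y_{t_1})$ differs from the paper's convention (indexed by the \emph{lower} point $y_{t_2}$), but the object is the same; and your worry about a stabiliser ambiguity is unnecessary, since the semigroup identity $g_{t_2}(z)\, g_{t_1-t_2}(y_{t_1}) = g_{t_1}(y_{t_1})$ holds exactly by uniqueness of solutions to the ODE \eqref{eqn:gauge-flow}, so the time-$t_2$ relating gauge transformation really is $f_{t_1}(y_0) f_{t_2}(y_0)^{-1}$.
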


\begin{proof}
Let $t_1 > t_2 \geq T$. The estimate \eqref{eqn:sigma-C0-estimate} shows that
\begin{equation*}
\sup_X \sigma(h_{t_1-t_2}(y_{t_2})) \leq C_2 \| y_{t_2} - x \|_{C^0}^2 \leq C e^{2 \lambda_k t_2} \| y_0 - x \|_{C^0}^2 \leq C e^{2 \lambda_k T} \| y_0 - x \|_{C^0}^2 .
\end{equation*}
Recall from \eqref{eqn:metric-difference} that 
\begin{equation*}
\sigma(h_{t_1}(y_0) h_{t_2}(y_0)^{-1}) = \sigma\left( ( f_{t_1}(y_0) f_{t_2}(y_0)^{-1} )^* f_{t_1}(y_0) f_{t_2}(y_0)^{-1} \right) .
\end{equation*}
The distance-decreasing formula of Lemma \ref{lem:distance-decreasing} shows that 
\begin{equation*}
\sup_X \sigma\left( ( f_{t_1}(y_0) f_{t_2}(y_0)^{-1} )^* f_{t_1}(y_0) f_{t_2}(y_0)^{-1} \right) \leq \sup_X \sigma( h_{t_1-t_2}(y_{t_2}) ) .
\end{equation*}
Therefore the distance (measured by $\sigma$) between the two metrics $h_{t_1}(y_0)$ and $h_{t_2}(y_0)$ satisfies the following bound
\begin{align*}
\sup_X \sigma(h_{t_1}(y_0) h_{t_2}(y_0)^{-1}) & = \sup_X \sigma\left( ( f_{t_1}(y_0) f_{t_2}(y_0)^{-1} )^* f_{t_1}(y_0) f_{t_2}(y_0)^{-1} \right) \\
 & \leq \sup_X \sigma( h_{t_1-t_2}(y_{t_2}) ) \leq C_2 e^{2 \lambda_k T} \| y_0 - x \|_{C^0}^2  
\end{align*}
and so $h_t(y_0)$ is a Cauchy sequence in $C^0$ with a unique limit $h_\infty \in \mathcal{G}^\C / \mathcal{G}$, The above equation shows that the rate of convergence is given by \eqref{eqn:metric-convergence-rate}.

Since the finite-time Yang-Mills-Higgs flow and linearised flow both depend continuously on the initial condition, then $h_t(y_0)$ depends continuously on $y_0$ for each $t > 0$. Continuous dependence of the limit then follows from the estimate \eqref{eqn:metric-convergence-rate}.
\end{proof}

Now we can improve on the previous estimates to show that $h_t(y_0)$ converges in the smooth topology along a subsequence, and therefore the limit $h_\infty$ is $C^\infty$. Define $z_t = f_t(y_0) \cdot y_0$, where $y_0 \in S_x^-$ and $f_t(y_0) \in \mathcal{G}^\C$ are as defined in the previous proposition. Given a Higgs bundle $z_t = (\bar{\partial}_A, \phi)$, let $\nabla_A$ denote the covariant derivative with respect to the metric connection associated to $\bar{\partial}_A$. 

\begin{lemma}
For each initial condition $y_0 \in S_x^-$, there is a uniform bound on $\sup_X | \nabla_A^\ell \mu(z_t) |$ and $\sup_X |\nabla_A^\ell \phi|$ for each $\ell \geq 0$.
\end{lemma}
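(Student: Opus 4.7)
The plan is to view $z_t = f_t(y_0)\cdot y_0 = g_t(y_t)\cdot y_t$ as the time-$t$ solution of the downwards $\YMH$ flow with initial condition $y_t = e^{i\beta t}\cdot y_0$, and then to combine uniform $C^\infty$ control on the family $\{y_t\}_{t\geq 0}$ with the standard parabolic estimates for the $\YMH$ flow.

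First I would establish uniform $C^\infty$ bounds on the family of initial conditions $y_t$. Writing $y_0 = x + \delta y_0$ with $\delta y_0 \in S_x^-$, which is smooth by Lemma \ref{lem:slice-smooth}, the linearised flow satisfies $y_t = x + e^{i\beta t}\cdot \delta y_0$, and the Lipschitz estimate \eqref{eqn:lipschitz-slice} (applied in every Sobolev and $C^\ell$ norm after covariant differentiation) gives exponential decay $\|y_t - x\|_{C^\ell} \to 0$ as $t\to\infty$ for every $\ell$. In particular $\{y_t\}_{t\geq 0}$ is bounded in $C^\infty$, so $\sup_X |\nabla_A^\ell \mu(y_t)|$ and $\sup_X |\nabla_A^\ell \phi_{y_t}|$ are uniformly bounded in $t$ for each $\ell \geq 0$.

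Next I would obtain uniform $C^0$ bounds on $\mu$ and $\phi$ at $z_t$. The Bochner computation for the $\YMH$ flow (cf.\ \cite{Donaldson85}, \cite{Simpson88}, \cite{Wilkin08}) gives parabolic inequalities of the form
\begin{equation*}
(\partial_s + \Delta)|\mu|^2 \leq 0, \qquad (\partial_s + \Delta)|\phi|^2 \leq 0
\end{equation*}
along the flow. The parabolic maximum principle then yields $\sup_X |\mu(z_t)| \leq \sup_X|\mu(y_t)|$ and $\sup_X |\phi_{z_t}| \leq \sup_X |\phi_{y_t}|$, and both right-hand sides are uniformly bounded by the previous step.

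Finally, with uniform $C^0$ bounds on $(\mu,\phi)$ along the entire trajectory $\{g_s(y_t)\cdot y_t : 0 \leq s \leq t\}$ together with uniform $C^\ell$ bounds on the initial data $y_t$, the standard parabolic bootstrap argument for the $\YMH$ flow from \cite[Lem.~3.14, Cor.~3.16]{Wilkin08} produces uniform bounds on $\sup_X |\nabla_A^\ell \mu|$ and $\sup_X |\nabla_A^\ell \phi|$ along the trajectory for every $\ell$. Evaluating at $s = t$ yields the required bound on $z_t$, uniformly in $t$. The main (mild) obstacle is to confirm that the constants in these parabolic estimates do not depend on the flow-time parameter $t$; this is resolved by iterating the estimates on successive unit time intervals $[s,s+1]$, since the constants in \cite{Wilkin08} depend only on the geometry of $X$ and on the $C^0$ bounds for $(\mu,\phi)$ along the flow, both of which are uniform by the preceding steps.
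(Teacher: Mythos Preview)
Your proposal is correct and follows essentially the same approach as the paper: both arguments observe that $z_t$ is the time-$t$ downwards $\YMH$ flow with initial condition $y_t = e^{i\beta t}\cdot y_0$, establish uniform $C^\infty$ bounds on the compact family of initial data $\{y_t\}_{t\in[0,\infty]}$, and then invoke the parabolic estimates from \cite[Sec.~3.2]{Wilkin08} to propagate these to uniform bounds on $\sup_X|\nabla_A^\ell\mu|$ and $\sup_X|\nabla_A^\ell\phi|$ along the flow. Your version unpacks the mechanism slightly more (the explicit maximum-principle step for $C^0$ control and the unit-interval iteration for time-uniformity of constants), whereas the paper compresses this into a single citation, but the underlying argument is the same.
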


\begin{proof}
Since $\{ e^{i \beta t} \cdot y_0 \, : \, t \in [0, \infty] \}$ is a compact curve in the space of $C^\infty$ Higgs bundles connecting two $C^\infty$ Higgs bundles $y_0$ and $x$, then $\sup_X \left| \mu(e^{i \beta t} \cdot y_0) \right|$ and $\sup_X \left| \nabla_A \phi \right|$ are both uniformly bounded along the sequence $e^{i \beta t} \cdot y_0$. By construction, $z_t$ is the time $t$ $\YMH$ flow with initial condition $e^{i \beta t} \cdot y_0$. Along the $\YMH$ flow, for each $\ell$ the quantities $\sup_X \left| \nabla_A^\ell \mu \right|$ and $\sup_X \left| \nabla_A^\ell \phi \right|$ are both uniformly bounded by a constant depending on the value of $\sup_X \left| \mu \right|$ and $\sup_X \left| \nabla_A \phi \right|$ at the initial condition (cf. \cite[Sec. 3.2]{Wilkin08}). Since these quantities are uniformly bounded for the initial conditions, then the result follows.
\end{proof}

\begin{corollary}
There is a subsequence $t_n$ such that $h_{t_n} \rightarrow h_\infty$ in the $C^\infty$ topology. Therefore $h_\infty$ is $C^\infty$.
\end{corollary}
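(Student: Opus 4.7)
The plan is to obtain uniform $C^k$ bounds on $h_t$ for every $k$, extract a $C^\infty$-convergent subsequence by a diagonal Arzelà--Ascoli argument, and identify the limit with $h_\infty$ from Proposition \ref{prop:metrics-converge} by uniqueness. The available inputs are the uniform pointwise bounds on $\sup_X |\nabla_A^\ell \mu(z_t)|$ and $\sup_X |\nabla_A^\ell \phi|$ supplied by the preceding lemma, together with the $C^0$ bound $\sup_X \sigma(h_t) \leq C$ from Proposition \ref{prop:metrics-converge}.

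The key step is to transfer these bounds on $z_t = f_t(y_0) \cdot y_0$ into bounds on the metric $h_t = f_t(y_0)^* f_t(y_0)$ itself. The complex gauge transformation $f_t$ is a pointwise unitary isomorphism $(E, h_t) \to (E, H_0)$, and under this isomorphism the Chern connection of $(\bar\partial_{A_0}, h_t)$ together with its curvature correspond to the Chern connection of $z_t$ together with its curvature. Consequently the identity $\mu_{h_t}(y_0) = \Ad_{f_t^{-1}} \mu(z_t)$ combined with Hitchin's equation applied to the pair $(\bar\partial_{A_0}, h_t)$ converts the bounds on $|\nabla_A^\ell \mu(z_t)|$ and $|\nabla_A^\ell \phi|$ into pointwise bounds on the covariant derivatives of $F_{A_0}^{h_t}$ computed with respect to the Chern connection of $(\bar\partial_{A_0}, h_t)$. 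Since $\sigma(h_t) \leq C$ forces $h_t$ and $H_0$ to be uniformly equivalent as Hermitian metrics, these bounds transfer to the fixed background metric, after which a standard elliptic bootstrap applied to the quasilinear second-order equation $F_{A_0}^{h_t} = F_{A_0}^{H_0} + \bar\partial_{A_0}(h_t^{-1} \partial_{A_0^{H_0}} h_t)$ (in the spirit of the regularity arguments in \cite{Simpson88}) yields uniform $C^{k+2}$ bounds on $h_t$ for every $k$.

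A diagonal Arzelà--Ascoli argument then supplies a subsequence $h_{t_n}$ converging in every $C^k$ norm to a smooth limit, and uniqueness of the $C^0$ limit from Proposition \ref{prop:metrics-converge} identifies this $C^\infty$ limit with $h_\infty$, so $h_\infty$ is smooth as required. The main obstacle is the transfer step in the middle paragraph: because $f_t$ can diverge in Sobolev norm as $t \to \infty$, one cannot naively compare tensors measured in $H_0$ with tensors measured in $h_t$, and the argument must systematically compute norms and covariant derivatives with respect to the varying metric $h_t$, only invoking the $C^0$ bound $\sigma(h_t) \leq C$ at the end to exchange back to the fixed background metric.
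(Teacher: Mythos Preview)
Your proposal is correct and follows essentially the same strategy as the paper. The paper's proof is terser: it invokes \cite[Lem.~3.14]{Wilkin08} to pass from the bounds on $|\nabla_A^\ell \mu(z_t)|$ to bounds on $|\nabla_A^\ell F_A|$ (using that $z_t$ stays in a single $\mathcal{G}^\C$-orbit), and then cites \cite[Lem.~19 \& 20]{Donaldson85} for the curvature-plus-$C^0$-to-$C^\ell$ bootstrap on $h_t$, whereas you describe the same transfer and elliptic bootstrap by hand; the identification of the limit with $h_\infty$ via the $C^0$ uniqueness from Proposition~\ref{prop:metrics-converge} is likewise the same.
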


\begin{proof}
Since $z_t$ is contained in the complex gauge orbit of $y_0$ for all $t$, then \cite[Lem. 3.14]{Wilkin08} shows that the uniform bound on $\left| \nabla_A^\ell \mu(z_t) \right|$ from the previous lemma implies a uniform bound on $\left| \nabla_A^\ell F_A \right|$ for all $\ell$. Therefore, since Proposition \ref{prop:metrics-converge} shows that $h_t$ converges in $C^0$, then the estimates of \cite[Lem. 19 \& 20]{Donaldson85} show that $h_t$ is bounded in $C^\ell$ for all $\ell$, and so there is a subsequence $h_{t_n}$ converging in the $C^\infty$ topology.
\end{proof}

\subsubsection{$C^\infty$ convergence in the space of Higgs bundles}\label{sec:convergence-in-B}

In this section we show that the scattering construction converges in the $C^\infty$ topology on the space of Higgs bundles. As a consequence of the methods, we obtain an estimate that shows the solution to the reverse heat flow constructed in Section \ref{subsec:construct-reverse-solution} converges to the critical point $x$ in the smooth topology.

This section uses a slightly modified version of the flow from the previous section, defined as follows. Given $y_0 \in S_x^-$ and $t > 0$, let $x_s = g_s \cdot e^{i \beta t} \cdot y_0$ be the time $s$ solution to the $\YMH$ flow \eqref{eqn:gauge-flow} with initial condition $e^{i \beta t} \cdot y_0$, let $s(t)$ be the unique point in time such that $\YMH(x_{s(t)}) = \YMH(y_0)$ and define $t' = \min \{ t, s(t) \}$. Since the critical values of $\YMH$ are discrete, then $t'$ is well-defined for small values of $\YMH(x) - \YMH(y_0)$. 

\begin{center}
\begin{pspicture}(0,-0.5)(8,5)
\psline(4,5)(4,0)
\psline(4,4)(4,1)
\pscurve[arrowsize=5pt]{->}(4,4)(4.4,2.3)(5,1.2)(5.2,1)
%\pscurve[arrowsize=5pt]{->}(4,1)(4.5,1.05)(5,1.2)
\psline[linestyle=dashed](0,1)(8,1)
\psdots[dotsize=3pt](4,5)(4,4)(4,1)(5.2,1)
\uput{4pt}[180](4,5){\small{$x$}}
\uput{4pt}[180](4,4){\small{$e^{i \beta t} \cdot y_0$}}
\uput{4pt}[180](4,0.7){\small{$y_0$}}
\uput{4pt}[0](4.5,0.7){\small{$z_t = g_{t'} \cdot e^{i \beta t} \cdot y_0$}}
\uput{2pt}[180](4,0){$S_x^-$}
%\uput{3pt}[270](4.5,1.05){\small{$f_t$}}
\uput{3pt}[30](4.4,2.3){\small{$g_{t'}$}}
\uput{3pt}[180](4,2.3){\small{$e^{i \beta t}$}}
\uput{3pt}[90](1,1){\small{$\YMH^{-1}(\YMH(y_0))$}}
\end{pspicture}
\end{center}

Now define $z_t = g_{t'} \cdot e^{i \beta t} \cdot y_0$ and $y_t = e^{i \beta (t-t')} \cdot y_0$. Note that $z_t = g_{t'} \cdot e^{i \beta t'} \cdot y_t$ and so the results of the previous section show that the $C^0$ norm of the change of metric connecting $y_t$ and $z_t$ is bounded. Therefore Corollary \ref{cor:bounded-metric-away-from-critical} shows that $y_t$ and $z_t$ are both uniformly bounded away from $x$.

\begin{lemma}\label{lem:bounded-away-from-critical}
There exists $T > 0$ such that $t-t' \leq T$ for all $t$.
\end{lemma}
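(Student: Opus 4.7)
The plan is to argue by a direct Lipschitz estimate on the linearised flow, combining the uniform lower bound on $\|y_t - x\|_{C^0}$ from the paragraph preceding the lemma (which invokes Corollary \ref{cor:bounded-metric-away-from-critical}) with the upper Lipschitz bound \eqref{eqn:lipschitz-slice} for the flow $e^{i\beta\cdot}$ on the negative slice. Concretely, the discussion just above the statement furnishes a $\delta > 0$ with $\|y_t - x\|_{C^0} \geq \delta$ for every $t \geq 0$. On the other hand, since $y_t = e^{i\beta(t-t')} \cdot y_0$ with $y_0 \in S_x^-$ and $t - t' \geq 0$, the estimate \eqref{eqn:lipschitz-slice} gives
\begin{equation*}
\|y_t - x\|_{C^0} \leq e^{\lambda_k(t-t')} \|y_0 - x\|_{C^0},
\end{equation*}
where $\lambda_k < 0$ is the largest eigenvalue of $i\beta$ acting on $S_x^-$. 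Combining the two inequalities and solving for $t-t'$ using $\lambda_k < 0$ yields the explicit uniform bound $t - t' \leq (-\lambda_k)^{-1} \log(\|y_0 - x\|_{C^0}/\delta) =: T$.

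I do not expect any serious obstacle, since the lemma essentially repackages the boundedness of $\{y_t\}$ as a uniform cap on the excess linearised time $t - t'$. The one small point to verify is that the hypotheses of Corollary \ref{cor:bounded-metric-away-from-critical} really do apply on both branches of the definition $t' = \min\{t, s(t)\}$. On the branch $t' = s(t) < t$, the defining property of $s(t)$ gives $\YMH(z_t) = \YMH(y_0) < \YMH(x)$, so $z_t$ lies in a fixed sublevel set bounded away from $x$, and the Corollary then transfers this bound to $y_t$ using the $C^0$ bound on the change of metric $h_{t'}(e^{i\beta t} \cdot y_0)$ supplied by the estimate in Section \ref{subsec:metric-convergence}. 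On the branch $t' = t$, we have $y_t = y_0$ and hence $t - t' = 0$, so the bound is trivial there. The intuitive picture is that if $t - t'$ were unbounded along some sequence, the upward linearised flow would drag $y_t$ into every neighbourhood of $x$, contradicting the bounded-away-from-$x$ conclusion already established in the paragraph preceding the lemma.
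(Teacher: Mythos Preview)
Your argument is essentially the paper's own proof: split into the trivial branch $t'=t$ and the branch $t'=s(t)<t$, use the bounded-away-from-$x$ conclusion obtained via Corollary~\ref{cor:bounded-metric-away-from-critical}, and then observe that $e^{i\beta(t-t')}\cdot y_0$ would enter any neighbourhood of $x$ if $t-t'$ were unbounded. The one small correction is that Corollary~\ref{cor:bounded-metric-away-from-critical} and the preceding paragraph give the lower bound in an $L_k^2$ norm (the paper uses $L_1^2$), not in $C^0$; since \eqref{eqn:lipschitz-slice} holds in any norm, simply replace $\|\cdot\|_{C^0}$ by $\|\cdot\|_{L_1^2}$ throughout and your explicit bound $T=(-\lambda_k)^{-1}\log(\|y_0-x\|_{L_1^2}/\delta)$ goes through unchanged.
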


\begin{proof}
If $s(t) \geq t$ then $t' = t$ and the desired inequality holds. Therefore the only non-trivial case is $s(t) < t$. Since $\YMH(z_t) = \YMH(y_0)$ and $\YMH$ is continuous in the $L_1^2$ norm on $\mathcal{B}$, then there exists a neighbourhood $V$ of $x$ such that $z_t \notin V$ for all $t$. We also have $z_t = f_{t'} \cdot y_t$ with $f_{t'} = g_{t'} e^{i \beta t'}$ such that $h_t = f_{t'}^* f_{t'}$ satisfies $\sup_X \sigma(h_t) \leq C \| y_t - x \|_{C^0}^2 \leq C \| y_0 - x \|_{C^0}^2$ by Lemma \ref{lem:uniform-bound-sigma}, and so Corollary \ref{cor:bounded-metric-away-from-critical} shows that there exists a neighbourhood $U$ of $x$ in the $L_1^2$ topology on $\mathcal{B}$ such that $y_t \notin U$. Therefore there exists $\eta > 0$ such that $\| y_t - x \|_{L_1^2} \geq \eta$ and $\| z_t - x \|_{L_1^2} \geq \eta$.

Since $y_t = e^{i \beta (t-t')} \cdot y_0$ and $e^{i \beta s} \cdot y_0$ converges to $x$ as $s \rightarrow \infty$, then there exists $T$ such that $t-t' \leq T$ for all $t$, since otherwise $\| y_t - x \|_{L_1^2} < \eta$ for some $t$ which contradicts the inequality from the previous paragraph.
\end{proof}

Next we use the Lojasiewicz inequality to derive a uniform bound on $\| z_t - x \|_{L_1^2}$.

\begin{lemma}\label{lem:L12-bound}
Given $\varepsilon > 0$ there exists $\delta > 0$ such that for each $y_0 \in S_x^-$ with $\| y_0 - x \|_{L_1^2} < \delta$ there exists a neighbourhood $U$ of $x$ in the $L_1^2$ topology such that $\| z_t - x \|_{L_1^2} < \varepsilon$ for all $t$ such that $e^{i \beta t} \cdot y_0 \in U$.
\end{lemma}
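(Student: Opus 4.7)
The plan is to decompose via the triangle inequality
$$\|z_t - x\|_{L_1^2} \;\leq\; \|z_t - e^{i\beta t}\cdot y_0\|_{L_1^2} + \|e^{i\beta t}\cdot y_0 - x\|_{L_1^2},$$
in which the second summand is made smaller than $\varepsilon/2$ directly by shrinking $U$. The real work is to bound the first summand, which is the $L_1^2$ length of the $\YMH$ flow segment $y_s := g_s\cdot e^{i\beta t}\cdot y_0$ for $s\in[0,t']$ joining $e^{i\beta t}\cdot y_0$ to $z_t = g_{t'}\cdot e^{i\beta t}\cdot y_0$.

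First I would fix a neighbourhood $U_0$ of $x$ on which both the Lojasiewicz inequality (cf.\ \cite[Prop. 3.5]{Wilkin08}) and Rade's interior smoothing estimate (cf.\ \cite[Lem. 7.3]{Rade92} and Lemma \ref{lem:interior-bound}) are available, and choose $\rho>0$ with $B_{L_1^2}(x,\rho)\subset U_0$. Conditional on $y_s$ remaining in $U_0$, the standard Lojasiewicz length calculation gives
$$\int_0^{t'}\|\grad\YMH(y_s)\|_{L^2}\, ds \;\leq\; \tfrac{1}{C\theta}\bigl(\YMH(x) - \YMH(y_0)\bigr)^\theta,$$
because the definition of $t'$ forces $\YMH(y_0)\leq\YMH(z_t)\leq\YMH(y_s)\leq\YMH(e^{i\beta t}\cdot y_0)\leq\YMH(x)$ on the relevant interval; Rade's interior estimate then upgrades this to the corresponding $L_1^2$ length bound. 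Since $\YMH$ is continuous in the $L_1^2$ topology, the right-hand side is made as small as desired by taking $\|y_0-x\|_{L_1^2}<\delta$ small.

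The ``conditional on $y_s$ remaining in $U_0$'' clause is discharged by a standard bootstrap. Shrink $U\subset U_0$ so that $\|e^{i\beta t}\cdot y_0 - x\|_{L_1^2}<\rho/3$ whenever $e^{i\beta t}\cdot y_0\in U$, and shrink $\delta$ so that the $L_1^2$ length bound above is less than $\rho/3$. Define
$$s^* \;=\; \sup\{s\in[0,t']\,:\, y_r\in B_{L_1^2}(x,\rho)\ \text{for all } r\in[0,s]\}.$$
On $[0,s^*]$ the length bound, combined with the triangle inequality and $\|e^{i\beta t}\cdot y_0 - x\|_{L_1^2}<\rho/3$, places $y_{s^*}$ within $2\rho/3$ of $x$ in $L_1^2$, so openness of the ball forces $s^*=t'$; the same length bound evaluated at $s=t'$ then yields $\|z_t - e^{i\beta t}\cdot y_0\|_{L_1^2}<\rho/3$, and combining with the hypothesis on $U$ gives $\|z_t - x\|_{L_1^2}<\varepsilon$.

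The main obstacle I expect is the initial unit-time loss inherent in Rade's estimate, which only bounds $\int_1^{t'}\|\grad\YMH\|_{L_1^2}$ in terms of $\int_0^{t'}\|\grad\YMH\|_{L^2}$, so the contribution of $[0,1]$ to the $L_1^2$ length needs separate treatment. I would absorb this term using smoothness: since $e^{i\beta t}\cdot y_0$ is a smooth Higgs bundle $L_1^2$-close to the smooth critical point $x$, the parabolic smoothing estimates in the spirit of \cite[Lem. 3.14, Cor. 3.16]{Wilkin08} give uniform $L_k^2$ bounds on $y_s$ for $s\in[0,1]$, from which $\|\grad\YMH(y_s)\|_{L_1^2}$ inherits a bound that vanishes as $\|y_0-x\|_{L_1^2}\to 0$, providing the control needed over the initial unit interval.
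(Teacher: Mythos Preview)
Your proposal is correct and follows essentially the same strategy as the paper: Lojasiewicz inequality plus the interior estimate to bound the $L_1^2$ length of the flow segment, combined with an open--closed bootstrap to keep the trajectory in the neighbourhood where those estimates apply. The one place where the paper is slicker is the handling of the unit-time loss in the interior estimate: rather than trying to make $\int_0^1 \|\grad\YMH(y_s)\|_{L_1^2}\,ds$ small via smoothing, the paper simply invokes continuous dependence of the time-$1$ flow on initial conditions in $L_1^2$ (\cite[Prop.~3.4]{Wilkin08}) to choose $U$ so that $e^{i\beta t}\cdot y_0 \in U$ forces $\|x_1 - x\|_{L_1^2} < \tfrac{1}{3}\varepsilon'$, and then bounds only $\|x_\tau - x_1\|_{L_1^2}$ by the gradient integral over $[1,\tau]$; your route via \cite[Lem.~3.14, Cor.~3.16]{Wilkin08} would need an extra step, since those results give uniform bounds rather than smallness.
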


\begin{proof}
Recall from \cite[Prop. 3.5]{Wilkin08} that there exists $\varepsilon_1 > 0$ and constants $C > 0$ and $\theta \in \left( 0, \frac{1}{2} \right)$ such that the Lojasiewicz inequality 
\begin{equation}\label{eqn:lojasiewicz}
\| \grad \YMH(z) \|_{L^2} \geq C \left| \YMH(x) - \YMH(z) \right|^{1-\theta}
\end{equation}
holds for all $z$ such that $\| z - x \|_{L_1^2} < \varepsilon_1$. Recall the interior estimate \cite[Prop. 3.6]{Wilkin08} which says that for any positive integer $k$ there exists $\varepsilon_2 > 0$ and a constant $C_k'$ such that for any solution $x_s = g_s \cdot e^{i \beta t} \cdot y_0$ to the $\YMH$ flow with initial condition $e^{i \beta t} \cdot y_0$ which satisfies $\| x_s - x \|_{L_k^2} < \varepsilon_2$ for all $0 \leq s \leq S$, then we have
\begin{equation}\label{eqn:interior-estimate}
\int_1^S \| \grad \YMH(x_s) \|_{L_k^2} \, dt \leq C_k' \int_0^S \| \grad \YMH(x_s) \|_{L^2} \, dt .
\end{equation}
where the constant $C_k'$ is uniform over all initial conditions in a given $\mathcal{G}^\C$ orbit and for all $S$ such that $\| x_s - x \|_{L_k^2} < \varepsilon_2$ for all $s \in [0, S]$ (cf. Lemma \ref{lem:interior-bound}). Define $\varepsilon' = \min \{ \varepsilon, \varepsilon_1, \varepsilon_2 \}$. A calculation using \eqref{eqn:lojasiewicz} (cf. \cite{Simon83}) shows that any flow line $x_s$ which satisfies $\| x_s - x \|_{L_1^2} < \varepsilon'$ for all $s \in [0,t']$ also satisfies the gradient estimate
\begin{equation*}
C \theta \| \grad \YMH(x_s) \|_{L^2} \leq \frac{\partial}{\partial s} \left| \YMH(x) - \YMH(x_s) \right|^\theta
\end{equation*}
and so if $\| x_s - x \|_{L_1^2} < \varepsilon'$ for all $s < t'$ then
\begin{align}\label{eqn:flow-length-estimate}
\begin{split}
\int_0^{t'} \| \grad \YMH(x_s) \|_{L^2} \, ds & \leq \frac{1}{C\theta} \left( |\YMH(x) - \YMH(x_{t'}) |^\theta - |\YMH(x) - \YMH(x_0)|^\theta \right) \\
 & \leq \frac{1}{C\theta} \left| \YMH(x) - \YMH(x_{t'}) \right|^\theta
\end{split}
\end{align} 
Let $k=1$ in \eqref{eqn:interior-estimate} and choose $\delta > 0$ so that $\| y_0 - x \|_{L_1^2} < \delta$ implies that that $\frac{1}{C \theta} |\YMH(x) - \YMH(y_0)|^\theta \leq  \frac{\varepsilon'}{3C_1'}$, where $C$ and $\theta$ are the constants from the Lojasiewicz inequality \eqref{eqn:lojasiewicz} and $C_1'$ is the constant from \eqref{eqn:interior-estimate} for $k=1$. Therefore, since $\YMH(y_0) = \YMH(x_{t'}) < \YMH(x_\tau) \leq \YMH(x)$ for all $\tau < t'$, then 
\begin{equation}\label{eqn:energy-bound}
\frac{1}{C\theta} \left| \YMH(x) - \YMH(x_\tau) \right|^\theta \leq \frac{\varepsilon'}{3C_1'} \quad \text{for all $\tau < t'$}.
\end{equation}
Since the finite-time $\YMH$ flow depends continuously on the initial condition in the $L_1^2$ norm by \cite[Prop. 3.4]{Wilkin08}, then there exists a neighbourhood $U$ of $x$ such that $x_0 \in U$ implies that $\| x_1 - x \|_{L_1^2} < \frac{1}{3} \varepsilon'$. Choose $t$ large so that $e^{i \beta t} \cdot y_0 = e^{i \beta t'} \cdot y_t \in U$ and let $x_s = g_s \cdot e^{i \beta t} \cdot y_0$ be the solution to the $\YMH$ flow at time $s$ with initial condition $x_0 = e^{i \beta t} \cdot y_0$. Note that $x_{t'} = z_t$. Define
\begin{equation*}
\tau = \sup \{ s \mid \| x_r - x \|_{L_1^2} < \varepsilon' \, \, \text{for all $r \leq s$} \}
\end{equation*}
and note that $\tau > 0$. By definition of $\tau$, the Lojasiewicz inequality \eqref{eqn:lojasiewicz} and the interior estimate \eqref{eqn:flow-length-estimate} are valid for the flow line $x_s$ on the interval $[0,\tau]$. If $\tau < t'$, then \eqref{eqn:flow-length-estimate} and \eqref{eqn:energy-bound} imply that
\begin{align*}
\| x_\tau - x \|_{L_1^2} & \leq \| x_1 - x \|_{L_1^2} + \| x_\tau - x_1 \|_{L_1^2} \\
 & < \frac{1}{3} \varepsilon' + \int_1^\tau \| \grad \YMH(x_s) \|_{L_1^2} \, ds \\
 & \leq \frac{1}{3} \varepsilon' + C_1' \int_0^\tau \| \grad \YMH(x_s) \|_{L^2} \, ds \\
 & \leq \frac{1}{3} \varepsilon' + \frac{C_1'}{C \theta} \left| \YMH(x) - \YMH(x_\tau) \right|^\theta \leq \frac{1}{3} \varepsilon' + \frac{1}{3} \varepsilon' 
\end{align*}
contradicting the definition of $\tau$ as the supremum. Therefore $t' \leq \tau$ and the same argument as above shows that $\| x_{t'} - x_0 \|_{L_1^2} < \frac{2}{3} \varepsilon'$, so we conclude that $z_t = x_{t'}$ satisfies $\| z_t - x \|_{L_1^2} < \frac{2}{3} \varepsilon' < \varepsilon$ for all $t$ such that $e^{i \beta t} \cdot y_0 \in U$. 
\end{proof}

Now that we have a uniform $L_1^2$ bound on $z_t - x$, then we can apply the same idea using the interior estimate \eqref{eqn:interior-estimate} as well as continuous dependence on the initial condition in the $L_k^2$ norm from \cite[Prop. 3.4]{Wilkin08} to prove the following uniform $L_k^2$ bound on $z_t - x$.

\begin{lemma}\label{lem:Lk2-length}
Given $\varepsilon > 0$ and a positive integer $k$ there exists $\delta > 0$ such that if $\| y_0 - x \|_{L_1^2} < \delta$ then there exists a neighbourhood $U$ of $x$ in the $L_k^2$ topology such that $\| z_t - x \|_{L_k^2} < \varepsilon$ for all $t$ such that $e^{i \beta t} \cdot y_0 \in U$.
\end{lemma}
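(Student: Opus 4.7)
The plan is to adapt the bootstrapping argument of Lemma \ref{lem:L12-bound} to higher Sobolev norms, using the $L_1^2$ bound already established as input. The key observation is that the Lojasiewicz inequality \eqref{eqn:lojasiewicz} requires only control in the $L_1^2$ norm (not $L_k^2$), so once Lemma \ref{lem:L12-bound} gives us $\| x_s - x \|_{L_1^2} < \varepsilon_1$ along the entire flow line $x_s = g_s \cdot e^{i\beta t}\cdot y_0$, the length estimate \eqref{eqn:flow-length-estimate} is available throughout the argument without further hypotheses.

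Fix $k$ and $\varepsilon > 0$. Let $\varepsilon_2^{(k)}$ and $C_k'$ be the constants from the interior estimate \eqref{eqn:interior-estimate} at level $k$, and set $\varepsilon' = \min\{ \varepsilon, \varepsilon_2^{(k)}, \varepsilon_1 \}$. By continuous dependence of the finite-time $\YMH$ flow on the initial condition in the $L_k^2$ norm \cite[Prop.~3.4]{Wilkin08}, choose a neighbourhood $U$ of $x$ in the $L_k^2$ topology so that $x_0 \in U$ implies $\| x_1 - x \|_{L_k^2} < \tfrac{1}{3} \varepsilon'$. Using the monotonicity $\YMH(x_{t'}) = \YMH(y_0) \leq \YMH(x_\tau) \leq \YMH(x)$, choose $\delta > 0$ small enough that $\| y_0 - x \|_{L_1^2} < \delta$ implies simultaneously the $L_1^2$ conclusion of Lemma \ref{lem:L12-bound} and the energy bound
\begin{equation*}
\frac{C_k'}{C\theta} \left| \YMH(x) - \YMH(y_0) \right|^\theta < \frac{1}{3}\varepsilon' .
\end{equation*}

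Now for any $t$ with $e^{i\beta t}\cdot y_0 \in U$, define $\tau = \sup\{ s \mid \| x_r - x \|_{L_k^2} < \varepsilon' \text{ for all } r \leq s \}$. By choice of $U$ we have $\tau > 1$. If $\tau < t'$, then on $[0,\tau]$ the interior estimate \eqref{eqn:interior-estimate} at level $k$ applies, and combined with \eqref{eqn:flow-length-estimate} and the energy bound above gives
\begin{align*}
\| x_\tau - x \|_{L_k^2}
&\leq \| x_1 - x \|_{L_k^2} + \int_1^\tau \| \grad \YMH(x_s) \|_{L_k^2} \, ds \\
&\leq \tfrac{1}{3}\varepsilon' + C_k' \int_0^\tau \| \grad \YMH(x_s) \|_{L^2} \, ds \\
&\leq \tfrac{1}{3}\varepsilon' + \frac{C_k'}{C\theta} \left| \YMH(x) - \YMH(x_\tau) \right|^\theta \;<\; \tfrac{2}{3}\varepsilon',
\end{align*}
contradicting the definition of $\tau$. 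Hence $\tau \geq t'$ and the same estimate with $\tau$ replaced by $t'$ yields $\| z_t - x \|_{L_k^2} = \| x_{t'} - x \|_{L_k^2} < \tfrac{2}{3}\varepsilon' < \varepsilon$.

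The main obstacle is the circular nature of the interior estimate \eqref{eqn:interior-estimate}, whose hypothesis is precisely the $L_k^2$ bound we are trying to establish; this is resolved exactly as in Lemma \ref{lem:L12-bound} by the continuity (in $s$) argument on $\tau$. One also has to verify that the constants $C_k'$ are uniform over the family of initial conditions $\{ e^{i\beta t}\cdot y_0 \}_{t \geq 0}$, but this follows from the uniformity statement in \cite[Prop.~A]{Rade92} / \cite[Prop.~3.6]{Wilkin08} together with the fact that these initial conditions all lie on the compact curve $\{ e^{i\beta t} \cdot y_0 : t \in [0,\infty] \}$ in the $C^\infty$ Higgs bundles connecting $y_0$ to $x$, so a uniform bound on the relevant derivatives of the curvature holds along the family.
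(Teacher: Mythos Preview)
Your proposal is correct and follows essentially the same approach as the paper. The paper's own proof is a one-sentence sketch, saying only to ``apply the same idea'' as in Lemma~\ref{lem:L12-bound} using the interior estimate \eqref{eqn:interior-estimate} together with continuous dependence on the initial condition in the $L_k^2$ norm; you have simply spelled out those details, including the key observation that the Lojasiewicz step \eqref{eqn:flow-length-estimate} needs only the $L_1^2$ control already supplied by Lemma~\ref{lem:L12-bound}.
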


Now we can prove that there is a limit $z_\infty$ in the space of $C^\infty$ Higgs bundles. In Section \ref{subsec:construct-reverse-solution} we will show that $z_\infty \in W_x^-$.

\begin{proposition}\label{prop:strong-convergence}
For each $y_0 \in S_x^-$, let $z_t$ be the sequence defined above. Then there exists $z_\infty \in \mathcal{B}$ such that for each positive integer $k$ there exists a subsequence of $z_t$ converging to $z_\infty$ strongly in $L_k^2$.
\end{proposition}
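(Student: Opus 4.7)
The plan is to combine the uniform Sobolev bounds from Lemma \ref{lem:Lk2-length} with standard weak compactness and Rellich embedding, followed by a Cantor diagonal argument. First I would verify the hypotheses of Lemma \ref{lem:Lk2-length}. Since $y_0 \in S_x^-$, the Lipschitz bounds \eqref{eqn:lipschitz-slice} imply $e^{i\beta t} \cdot y_0 \to x$ in $C^\infty$ as $t \to \infty$, so for every positive integer $k$ and every $L_k^2$-neighbourhood $U$ of $x$ there exists $T_k > 0$ such that $e^{i\beta t} \cdot y_0 \in U$ for all $t \geq T_k$. After shrinking $\|y_0 - x\|_{L_1^2}$ if necessary (which we may do by first flowing along the linearised flow to a point sufficiently close to $x$ on the slice and relabelling, or simply by replacing $y_0$ by $e^{i\beta t_0} \cdot y_0$ for some large $t_0$), Lemma \ref{lem:Lk2-length} gives a uniform bound $\|z_t - x\|_{L_k^2} < \varepsilon_k$ for all $t \geq T_k$. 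On the bounded interval $[0, T_k]$ the family $z_t$ is automatically bounded by continuity of the finite-time flows in $L_k^2$ \cite[Prop. 3.4]{Wilkin08} and of $t \mapsto e^{i\beta t} \cdot y_0$. Hence $\{z_t\}_{t \geq 0}$ is bounded in $L_k^2$ for every $k$.

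Next I would extract a convergent subsequence. By Banach--Alaoglu, for each fixed $k$ any sequence $t_n \to \infty$ admits a subsequence along which $z_{t_n}$ converges weakly in $L_k^2$. The compact Rellich embedding $L_k^2 \hookrightarrow L_{k-1}^2$ on the compact surface $X$ upgrades this to strong convergence in $L_{k-1}^2$. A Cantor diagonal argument then produces a single subsequence, still denoted $z_{t_n}$, which converges strongly in $L_k^2$ for every $k$ to a limit $z_\infty$. Equivalently, $z_{t_n} \to z_\infty$ in the $C^\infty$ topology on $T^*\mathcal{A}^{0,1}$.

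Finally, I would verify $z_\infty \in \mathcal{B}$. Each $z_{t_n} = (\bar\partial_{A_n}, \phi_n)$ lies in the $\mathcal{G}^\C$-orbit of $y_0 \in \mathcal{B}$, so $\bar\partial_{A_n} \phi_n = 0$. For $k \geq 2$, Sobolev multiplication makes the map $(a, \varphi) \mapsto \bar\partial_{A_0}\varphi + [a, \phi_0 + \varphi]$ continuous $L_k^2 \times L_k^2 \to L_{k-1}^2$, so passing to the limit yields $\bar\partial_{A_\infty} \phi_\infty = 0$, hence $z_\infty \in \mathcal{B}$.

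The substantive work has already been absorbed into Lemma \ref{lem:L12-bound} and Lemma \ref{lem:Lk2-length}, so the main potential obstacle, namely obtaining $t$-independent control on the $L_k^2$ size of $z_t - x$ despite the ``flow up then flow down'' construction potentially moving $z_t$ well away from the critical point, has already been addressed by the Lojasiewicz-inequality argument combined with the interior Sobolev estimate of \cite[Prop. 3.6]{Wilkin08}. Everything remaining is a standard compactness argument, and the only point requiring slight care is that we want a single diagonal subsequence that works simultaneously in every $L_k^2$; this is immediate from the scale of bounds because the diagonal of a countable family of nested subsequences does the job. In Section \ref{subsec:construct-reverse-solution} we will separately identify this $z_\infty$ with the point on $W_x^-$ produced by the scattering construction, using the $C^\infty$ limit $h_\infty$ of metrics from Proposition \ref{prop:metrics-converge}.
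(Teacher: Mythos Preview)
Your proposal is correct and follows essentially the same route as the paper: use the uniform $L_k^2$ bounds from Lemma \ref{lem:Lk2-length}, invoke the compact embedding $L_{k+1}^2 \hookrightarrow L_k^2$ to extract strongly convergent subsequences, and identify all the limits as a single $C^\infty$ Higgs pair $z_\infty$. The only cosmetic differences are that the paper phrases the extraction as ``take a subsequence converging in $L_1^2$, then pass to further subsequences for higher $k$ and note the limits must coincide'' rather than your Cantor diagonal, and the paper absorbs your explicit verification of $\bar\partial_{A_\infty}\phi_\infty = 0$ into the single phrase ``therefore $z_\infty$ is a $C^\infty$ Higgs pair''; the Banach--Alaoglu step you mention is harmless but redundant, since Rellich already gives strong subsequential compactness.
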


\begin{proof}
The previous estimate with $k=2$ shows that $\| z_t - x \|_{L_2^2}$ is bounded. Compactness of the embedding $L_{k+1}^2 \hookrightarrow L_k^2$ shows that there is a subsequence $\{ z_{t_n} \}$ converging strongly to a limit $z_\infty$ in $L_1^2$.  %See Adams-Fournier Theorem 6.3, Part I (borderline case).

For any $k > 1$, the same argument applied to the subsequence $\{ z_{t_n} \}$ from the previous paragraph shows that there exists a further subsequence, which we denote by $\{ z_{t_{n_j}} \}$, which converges strongly in $L_k^2$. Since $z_{t_{n_j}} \stackrel{L_1^2}{\longrightarrow} z_\infty$ then the limit in $L_k^2$ of $z_{t_{n_j}}$ must be $z_\infty$ also. Therefore $z_\infty$ is a $C^\infty$ Higgs pair.
\end{proof}

Finally, we can prove that $z_\infty$ is gauge-equivalent to $y_0$. Recall the constant $T$ from Lemma \ref{lem:bounded-away-from-critical} and let $\varphi(z_t,s)$ denote the time $s$ downwards $\YMH$ flow \eqref{eqn:YMH-flow} with initial condition $z_t$. The gauge transformation $f_t(y_0) \in \mathcal{G}^\C$ from Proposition \ref{prop:metrics-converge} satisfies $f_t(y_0) \cdot y_0 = \phi(z_t, t-t')$. 

For any $k$, let $z_{t_n}$ be a subsequence converging strongly to $z_\infty$ in $L_k^2$. Such a subsequence exists by Proposition \ref{prop:strong-convergence}. Since $0 \leq t_n - t_n' \leq T$ for all $n$ then there exists $s \in [0, T]$ and a subsequence $\{t_{n_\ell} \}$ such that $t_{n_\ell} - t_{n_\ell}' \rightarrow s$. Since the finite-time $\YMH$ flow depends continuously on the initial condition in $L_k^2$, then $f_{t_{n_\ell}} (y_0) \cdot y_0 = \varphi(z_{t_{n_\ell}}, t_{n_\ell} - t_{n_\ell}')$ converges to $z_\infty^0 := \varphi(z_\infty, s)$ strongly in $L_k^2$. After taking a further subsequence if necessary, the method of Section \ref{subsec:metric-convergence} shows that the change of metric associated to $f_{t_{n_\ell}}(y_0)$ converges strongly in $L_{k+1}^2$. Therefore, since the action of the Sobolev completion $\mathcal{G}_{L_{k+1}^2}^\C$ on $\mathcal{B}_{L_k^2}$ is continuous, then $\varphi(z_\infty, s)$ (and hence $z_\infty$) is related to $y_0$ by a gauge transformation in $\mathcal{G}_{L_{k+1}^2}^\C$. Since $y_0$ and $z_\infty$ are both smooth Higgs pairs then an elliptic regularity argument shows that this gauge transformation is smooth. Therefore we have proved the following result.

\begin{proposition}\label{prop:limit-in-group-orbit}
Given any $y_0 \in S_x^-$, let $z_\infty$ be the limit from Proposition \ref{prop:strong-convergence}. Then there exists a smooth gauge transformation $g \in \mathcal{G}^\C$ such that $z_\infty = g \cdot y_0$.
\end{proposition}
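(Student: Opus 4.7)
The plan is to combine three already-established ingredients: strong subsequential convergence $z_{t_n} \to z_\infty$ from Proposition \ref{prop:strong-convergence}, the uniform bound $0 \le t - t' \le T$ from Lemma \ref{lem:bounded-away-from-critical}, and the smooth convergence of the associated change of metric from the corollary at the end of Section \ref{subsec:metric-convergence}. The idea is to produce the gauge transformation $g$ as a limit of the sequence $f_{t_n}(y_0) \in \mathcal{G}^\C$ appearing in the scattering construction, and then to check that the limit acts on $y_0$ to yield $z_\infty$ up to the flow by a finite time $s$.

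Fix a positive integer $k$. By Proposition \ref{prop:strong-convergence} I may pass to a subsequence so that $z_{t_n} \to z_\infty$ strongly in $L_k^2$. Since $t_n - t_n' \in [0,T]$ by Lemma \ref{lem:bounded-away-from-critical}, a further subsequence (still denoted $t_n$) has $t_n - t_n' \to s$ for some $s \in [0,T]$. Continuous dependence of the finite-time $\YMH$ flow on initial data in $L_k^2$ (cf.\ \cite[Prop.~3.4]{Wilkin08}) then gives
\begin{equation*}
f_{t_n}(y_0)\cdot y_0 \;=\; \varphi(z_{t_n},\, t_n - t_n') \;\longrightarrow\; \varphi(z_\infty, s) \;=:\; z_\infty^0
\end{equation*}
strongly in $L_k^2$. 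On the other hand, the argument of Section \ref{subsec:metric-convergence} shows that, after passing to a further subsequence, the metrics $h_{t_n}(y_0)$ converge in the $C^\infty$ topology on $\mathcal{G}^\C/\mathcal{G}$; since each $f_{t_n}(y_0)$ is determined by its associated metric up to a unitary factor, I can premultiply by a suitable unitary gauge transformation (bounded in every Sobolev norm) to arrange that the $f_{t_n}(y_0)$ themselves converge in $L_{k+1}^2$ to some $\tilde g \in \mathcal{G}_{L_{k+1}^2}^\C$.

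Continuity of the action of $\mathcal{G}_{L_{k+1}^2}^\C$ on $\mathcal{B}_{L_k^2}$ then gives $\tilde g \cdot y_0 = z_\infty^0 = \varphi(z_\infty, s)$. Because the time-$(-s)$ reverse $\YMH$ flow on the smooth point $z_\infty^0$ is also generated by a complex gauge transformation, I obtain $g \in \mathcal{G}_{L_{k+1}^2}^\C$ with $g \cdot y_0 = z_\infty$. Finally, since both $y_0$ and $z_\infty$ are $C^\infty$, elliptic regularity (bootstrapping in exactly the fashion used in Lemma \ref{lem:slice-smooth}) promotes $g$ from $L_{k+1}^2$ to $C^\infty$.

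The one step that takes slight care is the promotion of the $C^\infty$ convergence of the metrics $h_{t_n}(y_0)$ to $L_{k+1}^2$ convergence of the group elements $f_{t_n}(y_0)$ themselves: the map $g \mapsto g^*g$ has a unitary kernel, so one must make a coherent choice of unitary representatives. This is the main (minor) obstacle, but it is handled as above by using the unitary freedom to normalise the sequence, after which the pointwise, hence Sobolev, convergence of the positive-definite square roots $h_{t_n}(y_0)^{1/2}$ supplies the desired limit gauge transformation. Everything else is a direct assembly of the ingredients already proved.
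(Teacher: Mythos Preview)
Your proof is correct and follows essentially the same route as the paper's: extract a subsequence with $t_n - t_n' \to s \in [0,T]$, use continuous dependence of the finite-time flow to get $f_{t_n}(y_0)\cdot y_0 \to \varphi(z_\infty,s)$, invoke the $L_{k+1}^2$ convergence of the metrics from Section~\ref{subsec:metric-convergence} to produce a limiting gauge transformation, and finish by composing with the (smooth) gauge transformation generated by the forward flow from $z_\infty$ to $\varphi(z_\infty,s)$ and bootstrapping via elliptic regularity. The paper handles the passage from metric convergence to gauge-equivalence somewhat implicitly, whereas you spell out the unitary-normalisation step via $h_{t_n}^{1/2}$; this is the right way to make that step precise, and is the only place where your write-up adds detail beyond the paper's own argument.
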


Since $z_\infty^0 = \varphi(z_\infty, s)$ is related to $z_\infty$ by the finite-time flow and $s$ is bounded, then we have the following estimate for $\| z_\infty^0 - x \|_{L_k^2}$. Note that this requires a bound on $\| y_0 - x \|_{L_1^2}$ for the estimates of this section to work, and a bound on $\| y_0 - x \|_{C^0}$ for the estimates of Lemma \ref{lem:uniform-bound-sigma} to work.

\begin{corollary}\label{cor:flow-bound}
For all $\varepsilon > 0$ there exists $\delta > 0$ such that $\| y_0 - x \|_{L_1^2} + \| y_0 - x \|_{C^0} < \delta$ implies $\| z_\infty^0 - x \|_{L_k^2} < \varepsilon$. 
\end{corollary}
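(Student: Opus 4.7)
The plan is to combine three ingredients: the uniform $L_k^2$ estimate for $z_t$ from Lemma \ref{lem:Lk2-length}, the strong $L_k^2$ subsequential convergence $z_{t_n} \to z_\infty$ from Proposition \ref{prop:strong-convergence}, and continuous dependence of the finite-time $\YMH$ flow on initial data in $L_k^2$ uniformly in $s \in [0,T]$ (where $T$ is the upper bound on $t - t'$ from Lemma \ref{lem:bounded-away-from-critical}). The role of the $C^0$ hypothesis on $y_0-x$ is only to guarantee that the $C^0$ assumption of Lemma \ref{lem:uniform-bound-sigma} is met, which in turn is what makes the constant $T$ from Lemma \ref{lem:bounded-away-from-critical} applicable; the $L_1^2$ hypothesis is what activates Lemma \ref{lem:Lk2-length}.

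Given $\varepsilon > 0$, I would first pass through the finite-time flow. Since $x$ is a critical point, $\varphi(x,s) = x$ for every $s$, and by continuous dependence of the $\YMH$ flow on initial condition in $L_k^2$ (cf. \cite[Prop. 3.4]{Wilkin08}), which is uniform over any compact time interval, there exists $\varepsilon_1 \in (0,\varepsilon)$ such that
\begin{equation*}
\| w - x \|_{L_k^2} < \varepsilon_1 \quad \Longrightarrow \quad \| \varphi(w,s) - x \|_{L_k^2} < \varepsilon \quad \text{for all } s \in [0,T].
\end{equation*}
Next, apply Lemma \ref{lem:Lk2-length} with $\varepsilon_1$ in place of $\varepsilon$ to obtain $\delta_1 > 0$ and an $L_k^2$ neighbourhood $U$ of $x$ such that $\| y_0 - x \|_{L_1^2} < \delta_1$ implies $\| z_t - x \|_{L_k^2} < \varepsilon_1$ for every $t$ with $e^{i\beta t} \cdot y_0 \in U$. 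Finally, pick $\delta_2 > 0$ small enough that $\| y_0 - x \|_{C^0} < \delta_2$ ensures the hypotheses of Lemma \ref{lem:uniform-bound-sigma}, and hence of Lemma \ref{lem:bounded-away-from-critical}, so that the constant $T$ is effective. Set $\delta = \min\{\delta_1,\delta_2\}$.

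Assume $\| y_0 - x \|_{L_1^2} + \| y_0 - x \|_{C^0} < \delta$. By Lemma \ref{lem:slice-smooth} the point $y_0 \in S_x^-$ is $C^\infty$, and because $\beta$ is smooth with strictly negative adjoint eigenvalues on the negative slice, $e^{i\beta t} \cdot y_0 \to x$ in $L_k^2$ as $t \to \infty$ (indeed, the Lipschitz bound \eqref{eqn:lipschitz-slice} applies in the $L_k^2$ norm as well). Hence $e^{i\beta t} \cdot y_0 \in U$ for all sufficiently large $t$, and so $\| z_t - x \|_{L_k^2} < \varepsilon_1$ for all such $t$. Let $\{z_{t_{n_\ell}}\}$ be the subsequence from Proposition \ref{prop:strong-convergence} converging strongly in $L_k^2$ to $z_\infty$; passing to the limit along this subsequence gives $\| z_\infty - x \|_{L_k^2} \leq \varepsilon_1$. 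Since $z_\infty^0 = \varphi(z_\infty, s)$ for some $s \in [0,T]$, the first step then yields $\| z_\infty^0 - x \|_{L_k^2} < \varepsilon$.

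The only mildly subtle step is ensuring that the Lipschitz estimate \eqref{eqn:lipschitz-slice} indeed drives $e^{i\beta t} \cdot y_0$ to $x$ in $L_k^2$ (not merely $L_1^2$ or $C^0$), but this is immediate from smoothness of $\beta$ and of $y_0$, so the argument is a clean concatenation of three standard continuity statements rather than a genuine obstacle.
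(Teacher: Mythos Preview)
Your proposal is correct and follows essentially the same route as the paper's own argument, which is only sketched in the sentence preceding the corollary: bound $\|z_\infty - x\|_{L_k^2}$ via Lemma~\ref{lem:Lk2-length} and Proposition~\ref{prop:strong-convergence}, then pass to $z_\infty^0 = \varphi(z_\infty,s)$ using continuous dependence of the finite-time flow on the compact interval $[0,T]$; the paper likewise notes that the $L_1^2$ hypothesis feeds the estimates of this section while the $C^0$ hypothesis feeds Lemma~\ref{lem:uniform-bound-sigma}.

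One quantifier-order point worth tightening: you select $\varepsilon_1$ using continuous dependence on $[0,T]$ \emph{before} choosing $\delta$, yet the constant $T$ from Lemma~\ref{lem:bounded-away-from-critical} is produced for a \emph{given} $y_0$ and could in principle vary with $y_0$. The paper's sketch has the same implicit assumption (``$s$ is bounded''), so this is not a divergence from the paper, but if you want to make it airtight you can either (i) check that the proof of Lemma~\ref{lem:bounded-away-from-critical} yields a $T$ depending only on an upper bound for $\|y_0-x\|_{C^0}$, or (ii) bypass the issue entirely by reusing the Lojasiewicz-based length estimate from the proof of Lemma~\ref{lem:L12-bound} on the forward flow starting at $z_\infty$, which controls $\|\varphi(z_\infty,s)-x\|_{L_k^2}$ for all $s$ with $\YMH(\varphi(z_\infty,s))$ close to $\YMH(x)$, without needing a uniform $T$.
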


\begin{remark}
The previous proof uses the fact that the \emph{finite-time} flow depends continuously on the initial condition. The limit of the downwards $\YMH$ flow as $t \rightarrow \infty$ depends continuously on initial conditions within the same Morse stratum (cf. \cite[Thm. 3.1]{Wilkin08}). It is essential that the constant $T$ from Lemma \ref{lem:bounded-away-from-critical} is finite (which follows from Corollary \ref{cor:bounded-metric-away-from-critical}) in order to guarantee that $z_\infty$ and $\varphi(z_\infty, s)$ are gauge equivalent. Without a bound on $T$, it is possible that $z_\infty$ may be in a different Morse stratum to $\lim_{t \rightarrow \infty} \varphi(z_t, t-t')$.
\end{remark}

\subsubsection{Constructing a convergent solution to the backwards $\YMH$ flow}\label{subsec:construct-reverse-solution}

In this section we show that the limit $z_\infty$ is in the unstable set $W_x^-$.

\begin{proposition}\label{prop:convergence-group-action}
For each $y_0 \in S_x^-$ there exists $g \in \mathcal{G}^\C$ such that $g \cdot y_0 \in W_x^-$.
\end{proposition}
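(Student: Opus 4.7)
Building on Propositions \ref{prop:strong-convergence} and \ref{prop:limit-in-group-orbit}, the plan is to show that the limit $z_\infty = g \cdot y_0$ produced by the scattering construction lies in $W_{x'}^-$ for some critical point $x' \in \mathcal{G} \cdot x$. Since the Yang--Mills--Higgs flow is $\mathcal{G}$-equivariant, we have $W_{k \cdot x}^- = k \cdot W_x^-$ for every $k \in \mathcal{G}$, so replacing $g$ by $k^{-1} g$ for an appropriate $k$ will then yield the element of $\mathcal{G}^\C$ required by the statement.

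For the construction, fix a subsequence $\{ t_n \}$ along which $z_{t_n} \to z_\infty$ in every $L_k^2$, and for each $\tau \geq 0$ set
\begin{equation*}
w_n(\tau) := \varphi(z_{t_n}, -\tau) = \varphi(e^{i \beta t_n} \cdot y_0, t_n' - \tau),
\end{equation*}
which is well defined as soon as $t_n' > \tau$, and this holds for $n$ large. The key observation I would exploit is that the proofs of Lemmas \ref{lem:L12-bound} and \ref{lem:Lk2-length} produce the bound $\| x_s - x \|_{L_k^2} < \varepsilon$ uniformly for $s \in [0, t_n']$, not just at the endpoint $s = t_n'$; indeed, the supremum $\tau$ introduced in the proof of Lemma \ref{lem:L12-bound} is shown to satisfy $\tau \geq t_n'$. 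Consequently $\| w_n(\tau) - x \|_{L_k^2} < \varepsilon$ independently of $n$ and $\tau$. Sobolev compactness together with a diagonal argument over a countable dense set of times then extracts a further subsequence along which $w_n(\tau) \to \gamma(-\tau)$ in $C^\infty$ for every $\tau \in [0, \infty)$.

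The identity $\varphi(w_n(\tau_2), \tau_2 - \tau_1) = w_n(\tau_1)$, which holds by construction for $0 \leq \tau_1 \leq \tau_2$, passes to the limit using continuous dependence of the finite-time forward Yang--Mills--Higgs flow on initial conditions in $L_k^2$ \cite[Prop.~3.4]{Wilkin08}. This yields the semigroup property $\varphi(\gamma(-\tau_2), \tau_2 - \tau_1) = \gamma(-\tau_1)$, so together with $\gamma(0) = z_\infty$ the curve $\gamma : (-\infty, 0] \to \mathcal{B}$ is a genuine solution of the reverse Yang--Mills--Higgs flow issuing from $z_\infty$.

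The main obstacle will be the last step: showing that $\gamma(-\tau)$ converges in $C^\infty$ as $\tau \to \infty$ to a critical point in $\mathcal{G} \cdot x$. The plan is to combine the uniform bound $\| \gamma(-\tau) - x \|_{L_k^2} < \varepsilon$ from above (which, for $\varepsilon$ sufficiently small, confines $\gamma(-\tau)$ to a neighbourhood of $x$ in which the only critical points lie in $\mathcal{G} \cdot x$ by the slice theorem \ref{prop:filtered-slice-theorem} and the discreteness of critical values of $\YMH$) with a Lojasiewicz inequality and interior estimate applied along the reverse flow, in the same spirit as Lemma \ref{lem:interior-bound} and Lemma \ref{lem:f-exponential}. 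Monotonicity and boundedness of $\YMH$ along $\gamma$ force $\int_0^\infty \| \grad \YMH(\gamma(-\tau)) \|_{L^2} \, d\tau$ to be finite, and the interior estimate upgrades this to a finite length in every $L_k^2$, giving $C^\infty$ convergence of $\gamma(-\tau)$ to some critical point $x' = k \cdot x$ with $k \in \mathcal{G}$. Replacing $g$ by $k^{-1} g$ then produces the element of $\mathcal{G}^\C$ asserted by the proposition.
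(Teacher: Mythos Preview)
Your construction of the backward solution $\gamma$ via a diagonal argument is sound, and the observation that the proofs of Lemmas \ref{lem:L12-bound} and \ref{lem:Lk2-length} bound the whole trajectory $x_s$, not only the endpoint, is correct and useful. The gap is in the final step.

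The assertion that a small enough neighbourhood of $x$ contains no critical points outside $\mathcal{G} \cdot x$ is false. A critical point has the form $(E_1,\phi_1)\oplus\cdots\oplus(E_n,\phi_n)$ with each summand polystable; deforming one summand within its own moduli space produces a nearby critical point with the same Harder--Narasimhan type and the same value of $\YMH$, yet not $\mathcal{G}$-equivalent to $x$. Neither the slice theorem nor the discreteness of critical values rules this out. Your \L ojasiewicz argument therefore gives convergence of $\gamma(-\tau)$ to \emph{some} critical point $x'$ with $\|x'-x\|_{L_k^2}<\varepsilon$, but you have no mechanism to force $x'\in\mathcal{G}\cdot x$, and without that the conclusion $g\cdot y_0\in W_x^-$ does not follow. (As a minor aside, monotonicity and boundedness of $\YMH$ along $\gamma$ give only $\int_0^\infty \|\grad\YMH\|_{L^2}^2\,d\tau<\infty$, not the $L^1$ bound; the \L ojasiewicz inequality is genuinely needed, not just combined with.)

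The paper closes this gap by exploiting the self-similarity of the scattering construction rather than a \L ojasiewicz argument. Applying the entire construction with $y_0$ replaced by $y_s=e^{i\beta s}\cdot y_0\in S_x^-$ yields a limit $z_\infty^{-s}$, and by continuous dependence of the forward flow together with backward uniqueness (Proposition \ref{prop:backwards-uniqueness}) one identifies $z_\infty^{-s}$ with the reverse flow $\gamma(-s)$ of $z_\infty^0$. Since $\|y_s-x\|\to 0$ as $s\to\infty$, Corollary \ref{cor:flow-bound} applied with initial condition $y_s$ gives $\|z_\infty^{-s}-x\|_{L_k^2}\to 0$ directly, so $\gamma(-s)\to x$ itself, not merely some nearby critical point. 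This is the missing idea in your approach: the $\varepsilon$ in your uniform bound is fixed by $y_0$ and cannot be shrunk by taking $\tau$ large, whereas re-running the construction from $y_s$ trades the fixed $\varepsilon$ for one that tends to zero.
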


\begin{proof}
In what follows, fix any positive integer $k$. Given $y_0 \in S_x^-$, let $z_t^0 = f_t(y_0) \cdot y_0$, where $f_t$ is the complex gauge transformation from Proposition \ref{prop:metrics-converge}. Then Proposition \ref{prop:limit-in-group-orbit} shows that there exists $z_\infty^0 := \phi(z_\infty, s)$ and a subsequence $\{ z_{t_n}^0 \}$ such that $z_{t_n}^0 \rightarrow z_\infty^0$ strongly in $L_k^2$.

For any $s > 0$, let $y_s = e^{i \beta s} \cdot y_0$ and define $z_t^{-s} = f_t(y_s) \cdot y_s$. By definition, $z_t^0$ is the downwards $\YMH$ flow for time $s$ with initial condition $z_t^{-s}$. Applying Proposition \ref{prop:strong-convergence} to the subsequence $z_{t_n}^{-s}$ shows that there is a subsequence $z_{t_{n_j}}^{-s}$ converging in $L_k^2$ to some $z_\infty^{-s}$. Since the $\YMH$ flow for finite time $s$ depends continuously on the initial condition (cf. \cite[Prop. 3.4]{Wilkin08}) then $z_{t_{n_j}}^{-s} \rightarrow z_\infty^{-s}$ and $z_{t_{n_j}}^0 \rightarrow z_\infty^0$ implies that $z_\infty^0$ is the time $s$ flow with initial condition $z_\infty^{-s}$. Therefore, for any $s > 0$ we have constructed a solution to the $\YMH$ flow on $[-s, 0]$ connecting $z_\infty^0$ and $z_\infty^{-s}$. Proposition \ref{prop:backwards-uniqueness} shows that this solution must be unique for each $s$, and therefore there is a well-defined solution on the time interval $(-\infty, 0]$.

Moreover, we also have the uniform bound from Corollary \ref{cor:flow-bound} which shows that for all $\varepsilon > 0$ there exists $\delta > 0$ such that $\| z_\infty^{-s} - x \|_{L_k^2} \leq \varepsilon$ for all $y_0$ such that $\| y_0 - x \|_{L_1^2} < \delta$. Therefore as $s \rightarrow \infty$, the sequence $z_\infty^{-s}$ converges strongly to $x$ in the $L_k^2$ norm for any $k$, and so $z_\infty^0 = g \cdot y_0 \in W_x^-$. Proposition \ref{prop:exponential-convergence} then shows that the convergence is exponential in each Sobolev norm.
\end{proof} 

\subsection{Convergence of the inverse process}\label{subsec:inverse-construction}

In this section we consider the inverse procedure to that of the previous section and prove that each point in the unstable set $W_x^-$ is gauge equivalent to a point in the negative slice $S_x^-$. The idea is similar to that of the previous section, except here we use the modified flow.

\subsubsection{A $C^0$ bound in the space of metrics}

Given $y_0 \in W_x^-$, let $y_t = g_t \cdot y_0$ be the solution to the modified flow \eqref{eqn:modified-flow} with initial condition $y_0$. Define $f_t = g_t \cdot e^{i\beta t}$ and let $h_t = f_t^* f_t$. This is summarised in the diagram below.

\begin{figure}[ht]
\begin{center}
\begin{pspicture}(2,0)(10,5)
\pscurve(3.9,5)(4,4)(4.4,2.3)(5,1.2)(5.5,0.5)(6,0)
\psline[arrowsize=5pt]{->}(4,4)(4,1)
\pscurve[arrowsize=5pt]{->}(4,4)(4.4,2.3)(5,1.2)
\pscurve[arrowsize=5pt]{->}(4,1)(4.5,1.05)(5,1.2)
\psdots[dotsize=3pt](3.9,5)(4,4)(4,1)(5,1.2)
\uput{4pt}[180](3.9,5){\small{$x$}}
\uput{4pt}[180](4,4){\small{$y_0$}}
\uput{4pt}[180](4,1){\small{$e^{- i \beta t} \cdot y_0$}}
\uput{4pt}[0](5,1.2){\small{$g_t \cdot y_0$}}
\uput{2pt}[30](6,0){$W_x^-$}
\uput{3pt}[270](4.5,1.05){\small{$f_t$}}
\uput{3pt}[30](4.4,2.3){\small{$g_t$}}
\uput{3pt}[180](4,2.3){\small{$e^{- i \beta t}$}}
\end{pspicture}
%\caption{Comparison of the gradient flow and the linearised flow.}
\end{center}
\end{figure}

Using a similar calculation as the previous section, we have the same expression for the change of metric as in Lemma \ref{lem:derivative-difference}.
\begin{lemma}\label{lem:reverse-derivative-difference}
For any initial condition $y_0 \in W_x^-$, the induced flow on $\mathcal{G}^\C / \mathcal{G}$ satisfies
\begin{equation}\label{eqn:metric-derivative}
\frac{dh_t}{dt} = -2i h_t \mu_h(e^{-i \beta t} \cdot y_0) + i \beta h_t + i h_t \beta .
\end{equation}
\end{lemma}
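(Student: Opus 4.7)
The plan is to imitate directly the computation of Lemma \ref{lem:derivative-difference}, the only extra point being that the modification term $\gamma$ in the modified flow \eqref{eqn:modified-gauge-flow} contributes nothing to $\frac{dh_t}{dt}$ because it takes values in $\Lie(\mathcal{G})$.

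First I would differentiate $f_t = g_t e^{i\beta t}$ using the modified gauge flow equation \eqref{eqn:modified-gauge-flow}. Since $i\beta$ commutes with $e^{i\beta t}$ and $g_t = f_t e^{-i\beta t}$, the term $g_t(i\beta) e^{i\beta t} f_t^{-1}$ collapses to $f_t (i\beta) f_t^{-1}$, yielding
$$\frac{df_t}{dt} f_t^{-1} = -i\mu(g_t \cdot y_0) + \gamma(g_t \cdot y_0) + f_t (i\beta) f_t^{-1}.$$
From $h_t = f_t^* f_t$ one then has
$$\frac{dh_t}{dt} = f_t^* \left[\Bigl(\frac{df_t}{dt} f_t^{-1}\Bigr)^{\!*} + \frac{df_t}{dt} f_t^{-1}\right] f_t,$$
so the three contributions from the previous display can be analysed one at a time. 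By the definition \eqref{eqn:def-gamma}, $\gamma = \gamma_- - \gamma_-^*$ is skew-Hermitian, so $\gamma + \gamma^* = 0$ and the $\gamma(g_t \cdot y_0)$ term drops out. The element $-i\mu(g_t \cdot y_0)$ is Hermitian (as $\mu$ is skew-Hermitian, consistent with \eqref{eqn:critical-curvature}), so its symmetrisation doubles it, giving $-2i\mu(g_t \cdot y_0)$. Finally, since $i\beta$ is Hermitian, $(f_t (i\beta) f_t^{-1})^* = (f_t^*)^{-1}(i\beta) f_t^*$, and conjugation by $f_t^*$ on the left and $f_t$ on the right produces $h_t(i\beta)$ and $(i\beta)h_t$ respectively.

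Assembling the pieces gives $\frac{dh_t}{dt} = -2i\, f_t^* \mu(g_t \cdot y_0) f_t + i\beta h_t + i h_t \beta$, and the identity $f_t^* \mu(g_t \cdot y_0) f_t = h_t \mu_h(e^{-i\beta t} \cdot y_0)$ follows directly from the definition $\mu_h(y) = \Ad_{g^{-1}} \mu(g \cdot y)$ in \eqref{eqn:def-muh} applied with $g = f_t$ together with $g_t \cdot y_0 = f_t \cdot (e^{-i\beta t} \cdot y_0)$. This yields \eqref{eqn:metric-derivative}. There is no serious obstacle: the only conceptual point is that $\gamma$ is purely unitary-valued, so the modification does not register in the change-of-metric equation. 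This is consistent with Corollary \ref{cor:metrics-same}, which shows that the metric evolution is the same for the ordinary and modified Yang-Mills-Higgs flows, and this consistency is what makes the formula for $\frac{dh_t}{dt}$ here coincide with that of Lemma \ref{lem:derivative-difference}.
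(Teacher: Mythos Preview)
Your proof is correct and follows essentially the same approach as the paper: compute $\frac{df_t}{dt}f_t^{-1}$ from the modified gauge flow, expand $\frac{dh_t}{dt}$ as the symmetrised conjugate, observe that the skew-Hermitian $\gamma$ term cancels while the Hermitian $-i\mu$ term doubles, and then rewrite $f_t^*\mu(g_t\cdot y_0)f_t$ via the definition of $\mu_h$. Your additional remark linking the cancellation of $\gamma$ to Corollary~\ref{cor:metrics-same} is a helpful piece of context that the paper does not spell out.
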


\begin{proof}
A similar calculation as in the proof of Lemma \ref{lem:derivative-difference} (this time using the modified flow \eqref{eqn:modified-flow}) shows that
\begin{equation*}
\frac{df_t}{dt} f_t^{-1} = -i\mu(g_t \cdot y_0) + \gamma(g_t \cdot y_0) + f_t (i \beta) f_t^{-1}  .
\end{equation*}
Then
\begin{align*}
\frac{dh_t}{dt} & = f_t^* \left( \frac{df}{dt} f_t^{-1} \right)^* f_t + f_t^* \left( \frac{df}{dt} f_t^{-1} \right) f_t \\
 & = f_t^* \left( -i \mu(g_t \cdot y_0) - \gamma(g_t \cdot y_0) + (f_t^*)^{-1} (i \beta) f_t^* - i \mu(g_t \cdot y_0) + \gamma(g_t \cdot y_0) + f_t (i \beta) f_t^{-1} \right) f_t \\
 & = -2i h_t f_t^{-1} \mu(g_t \cdot y_0) f_t + i \beta h_t + i h_t \beta \\
 & = -2i h_t \mu_h(e^{-i \beta t} \cdot y_0) + i \beta h_t + i h_t \beta . \qedhere
\end{align*}
\end{proof}

\begin{lemma}\label{lem:reverse-uniform-bound-sigma}
For every $\varepsilon > 0$ there exists a constant $C > 0$ such that for any initial condition $y_0 \in W_{x}^-$ with $\| e^{-i \beta T} \cdot y_0 - x \|_{C^1} + \| g_T \cdot y_0 - x \|_{C^1} < \varepsilon$ we have the estimate 
\begin{equation*}
\sup_X \sigma(h_t) \leq C \left( \| e^{-i \beta T} \cdot y_0 - x \|_{C^1} + \| g_T \cdot y_0 - x \|_{C^1} \right)
\end{equation*}
for all $0 \leq t \leq T$. 
\end{lemma}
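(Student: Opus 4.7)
Following the strategy of Lemma \ref{lem:uniform-bound-sigma}, I first take the trace of the matrix ODE of Lemma \ref{lem:reverse-derivative-difference} (and separately the trace of its inverse), which after trace cyclicity yields
\begin{equation*}
\frac{d}{dt}\tr h_t = -2i\tr\bigl((\mu_h(e^{-i\beta t}\cdot y_0) - \beta)\, h_t\bigr), \qquad \frac{d}{dt}\tr h_t^{-1} = 2i\tr\bigl((\mu_h(e^{-i\beta t}\cdot y_0) - \beta)\, h_t^{-1}\bigr).
\end{equation*}
Splitting $\mu_h - \beta = (\mu_h - \mu(e^{-i\beta t}\cdot y_0)) + (\mu(e^{-i\beta t}\cdot y_0) - \mu(x))$ and using the two inequalities of Lemma \ref{lem:metric-inequalities} to absorb the $\mu_h - \mu$ contribution into the Laplacian, I arrive at
\begin{equation*}
(\partial_t + \Delta)\sigma(h_t) \leq C_1\|\mu(e^{-i\beta t}\cdot y_0) - \mu(x)\|_{C^0}\bigl(\sigma(h_t) + 2\rank E\bigr).
\end{equation*}

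The essential difference from the proof of Lemma \ref{lem:uniform-bound-sigma} is that $e^{-i\beta t}\cdot y_0$ no longer lies in the slice $S_x^-$, so the quadratic moment map estimate of Lemma \ref{lem:moment-map-quadratic} is unavailable. In its place I use only smoothness of $\mu$ as a quadratic expression in the connection to obtain the linear Lipschitz bound $\|\mu(z) - \mu(x)\|_{C^0} \leq C_2\|z - x\|_{C^1}$ for $z$ in a bounded $C^1$-neighbourhood of $x$. This explains why the right-hand side of the lemma is linear in the small quantities and involves $C^1$ norms rather than $C^0$.

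The main technical step is to bound $\|e^{-i\beta t}\cdot y_0 - x\|_{C^1}$ uniformly on $t \in [0, T]$ by $\|e^{-i\beta T}\cdot y_0 - x\|_{C^1} + \|g_T\cdot y_0 - x\|_{C^1}$. The critical point equations imply that $\beta$ commutes with the holomorphic structure and the Higgs field of $x$, so $e^{-i\beta t}$ fixes $x$ and the complex gauge action of $e^{-i\beta t}$ on a deformation $y = x + (a, \varphi)$ is exactly $x + \Ad_{e^{-i\beta t}}(a, \varphi)$. Decomposing $y_0 - x$ along the eigenspace decomposition $\End(E) = \End(E)_+ \oplus \End(E)_0 \oplus \End(E)_-$ of $\ad_{i\beta}$, the component in $\End(E)_-$ has norm monotone increasing in $t$, so for $t \leq T$ it is bounded by its value at $t = T$ and hence by $\|e^{-i\beta T}\cdot y_0 - x\|_{C^1}$. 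The components in $\End(E)_+ \oplus \End(E)_0$ are contracted or preserved by $\Ad_{e^{-i\beta t}}$, so they are controlled by $\|y_0 - x\|_{C^1}$, which in turn is controlled by $\|g_T\cdot y_0 - x\|_{C^1}$ via a Gronwall-type estimate along the modified flow, using the fact that its velocity $-I\rho_y(\mu(y)) + \rho_y(\gamma(y))$ vanishes at $x$ and is $O(\|y - x\|)$ near $x$.

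Combining the above gives $(\partial_t + \Delta)\sigma(h_t) \leq C_3\bigl(\|e^{-i\beta T}\cdot y_0 - x\|_{C^1} + \|g_T\cdot y_0 - x\|_{C^1}\bigr)(\sigma(h_t) + 2\rank E)$. With initial data $\sigma(h_0) = 0$, the maximum principle argument from the end of the proof of Lemma \ref{lem:uniform-bound-sigma} then produces the desired pointwise bound for $\sup_X \sigma(h_t)$. I expect the main obstacle to lie in the fourth step above, specifically in establishing the bound on $\|y_0 - x\|_{C^1}$ in terms of $\|g_T\cdot y_0 - x\|_{C^1}$ with constants independent of $T$ in the relevant range, which requires careful use of the structure of the modified flow and the vanishing of its velocity at the critical point.
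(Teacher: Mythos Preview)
Your outline matches the paper's proof closely: the derivation of the heat-operator inequality for $\sigma(h_t)$, the replacement of the quadratic slice estimate by the linear $C^1$ bound $\|\mu(z)-\mu(x)\|_{C^0}\le C\|z-x\|_{C^1}$, and the eigenspace decomposition of $y_0-x$ are all exactly what the paper does. The one genuine gap is the step you yourself flag at the end.

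A Gronwall argument along the modified flow will not give a $T$-independent bound on $\|y_0-x\|_{C^1}$ in terms of $\|g_T\cdot y_0 - x\|_{C^1}$; it produces a factor $e^{CT}$, which destroys the estimate. The paper's resolution is to invoke Proposition~\ref{prop:exponential-convergence} (exponential convergence of the backwards flow, proved via the Lojasiewicz inequality), which gives the reverse inequality $\|y_0 - x\|_{C^1} \le C_1 e^{-\eta T}\|g_T\cdot y_0 - x\|_{C^1}$ with an exponential \emph{decay} factor. This is then fed into the \emph{integral} $I(t)=\int_0^t \|e^{-i\beta s}\cdot(y_0-x)\|_{C^1}\,ds$ rather than a pointwise bound: the $\End(E)_+$ part of the integrand contributes $O(e^{-\eta T})$, the $\End(E)_0$ part contributes $O(Te^{-\eta T})$ (bounded uniformly in $T$), and the $\End(E)_-$ part contributes $O(\|e^{-i\beta T}\cdot y_0 - x\|_{C^1})$. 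The general heat bound \eqref{eqn:general-heat-bound} with $C(s)=D(s)$ proportional to $\|e^{-i\beta s}\cdot(y_0-x)\|_{C^1}$ then finishes the argument. So the missing ingredient is precisely the exponential convergence of Proposition~\ref{prop:exponential-convergence}, not a Gronwall estimate.
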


\begin{proof}
In contrast to the proof of Lemma \ref{lem:uniform-bound-sigma}, $e^{-i \beta t} \cdot y_0$ is not in the slice $S_x$ and so it satisfies the inequality $\| \mu(e^{-i \beta t} \cdot y_0) - \beta \|_{C^0} \leq C' \| e^{-i \beta t} \cdot y_0 - x \|_{C^1}$ instead of the quadratic bound of Lemma \ref{lem:moment-map-quadratic}. Using this inequality, the same idea as in the proof of Lemma \ref{lem:uniform-bound-sigma} leads to the bound
\begin{align}\label{eqn:heat-operator-bounded}
\begin{split}
\left( \frac{\partial}{\partial t} + \Delta \right) \sigma(h_t) & \leq C_1 \| e^{-i \beta t} \cdot (y_0-x) \|_{C^1} \left( \tr(h_t) + \tr(h_t^{-1}) \right) \\
 & = C_1 \| e^{-i \beta t} \cdot (y_0 - x) \|_{C^1} \sigma(h_t) + 2 C_1 \| e^{-i \beta t} \cdot (y_0 - x) \|_{C^1} \rank(E)
\end{split}
\end{align}
In general, if the heat operator is bounded for all $t \geq 0$
\begin{equation*}
\left( \frac{\partial}{\partial t} + \Delta \right) f(p,t) \leq C(t) f(p,t) + D(t), \quad p \in X, t \in [0, \infty)
\end{equation*}
for some nonnegative functions $C(t)$ and $D(t)$ independent of $p \in X$, then $f(p,t)$ satisfies the bound
\begin{equation}\label{eqn:general-heat-bound}
f(p,t) \leq \exp \left( \int_0^t C(s) \, ds \right) \int_0^t D(s) \, ds + f(p,0)
\end{equation}
Therefore \eqref{eqn:heat-operator-bounded} implies that the problem reduces to finding a bound for $\int_0^t \| e^{-i \beta s} \cdot (y_0 - x) \|_{C^1} \, ds$. Proposition \ref{prop:exponential-convergence} shows that the backwards flow with initial condition in $W_x^-$ converges exponentially to $x$ in every Sobolev norm. Therefore there exists a neighbourhood $U$ of $x$ such that if $g_T \cdot y_0 \in U$ then there exist positive constants $C_1$ and $\eta$ such that the following estimate holds
\begin{equation*}
\| y_0 - x \|_{C^1} \leq C_1 e^{-\eta T} \| g_T \cdot y_0 - x \|_{C^1} .
\end{equation*}
Recall the eigenbundles $\End(E)_-$, $\End(E)_0$ and $\End(E)_+$ from Section \ref{sec:preliminaries}. The above estimate shows that each component of $y_0 - x$ in $\End(E)_-$, $\End(E)_0$ and $\End(E)_+$ is bounded by $C_1 e^{-\eta T} \| g_T \cdot y_0 - x \|_{C^1}$. Since the component of $e^{-i \beta t} \cdot (y_0 - x)$ in $\End(E)_+$ is exponentially decreasing with $t$ then 
\begin{equation*}
\int_0^T \| (e^{-i \beta t} \cdot y_0 - x)_{\End(E)_+} \|_{C^1} dt \leq C_1' \| y_0 - x \|_{C^1} \leq C_1 e^{-\eta T} \| g_T \cdot y_0 - x \|_{C^1} .
\end{equation*}
The component of $e^{-i \beta t} \cdot (y_0 - x)$ in $\End(E)_0$ is constant with respect to $t$, and so 
\begin{equation*}
\int_0^T \| (e^{-i \beta t} \cdot y_0 - x)_{\End(E)_0} \|_{C^1} dt \leq C_2' T \| y_0 - x \|_{C^1} \leq C_2 T e^{-\eta T} \| g_T \cdot y_0 - x \|_{C^1} .
\end{equation*}
Finally, the component of $e^{-i \beta t} \cdot (y_0 - x)$ in $\End(E)_-$ is exponentially increasing, and so we have the bound
\begin{equation*}
\int_0^T \| (e^{-i \beta t} \cdot y_0 - x)_{\End(E)_-} \|_{C^1} \leq C_3 \| e^{-i \beta T} \cdot (y_0 - x) \|_{C^1} .
\end{equation*}

Combining the estimates for the three components shows that the integral 
\begin{equation*}
I(t) = \int_0^t \| e^{-i \beta s} \cdot (y_0-x) \|_{C^1} \, ds
\end{equation*}
is bounded by
\begin{equation*}
I(t) \leq C_1 e^{-\eta T} \| g_T \cdot y_0 - x \|_{C^1} + C_2 T e^{-\eta T} \| g_T \cdot y_0 - x \|_{C^1} + C_3 \| e^{-i \beta T} \cdot y_0 - x \|_{C^1}
\end{equation*}
The inequality \eqref{eqn:general-heat-bound} together with the assumption $\| g_T \cdot y_0 - x \|_{C^1} + \| e^{-i \beta T} \cdot y_0 - x \|_{C^1} < \varepsilon$ shows that there exists a constant $C$ such that
\begin{equation*}
\sup_X \sigma(h_t) \leq C \left( \| e^{-i \beta T} \cdot y_0 - x \|_{C^1} + \| g_T \cdot y_0 - x \|_{C^1} \right) \qedhere
\end{equation*}
\end{proof}

\subsubsection{Convergence in the space of Higgs bundles}

In this section we use a method analogous to that of Section \ref{sec:convergence-in-B} to show that the sequence converges in the space of Higgs bundles and that the limit is gauge equivalent to $y_0$. Given $y_0 \in W_x^-$ and $t \in (-\infty, 0]$, define $s < 0$ by $\| e^{i \beta s} \cdot g_t(y_0) \cdot y_0 - x \|_{L_k^2} = \| y_0 - x \|_{L_k^2}$. Note that this is well-defined for small values of $\| y_0 - x \|_{L_k^2}$ since Lemma \ref{lem:unstable-sets-same} shows that $g_t(y_0) \cdot y_0 \rightarrow x$ in the $C^\infty$ topology as $t \rightarrow - \infty$ and for $s < 0$ the action of $e^{i \beta s}$ exponentially increases the $C^0$ norm of the component of $g_t(y_0) \cdot y_0$ in $\End(E)_-$. Now define $t' := \max \{ t, s \} < 0$, let $f_t(y_0) = e^{i \beta t'} \cdot  g_{t'}(g_{t-t'}(y_0) \cdot y_0) $ and $z_t := e^{i \beta t'} \cdot g_t(y_0) \cdot y_0 = f_t(y_0) \cdot g_{t-t'}(y_0) \cdot y_0$. Let $h_t = f_t^* f_t$ be the associated change of metric.

\begin{center}
\begin{pspicture}(0,-0.5)(8,5.5)
\psline[arrowsize=5pt]{->}(4,4)(4,1)
\pscurve(3.9,5)(4,4)(4.4,2.3)(5,1.2)(5.2,1)(6,0.4)
%\pscurve[arrowsize=5pt]{->}(4,1)(4.5,1.05)(5,1.2)
\psline[linestyle=dashed](0,1)(8,1)
\psdots[dotsize=3pt](3.9,5)(4,4)(4,1)(5.2,1)
\uput{4pt}[180](3.9,5){\small{$x$}}
\uput{4pt}[180](4,4){\small{$g_t(y_0) \cdot y_0$}}
\uput{5pt}[270](3.8,1){\small{$z_t = f_t \cdot y_0$}}
\uput{5pt}[270](5.2,1){\small{$y_0$}}
\uput{2pt}[270](6,0.4){$W_x^-$}
%\uput{3pt}[270](4.5,1.05){\small{$f_t$}}
\uput{3pt}[30](4.4,2.3){\small{$g_{t}$}}
\uput{3pt}[180](4,2.3){\small{$e^{i \beta t'}$}}
\uput{3pt}[90](8,1){\small{$\| y_0 - x \|_{L_k^2} = \text{constant}$}}
\end{pspicture}
\end{center}

Lemma \ref{lem:reverse-uniform-bound-sigma} then shows that $\sup_X \sigma(h_t) \leq C \left( \| z_t - x \|_{C^1} + \| g_{t-t'}(y_0) \cdot y_0 - x \|_{C^1} \right)$. Since either $\| z_t - x \|_{L_k^2} = \| y_0 - x \|_{L_k^2}$ (when $t < t'$) or $g_{t-t'}(y_0) \cdot y_0 = y_0$ (when $t'=t$), then  Corollary \ref{cor:bounded-metric-away-from-critical} shows that $g_{t-t'}(y_0) \cdot y_0$ and $z_t$ are both bounded away from $x$ in the $L_k^2$ norm. As a consequence, $|t-t'|$ is uniformly bounded in the same way as Lemma \ref{lem:bounded-away-from-critical}. Therefore
\begin{equation}\label{eqn:bounded-linear-flow}
\| e^{i \beta t} \cdot g_t(y_0) \cdot y_0 - x \|_{L_k^2} = \| e^{i \beta (t-t')} \cdot z_t - x \|_{L_k^2} \leq C' \| z_t - x \|_{L_k^2} = C' \| y_0 - x \|_{L_k^2}
\end{equation}
for some constant $C'$, which implies that there is a subsequence of $e^{i \beta t} \cdot g_t(y_0) \cdot y_0$ converging strongly to a limit $z_\infty^0$ in $L_{k-1}^2$. Since this is true for all $k$, then $z_\infty^0$ is a $C^\infty$ Higgs pair. 

A special case of \eqref{eqn:bounded-linear-flow} is
\begin{equation}\label{eqn:bounded-linear-flow-C1}
\| e^{i \beta t} \cdot g_t(y_0) \cdot y_0 - x \|_{C^1} \leq C \| e^{i \beta t} \cdot g_t(y_0) \cdot y_0 - x \|_{L_k^2} \leq C' \| y_0 - x \|_{L_k^2}
\end{equation}
for any $k$ such that $L_k^2 \hookrightarrow C^1$ is an embedding. 

By modifying the method of Proposition \ref{prop:metrics-converge} we can now show that the change of metric converges in $C^0$. For $t \in (-\infty, 0]$, define $f_t(y_0) = e^{i \beta t} \cdot g_{t}(y_0)$ and let $t_1 \leq t_2 \leq T < 0$. This is summarised in the diagram below.

\begin{figure}[ht]
\begin{center}
\begin{pspicture}(2,-1)(10,6)
%\psline[linestyle=dashed](3.5,6)(3.5,-1)
\psline[arrowsize=5pt]{->}(5,3.6)(5,0.2)
\psline[arrowsize=5pt]{->}(4,5.1)(4,0)
\pscurve(3.5,6)(4,5.1)(5,3.6)(6,2.2)(7.4,0.5)
\pscurve[arrowsize=5pt]{<-}(5,3.6)(4.4,3.53)(4,3.4)
\pscurve[arrowsize=5pt]{->}(7.4,0.5)(5,0.2)(4,0)
%\pscurve[arrowsize=5pt]{->}(4,3.5)(4.6,1.7)(5.3,0.09)
\psdots[dotsize=3pt](3.5,6)(4,5.1)(4,3.4)(4,0)(7.4,0.5)(5,3.6)(5,0.2)
\uput{4pt}[180](3.5,6){\small $x$}
\uput{4pt}[20](4,5){\small $y_{t_1} = g_{t_1}(y_0) \cdot y_0$}
\uput{4pt}[180](4,-0.2){\small $f_{t_1}(y_0) \cdot y_0 = e^{i \beta t_1} \cdot y_{t_1}$}
\uput{4pt}[180](4,3.4){\small $e^{i \beta (t_1-t_2)} \cdot y_{t_1}$}
\uput{4pt}[10](7.4,0.5){\small $y_0$}
\uput{5pt}[305](5,0.2){\small{$f_{t_2}(y_0) \cdot y_0 = e^{i\beta t_2} \cdot y_{t_2}$}}
\uput{4pt}[20](5,3.6){\small{$y_{t_2} = g_{t_2}(y_0) \cdot y_0$}}
%\uput{2pt}[180](3.5,-0.8){$S_x^-$}
\uput{2pt}[45](6.8,1.5){$W_x^-$}
\end{pspicture}
\end{center}
\end{figure}

\begin{proposition}\label{prop:reverse-metrics-converge}
$h_t(y_0)$ converges in the $C^0$ norm to a unique limit $h_\infty(y_0) \in \mathcal{G}^\C / \mathcal{G}$ as $t \rightarrow -\infty$. The limit depends continuously on the initial condition $y_0 \in S_x^-$. The rate of convergence is given by
\begin{equation}\label{eqn:reverse-metric-convergence-rate}
\sup_X \sigma(h_t(y_0) (h_\infty(y_0))^{-1}) \leq C_2 e^{2 \eta t} \| y_0 - x \|_{L_k^2} 
\end{equation}
where $C_2 > 0$ is a constant depending only on the orbit $\mathcal{G} \cdot x$, the constant $\eta$ is from Proposition \ref{prop:exponential-convergence} and $k$ is a positive integer chosen so that $L_k^2 \hookrightarrow C^1$ is a continuous embedding.
\end{proposition}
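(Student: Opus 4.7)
The plan is to mirror the Cauchy-sequence argument used for Proposition \ref{prop:metrics-converge}, with three substitutions in the toolkit: the modified flow replaces the $\YMH$ flow (by Corollary \ref{cor:metrics-same} the associated change of metric $h_t$ is unchanged), Lemma \ref{lem:modified-distance-decreasing} replaces Lemma \ref{lem:distance-decreasing} as the distance-decreasing estimate on the space of metrics, and Lemma \ref{lem:reverse-uniform-bound-sigma} replaces Lemma \ref{lem:uniform-bound-sigma} as the pointwise bound on $\sigma(h_t)$.

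First I would fix $t_1 \leq t_2 \leq T < 0$ with $|T|$ large enough that $y_{t_2} := g_{t_2} \cdot y_0$ lies in a small $C^1$ neighbourhood of $x$; this is possible by the exponential convergence of Proposition \ref{prop:exponential-convergence}. Using the semigroup property of the modified flow, $y_{t_1}$ is the modified flow of $y_{t_2}$ over time $t_1 - t_2 \leq 0$, with associated gauge transformation $\hat{g}_{t_1-t_2}(y_{t_2})$; setting $\hat{f}_s(y_{t_2}) = \hat{g}_s(y_{t_2}) e^{i\beta s}$ and $\hat{h}_s(y_{t_2}) = \hat{f}_s^* \hat{f}_s$ gives the analogues of $f_s, h_s$ for the initial condition $y_{t_2}$. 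Using the identity \eqref{eqn:metric-difference} together with the distance-decreasing property of Lemma \ref{lem:modified-distance-decreasing}, I expect to obtain the key estimate
\[
\sup_X \sigma(h_{t_1}(y_0) h_{t_2}(y_0)^{-1}) \leq \sup_X \sigma(\hat{h}_{t_1-t_2}(y_{t_2})) .
\]
Applying Lemma \ref{lem:reverse-uniform-bound-sigma} to the right-hand side bounds it by $C\bigl(\|e^{-i\beta(t_2-t_1)}\cdot y_{t_2} - x\|_{C^1} + \|y_{t_1} - x\|_{C^1}\bigr)$, and Proposition \ref{prop:exponential-convergence} then gives $\|y_{t_j} - x\|_{C^1} \leq C_1 e^{\eta t_j} \|y_0 - x\|_{L_k^2}$ for any $k$ with $L_k^2 \hookrightarrow C^1$. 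Decomposing $y_{t_2} - x$ along the eigenbundles of $\ad(i\beta)$ and using the Lipschitz bounds \eqref{eqn:lipschitz-slice} controls the first term similarly, after possibly shrinking $\eta$, yielding the Cauchy property of $\{h_t\}$ in $C^0$ and, letting $t_1 \to -\infty$, the convergence rate \eqref{eqn:reverse-metric-convergence-rate}.

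Continuous dependence of the limit on the initial condition $y_0$ will follow from continuous dependence of the finite-time modified flow on its initial condition (standard parabolic theory, cf. \cite[Prop. 3.4]{Wilkin08}) combined with the uniform decay \eqref{eqn:reverse-metric-convergence-rate}. I expect the main technical hurdle to be controlling the term $\|e^{-i\beta(t_2-t_1)} \cdot y_{t_2} - x\|_{C^1}$: unlike in the forward setting of Proposition \ref{prop:metrics-converge}, where $y_0$ lies in the negative slice and the linear flow is contracting, here $y_{t_2}$ may have components in every eigenbundle of $\ad(i\beta)$, and the backwards linear flow $e^{-i\beta(t_2-t_1)}$ amplifies the component in $\End(E)_-$. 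The key input that makes the argument close is the exponential rate $\eta$ from Proposition \ref{prop:exponential-convergence}, which forces these components of $y_{t_2} - x$ to be small enough to compensate for the amplification.
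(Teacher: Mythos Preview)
Your overall architecture---Cauchy in $C^0$ via Lemma \ref{lem:modified-distance-decreasing} plus Lemma \ref{lem:reverse-uniform-bound-sigma}, mirroring Proposition \ref{prop:metrics-converge}---is exactly what the paper does. However, the step you flag as the ``main technical hurdle'' is a genuine gap in your proposal, and the paper closes it in a way you do not.

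The problematic term (in the paper's orientation it reads $\|e^{i\beta(t_1-t_2)}\cdot y_{t_1} - x\|_{C^1}$; in your orientation the analogous quantity) cannot be controlled merely by decomposing into eigenbundles and invoking the rate $\eta$ from Proposition \ref{prop:exponential-convergence}. The amplification of the $\End(E)_-$ component under $e^{i\beta(t_1-t_2)}$ is of order $e^{|\lambda_1|\,|t_1-t_2|}$, and since $|t_1-t_2|$ is unbounded you would need $\eta \geq |\lambda_1|$ to compensate. There is no reason for this: $\eta$ comes from the Lojasiewicz inequality and has no a priori relationship with the eigenvalues of $\ad(i\beta)$. ``Possibly shrinking $\eta$'' makes matters worse, not better.

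The paper's fix is the uniform estimate \eqref{eqn:bounded-linear-flow-C1}, established \emph{before} Proposition \ref{prop:reverse-metrics-converge} by a separate argument: one introduces a stopping time $t'$ so that $z_t := e^{i\beta t'}\cdot g_t(y_0)\cdot y_0$ lies on the level set $\{\|\cdot - x\|_{L_k^2} = \|y_0 - x\|_{L_k^2}\}$, then uses Lemma \ref{lem:reverse-uniform-bound-sigma} together with Corollary \ref{cor:bounded-metric-away-from-critical} to show $|t-t'|$ is \emph{uniformly bounded}. This yields $\|e^{i\beta t}\cdot g_t(y_0)\cdot y_0 - x\|_{L_k^2} \leq C'\|y_0 - x\|_{L_k^2}$, which---applied with base point $y_{t_2}$ and $t = t_1 - t_2$---bounds the bad term by $C'\|y_{t_2} - x\|_{L_k^2}$ with no competition between $\eta$ and the eigenvalues. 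Everything then reduces to the exponential decay of $\|y_{t_2}-x\|_{L_k^2}$. You should incorporate this estimate (or an equivalent device that uniformly bounds the overshoot $|t-t'|$) rather than hope the raw exponential rate suffices.
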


\begin{proof}
The result follows from the same procedure as the proof of Proposition \ref{prop:metrics-converge}, except now we use the estimate from Lemma \ref{lem:reverse-uniform-bound-sigma} instead of the estimate from Lemma \ref{lem:uniform-bound-sigma} and the distance-decreasing formula for the modified flow from Lemma \ref{lem:modified-distance-decreasing}.

Let $h_{t_1-t_2}(y_{t_2})$ be the change of metric connecting $y_{t_2} = g_{t_2}(y_0) \cdot y_0$ and $e^{i \beta (t_1-t_2)} \cdot y_{t_1}$. Lemma \ref{lem:reverse-uniform-bound-sigma} and the estimate \eqref{eqn:bounded-linear-flow-C1} above show that $h_{t_1-t_2}(y_{t_2})$ satisfies
\begin{align*}
\sup_X \sigma(h_{t_1-t_2}(y_{t_2})) & \leq C \left( \| e^{i \beta (t_1-t_2)} \cdot y_{t_1} - x \|_{C^1} + \| y_{t_2} - x \|_{C^1} \right)  \\
 & \leq C C' \| y_{t_2} - x \|_{L_k^2} + C \| y_{t_2} - x \|_{C^1} \\
 & \leq C'' \| y_T - x \|_{L_k^2} \leq C_2 e^{2 \eta T} \| y_0 - x \|_{L_k^2}
\end{align*}
By the construction of the modified flow, the gauge transformation connecting $y_{t_2}$ and $e^{i \beta (t_1-t_2)} \cdot y_{t_1}$ is in $\mathcal{G}_*^\C$, The distance-decreasing formula for the action of $e^{i \beta (t_1 - t_2)}$ from Lemma \ref{lem:modified-distance-decreasing} then implies that
\begin{equation*}
\sigma(h_{t_1}(y_0) h_{t_2}(y_0)^{-1}) \leq \sigma(h_{t_1-t_2}(y_{t_2}))
\end{equation*}
and so the sequence $h_t(y_0)$ is Cauchy in the $C^0$ norm, by the same proof as Proposition \ref{prop:metrics-converge}.
\end{proof}

Therefore $y_0$ is connected to $z_\infty^0$ by a $C^0$ gauge transformation. Elliptic regularity together with the fact that $z_\infty^0$ is a $C^\infty$ Higgs pair then shows that $y_0$ is gauge equivalent to $z_\infty^0$ by a $C^\infty$ gauge transformation.

The same method as the proof of Proposition \ref{prop:convergence-group-action} then allows us to explicitly construct a solution of the linearised flow $z_\infty^{-s} = e^{i\beta s} \cdot z_\infty^0$ converging to $x$ as $s \rightarrow +\infty$. Lemma \ref{lem:classify-neg-slice} then shows that $z_\infty^0$ is $\mathcal{G}^\C$ equivalent to a point in $S_x^-$, which is smooth by Lemma \ref{lem:slice-smooth}. 

Therefore $y_0$ is $\mathcal{G}^\C$ equivalent to a point in $S_x^-$, and so we have proved the following converse to Proposition \ref{prop:convergence-group-action}.

\begin{proposition}\label{prop:unstable-maps-to-slice}
For each $y_0 \in W_x^-$ there exists a $C^\infty$ gauge transformation $g \in \mathcal{G}^\C$ such that $g \cdot y_0 \in S_x^-$.
\end{proposition}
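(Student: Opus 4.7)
The plan is to run the scattering construction in reverse: instead of starting in the negative slice and flowing up via $e^{i\beta t}$ and down via the $\YMH$ flow, I would start in the unstable set $W_x^-$ and flow down via $e^{i\beta t}$ (which increases the component in $\End(E)_-$) while using the modified backward $\YMH$ flow to move toward $x$. The output $z_t$ should converge, modulo a bounded finite-time flow correction, to a $C^\infty$ Higgs pair $z_\infty^0$ that is $\mathcal{G}^\C$-equivalent to $y_0$ and lies over a point of the negative slice.

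Concretely, given $y_0 \in W_x^-$, let $g_t \in \mathcal{G}_*^\C$ be the modified backward flow (Definition \ref{def:modified-flow}), whose existence and $C^\infty$ convergence $g_t \cdot y_0 \to x$ as $t \to -\infty$ is Lemma \ref{lem:unstable-sets-same}. Fix a large $k$ with $L_k^2 \hookrightarrow C^1$. Following the diagram in the excerpt, I define $s(t) < 0$ by the equation $\|e^{i\beta s}\cdot g_t(y_0)\cdot y_0 - x\|_{L_k^2} = \|y_0 - x\|_{L_k^2}$ (well-defined for small initial distance since the action of $e^{i\beta s}$ with $s<0$ exponentially grows the $\End(E)_-$-component of $g_t\cdot y_0$), set $t' = \max\{t,s(t)\}$, and put $z_t := e^{i\beta t'}\cdot g_t(y_0)\cdot y_0$, with $f_t = e^{i\beta t'}\cdot g_{t'}$ and $h_t = f_t^*f_t$ the associated change of metric. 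By construction either $z_t$ or $g_{t-t'}(y_0)\cdot y_0$ stays on a level set of the $L_k^2$ distance to $x$, so both are bounded away from $x$ in $L_k^2$; combining this with the metric bound of Lemma \ref{lem:reverse-uniform-bound-sigma} and Corollary \ref{cor:bounded-metric-away-from-critical} forces $|t-t'|$ to be uniformly bounded, by the same argument used in Lemma \ref{lem:bounded-away-from-critical}.

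Next I would prove $C^0$-convergence of the metrics $h_t(y_0)$ to some $h_\infty \in \mathcal{G}^\C/\mathcal{G}$ as $t \to -\infty$, following the scheme of Proposition \ref{prop:reverse-metrics-converge}: for $t_1 \le t_2 \le T < 0$, bound $\sigma(h_{t_1-t_2}(y_{t_2}))$ by Lemma \ref{lem:reverse-uniform-bound-sigma}, apply the modified-flow distance-decreasing formula of Lemma \ref{lem:modified-distance-decreasing} (which is valid because the gauge transformation at issue lies in $\mathcal{G}_*^\C$), and combine with the exponential decay $\|g_T\cdot y_0 - x\|_{C^1} \le C_1 e^{\eta T}\|y_0-x\|_{L_k^2}$ from Proposition \ref{prop:exponential-convergence} to produce a Cauchy estimate. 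The bound $|t-t'|\le T_0$ ensures $\|e^{i\beta t}\cdot g_t(y_0)\cdot y_0 - x\|_{L_k^2}$ stays uniformly bounded, so Rellich extracts a subsequence of $z_t$ converging strongly in $L_{k-1}^2$ to some $z_\infty^0$; doing this for every $k$ gives $z_\infty^0 \in C^\infty$. Then $y_0$ and $z_\infty^0$ are connected by a $C^0$ change of metric $h_\infty$, and since both endpoints are $C^\infty$, elliptic regularity (as in Lemma \ref{lem:slice-smooth} / Lemma \ref{lem:slice-theorem}) upgrades this to a $C^\infty$ gauge transformation.

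Finally, to land inside $S_x^-$, I would run the linearized flow forward on $z_\infty^0$: mimicking the construction of Proposition \ref{prop:convergence-group-action}, define $z_\infty^{-s} = e^{i\beta s}\cdot z_\infty^0$ and show, using the uniform estimates above, that $z_\infty^{-s} \to x$ in every Sobolev norm as $s\to +\infty$. This means $z_\infty^0$ lies in the set of Higgs bundles whose image under $e^{i\beta s}$ converges to $x$; by Lemma \ref{lem:classify-neg-slice} and the local homeomorphism $(\ker\rho_x^-)^\perp \times S_x^- \to \mathcal{B}$ there, $z_\infty^0$ is $\mathcal{G}^\C$-equivalent to a point of $S_x^-$. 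Composing this gauge transformation with the one identifying $y_0$ and $z_\infty^0$ gives the required $g \in \mathcal{G}^\C$. The main obstacle, as in the forward direction, is establishing the uniform $C^0$-bound on $\sigma(h_t)$ while simultaneously tracking both the modified flow and the linearized action; this is where one crucially needs the exponential convergence of the backward $\YMH$ flow from Proposition \ref{prop:exponential-convergence}, since the linearized flow is expanding on $\End(E)_-$ and one must beat that expansion against the exponential contraction of $g_t\cdot y_0 \to x$.
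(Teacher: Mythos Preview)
Your proposal is correct and follows essentially the same approach as the paper: the paper also runs the modified backward flow $g_t \in \mathcal{G}_*^\C$, defines $t' = \max\{t,s\}$ and $z_t = e^{i\beta t'}\cdot g_t(y_0)\cdot y_0$, uses Lemma \ref{lem:reverse-uniform-bound-sigma} together with Lemma \ref{lem:modified-distance-decreasing} and the exponential convergence of Proposition \ref{prop:exponential-convergence} to get the Cauchy estimate for $h_t$, extracts a $C^\infty$ limit $z_\infty^0$ gauge-equivalent to $y_0$ via elliptic regularity, and then invokes Lemma \ref{lem:classify-neg-slice} on $e^{i\beta s}\cdot z_\infty^0 \to x$ to land in $S_x^-$. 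Your identification of the key obstacle---beating the expansion of $e^{i\beta s}$ on $\End(E)_-$ against the exponential decay of $g_t\cdot y_0 - x$---is exactly where the paper invokes Proposition \ref{prop:exponential-convergence}.
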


\subsection{An algebraic criterion for the existence of flow lines}\label{sec:filtration-criterion}

The results of the previous two sections combine to give the following theorem.
\begin{theorem}\label{thm:algebraic-flow-line}
Let $E$ be a complex vector bundle over a compact Riemann surface $X$, and let $(\bar{\partial}_A, \phi)$ be a Higgs bundle on $E$. Suppose that $E$ admits a filtration $(E^{(1)}, \phi^{(1)}) \subset \cdots \subset (E^{(n)}, \phi^{(n)}) = (E, \phi)$ by Higgs subbundles such that the quotients $(Q_k, \phi_k) := (E^{(k)}, \phi^{(k)}) / (E^{(k-1)}, \phi^{(k-1)})$ are Higgs polystable and $\slope(Q_k) < \slope(Q_j)$ for all $k < j$. Then there exists $g \in \mathcal{G}^\C$ and a solution to the reverse Yang-Mills-Higgs heat flow equation with initial condition $g \cdot (\bar{\partial}_A, \phi)$ which converges to a critical point isomorphic to $(Q_1, \phi_1) \oplus \cdots \oplus (Q_n, \phi_n)$.

Conversely, if there exists a solution of the reverse heat flow from the initial condition $(\bar{\partial}_A, \phi)$ converging to a critical point $(Q_1, \phi_1) \oplus \cdots \oplus (Q_n, \phi_n)$ then $(\bar{\partial}_A, \phi)$ admits a filtration $(E^{(1)}, \phi^{(1)}) \subset \cdots \subset (E^{(n)}, \phi^{(n)}) = (E, \phi)$ whose graded object is isomorphic to $(Q_1, \phi_1) \oplus \cdots \oplus (Q_n, \phi_n)$.
\end{theorem}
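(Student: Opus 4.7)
The plan is to combine Lemma \ref{lem:classify-neg-slice}, which identifies the isomorphism classes of points in the negative slice $S_x^-$ with filtered Higgs bundles, with the two scattering results Proposition \ref{prop:convergence-group-action} and Proposition \ref{prop:unstable-maps-to-slice}, which match complex gauge orbits meeting $S_x^-$ with complex gauge orbits meeting $W_x^-$. Schematically, the forward implication will be the composite
\begin{equation*}
\text{filtration with polystable quotients} \;\Longrightarrow\; \mathcal{G}^\C\text{-orbit meets } S_x^- \;\Longrightarrow\; \mathcal{G}^\C\text{-orbit meets } W_x^-,
\end{equation*}
and the converse is the same chain read in reverse.

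For the forward implication, first realise $x := (Q_1, \phi_1) \oplus \cdots \oplus (Q_n, \phi_n)$ as a critical point of $\YMH$ on $E$ with curvature of the block form \eqref{eqn:critical-curvature} and eigenvalues $\lambda_k = \slope(Q_k)$ strictly increasing in $k$. Choose a $C^\infty$ splitting $E \cong Q_1 \oplus \cdots \oplus Q_n$ compatible with the filtration, so that $E^{(k)} = Q_1 \oplus \cdots \oplus Q_k$ as smooth subbundles. Relative to this splitting, $(\bar{\partial}_A, \phi)$ decomposes as $x + (a_-, \varphi_-)$ with $(a_-, \varphi_-)$ strictly block lower triangular; because an entry $\Hom(Q_i, Q_j)$ with $i > j$ carries weight $\lambda_j - \lambda_i < 0$ for $\ad(i \beta)$, the perturbation lies in $\Omega^{0,1}(\End(E)_-) \oplus \Omega^{1,0}(\End(E)_-)$. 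Consequently the linearised flow $e^{i \beta t} \cdot (\bar{\partial}_A, \phi)$ converges to $x$ in $C^\infty$ as $t \to +\infty$, and for large $t$ it lies in the neighbourhood $U$ of Lemma \ref{lem:classify-neg-slice} while still admitting the given filtration (since $e^{i \beta t} \in \mathcal{G}^\C$ preserves $\mathcal{B}$ and carries Higgs subbundles to Higgs subbundles with the same quotients). The converse part of Lemma \ref{lem:classify-neg-slice} then produces $g_1 \in \mathcal{G}^\C$ with $g_1 \cdot e^{i \beta t} \cdot (\bar{\partial}_A, \phi) \in S_x^-$, and Proposition \ref{prop:convergence-group-action} produces $g_2 \in \mathcal{G}^\C$ mapping this point into $W_x^-$. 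Setting $g := g_2 g_1 e^{i \beta t}$ gives an initial condition from which the reverse $\YMH$ flow exists on $(-\infty, 0]$ and converges in $C^\infty$ to $x$.

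For the converse, the existence of a reverse $\YMH$ flow line from $(\bar{\partial}_A, \phi)$ to the critical point $x = (Q_1, \phi_1) \oplus \cdots \oplus (Q_n, \phi_n)$ is exactly the statement $(\bar{\partial}_A, \phi) \in W_x^-$ by Definition \ref{def:unstable-set}, and the slope ordering $\slope(Q_k) < \slope(Q_j)$ for $k < j$ is automatic for the polystable summands of a critical point after reindexing. Proposition \ref{prop:unstable-maps-to-slice} furnishes $h \in \mathcal{G}^\C$ with $h \cdot (\bar{\partial}_A, \phi) \in S_x^-$, and the forward part of Lemma \ref{lem:classify-neg-slice} then equips $h \cdot (\bar{\partial}_A, \phi)$ with a filtration whose successive quotients are $(Q_k, \phi_k)$. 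Pushing this filtration forward by $h^{-1}$, which is an isomorphism of Higgs bundles, yields the required filtration of $(\bar{\partial}_A, \phi)$.

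The only genuinely new step is the preparatory $e^{i \beta t}$-scaling in the forward direction: one must verify that a $C^\infty$ splitting of a filtration by Higgs subbundles with polystable quotients of strictly increasing slope really does place the extension data in $\End(E)_-$, so that the linearised flow contracts it to the critical point. This follows from the weight computation $\lambda_j - \lambda_i < 0$ on $\Hom(Q_i, Q_j)$ for $i > j$, together with the fact that $e^{i \beta t} \in \mathcal{G}^\C$ acts on $\mathcal{B}$. Once this observation is in place, Theorem \ref{thm:algebraic-flow-line} is a formal consequence of Lemma \ref{lem:classify-neg-slice} combined with the two scattering results of Sections \ref{sec:scattering-convergence} and \ref{subsec:inverse-construction}.
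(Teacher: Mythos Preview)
Your proposal is correct and follows essentially the same route as the paper's proof: both directions combine Lemma \ref{lem:classify-neg-slice} with Propositions \ref{prop:convergence-group-action} and \ref{prop:unstable-maps-to-slice}, and your preparatory $e^{i\beta t}$-scaling step is precisely what the paper calls ``applying the isomorphism $x \cong (Q_1, \phi_1) \oplus \cdots \oplus (Q_n, \phi_n)$ and scaling the extension classes'' to land in the neighbourhood $U$. Your account is in fact slightly more explicit about the weight computation placing the extension data in $\End(E)_-$, which the paper leaves implicit.
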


\begin{proof}
Suppose first that $(\bar{\partial}_A, \phi)$ admits a filtration $(E^{(1)}, \phi^{(1)}) \subset \cdots \subset (E^{(n)}, \phi^{(n)}) = (E, \phi)$ by Higgs subbundles such that the quotients $(Q_k, \phi_k) := (E^{(k)}, \phi^{(k)}) / (E^{(k-1)}, \phi^{(k-1)})$ are Higgs polystable and $\slope(Q_k) < \slope(Q_j)$ for all $k < j$. Let $x$ be a critical point isomorphic to $(Q_1, \phi_1) \oplus \cdots \oplus (Q_n, \phi_n)$, and let $U$ be the neighbourhood of $x$ from Lemma \ref{lem:classify-neg-slice}. Then by applying the isomorphism $x \cong (Q_1, \phi_1) \oplus \cdots \oplus (Q_n, \phi_n)$ and scaling the extension classes there exists a complex gauge transformation such that $g \cdot (\bar{\partial}_A, \phi)$ is in $U$. Applying Lemma \ref{lem:classify-neg-slice} shows that $(\bar{\partial}_A, \phi)$ is isomorphic to a point in $S_x^-$, and therefore Proposition \ref{prop:convergence-group-action} shows that $(\bar{\partial}_A, \phi)$ is isomorphic to a point in $W_x^-$.

Conversely, if $x = (Q_1, \phi_1) \oplus \cdots \oplus (Q_n, \phi_n)$ is a critical point and $(\bar{\partial}_A, \phi) \in W_x^-$, then Proposition \ref{prop:unstable-maps-to-slice} shows that there exists $g \in \mathcal{G}^\C$ such that $g \cdot (\bar{\partial}_A, \phi) \in S_x^-$. Therefore Lemma \ref{lem:classify-neg-slice} shows that $(\bar{\partial}_A, \phi)$ admits a filtration whose graded object is isomorphic to $(Q_1, \phi_1) \oplus \cdots \oplus (Q_n, \phi_n)$.
\end{proof}

\section{The Hecke correspondence via Yang-Mills-Higgs flow lines}\label{sec:hecke}

Let $(E, \phi)$ be a polystable Higgs bundle of rank $r$ and degree $d$, and let $(L_u, \phi_u)$ be a Higgs line bundle with $\deg L_u < \slope E$. Let $F$ be a smooth complex vector bundle $C^\infty$ isomorphic to $E \oplus L_u$ and choose a metric on $F$ such that the Higgs structure on $(E, \phi) \oplus (L_u,\phi_u)$ is a Yang-Mills-Higgs critical point in the space $\mathcal{B}(F)$ of Higgs bundles on $F$. The goal of this section is to show that Hecke modifications of the Higgs bundle $(E, \phi)$ correspond to Yang-Mills-Higgs flow lines in $\mathcal{B}(F)$ connecting the critical point $(E, \phi) \oplus (L_u, \phi_u)$ to lower critical points. 

In Section \ref{sec:Higgs-hecke-review} we review Hecke modifications of Higgs bundles. Section \ref{sec:canonical-map} describes how the space of Hecke modifications relates to the geometry of the negative slice and Section \ref{sec:YMH-flow-hecke} contains the proof of Theorem \ref{thm:flow-hecke} which shows that Hecke modifications correspond to $\YMH$ flow lines. In Section \ref{sec:secant-criterion} we give a geometric criterion for points to be connected by unbroken flow lines in terms of the secant varieties of the space of Hecke modifications inside the negative slice. In particular, this gives a complete classification of the $\YMH$ flow lines for rank $2$ (cf. Corollary \ref{cor:rank-2-classification}). Throughout this section the notation $\mathcal{E}$ is used to denote the sheaf of holomorphic sections of the bundle $E$.

\subsection{Hecke modifications of Higgs bundles}\label{sec:Higgs-hecke-review}

The purpose of this section is to derive some basic results for Hecke modifications of Higgs bundles which will be used in Section \ref{sec:YMH-flow-hecke} to prove Theorem \ref{thm:flow-hecke}. In Section \ref{sec:secant-criterion} we extend these results to study unbroken YMH flow lines.

First recall that a Hecke modification of a holomorphic bundle $E$ over a Riemann surface $X$ is determined by points $p_1, \ldots, p_n \in X$ (not necessarily distinct) and nonzero elements $v_j \in E_{p_j}^*$ for $j = 1, \ldots, n$. This data determines a sheaf homomorphism $\mathcal{E} \rightarrow \oplus_{j=1}^n \C_{p_j}$ to the skyscraper sheaf supported at $p_1, \ldots, p_n$ with kernel a locally free sheaf $\mathcal{E}'$. This determines a holomorphic bundle $E' \rightarrow X$ which we call the \emph{Hecke modification of $E$ determined by $v = (v_1, \ldots, v_n)$}.
\begin{equation*}
0 \rightarrow \mathcal{E}' \rightarrow \mathcal{E} \stackrel{v}{\longrightarrow} \bigoplus_{j=1}^n \C_{p_j} \rightarrow 0  
\end{equation*}
Since the kernel sheaf $\mathcal{E}'$ only depends on the equivalence class of each $v_j$ in $\mathbb{P} E_{p_j}^*$ then from now on we abuse the notation slightly and also use $v_j \in \mathbb{P} E_{p_j}^*$ to denote the equivalence class of $v_j \in E_{p_j}^*$. 

As explained in \cite[Sec. 4.5]{witten-hecke}, if $(E, \phi)$ is a Higgs bundle, then a Hecke modification of $(E, \phi)$ may introduce poles into the Higgs field and so there are restrictions on the allowable modifications which preserve holomorphicity of the Higgs field. 

\begin{definition}
Let $(E, \phi)$ be a Higgs bundle. A Hecke modification $E'$ of $E$ is \emph{compatible} with $\phi$ if the induced Higgs field on $E'$ is holomorphic.
\end{definition}

The next result describes a basic condition for the modification to be compatible with the Higgs field.

\begin{lemma}
Let $(E, \phi)$ be a Higgs bundle, and $0 \rightarrow \mathcal{E}' \rightarrow \mathcal{E} \stackrel{v}{\longrightarrow} \mathbb{C}_p \rightarrow 0$ a Hecke modification of $E$ induced by $v \in E_p^*$. Then the induced Higgs field $\phi'$ on $E'$ is holomorphic if and only if there exists an eigenvalue $\mu$ of $\phi(p)$ such that the composition $\mathcal{E} \otimes K^{-1} \stackrel{\phi - \mu \cdot \id}{\longlonglongrightarrow} \mathcal{E} \stackrel{v}{\rightarrow} \C_p$ is zero.
\end{lemma}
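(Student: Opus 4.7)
The plan is to reduce the statement to a straightforward local computation at $p$. First, choose a holomorphic coordinate $z$ with $z(p)=0$, so that $dz$ trivialises $K$ in a neighbourhood of $p$, and pick a local frame $e_1, \ldots, e_r$ of $\mathcal{E}$ adapted to $v$, in the sense that $v(e_1(p)) = 1$ and $v(e_j(p)) = 0$ for $j \geq 2$. With these choices, $\mathcal{E}'$ is freely generated near $p$ by $\{ze_1, e_2, \ldots, e_r\}$, and $\phi = A(z)\, dz$ for some holomorphic matrix $A(z) = [a_{ij}(z)]$. The entire question then reduces to a condition on $A(0)$.

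Away from $p$ the sheaves $\mathcal{E}'$ and $\mathcal{E}$ coincide, so $\phi'$ is automatically holomorphic there; the only issue is whether $\phi(\mathcal{E}') \subseteq \mathcal{E}' \otimes K$ near $p$. Testing this on the local frame of $\mathcal{E}'$, the element $ze_1$ is unproblematic since $\phi(ze_1) = zA(z)e_1 \otimes dz \in z\mathcal{E} \otimes K \subseteq \mathcal{E}' \otimes K$. For $j \geq 2$, $\phi(e_j) = \sum_i a_{ij}(z)\, e_i \otimes dz$; all summands with $i \geq 2$ already lie in $\mathcal{E}' \otimes K$, so the sole obstruction is the term $a_{1j}(z)\, e_1 \otimes dz$, which lies in $\mathcal{E}' \otimes K$ if and only if $a_{1j}(0) = 0$. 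Hence $\phi'$ is holomorphic if and only if $a_{1j}(0) = 0$ for all $j \geq 2$.

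Next I would repackage this condition invariantly. It says precisely that $v \circ \phi(p) = a_{11}(0)\, v \otimes dz|_p$ in $E_p^* \otimes K_p$; setting $\mu := a_{11}(0)\, dz|_p \in K_p$, the condition becomes $v \circ (\phi(p) - \mu \cdot \id_{E_p}) = 0$ as a map $E_p \to K_p$. After tensoring with $K^{-1}$, and recalling that the sheaf map $\mathcal{E} \to \C_p$ in the Hecke sequence is evaluation at $p$ followed by $v$, this is exactly the vanishing of the composition $\mathcal{E} \otimes K^{-1} \stackrel{\phi - \mu \cdot \id}{\longrightarrow} \mathcal{E} \stackrel{v}{\rightarrow} \C_p$ displayed in the statement. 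Finally, the existence of a left eigenvector $v$ with eigenvalue $\mu$ forces $\mu$ to be an eigenvalue of $\phi(p)$ in the usual (right-eigenvector) sense, since left and right eigenvalues of a finite-dimensional endomorphism coincide (both are roots of the characteristic polynomial).

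The argument is essentially a one-step local computation, and I do not expect any genuine obstacle. The only subtlety worth flagging is interpretational: $\mu$ in the statement must be viewed as an element of $K_p$ (equivalently, a local section of $K$ near $p$), so that $\mu \cdot \id : \mathcal{E} \to \mathcal{E} \otimes K$ is well-defined and $\phi - \mu \cdot \id$ can legitimately be regarded as a map $\mathcal{E} \otimes K^{-1} \to \mathcal{E}$.
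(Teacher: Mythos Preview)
Your argument is correct and is essentially the same as the paper's, just organised in the opposite order: the paper first phrases the condition invariantly (using the commutative diagram of the short exact sequence $0 \to \ker v \to E_p \to \C_p \to 0$ on the fibre, so that $\phi(p)$ preserving $\ker v$ induces the scalar $\mu$ on the quotient), and then gives exactly your local-frame computation as a subsequent remark; you do the local computation first and then repackage it.
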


\begin{proof}
Let $\phi \in H^0(\End(E) \otimes K)$. Then $\phi$ pulls back to a holomorphic Higgs field $\phi' \in H^0(\End(E') \otimes K)$ if and only if for any open set $U \subset X$ and any section $s \in \mathcal{E}(U)$, the condition $s \in \ker (\mathcal{E}(U) \stackrel{v}{\rightarrow} \C_p(U))$ implies that $\phi(s) \in \ker ((\mathcal{E} \otimes K)(U) \stackrel{v}{\rightarrow} \C_p(U))$. After choosing a trivialisation of $K$ in a neighbourhood of $p$, we can decompose the Higgs field $\phi(p)$ on the fibre $E_p$ as follows
\begin{equation}\label{eqn:fibre-extension}
\xymatrix{
0 \ar[r] & \ker v  \ar[r] \ar[d]^{\left. \phi(p) \right|_{\ker v}} & E_p  \ar[r] \ar[d]^{\phi(p)} & \mathbb{C}_p  \ar[r] \ar[d]^\mu & 0 \\
0 \ar[r] & \ker v  \ar[r] & E_p  \ar[r] & \mathbb{C}_p  \ar[r] & 0
}
\end{equation}
where scalar multiplication by $\mu$ is induced from the action of $\phi(p)$ on the quotient $\mathbb{C}_p = E_p / \ker v$. Therefore the endomorphism $\left( \phi(p) - \mu \cdot \id \right)$ maps $E_p$ into the subspace $\ker v$ and so $v \in E_p^*$ descends to a well-defined homomorphism $v' : \coker \left( \phi(p) - \mu \cdot \id \right) \rightarrow \C$.

Conversely, given an eigenvalue $\mu$ of $\phi(p)$ and an element $v' \in \coker(\phi(p) - \mu \cdot \id)^*$, one can choose a basis of $E_p$ and extend $v'$ to an element $v \in E_p^*$ such that $\im (\phi(p) - \mu \cdot \id) \subset \ker v$. Equivalently, $\phi(p)$ preserves $\ker v$ and so $v \in E_p^*$ defines a Hecke modification $E'$ of $E$ such that the induced Higgs field on $E'$ is holomorphic. 
\end{proof}

\begin{corollary}\label{cor:Higgs-compatible}
Let $(E, \phi)$ be a Higgs bundle and let $0 \rightarrow \mathcal{E}' \rightarrow \mathcal{E} \stackrel{v}{\rightarrow} \C_p \rightarrow 0$ be a Hecke modification of $E$ induced by $v \in \mathbb{P} E_p^*$. The following conditions are equivalent
\begin{enumerate}
\item The induced Higgs field $\phi'$ on $E'$ is holomorphic.

\item There exists an eigenvalue $\mu$ of $\phi(p)$ such that $v(\phi(s)) = \mu v(s)$ for all sections $s$ of $E$.

\item There exists an eigenvalue $\mu$ of $\phi(p)$ such that $v$ descends to a well-defined $v' \in (\coker (\phi(p) - \mu \cdot \id))^*$.

\end{enumerate}
\end{corollary}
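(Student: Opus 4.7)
The plan is to deduce the corollary directly from the preceding lemma by reformulating the vanishing condition there at the fibre level. The lemma establishes that (1) is equivalent to the existence of an eigenvalue $\mu$ of $\phi(p)$ such that the composition
\[
\mathcal{E} \otimes K^{-1} \xrightarrow{\phi - \mu \cdot \id} \mathcal{E} \xrightarrow{v} \C_p
\]
vanishes as a morphism of sheaves, so it suffices to show that each of (2) and (3) is equivalent to the vanishing of this composition for the same eigenvalue $\mu$.

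For (1) $\Leftrightarrow$ (2), I would fix a local holomorphic trivialisation of $K$ in a neighbourhood of $p$ and use it to identify $\phi$ locally with an $\End(E)$-valued holomorphic function. Since $v$ is a skyscraper morphism supported at the single point $p$, the composition $v \circ (\phi - \mu \cdot \id)$ vanishes as a sheaf morphism if and only if it vanishes on fibres at $p$, i.e., $v\bigl((\phi(p) - \mu \cdot \id)(w)\bigr) = 0$ for every $w \in E_p$. Every such $w$ is the value $s(p)$ of some local holomorphic section $s$ of $\mathcal{E}$, and after the trivialisation of $K$ this condition reads $v(\phi(s)) = \mu v(s)$, which is precisely condition (2). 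That $\mu$ must be an eigenvalue of $\phi(p)$ is automatic: since $v$ is nonzero, $v$ annihilating $\im(\phi(p) - \mu \cdot \id)$ forces this image to be a proper subspace of $E_p$, so $\phi(p) - \mu \cdot \id$ is not invertible.

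For (1) $\Leftrightarrow$ (3), the condition $v \circ (\phi(p) - \mu \cdot \id) = 0$ on fibres is exactly the condition that $v \in E_p^*$ annihilates the subspace $\im(\phi(p) - \mu \cdot \id) \subset E_p$, which in turn is exactly the condition that $v$ factors through the quotient $\coker(\phi(p) - \mu \cdot \id) = E_p / \im(\phi(p) - \mu \cdot \id)$ to define an element $v' \in (\coker(\phi(p) - \mu \cdot \id))^*$. Thus (3) is simply the dual reformulation of the sheaf-level vanishing condition. There is no substantive obstacle in this proof: the corollary is a direct restatement of the preceding lemma in the equivalent languages of local sections of $\mathcal{E}$ and of the cokernel of $\phi(p) - \mu \cdot \id$, with the only minor care needed being the local trivialisation of $K$ used to compare sections of $\mathcal{E}$ and $\mathcal{E} \otimes K$ at the point $p$.
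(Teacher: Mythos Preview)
Your proposal is correct and matches the paper's approach: the paper gives no separate proof of the corollary, treating it as an immediate restatement of the preceding lemma, and the details you supply (fibre-level vanishing, factoring through the cokernel) are exactly those already sketched inside the lemma's proof itself.
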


\begin{lemma}\label{lem:resolve-higgs-subsheaf}
Let $(E, \phi)$ be a Higgs bundle and $(G, \varphi)$ a Higgs subsheaf. Then there exists a Higgs subbundle $(G', \varphi') \subset (E, \phi)$ such that $\rank(G) = \rank (G')$ and $(G, \varphi)$ is a Higgs subsheaf of $(G', \varphi')$.
\end{lemma}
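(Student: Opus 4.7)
The plan is to saturate the underlying sheaf and then verify that the Higgs field automatically preserves the saturation, using the torsion/torsion-free dichotomy available on a smooth curve. Concretely, I would define $G'$ to be the saturation $G^{\mathrm{sat}} \subset E$, that is, the preimage under the quotient map $E \to E/G$ of the torsion subsheaf of $E/G$. Since $X$ is a smooth curve, $E/G'$ is torsion-free and therefore locally free, which means $G'$ is a subbundle of $E$ of rank equal to $\rank(G)$, and $G'/G$ is a torsion sheaf supported on finitely many points.

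The only nontrivial step is to show that $\phi$ restricts to a holomorphic Higgs field on $G'$. For this I would consider the composition
\begin{equation*}
\psi \colon G' \hookrightarrow E \xrightarrow{\phi} E \otimes K \twoheadrightarrow (E/G') \otimes K.
\end{equation*}
The restriction $\psi|_G$ vanishes because $\phi(G) \subset G \otimes K \subset G' \otimes K$, so $\psi$ factors through the quotient $G'/G$. But $G'/G$ is torsion by construction, while $(E/G') \otimes K$ is locally free, hence torsion-free; therefore $\psi = 0$, which is exactly the statement that $\phi(G') \subset G' \otimes K$. Setting $\varphi' := \phi|_{G'}$ then produces a Higgs subbundle $(G', \varphi') \subset (E, \phi)$ with the required properties, and the inclusion $G \hookrightarrow G'$ is automatically a morphism of Higgs sheaves because $\varphi' |_G = \phi|_G = \varphi$.

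The argument is entirely formal once the saturation is constructed, so the only real point requiring care is the torsion-free-ness of $E/G'$ and $(E/G') \otimes K$, which in turn relies on working over a smooth one-dimensional base. I expect no genuine obstacles; the main thing to check is that the definition of Higgs subsheaf used in the paper (a subsheaf preserved by $\phi$, with $\varphi$ the induced field) is compatible with the way I have realized $(G, \varphi) \hookrightarrow (G', \varphi')$, which it is by construction.
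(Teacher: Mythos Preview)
Your proof is correct. Both you and the paper take the saturation $G'$ of $G$ inside $E$ as the candidate subbundle, but you diverge on how to check $\phi$-invariance. The paper appeals to the Hecke-modification picture developed just before the lemma: since $G \hookrightarrow G'$ is a sequence of elementary modifications, the Higgs field $\varphi$ on $G$ extends \emph{a priori} only to a meromorphic Higgs field on $G'$, and then one observes that this extension coincides with the restriction of the holomorphic field $\phi$ from $E$, forcing holomorphicity. Your argument is more direct and purely sheaf-theoretic: the obstruction map $G' \to (E/G') \otimes K$ factors through the torsion sheaf $G'/G$ and lands in a torsion-free sheaf, hence vanishes. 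Your route avoids any reference to the local coordinate description of Hecke modifications and works verbatim for any coherent Higgs subsheaf, while the paper's phrasing ties the argument back to the surrounding discussion; functionally the two are equivalent, but yours is the cleaner stand-alone proof.
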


\begin{proof}
Since $\dim_\C X = 1$ then a standard procedure shows that there is a holomorphic subbundle $G' \subset E$ with $\rank(G) = \rank (G')$ and $G$ is a subsheaf of $G'$, and so it only remains to show that this is a Higgs subbundle. The reverse of the construction above shows that the Higgs field $\varphi$ preserving $G$ extends to a meromorphic Higgs field $\varphi'$ preserving $G'$, and since this is the restriction of a holomorphic Higgs field $\phi$ on $E$ to the holomorphic subbundle $G'$, then $\varphi'$ must be holomorphic on $G'$. Therefore $G'$ is $\phi$-invariant.
\end{proof}

\begin{definition}\label{def:m-n-stable}
A Higgs bundle $(E, \phi)$ is \emph{$(m,n)$-stable (resp. $(m,n)$-semistable)} if for every proper $\phi$-invariant holomorphic subbundle $F \subset E$ we have
\begin{equation*}
\frac{\deg F + m}{\rank F} < \frac{\deg E - n}{\rank E} \quad \text{(resp. $\leq$)} .
\end{equation*}
\end{definition}

If $(E, \phi)$ is $(0,n)$-semistable then any Hecke modification $0 \rightarrow (\mathcal{E}', \phi') \rightarrow (\mathcal{E}, \phi) \rightarrow \oplus_{j=1}^n \C_{p_j} \rightarrow 0$ is semistable.

\begin{definition}
Then \emph{space of admissible Hecke modifications} is the subset $\mathcal{N}_{\phi} \subset \mathbb{P} E^*$ corresponding to the Hecke modifications which are compatible with the Higgs field.
\end{definition}

\begin{remark}\label{rem:hecke-explicit}
\begin{enumerate}

\item If $\phi = 0$ then $\mathcal{N}_{0} = \mathbb{P} E^*$. If $E$ is $(0,1)$-stable then there is a well-defined map $\mathbb{P} E^* \rightarrow \mathbb{P} H^1(E^*)$. The construction of the next section generalises this to a map $\mathcal{N}_{\phi} \rightarrow \mathbb{P} \mathcal{H}^1(E^*)$ (cf. Remark \ref{rem:canonical-map}).

\item Note that the construction above is the reverse of that described in \cite{witten-hecke}, which begins with $E'$ and modifies the bundle to produce a bundle $E$ with $\deg E = \deg E' + 1$. Here we begin with $E$ and construct $E'$ via a modification $0 \rightarrow \mathcal{E}' \rightarrow \mathcal{E} \rightarrow \C_p \rightarrow 0$ since we want to interpret the compatible modifications in terms of the geometry of the negative slice (see Section \ref{sec:canonical-map}) in order to draw a connection with the results on gradient flow lines for the Yang-Mills-Higgs flow functional from Section \ref{sec:filtration-criterion}.

\item One can also see the above construction more explicitly in local coordinates as in \cite{witten-hecke} by choosing a local frame $\{ s_1, \ldots, s_n \}$ for $E$ in a neighbourhood $U$ of $p$ with local coordinate $z$ centred at $p$ and for which the evaluation map $\mathcal{E} \stackrel{v}{\rightarrow} \C_p$ satisfies $v(s_1) = s_1(0)$ and $v(s_j) = 0$ for all $j=2, \ldots, n$. Then over $U \setminus \{ p \}$, the functions $\{ \frac{1}{z} s_1(z), s_2(z) \ldots, s_n(z) \}$ form a local frame for $E'$. Equivalently, the transition function $g = \left( \begin{matrix} \frac{1}{z} & 0 \\ 0 & \id \end{matrix} \right)$ maps the trivialisation for $E$ to a trivialisation for $E'$ (note that this is the inverse of the transition function from \cite[Sec. 4.5.2]{witten-hecke} for the reason explained in the previous paragraph). In this local frame field on $E$ we write $\phi(z) = \left( \begin{matrix} A(z) & B(z) \\ C(z) & D(z) \end{matrix} \right)$. The action on the Higgs field is then
\begin{equation*}
g \left( \begin{matrix} A(z) & B(z) \\ C(z) & D(z) \end{matrix} \right) g^{-1} = \left( \begin{matrix} A(z) & \frac{1}{z} B(z) \\ z C(z) & D(z) \end{matrix} \right)
\end{equation*}
Therefore the induced Higgs field on $E'$ will have a pole at $p$ unless $B(0) = 0$. The scalar $A(0)$ in this local picture is the same as the scalar $\mu$ from \eqref{eqn:fibre-extension}, and we see that 
\begin{equation*}
\phi(p) - \mu \cdot \id = \left( \begin{matrix} 0 & 0 \\ C(0) & D(0) - \mu \cdot \id \end{matrix} \right)
\end{equation*}
With respect to the basis of $E_p$ given by the choice of local frame, $v(\phi(p) - \mu \cdot \id) = 0$. Moreover, via this local frame $\coker ( \phi(p) - \mu \cdot \id)$ is identified with a subspace of $E_p$ which contains the linear span of $s_1(0)$. Therefore we see in the local coordinate picture that $v \in E_p^*$ descends to an element of $(\coker (\phi(p) - \mu \cdot \id))^*$. 
\end{enumerate}
\end{remark}

The next result shows that the admissible Hecke modifications have an interpretation in terms of the spectral curve associated to the Higgs field. This extends the results of \cite{witten-hecke} to include the possibility that $p$ is a branch point of the spectral cover. 

First recall Hitchin's construction of the spectral curve from \cite{Hitchin87-2}. Let $(E, \phi)$ be a Higgs pair. Then there is a projection map $\pi : K \rightarrow X$ and a bundle $\pi^* E$ over the total space of the canonical bundle together with a tautological section $\lambda$ of $\pi^* E$. The zero set of the characteristic polynomial of $\pi^* \phi$ defines a subvariety $S$ inside the total space of $K$. The projection $\pi$ restricts to a map $\pi : S \rightarrow X$, where for each $p \in X$ the fibre $\pi^{-1}(p)$ consists of the eigenvalues of the Higgs field $\phi(p)$. As explained in \cite{Hitchin87-2}, generically the discriminant of the Higgs field has simple zeros and in this case $S$ is a smooth curve called the \emph{spectral curve}. The induced projection $\pi : S \rightarrow X$ is then a ramified covering map with ramification divisor denoted $\mathcal{R} \subset S$. 

The pullback of the Higgs field to the spectral curve is a bundle homomorphism $\pi^* E \rightarrow \pi^*(E \otimes K)$, and the eigenspaces correspond to $\ker (\pi^* \phi - \lambda \cdot \id)$, where $\lambda$ is the tautological section defined above.  When the discriminant of the Higgs field has simple zeros then Hitchin shows in \cite{Hitchin87-2} that the eigenspaces form a line bundle $\mathcal{N} \rightarrow S$ and that the original bundle $E$ can be reconstructed as $\pi_* \mathcal{L}$, where the line bundle $\mathcal{L} \rightarrow S$ is formed by modifying $\mathcal{N}$ at the ramification points $0 \rightarrow \mathcal{N} \rightarrow \mathcal{L} \rightarrow \bigoplus_{p \in \mathcal{R}} \C_p \rightarrow 0$. One can reconstruct the Higgs field $\phi$ by pushing forward the endomorphism defined by the tautological section $\lambda : \mathcal{L} \rightarrow \mathcal{L} \otimes \pi^* K$.

\begin{lemma}
If the discriminant of $\phi$ has simple zeros then an admissible Hecke modification of $(E, \phi)$ corresponds to a Hecke modification of the line bundle $\mathcal{L}$ over the spectral curve.
\end{lemma}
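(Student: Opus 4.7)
The plan is to use the description $E = \pi_* \mathcal{L}$ to express the fibre $E_p$ in terms of the fibres of $\mathcal{L}$ over the preimage $\pi^{-1}(p) \subset S$, and then to identify the cokernel appearing in Corollary \ref{cor:Higgs-compatible} with a single such fibre $\mathcal{L}_q^*$. By Corollary \ref{cor:Higgs-compatible}, an admissible Hecke modification of $(E, \phi)$ at $p$ is precisely the data of an eigenvalue $\mu$ of $\phi(p)$ together with a nonzero element of $(\coker(\phi(p) - \mu \cdot \id))^*$. Since $\phi$ pulls back to the endomorphism given by the tautological section $\lambda$, the eigenvalues of $\phi(p)$ are exactly the values $\lambda(q)$ for $q \in \pi^{-1}(p)$, giving a bijection between the choice of $\mu$ and the choice of a point $q \in \pi^{-1}(p)$.

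Away from the ramification locus $\mathcal{R}$, the preimage $\pi^{-1}(p) = \{q_1, \ldots, q_r\}$ consists of $r$ distinct points and the eigenvalues $\lambda(q_1), \ldots, \lambda(q_r)$ of $\phi(p)$ are pairwise distinct. In this case $(\pi_* \mathcal{L})_p \cong \bigoplus_j \mathcal{L}_{q_j}$ and the decomposition is precisely the eigenspace decomposition of $\phi(p)$, so $\coker(\phi(p) - \lambda(q_j) \cdot \id)$ is canonically identified with the one-dimensional eigenspace $\mathcal{L}_{q_j}$. A nonzero element of $\mathcal{L}_{q_j}^*$ is exactly the data of a Hecke modification of $\mathcal{L}$ at $q_j$, and this gives the desired bijection over points $p \in X \setminus \pi(\mathcal{R})$.

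At a ramification point, so $p = \pi(q)$ with $q \in \mathcal{R}$, the assumption that the discriminant of $\phi$ has simple zeros means that exactly two sheets of the spectral cover come together at $q$, so $\lambda(q)$ appears as an eigenvalue of $\phi(p)$ with algebraic multiplicity $2$ and geometric multiplicity $1$, i.e.\ $\phi(p)$ has a single Jordan block of size $2$ for the eigenvalue $\mu = \lambda(q)$. A direct computation then shows $\coker(\phi(p) - \mu \cdot \id)$ is one-dimensional, and using the defining sequence $0 \to \mathcal{N} \to \mathcal{L} \to \bigoplus_{q' \in \mathcal{R}} \C_{q'} \to 0$ together with the fact that $\pi_* \mathcal{N}$ differs from $\pi_* \mathcal{L} = E$ precisely by the contribution of these extra skyscraper sections at ramification points, one identifies this cokernel with $\mathcal{L}_q$. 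For eigenvalues $\mu$ of $\phi(p)$ corresponding to unramified preimages $q' \in \pi^{-1}(p)$ the analysis of the previous paragraph applies verbatim.

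The main obstacle is the last identification at ramification points: one has to verify carefully that the extension class in $0 \to \mathcal{N} \to \mathcal{L} \to \bigoplus \C_{q'} \to 0$ is compatible with the Jordan structure of $\phi(p)$ in such a way that the evaluation map $E_p \to \coker(\phi(p) - \mu \cdot \id)$ factors as the composition $E_p = (\pi_* \mathcal{L})_p \twoheadrightarrow \mathcal{L}_q$. Once this is checked, we conclude that a choice of admissible $v \in \mathcal{N}_\phi \cap \mathbb{P} E_p^*$ supported at $p$ is equivalent to a choice of $q \in \pi^{-1}(p)$ and a nonzero element of $\mathcal{L}_q^*$, which is exactly a Hecke modification of $\mathcal{L}$ on the spectral curve.
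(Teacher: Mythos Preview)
Your approach is essentially correct but takes a different route from the paper's. You work fibre-by-fibre with $E_p \cong (\pi_* \mathcal{L})_p$ and split into the unramified and ramified cases, explicitly computing the Jordan structure at ramification points. The paper instead uses the alternative description of $\mathcal{L}$ as the cokernel sheaf of $\pi^*\phi - \lambda\cdot\id : \pi^*\mathcal{E}\otimes(\pi^*K)^{-1} \to \pi^*\mathcal{E}$ over $S$, and works uniformly in both directions via the universal property of cokernels: given $v' \in \mathcal{L}_\mu^*$ one composes with the quotient map and pullback to obtain $v : \mathcal{E} \to \C_p$; given admissible $v$ with associated eigenvalue $\mu$, Corollary~\ref{cor:Higgs-compatible} produces an element of $\coker(\phi(p)-\mu\cdot\id)^*$, and a commuting square of cokernel maps yields a (necessarily nonzero) element of $\coker(\pi^*\phi-\mu\cdot\id)_\mu^* = \mathcal{L}_\mu^*$, since under the simple-zeros hypothesis both cokernels are one-dimensional.

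The payoff of the paper's approach is that it handles ramified and unramified points simultaneously and never invokes the auxiliary bundle $\mathcal{N}$ or the extension $0 \to \mathcal{N} \to \mathcal{L} \to \bigoplus_{q'\in\mathcal{R}}\C_{q'} \to 0$; in particular, the ``main obstacle'' you flag --- checking that the evaluation $E_p \to \coker(\phi(p)-\mu\cdot\id)$ agrees with $(\pi_*\mathcal{L})_p \twoheadrightarrow \mathcal{L}_q$ at ramification points --- is absorbed into the single observation that both cokernels are lines and the induced map between them is an isomorphism. Your approach is more concrete and makes the unramified picture very transparent, but the price is precisely the case analysis and the incomplete verification at ramification points that you yourself identify.
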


\begin{proof}
Consider the pullback bundle $\pi^* E \rightarrow S$. The pullback of the Higgs field induces a sheaf homomorphism $(\pi^* \phi - \lambda \cdot \id) : \pi^* \mathcal{E} \otimes (\pi^* K)^{-1} \rightarrow \pi^* \mathcal{E}$. As explained in \cite[Sec. 2.6]{witten-hecke}, when the discriminant of $\phi$ has simple zeros then the cokernel of this homomorphism is the line bundle $\mathcal{L} \rightarrow S$ such that $\mathcal{E} \cong \pi_* \mathcal{L}$. 

For $\mu \in S$ such that $p = \pi(\mu)$, there is an isomorphism of the stalks of the skyscraper sheaves $\C_p \cong \pi_* (\C_\mu)$. Then a Hecke modification $\mathcal{L} \stackrel{v'}{\rightarrow} \C_\mu$ given by nonzero $v' \in \mathcal{L}_\mu^*$ induces a Hecke modification $v = v' \circ q \circ \pi^* : \mathcal{E} \rightarrow \C_p$, defined by the commutative diagram below.
\begin{equation*}
\xymatrix{
\pi^* \mathcal{E} \otimes (\pi^* K)^{-1} \ar[rr]^(0.6){\pi^*\phi - \lambda \cdot \id} & & \pi^* \mathcal{E} \ar[r]^(0.3)q & \coker(\pi^* \phi - \lambda \cdot \id) \ar[dr]^(0.6){v'} \ar[r] &  0 \\
& & \mathcal{E} \ar[u]^{\pi^*} \ar[rr]^v & & \C_p \ar[r] & 0
}
\end{equation*}
The definition of $v$ implies that for any open set $U \subset X$ with a trivialisation of $K$ in a neighbourhood of $p$, and all $s \in \mathcal{E}(U)$ we have 
\begin{equation*}
v(\phi s) = v' \circ q(\pi^* (\phi s)) = v' \circ q(\mu \, \pi^* (s)) = \mu \, v' \circ q \circ \pi^* (s) = \mu \, v(s)
\end{equation*}
and so $v$ is compatible with the Higgs field by Corollary \ref{cor:Higgs-compatible}.

Conversely, let $v \in E_p^*$ be compatible with the Higgs field $\phi$. Corollary \ref{cor:Higgs-compatible} shows that this induces a well-defined element of $\coker(\phi - \mu \cdot \id)^*$. Consider the endomorphisms $\phi(p) - \mu \cdot \id$ on the fibre of $E$ over $p \in X$ and $\pi^* \phi(\mu) - \mu \cdot \id$ on the fibre of $\pi^* E$ over $\mu \in S$.

\begin{equation*}
\xymatrix{
(\pi^* E \otimes \pi^* K^{-1})_\mu \ar[rr]^(0.6){\pi^* \phi - \mu \cdot \id} & & (\pi^* E)_\mu \ar[r] & \coker (\pi^* \phi - \mu \cdot \id)_\mu \ar[r] & 0\\
(E \otimes K^{-1})_p \ar[rr]^(0.6){\phi - \mu \cdot \id} \ar[u] & & E_p \ar[r] \ar[u] & \coker(\phi - \mu \cdot \id)_p \ar[r] \ar@{-->}[u] & 0 
}
\end{equation*}

 The universal property of cokernel defines a map $\coker(\phi - \mu \cdot \id)_p \rightarrow \coker(\pi^* \phi - \mu \cdot \id)_\mu$. Since the discriminant of the Higgs field has simple zeros then both fibres are one-dimensional and so this map becomes an isomorphism. Therefore $v$ induces a well-defined homomorphism on the fibre $\coker(\pi^* \phi - \mu \cdot \id)_\mu \rightarrow \C$, and hence a Hecke modification of $\mathcal{L}$ at $\mu \in S$. 
\end{proof}

\begin{remark}
When $p \in X$ is not a branch point of $\pi : S \rightarrow X$ then this result is contained in \cite{witten-hecke}.
\end{remark}

\begin{corollary}
If the discriminant of $\phi$ has simple zeros then the space of Hecke modifications is $\mathcal{N}_{\phi} = S$. 
\end{corollary}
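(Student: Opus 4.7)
The plan is to read off the corollary directly from the preceding lemma, which established a bijective correspondence between admissible Hecke modifications of $(E,\phi)$ at a point $p \in X$ (specified by $v \in \mathbb{P} E_p^*$ together with an eigenvalue $\mu$ of $\phi(p)$) and Hecke modifications of the spectral line bundle $\mathcal{L} \to S$ at the corresponding point $\mu \in \pi^{-1}(p) \subset S$ (specified by a nonzero $v' \in \mathcal{L}_\mu^*$, taken up to scale).

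First I would observe that a Hecke modification of a line bundle $\mathcal{L}$ at a point $\mu \in S$ is determined by the point $\mu$ alone. Indeed, $\mathcal{L}_\mu$ is one-dimensional, so $\mathbb{P} \mathcal{L}_\mu^*$ is a single point, and there is exactly one nonzero $v' \in \mathcal{L}_\mu^*$ up to scale. Hence the totality of Hecke modifications of $\mathcal{L}$, as $\mu$ ranges over $S$, is canonically parametrised by $S$ itself.

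Combining with the previous lemma, I would define the map $\Phi : S \to \mathcal{N}_{\phi}$ by sending $\mu \in S$ (with $p = \pi(\mu)$) to the class of $v := v' \circ q \circ \pi^* \in \mathbb{P} E_p^*$, where $v'$ is any nonzero element of $\mathcal{L}_\mu^*$ and $q : \pi^* \mathcal{E} \to \coker(\pi^*\phi - \lambda \cdot \id) = \mathcal{L}$ is the projection. Since rescaling $v'$ rescales $v$, the class $[v] \in \mathbb{P} E_p^*$ is well-defined, and the assignment lands in $\mathcal{N}_\phi$ by the first direction of the lemma. Conversely, given $[v] \in \mathcal{N}_\phi$ compatible with eigenvalue $\mu$ of $\phi(p)$, the second direction of the lemma produces a well-defined element of $\coker(\pi^*\phi - \mu \cdot \id)_\mu^* = \mathcal{L}_\mu^*$, and the simple-zero discriminant hypothesis guarantees that this fibre is exactly one-dimensional, so the induced point of $S$ (namely $\mu$) is uniquely determined by $[v]$. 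This yields the inverse to $\Phi$.

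The only subtle point — which I expect to be the main thing to spell out — is showing that $\Phi$ is not merely a set-theoretic bijection but identifies $\mathcal{N}_\phi$ with $S$ as varieties. For this I would note that $\Phi$ is given fibrewise over $X$ by the natural algebraic map $\pi^{-1}(p) \to \mathbb{P} E_p^*$, $\mu \mapsto [\text{unique compatible } v]$, which is the composition of the inclusion of eigenvalues with the projectivised dual of the quotient $E_p \twoheadrightarrow \coker(\phi(p)-\mu\cdot\id)$, and this assembles into a morphism of schemes over $X$; its set-theoretic bijectivity together with smoothness of $S$ (under the simple-zero hypothesis) makes it an isomorphism onto its image $\mathcal{N}_\phi$.
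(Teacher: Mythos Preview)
Your proposal is correct and follows exactly the intended route: the paper states this corollary without proof, treating it as immediate from the preceding lemma, and your argument spells out precisely the bijection that lemma implies (Hecke modifications of a line bundle at $\mu$ are parametrised by $\mu$ alone, so the lemma's correspondence identifies $\mathcal{N}_\phi$ with $S$). Your additional paragraph on upgrading the set-theoretic bijection to an isomorphism of varieties goes beyond what the paper asserts, but is the right thing to check and your sketch is sound.
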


\subsection{Secant varieties associated to the space of Hecke modifications}\label{sec:canonical-map}

The purpose of this section is to connect the geometry of the space of Hecke modifications with the geometry of the negative slice at a critical point in order to prepare for the proof of Theorem \ref{thm:flow-hecke} in the next section. 

Let $(E_1, \phi_1)$ and $(E_2, \phi_2)$ be Higgs bundles and let $\bar{\partial}_A$ denote the induced holomorphic structure on $E_1^* E_2$. Then there is an elliptic complex
\begin{equation*}
\Omega^0(E_1^* E_2) \stackrel{L_1}{\longrightarrow} \Omega^{0,1}(E_1^* E_2) \oplus \Omega^{1,0}(E_1^* E_2) \stackrel{L_2}{\longrightarrow} \Omega^{1,1}(E_1^* E_2) ,
\end{equation*}
where $L_1(u) = (\bar{\partial}_A u, \phi_2 u - u \phi_1)$ and $L_2(a, \varphi) = (\bar{\partial}_A \varphi + [a, \phi])$. Let $\mathcal{H}^0 = \ker L_1$, $\mathcal{H}^1 = \ker L_1^* \cap \ker L_2$ and $\mathcal{H}^2 = \ker L_2^*$ denote the spaces of harmonic forms. Recall that if $(E_1, \phi_1)$ and $(E_2, \phi_2)$ are both Higgs stable and $\slope(E_2) < \slope(E_1)$ then $\mathcal{H}^0(E_1^* E_2) = 0$.

Now consider the special case where $(E_1, \phi_1)$ is $(0, n)$-stable and $(E_2, \phi_2)$ is a Higgs line bundle. Let $\mathcal{B}$ denote the space of Higgs bundles on the smooth bundle $E_1 \oplus E_2$ and choose a metric such that $(E_1, \phi_1) \oplus (E_2, \phi_2)$ is a critical point of $\YMH : \mathcal{B} \rightarrow \R$. Definition \ref{def:slice} shows that $\mathcal{H}^1(E_1^* E_2) \cong S_x^-$ is the negative slice at this critical point.

Let $0 \rightarrow (\mathcal{E}', \phi') \rightarrow (\mathcal{E}_1, \phi_1) \rightarrow \oplus_{j=1}^n \C_{p_j} \rightarrow 0$ be a Hecke modification defined by $v_1, \ldots, v_n \in \mathbb{P} E_1^*$. Applying the functor $\Hom(\cdot, \mathcal{E}_2)$ to the short exact sequence $0 \rightarrow \mathcal{E}' \rightarrow \mathcal{E}_1 \rightarrow \oplus_j \C_{p_j} \rightarrow 0$ gives us an exact sequence of sheaves $0 \rightarrow \Hom(\mathcal{E}_1, \mathcal{E}_2) \rightarrow \Hom(\mathcal{E}', \mathcal{E}_2) \rightarrow \oplus_{j=1}^n \C_{p_j}^* \rightarrow 0$, where the final term comes from the isomorphism $\Ext^1(\oplus_j \C_{p_j}, \mathcal{E}_2) \cong \Hom(\mathcal{E}_2, \oplus_j \C_{p_j} \otimes K)^* \cong \oplus_j \C_{p_j}^*$. Note that this depends on a choice of trivialisations of $E_2$ and $K$, however the kernel of the map $\Hom(\mathcal{E}', \mathcal{E}_2) \rightarrow \oplus_j \C_{p_j}$ is independent of these choices. This gives us the following short exact sequence of Higgs sheaves
\begin{equation}\label{eqn:dual-short-exact}
0 \rightarrow \mathcal{E}_1^* \mathcal{E}_2 \rightarrow (\mathcal{E}')^* \mathcal{E}_2 \rightarrow \bigoplus_{j=1}^n \C_{p_j}^* \rightarrow 0
\end{equation}

There is an induced map $\Omega^0((E')^* E_2) \rightarrow \Omega^{1,0}((E')^* E_2)$ given by $s \mapsto \phi_2 s - s \phi'$. Recall from Corollary \ref{cor:Higgs-compatible} that there exists an eigenvalue $\mu_j$ for $\phi_1(p_j)$ such that $v(\phi_1(p_j) - \mu_j \cdot \id) = 0$ for each $j=1, \ldots, n$. From the above exact sequence there is an induced homomorphism $\Omega^{1,0}((E')^* E_2) \stackrel{ev^1}{\longrightarrow} \oplus_{j=1}^n \C_{p_j} \rightarrow 0$. The component of $ev^1(\phi_2 s - s \phi')$ in $\C_{p_j}$ is $(\phi_2(p_j) - \mu_j ) s$. In particular, $\phi_2 s - s \phi' \in \ker(ev^1)$ iff $\phi_2(p_j) = \mu_j$ for all $j=1, \ldots, n$.

\begin{definition}\label{def:Hecke-compatible}
Let $(E_1, \phi_1)$ be a Higgs bundle, and $(E_2, \phi_2)$ a Higgs line bundle. The \emph{space of Hecke modifications compatible with $\phi_1$ and $\phi_2$}, denoted $\mathcal{N}_{\phi_1, \phi_2} \subset \mathcal{N}_{\phi_1}$, is the set of Hecke modifications compatible with $\phi_1$ such that $ev^1(\phi_2 s - s \phi') = 0$ for all $s \in \Omega^0((E')^* E_2)$. 
\end{definition}

\begin{remark}\label{rem:miniscule-compatible}
Note that if $n = 1$ and $v \in \mathbb{P} E_1^*$ is a Hecke modification compatible with $\phi_1$, then the requirement that $v \in \mathcal{N}_{\phi_1, \phi_2}$ reduces to $\phi_2(p) = \mu$, where $\mu$ is the eigenvalue of $\phi_1(p)$ from Corollary \ref{cor:Higgs-compatible}. Such a $\phi_2 \in H^0(\End(E_2) \otimes K) = H^0(K)$ always exists since the canonical linear system is basepoint free and therefore $\bigcup_{\phi_2 \in H^0(K)} \mathcal{N}_{\phi_1, \phi_2} = \mathcal{N}_{\phi_1}$.  If $n > 1$ then $\phi_2$ with these properties may not exist for some choices of $\phi_1 \in H^0(\End(E_1) \otimes K)$ and $v_1, \ldots, v_n \in \mathbb{P} E_1^*$ (the existence of $\phi_2$ depends on the complex structure of the surface $X$). If $\phi_1 = 0$, then we can choose $\phi_2 = 0$ and in this case $\mathcal{N}_{\phi_1, \phi_2} = \mathcal{N}_{\phi_1} = \mathbb{P} E_1^*$ (this corresponds to the case of the Yang-Mills flow in Theorem \ref{thm:flow-hecke}).
\end{remark}

\begin{lemma}
Let $(E_1, \phi_1)$ be Higgs polystable and $(E_2, \phi_2)$ be a Higgs line bundle. Let $0 \rightarrow (\mathcal{E}', \phi') \rightarrow (\mathcal{E}, \phi) \rightarrow \oplus_{j=1}^n \C_{p_j} \rightarrow 0$ be a Hecke modification defined by distinct $v_1, \ldots, v_n \in \mathcal{N}_{\phi_1, \phi_2}$. 

Then there is an exact sequence
\begin{equation}\label{eqn:hyper-exact-sequence}
0 \rightarrow \mathcal{H}^0(E_1^* E_2) \rightarrow \mathcal{H}^0((E')^* E_2) \rightarrow \C^n \rightarrow \mathcal{H}^1(E_1^* E_2) \rightarrow \mathcal{H}^1((E')^* E_2)
\end{equation}
\end{lemma}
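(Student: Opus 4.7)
The plan is to identify $\mathcal{H}^0$ and $\mathcal{H}^1$ with the hypercohomology of the two-term Higgs complex $[\mathcal{F} \xrightarrow{\psi} \mathcal{F} \otimes K]$ associated to the appropriate Higgs sheaf $(\mathcal{F}, \psi)$, and then to derive the claimed exact sequence as the long exact sequence in hypercohomology associated to the short exact sequence \eqref{eqn:dual-short-exact}. The Hodge-theoretic identification $\mathcal{H}^i \cong \mathbb{H}^i$ follows directly from the elliptic complex described at the start of the section: $\mathcal{H}^0 = \ker L_1$ consists of holomorphic sections commuting with the Higgs field, and $\mathcal{H}^1 = \ker L_1^* \cap \ker L_2$ provides harmonic Dolbeault representatives of $\mathbb{H}^1$.

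The first step is to check that \eqref{eqn:dual-short-exact} lifts to a short exact sequence of two-term Higgs complexes. The left and middle sheaves carry the natural Higgs differentials $\psi(s) = \phi_2 s - s \phi_1$ and $\psi'(s) = \phi_2 s - s \phi'$, and these commute with the inclusion $\mathcal{E}_1^* \mathcal{E}_2 \hookrightarrow (\mathcal{E}')^* \mathcal{E}_2$ since $\phi'$ is the restriction of $\phi_1$. To put a compatible Higgs structure on the quotient $\bigoplus_j \C_{p_j}^*$ one needs the composition $\Omega^0((E')^* E_2) \xrightarrow{\psi'} \Omega^{1,0}((E')^* E_2) \xrightarrow{ev^1} \bigoplus_j \C_{p_j}$ to vanish; by the calculation in the paragraph preceding Definition~\ref{def:Hecke-compatible} this composition equals $s \mapsto \bigl((\phi_2(p_j) - \mu_j) s(p_j)\bigr)_{j=1}^n$, which vanishes identically by the hypothesis $v_j \in \mathcal{N}_{\phi_1, \phi_2}$. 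The quotient therefore carries the zero Higgs differential, and \eqref{eqn:dual-short-exact} becomes a short exact sequence of Higgs complexes.

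The long exact sequence of hypercohomology then reads
\begin{equation*}
0 \to \mathbb{H}^0(\mathcal{E}_1^* \mathcal{E}_2) \to \mathbb{H}^0((\mathcal{E}')^* \mathcal{E}_2) \to \mathbb{H}^0\Bigl(\bigoplus_{j=1}^n \C_{p_j}^*\Bigr) \to \mathbb{H}^1(\mathcal{E}_1^* \mathcal{E}_2) \to \mathbb{H}^1((\mathcal{E}')^* \mathcal{E}_2),
\end{equation*}
and since the quotient complex has zero differential and is a skyscraper sheaf with cohomology concentrated in degree zero, $\mathbb{H}^0(\bigoplus_j \C_{p_j}^*) \cong \bigoplus_{j=1}^n \C_{p_j}^* \cong \C^n$ (using that the $n$ points are distinct). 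Substituting $\mathcal{H}^i \cong \mathbb{H}^i$ yields the desired exact sequence. The main obstacle is the Higgs compatibility check in the second paragraph --- verifying that the condition $v_j \in \mathcal{N}_{\phi_1, \phi_2}$ is precisely what makes \eqref{eqn:dual-short-exact} a short exact sequence of Higgs complexes rather than only of underlying sheaves; once this is in place the result follows from a formal application of the hypercohomology long exact sequence.
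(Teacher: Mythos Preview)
Your argument is correct and takes a genuinely different route from the paper. The paper works entirely at the Dolbeault level: it writes down the commutative diagram of spaces of smooth sections and then observes that, because $\bar{\partial}_A s$ depends on the germ of $s$ rather than its value at $p_j$, the evaluation maps on the right do not commute with $L_1$, so the standard snake-lemma argument does not apply. It therefore constructs the connecting map $\C^n \to \mathcal{H}^1(E_1^* E_2)$ by hand (choosing a smooth preimage $s'$ with $ev^1(\bar{\partial}_A s') = 0$, using the compatibility $\phi_2(p_j) = \mu_j$ to force $ev^1(L_1 s') = 0$, and taking the harmonic representative of the resulting $(a,\varphi)$), and then checks well-definedness and exactness at each term explicitly. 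Your approach sidesteps this by passing to the sheaf-theoretic two-term Higgs complex, where the compatibility condition $v_j \in \mathcal{N}_{\phi_1,\phi_2}$ is exactly what makes \eqref{eqn:dual-short-exact} a short exact sequence of complexes, and then invoking the long exact sequence in hypercohomology together with the Hodge identification $\mathcal{H}^i \cong \mathbb{H}^i$. This is cleaner and more conceptual; the paper's explicit construction, on the other hand, produces a concrete Dolbeault-level formula for the connecting map, which is closer in spirit to the analytic arguments used elsewhere in the paper (and makes the dependence on the choice of trivialisations, noted in Remark~\ref{rem:canonical-map}, transparent).
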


\begin{proof}

The short exact sequence \eqref{eqn:dual-short-exact} leads to the following commutative diagram of spaces of smooth sections
\begin{equation*}
\xymatrix@C=1.6em{
0 \ar[r] & \Omega^0(E_1^* E_2) \ar[r]^{i^*} \ar[d]^{L_1} & \Omega^0((E')^* E_2) \ar[r]^{ev^0} \ar[d]^{L_1} & \bigoplus_{j=1}^n \C_{p_j} \ar[r] & 0 \\
0 \ar[r] & \Omega^{0,1}(E_1^* E_2) \oplus \Omega^{1,0}(E_1^* E_2) \ar[r]^(0.47){i^*} & \Omega^{0,1}((E')^* E_2) \oplus \Omega^{1,0}((E')^* E_2) \ar[r]^(0.62){ev^1} & \bigoplus_{j=1}^n \C_{p_j} \oplus \C_{p_j} \ar[r] & 0 
}
\end{equation*}
Since $\bar{\partial}_A s$ depends on the germ of a section around a point, then there is no well-defined map $\bigoplus_{j=1}^n \C_{p_j} \rightarrow \bigoplus_{j=1}^n \C_{p_j} \oplus \C_{p_j}$ making the diagram commute, so the exact sequence \eqref{eqn:hyper-exact-sequence} does not follow immediately from the standard construction, and therefore we give an explicit construction below.

First construct a map $\C^n \rightarrow \mathcal{H}^1(E_1^* E_2)$ as follows. Given $z \in \C^n$, choose a smooth section $s' \in \Omega^0((E')^* E_2)$ such that $ev^0(s') = z$ and $ev^1(\bar{\partial}_A s') = 0$. Since $\phi_2(p_j) = \mu_j$, then $ev^1(\phi_2 s' - s' \phi') = 0$ and so $ev^1(L_1 s') = 0$. Therefore $(\bar{\partial}_A s', \phi_2 s' - s' \phi') = i^*(a, \varphi)$ for some $(a, \varphi) \in \Omega^{0,1}(E_1^* E_2) \oplus \Omega^{1,0}(E_1^* E_2)$. Let $[(a, \varphi)] \in \mathcal{H}^1(E_1^* E_2)$ denote the harmonic representative of $(a, \varphi)$. Define the map $\C^n \rightarrow \mathcal{H}^1(E_1^* E_2)$ by $z \mapsto [(a, \varphi)]$. 

To see that this is well-defined independent of the choice of $s' \in \Omega^0((E')^* E_2)$, note that if $s'' \in \Omega^0((E')^* E_2)$ is another section such that $ev^0(s'') = z$ and $ev^1(\bar{\partial}_A s'') = 0$, then $ev^0(s'' - s') = 0$, and so  $s'' - s' = i^*(s)$ for some $s \in \Omega^0(E_1^* E_2)$. Therefore $L_1(s'' - s') = i^* L_1(s)$ with $[L_1(s)] = 0 \in \mathcal{H}^1(E_1^* E_2)$, and so $s'$ and $s''$ determine the same harmonic representative in $\mathcal{H}^1(E_1^* E_2)$. 

To check exactness of \eqref{eqn:hyper-exact-sequence} at the term $\C^n$, note that if $z = ev^0(s')$ for some harmonic $s' \in \mathcal{H}^0((E')^* E_2)$, then $L_1(s') = 0 = i^* (0,0)$, and so $z \in \C^n$ maps to $0 \in \mathcal{H}^1(E_1^* E_2)$. Moreover, if $z$ maps to $0 \in \mathcal{H}^1(E_1^*E_2)$, then there exists $s' \in \Omega^0((E')^* E_2)$ such that $L_1(s') = i^*(a, \varphi)$ where $(a, \varphi) \in \Omega^{0,1}(E_1^* E_2) \oplus \Omega^{1,0}((E')^* E_2)$ and $(a, \varphi) = L_1(s)$ for some $s \in \Omega^0(E_1^* E_2)$. Therefore $s'$ and $i^* s$ differ by a harmonic section of $\mathcal{H}^0((E')^* E_2)$. Since $ev^0(i^* s) = 0$ then $z$ is the image of this harmonic section under the map $\mathcal{H}^0((E')^* E_2) \rightarrow \C^n$.

To check exactness at $\mathcal{H}^1(E_1^* E_2)$, given $z \in \C^n$ construct $(a, \varphi)$ as above and note that $i^*(a, \varphi) = L_1 s'$ for some $s' \in \Omega^0((E')^* E_2)$. Therefore $i^*[(a, \varphi)] = 0 \in \mathcal{H}^1((E')^* E_2)$ and so the image of $\C^n \rightarrow \mathcal{H}^1(E_1^* E_2)$ is contained in the kernel of $\mathcal{H}^1(E_1^* E_2) \rightarrow \mathcal{H}^1((E')^* E_2)$. Now suppose that the image of $[(a, \varphi)]$ is zero in $\mathcal{H}^1((E')^* E_2)$, i.e. $i^*(a, \varphi) = L_1 s'$ for some $s' \in \Omega^0((E')^* E_2)$. Let $z = ev^0(s')$. Note that $z = 0$ implies that $s' = i^* s$ for some $s \in \Omega^0(E_1^* E_2)$, and so $[(a, \varphi)] = 0$. If $z \neq 0$ then there exists $s'' \in \Omega^0((E')^* E_2)$ such that $ev^1(L_1(s'')) = 0$ and $ev^0(s'') = z$. Then $L_1(s'') = i^*(a'', \varphi'')$ for some $(a'', \varphi'') \in \Omega^{0,1}(E_1^* E_2) \oplus \Omega^{1,0}(E_1^* E_2)$. Moreover, $ev^0(s'' - s') = 0$, so $s'' - s' = i^* s$ for some $s \in \Omega^0(E_1^* E_2)$. Commutativity implies that $L_1 s = (a'', \varphi'') - (a, \varphi)$, and so the harmonic representatives $[(a, \varphi)]$ and $[(a'', \varphi'')]$ are equal. Therefore $[(a, \varphi)]$ is the image of $z$ by the map $\C^n \rightarrow \mathcal{H}^1(E_1^* E_2)$, which completes the proof of exactness at $\mathcal{H}^1(E_1^* E_2)$.

Exactness at the rest of the terms in the sequence \eqref{eqn:hyper-exact-sequence} then follows from standard methods.
\end{proof}

For any stable Higgs bundle $(E, \phi)$ with $d = \deg E$ and $r = \rank E$, define the \emph{generalised Segre invariant} by 
\begin{equation*}
s_k(E, \phi) := k d - r \left( \max_{F \subset E, \rank F = k} \deg F \right) .
\end{equation*}
where the maximum is taken over all $\phi$-invariant holomorphic subbundles of rank $k$. Note that $s_k(E, \phi) \geq s_k(E, 0) =: s_k(E)$ and
\begin{equation*}
\frac{1}{rk} s_k(E, \phi) = \min_{F \subset E, \rank F = k} \left( \slope(E) - \slope(F) \right)
\end{equation*}
Note that any Hecke modification $(E', \phi') \hookrightarrow (E, \phi)$ with $\deg E - \deg E' = n$ has Segre invariant $s_k(E', \phi') \geq s_k(E, \phi) - nk$. As a special case, $(E', \phi')$ is stable if $n < \frac{1}{k} s_k(E, \phi)$ for all $k = 1, \ldots, r-1$.

A theorem of Lange \cite[Satz 2.2]{Lange83} shows that a general stable holomorphic bundle $E$ satisfies $s_k(E) \geq k(r - k)(g-1)$ for all $k = 1, \ldots, r - 1$.  Since there is an dense open subset of stable Higgs bundles whose underlying holomorphic bundle is stable, then Lange's theorem also gives the same lower bound on the Segre invariant for a general stable Higgs bundle.

\begin{lemma}\label{lem:segre-bound}
Let $0 \rightarrow (E', \phi') \rightarrow (E, \phi) \rightarrow \oplus_{j=1}^n \C_{p_j} \rightarrow 0$ be a Hecke modification defined by distinct points $v_1, \ldots, v_n \in \mathbb{P} E^*$ such that $n < \frac{1}{k} s_k(E, \phi)$ for all $k = 1, \ldots, r-1$. Then $\slope(G) < \slope (E')$ for any proper non-zero Higgs subbundle $(G, \phi_G) \subset (E, \phi)$. In particular, this condition is satisfied if $(E, \phi)$ is a general stable Higgs bundle and $n < g-1$.
\end{lemma}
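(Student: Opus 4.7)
The proof reduces to a direct calculation using the definition of the generalised Segre invariant. The plan is as follows.

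Let $(G, \phi_G) \subset (E, \phi)$ be a proper non-zero Higgs subbundle of rank $k$, where $1 \leq k \leq r-1$. By the very definition of $s_k(E, \phi)$ as a maximum of degrees of $\phi$-invariant rank-$k$ subbundles, we have
\begin{equation*}
\deg G \leq \frac{1}{r}\bigl( k \deg E - s_k(E, \phi) \bigr),
\end{equation*}
or equivalently $\slope(E) - \slope(G) \geq \frac{1}{rk} s_k(E, \phi)$. Since $\deg E' = \deg E - n$, so that $\slope(E') = \slope(E) - n/r$, the hypothesis $n < \frac{1}{k} s_k(E, \phi)$ gives
\begin{equation*}
\slope(E') - \slope(G) \;\geq\; \frac{1}{rk}\bigl( s_k(E, \phi) - n k \bigr) \;>\; 0 ,
\end{equation*}
which is exactly the inequality $\slope(G) < \slope(E')$. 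This handles the first (main) assertion of the lemma.

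For the ``In particular'' clause, I would invoke Lange's theorem (\cite{Lange83}, Satz 2.2) applied to the underlying holomorphic bundle of $(E, \phi)$: for a general stable bundle one has $s_k(E) \geq k(r-k)(g-1)$ for all $k = 1, \ldots, r-1$. Since stable Higgs bundles with stable underlying bundle form a dense open subset of the moduli space, and since the obvious inequality $s_k(E, \phi) \geq s_k(E)$ holds (the maximum in the definition of $s_k(E, \phi)$ is taken over a smaller set), we get $\frac{1}{k} s_k(E, \phi) \geq (r-k)(g-1)$ for all $k \in \{1, \ldots, r-1\}$ for a general stable Higgs bundle. The minimum of $(r-k)(g-1)$ over this range of $k$ is achieved at $k = r-1$ and equals $g-1$, so $n < g-1$ implies the bound $n < \frac{1}{k} s_k(E, \phi)$ for every $k$, completing the proof.

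There is no real obstacle here: the whole argument is a one-line manipulation of the Segre inequality, and the only external input is the classical Lange bound. The only thing to be careful about is the passage from Higgs Segre invariants to ordinary Segre invariants, but this is immediate since $\phi$-invariant subbundles form a subclass of all holomorphic subbundles.
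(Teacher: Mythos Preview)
Your proof is correct and matches the paper's argument essentially line for line: both compute $\slope(E') - \slope(G) = \frac{1}{rk}(s_k(E,\phi) - kn)$ directly from the definition of the Segre invariant, and both appeal to Lange's bound for the ``in particular'' clause. Your write-up is in fact slightly more explicit than the paper's in spelling out why $n < g-1$ suffices (via the minimum $(r-k)(g-1) \geq g-1$), but the content is the same.
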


\begin{proof}
Let $k = \rank G$ and $h = \deg G$. Then the lower bound on the Segre invariant implies that
\begin{align*}
\slope(E') - \slope(G) = \frac{d - n}{r} - \frac{h}{k} & = \frac{1}{rk} \left(kd - kn - rh \right) \\
 & \geq \frac{1}{rk} \left( s_k(E, \phi) - kn \right) %\\
\end{align*}
Therefore if $n < \frac{1}{k} s_k(E, \phi)$ then $\slope(E') - \slope(G) > 0$ for any Higgs subbundle of rank $k$. If $n < g-1$ then \cite[Satz 2.2]{Lange83} shows that this condition is satisfied for general stable Higgs bundles.
\end{proof}

\begin{corollary}\label{cor:n-dim-kernel}
Let $(E_1, \phi_1)$ be a stable Higgs bundle, let $n < \frac{1}{k} s_k(E_1, \phi_1)$ for all $k=1, \ldots, \rank(E_1)-1$ and  let $(E_2, \phi_2)$ be a Higgs line bundle such that $\deg E_2 < \frac{\deg E_1 - n}{\rank E_1}$. Then given any set of $n$ distinct points $\{ v_1, \ldots, v_n \} \subset \mathcal{N}_{\phi_1, \phi_2}$ there is a well-defined $n$-dimensional subspace $\ker (\mathcal{H}^1(E_1^* E_2) \rightarrow \mathcal{H}^1((E')^* E_2))$.
\end{corollary}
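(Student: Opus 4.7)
The plan is to extract the $n$-dimensional kernel directly from the exact sequence \eqref{eqn:hyper-exact-sequence}. By that sequence, the kernel in question is isomorphic to the image of the connecting map $\C^n \to \mathcal{H}^1(E_1^* E_2)$, so it suffices to show this connecting map is injective. Equivalently, by exactness, I need the map $\mathcal{H}^0((E')^* E_2) \to \C^n$ to vanish, and the cleanest way to ensure this is to prove that both $\mathcal{H}^0(E_1^* E_2) = 0$ and $\mathcal{H}^0((E')^* E_2) = 0$.

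The vanishing of $\mathcal{H}^0(E_1^* E_2)$ is immediate: both $(E_1, \phi_1)$ and $(E_2, \phi_2)$ are Higgs polystable, and the slope inequality
\begin{equation*}
\slope(E_2) = \deg E_2 < \frac{\deg E_1 - n}{\rank E_1} < \slope(E_1)
\end{equation*}
rules out any nonzero Higgs bundle homomorphism $E_1 \to E_2$ by the standard stability argument. For the vanishing of $\mathcal{H}^0((E')^* E_2)$, suppose for contradiction that there exists a nonzero Higgs morphism $s : E' \to E_2$. Since $E_2$ is a line bundle, $L := \im(s) \subset E_2$ is a sub-line bundle and $G' := \ker(s)$ is a Higgs subbundle of $(E', \phi')$ of rank $r-1$, with $\deg L = \deg E' - \deg G'$.

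The key step is to bound $\slope(G')$. Using Lemma \ref{lem:resolve-higgs-subsheaf}, I can view $G'$ as a Higgs subsheaf of $(E_1, \phi_1)$ (via the inclusion $E' \hookrightarrow E_1$) and resolve it to a Higgs subbundle $G \subset E_1$ of the same rank $r-1$ with $G' \subset G$, so $\deg G' \leq \deg G$ and hence $\slope(G') \leq \slope(G)$. Since the hypothesis $n < \frac{1}{k} s_k(E_1, \phi_1)$ holds for all $k = 1,\ldots,r-1$, Lemma \ref{lem:segre-bound} gives $\slope(G) < \slope(E')$, hence $\slope(G') < \slope(E')$. Combining this with $\deg L = r \slope(E') - (r-1) \slope(G')$ yields $\deg L > \slope(E')$. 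On the other hand $L \subset E_2$ forces $\deg L \leq \deg E_2 < \slope(E')$ by assumption, a contradiction. Therefore $\mathcal{H}^0((E')^* E_2) = 0$, so the connecting map $\C^n \to \mathcal{H}^1(E_1^* E_2)$ is injective, and its image is the desired $n$-dimensional subspace $\ker(\mathcal{H}^1(E_1^* E_2) \to \mathcal{H}^1((E')^* E_2))$.

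The main obstacle is the slope bound for sub-objects of $E'$ rather than of $E_1$: the Segre invariant hypothesis is stated for $(E_1, \phi_1)$, so one must transfer it across the Hecke modification. Resolving a subsheaf of $E_1$ of the given rank to a Higgs subbundle via Lemma \ref{lem:resolve-higgs-subsheaf} is exactly the tool that makes this transfer work, and everything else is a clean slope/rank computation combined with exactness of \eqref{eqn:hyper-exact-sequence}. Well-definedness is automatic since the kernel is intrinsic to the pair $(E_1, E')$ determined by the set $\{v_1, \ldots, v_n\}$.
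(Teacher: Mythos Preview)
Your proof is correct and follows the same overall strategy as the paper: reduce the exact sequence \eqref{eqn:hyper-exact-sequence} to $0 \to \C^n \to \mathcal{H}^1(E_1^*E_2) \to \mathcal{H}^1((E')^*E_2)$ by showing $\mathcal{H}^0((E')^*E_2)=0$. The difference lies only in how that vanishing is obtained. The paper observes directly that the Segre hypothesis $n < \tfrac{1}{k}s_k(E_1,\phi_1)$ forces $(E',\phi')$ itself to be Higgs stable (since $s_k(E',\phi') \geq s_k(E_1,\phi_1) - nk > 0$), after which $\slope(E_2) < \slope(E')$ kills $\mathcal{H}^0$ in one line. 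Your argument instead takes a putative nonzero morphism $s\colon E'\to E_2$, extracts the kernel, resolves it inside $E_1$ via Lemma~\ref{lem:resolve-higgs-subsheaf}, and invokes Lemma~\ref{lem:segre-bound} to reach a slope contradiction. This is effectively reproving the relevant consequence of stability of $(E',\phi')$ by hand. Both are valid; the paper's route is shorter because it packages the Segre bound as ``$E'$ is stable'' once and for all, whereas your route has the minor advantage of making the role of Lemma~\ref{lem:segre-bound} completely explicit.
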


\begin{proof}
Let $(E', \phi')$ be the Hecke modification of $(E_1, \phi_1)$ determined by $\{ v_1, \ldots v_n \} \subset \mathbb{P} E_1^*$. The lower bound on the Segre invariant implies that $(E', \phi')$ is Higgs stable, and therefore $\mathcal{H}^0((E')^* E_2) = 0$ since $\slope(E_2) < \slope(E') = \frac{\deg E_1 - n}{\rank E_1}$. The exact sequence \eqref{eqn:hyper-exact-sequence} then reduces to
\begin{equation*}
0 \rightarrow \C^n \rightarrow \mathcal{H}^1(E_1^* E_2) \rightarrow \mathcal{H}^1((E')^* E_2)
\end{equation*}
and so $\ker (\mathcal{H}^1(E_1^* E_2) \rightarrow \mathcal{H}^1((E')^* E_2))$ is a well-defined $n$-dimensional subspace of $\mathcal{H}^1(E_1^* E_2)$ associated to $\{ v_1, \ldots, v_n \}$.
\end{proof}

\begin{remark}\label{rem:canonical-map}
As noted above, the maps $\C^n \rightarrow \mathcal{H}^1(E_1^* E_2)$ depend on choosing trivialisations, but different choices lead to the same map up to a change of basis of $\C^n$, and so the subspace $\ker (\mathcal{H}^1((E')^* E_2) \rightarrow \mathcal{H}^1(E_1^* E_2))$ is independent of these choices.

In the special case where $n=1$, then this construction gives a well-defined map $\mathcal{N}_{\phi_1, \phi_2} \rightarrow \mathbb{P} \mathcal{H}^1(E_1^* E_2)$. When $n < \frac{1}{k} s_k(E_1, \phi_1)$ for all $k$, then Corollary \ref{cor:n-dim-kernel} shows that any $n$ distinct points $v_1, \ldots, v_n$ span a nondegenerate copy of $\mathbb{P}^{n-1}$ in $\mathbb{P} \mathcal{H}^1(E_1^* E_2)$.

In the special case where $\phi_1 = \phi_2 = 0$ and $E_2$ is trivial, then $\mathcal{N}_{\phi_1, \phi_2} = \mathbb{P} E^*$ and $\mathcal{H}^1(E_1^*) \cong H^{0,1}(E_1^*) \oplus H^{1,0}(E_1^*)$. Then the map $\mathbb{P} E^* \rightarrow \mathcal{H}^1(E_1^*) \rightarrow H^{0,1}(E_1^*) \cong H^0(E_1 \otimes K)^*$ is the usual map defined for holomorphic bundles (cf. \cite[p804]{HwangRamanan04}).
\end{remark}

\begin{definition}\label{def:secant-variety}
The \emph{$n^{th}$ secant variety}, denoted $\Sec^n(\mathcal{N}_{\phi_1, \phi_2}) \subset \mathbb{P} \mathcal{H}^1(E_1^* E_2)$, is the union of the subspaces $\vecspan \{ v_1, \ldots, v_n \} \subset \mathbb{P} \mathcal{H}^1(E_1^* E_2)$ taken over all $n$-tuples of distinct points $v_1, \ldots, v_n \in \mathcal{N}_{\phi_1, \phi_2}$. 
\end{definition}

The next lemma is a Higgs bundle version of \cite[Lemma 3.1]{NarasimhanRamanan69}. Since the proof is similar to that in \cite{NarasimhanRamanan69} then it is omitted.

\begin{lemma}\label{lem:nr-higgs}
Let $0 \rightarrow (E_2, \phi_2) \rightarrow (F, \tilde{\phi}) \rightarrow (E_1, \phi_1) \rightarrow 0$ be an extension of Higgs bundles defined by the extension class $[(a, \varphi)] \in \mathcal{H}^1(E_1^* E_2)$. Let $(E', \phi') \stackrel{i}{\longrightarrow} (E_1, \phi_1)$ be a Higgs subsheaf such that $i^*[(a, \varphi)] = 0 \in \mathcal{H}^1((E')^* E_2)$. Then $(E', \phi')$ is a Higgs subsheaf of $(F, \tilde{\phi})$. 
\end{lemma}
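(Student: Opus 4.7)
The plan is to adapt Narasimhan and Ramanan's proof of \cite[Lemma 3.1]{NarasimhanRamanan69} to the Higgs setting, using the fact that $\mathcal{H}^1(E_1^* E_2)$ classifies Higgs extensions of $(E_1, \phi_1)$ by $(E_2, \phi_2)$ up to isomorphism, with the zero class corresponding to the split extension. Under this identification, the map $i^* : \mathcal{H}^1(E_1^* E_2) \to \mathcal{H}^1((E')^* E_2)$ induced by $i: (E', \phi') \to (E_1, \phi_1)$ corresponds to pulling back the extension along $i$.

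Explicitly, I would construct the pullback Higgs sheaf $F' = \{ (e', f) \in E' \oplus F \, : \, i(e') = p(f) \}$, where $p : F \to E_1$ denotes the canonical projection, equipped with the Higgs field inherited diagonally from $\phi'$ and $\tilde{\phi}$. This gives a short exact sequence of Higgs sheaves
\begin{equation*}
0 \rightarrow (E_2, \phi_2) \rightarrow (F', \tilde{\phi}') \rightarrow (E', \phi') \rightarrow 0
\end{equation*}
together with a natural Higgs sheaf homomorphism $\tilde{i} : F' \to F$ given by the second projection, covering $i$. The extension class of this new sequence is exactly $i^*[(a, \varphi)]$, which vanishes by hypothesis, so the sequence splits as Higgs sheaves: there is a Higgs section $\sigma : (E', \phi') \to (F', \tilde{\phi}')$ whose composition with the projection to $E'$ is the identity. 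Then $\tilde{i} \circ \sigma : (E', \phi') \to (F, \tilde{\phi})$ is a Higgs sheaf homomorphism, and its composition with the projection $F \to E_1$ equals $i$; since $i$ is injective, so is $\tilde{i} \circ \sigma$, realising $(E', \phi')$ as a Higgs subsheaf of $(F, \tilde{\phi})$.

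The main obstacle is to justify the identification of $\mathcal{H}^1(E_1^* E_2)$ with Higgs extension classes, together with compatibility of this identification with the pullback operation along $i$, in the case where $E'$ is a subsheaf rather than a subbundle. For this one can invoke Lemma \ref{lem:resolve-higgs-subsheaf} to replace $E'$ by a Higgs subbundle $G \subset E_1$ of the same rank containing $E'$ as a subsheaf, reducing to the subbundle case where the harmonic theory developed in Section \ref{sec:canonical-map} applies directly and $(E')^* E_2$ is a genuine vector bundle; alternatively, one can work intrinsically with the two-term complex defining $\mathcal{H}^1$ via hypercohomology and appeal to its functoriality under $i$. Once this identification and the splitting of the pullback extension are in place, the remaining verification that $\tilde{i} \circ \sigma$ is a Higgs sheaf monomorphism is immediate from the diagram chase described above.
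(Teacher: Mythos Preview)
Your proposal is correct and is precisely the approach the paper has in mind: the paper omits the proof entirely, stating only that it is similar to \cite[Lemma 3.1]{NarasimhanRamanan69}, and your argument is exactly the standard Narasimhan--Ramanan pullback-and-split construction carried over to Higgs sheaves. Your concern about the subsheaf case is unnecessary here, since on a Riemann surface any torsion-free subsheaf of a vector bundle is itself locally free, so $(E')^* E_2$ is a genuine bundle and the harmonic theory applies directly.
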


\begin{equation*}
\xymatrix{
 & & & (E', \phi') \ar[d]^i \ar@{-->}[dl] \\
0 \ar[r] & (E_2, \phi_2) \ar[r] & (F, \tilde{\phi}) \ar[r] & (E_1, \phi_1) \ar[r] & 0 \\
}\qedhere
\end{equation*}

\begin{corollary}\label{cor:linear-span}
Let $(E_1, \phi_1)$ be stable, let $n < \frac{1}{k} s_k(E_1, \phi_1)$ for all $k=1, \ldots, \rank(E_1)-1$, let $(E_2, \phi_2)$ be a Higgs line bundle and suppose that $\deg E_2 < \frac{\deg E_1 - n}{\rank E_1}$. Let $0 \rightarrow (E_2, \phi_2) \rightarrow (F, \tilde{\phi}) \rightarrow (E_1, \phi_1) \rightarrow 0$ be an extension of Higgs bundles with extension class $[(a, \varphi)] \in \mathcal{H}^1(E_1^* E_2)$. Let $0 \rightarrow (E', \phi') \stackrel{i}{\hookrightarrow} (E_1, \phi_1) \rightarrow \oplus_{j=1}^n \C_{p_j} \rightarrow 0$ be a Hecke modification determined by distinct points $\{v_1, \ldots, v_n\} \in \mathcal{N}_{\phi_1, \phi_2}$. 

Then $(E', \phi')$ is a subsheaf of $(F, \tilde{\phi})$ if $[(a, \varphi)] \in \vecspan\{ v_1, \ldots, v_n \} \subset \mathcal{H}^1(E_1^* E_2)$. 
\end{corollary}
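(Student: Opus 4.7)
The plan is to apply Lemma \ref{lem:nr-higgs}, which reduces the conclusion to verifying that $i^*[(a, \varphi)] = 0$ in $\mathcal{H}^1((E')^* E_2)$. Under the standing hypotheses, Corollary \ref{cor:n-dim-kernel} produces an $n$-dimensional kernel
\begin{equation*}
K := \ker\bigl( \mathcal{H}^1(E_1^* E_2) \to \mathcal{H}^1((E')^* E_2) \bigr) \subset \mathcal{H}^1(E_1^* E_2),
\end{equation*}
so it suffices to show $\vecspan\{v_1, \ldots, v_n\} = K$.

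First I would recall the construction from Remark \ref{rem:canonical-map}: the image of a single point $v_j \in \mathcal{N}_{\phi_1, \phi_2}$ in $\mathbb{P}\mathcal{H}^1(E_1^* E_2)$ is defined via the one-point Hecke modification $(E_{(j)}', \phi_{(j)}') \hookrightarrow (E_1, \phi_1)$ at $p_j$ and equals the one-dimensional kernel of $\mathcal{H}^1(E_1^* E_2) \to \mathcal{H}^1((E_{(j)}')^* E_2)$. Since the $n$-point Hecke modification factors as $(E', \phi') \hookrightarrow (E_{(j)}', \phi_{(j)}') \hookrightarrow (E_1, \phi_1)$ for each $j$, functoriality of the long exact sequence \eqref{eqn:hyper-exact-sequence} yields a commutative diagram showing that $\vecspan\{v_j\} \subset K$, and hence $\vecspan\{v_1, \ldots, v_n\} \subset K$.

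To upgrade this containment to equality, I would identify each line $\vecspan\{v_j\}$ with the image of the $j$-th coordinate axis under the connecting homomorphism $\C^n \to \mathcal{H}^1(E_1^* E_2)$ from \eqref{eqn:hyper-exact-sequence}. Tracing the construction given in the proof preceding Corollary \ref{cor:n-dim-kernel}, the basis vector $e_j \in \C^n$ is sent to the harmonic representative of $L_1 s'$ for any lift $s' \in \Omega^0((E')^* E_2)$ with $ev^0(s') = e_j$ and $ev^1(\bar{\partial}_A s') = 0$; by choosing $s'$ supported in a small neighbourhood of $p_j$ only, the resulting class agrees with the one produced by the single-point construction applied to $v_j$. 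This localisation-at-$p_j$ step is the main point to check carefully and is the principal obstacle in the proof. The injectivity of $\C^n \to \mathcal{H}^1(E_1^* E_2)$, guaranteed by $\mathcal{H}^0((E')^* E_2) = 0$ (which in turn follows from the Segre-invariant hypothesis forcing stability of $(E', \phi')$ together with $\slope(E_2) < \slope(E')$), then forces the lines $\vecspan\{v_j\}$ to be linearly independent, so their span is $n$-dimensional and must equal the $n$-dimensional space $K$.

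With $\vecspan\{v_1, \ldots, v_n\} = K$ established, the assumption $[(a, \varphi)] \in \vecspan\{v_1, \ldots, v_n\}$ immediately yields $i^*[(a, \varphi)] = 0$ in $\mathcal{H}^1((E')^* E_2)$, and Lemma \ref{lem:nr-higgs} produces the desired inclusion $(E', \phi') \subset (F, \tilde{\phi})$ as a Higgs subsheaf.
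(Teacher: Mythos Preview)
Your proposal is correct and follows essentially the same route as the paper: reduce to Lemma \ref{lem:nr-higgs} by showing $[(a,\varphi)]$ lies in the kernel of $\mathcal{H}^1(E_1^*E_2)\to\mathcal{H}^1((E')^*E_2)$, invoking Corollary \ref{cor:n-dim-kernel}. The paper's proof is a two-line citation of Corollary \ref{cor:n-dim-kernel} and Lemma \ref{lem:nr-higgs}, implicitly taking $\vecspan\{v_1,\ldots,v_n\}$ to \emph{be} the image of $\C^n$ under the connecting map (hence the kernel), whereas you spell out the identification between the span defined via the single-point maps of Remark \ref{rem:canonical-map} and the $n$-dimensional kernel via a localisation argument; this extra care is not needed for the paper's internal logic but does make explicit a point the paper leaves tacit.
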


\begin{proof}
If $[(a, \varphi)] \in \vecspan\{ v_1, \ldots, v_n \}$ then $[(a, \varphi)] \in \ker (\mathcal{H}^1(E_1^* E_2) \rightarrow \mathcal{H}^1((E')^* E_2))$ by Corollary \ref{cor:n-dim-kernel}, and therefore $(E', \phi')$ is a subsheaf of $(F, \tilde{\phi})$ by Lemma \ref{lem:nr-higgs}.  
\end{proof}

The next lemma gives a condition on the extension class $[(a, \varphi)] \in \mathcal{H}^1(E_1^* E_2)$ for $(E', \phi')$ to be the subsheaf of largest degree which lifts to a subsheaf of $(F, \tilde{\phi})$. This is used to study unbroken flow lines in Section \ref{sec:secant-criterion}.

\begin{lemma}\label{lem:nondegenerate-maximal}
Let $(E_1, \phi_1)$ be a stable Higgs bundle, choose $n$ such that $2n-1 < \frac{1}{k} s_k(E_1, \phi_1)$ for all $k=1, \ldots, \rank(E_1)$, let $(E_2, \phi_2)$ be a Higgs line bundle and suppose that $\deg E_2 < \frac{\deg E_1 - (2n-1)}{\rank E_1}$. Let $0 \rightarrow (E_2, \phi_2) \rightarrow (F, \tilde{\phi}) \rightarrow (E_1, \phi_1) \rightarrow 0$ be an extension of Higgs bundles with extension class $[(a, \varphi)] \in  \Sec^n(\mathcal{N}_{\phi_1, \phi_2}) \setminus \Sec^{n-1}(\mathcal{N}_{\phi_1, \phi_2}) \subset \mathbb{P} \mathcal{H}^1(E_1^* E_2)$ and let $0 \rightarrow (E', \phi') \stackrel{i}{\hookrightarrow} (E_1, \phi_1) \rightarrow \oplus_{j=1}^n \C_{p_j} \rightarrow 0$ be a Hecke modification determined by distinct points $v_1, \ldots, v_n \in \mathcal{N}_{\phi_1, \phi_2}$ such that $i^* [(a, \varphi)] = 0$.

Let $(\mathcal{E}'', \phi'') \stackrel{i''}{\hookrightarrow} (\mathcal{E}, \phi)$ be a subsheaf such that $(i'')^* [(a, \varphi)] = 0 \in \mathcal{H}^1((E'')^* E_2)$ and $\rank E'' = \rank E$. Then $\deg(E'') \leq \deg(E')$.
\end{lemma}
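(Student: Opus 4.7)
The plan is to argue by contradiction. Assume $\deg E'' > \deg E'$, so $m := \deg E_1 - \deg E'' < n$, and write the Hecke modification as $0 \to E'' \to E_1 \to \bigoplus_j (\C_{q_j})^{k_j} \to 0$ with $\sum_j k_j = m$, defined by data $w_j \in \mathbb{P} E_{1,q_j}^*$. Each $q_j$ automatically lies in $\mathcal{N}_{\phi_1}$ since $(E'', \phi'') \hookrightarrow (E_1, \phi_1)$ is a Higgs subsheaf. By Lemma \ref{lem:nr-higgs}, the hypothesis $(i'')^*[(a, \varphi)] = 0$ lifts this to a Higgs subsheaf inclusion $(E'', \phi'') \hookrightarrow (F, \tilde\phi)$; equivalently, $(E'' \oplus E_2, \phi'' \oplus \phi_2) \hookrightarrow (F, \tilde\phi)$ with Higgs torsion quotient of length $m$.

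The next step is to set up the Higgs analogue of the exact sequence used to prove Corollary \ref{cor:n-dim-kernel}. Applying $R\Hom_{Higgs}(\cdot, (E_2, \phi_2))$ to the short exact sequence above, with $(\C_{q_j}, \mu_j)$ the Higgs skyscraper determined by the eigenvalue $\mu_j$ of $\phi_1(q_j)$ along $w_j$ (Corollary \ref{cor:Higgs-compatible}), yields
\begin{equation*}
\cdots \to \mathcal{H}^0((E'')^* E_2) \to \bigoplus_j \Ext^1_{Higgs}\bigl( (\C_{q_j}, \mu_j)^{k_j}, (E_2, \phi_2)\bigr) \to \mathcal{H}^1(E_1^* E_2) \stackrel{(i'')^*}{\longrightarrow} \mathcal{H}^1((E'')^* E_2) \to \cdots
\end{equation*}
A local computation at each $q_j$ (following the same two-step construction as in the paragraphs preceding Definition \ref{def:Hecke-compatible}) shows that the local $\Ext^1_{Higgs}$ group is nonzero precisely when $\phi_2(q_j) = \mu_j$, i.e., when $q_j \in \mathcal{N}_{\phi_1, \phi_2}$, and the induced map to $\mathcal{H}^1(E_1^* E_2)$ then recovers the canonical inclusion $\mathcal{N}_{\phi_1, \phi_2} \hookrightarrow \mathbb{P}\, \mathcal{H}^1(E_1^* E_2)$ from Remark \ref{rem:canonical-map}.

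Exactness together with $(i'')^*[(a, \varphi)] = 0$ then forces $[(a, \varphi)]$ to lie in the span (inside $\mathcal{H}^1(E_1^* E_2)$) of those $w_j$ with $q_j \in \mathcal{N}_{\phi_1, \phi_2}$. At most $m$ such points contribute, counted with multiplicity, and since $m < n$ and $2n-1 < \tfrac{1}{k} s_k(E_1, \phi_1)$ for all $k$, Corollary \ref{cor:n-dim-kernel} guarantees that the induced linear map from $\C^m$ is injective; hence $[(a, \varphi)] \in \Sec^m(\mathcal{N}_{\phi_1, \phi_2}) \subseteq \Sec^{n-1}(\mathcal{N}_{\phi_1, \phi_2})$, contradicting the hypothesis. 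The main technical difficulty will be the local $\Ext^1_{Higgs}$ calculation when some $k_j > 1$: one either enlarges $\mathcal{N}_{\phi_1, \phi_2}$ to track the appropriate jet data at $q_j$, or deforms $E''$ into a Hecke modification at $m$ distinct simple points, using the openness ensured by the Segre bound $2n-1 < \tfrac{1}{k} s_k(E_1, \phi_1)$ to keep everything within the range where Corollary \ref{cor:n-dim-kernel} applies.
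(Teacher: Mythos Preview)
Your overall strategy is correct and matches the paper's: assuming $m = \deg E_1 - \deg E'' < n$, one forces $[(a,\varphi)] \in \Sec^{n-1}(\mathcal{N}_{\phi_1,\phi_2})$, contradicting the hypothesis. The execution differs. You work only with the points $w_1,\ldots,w_m$ defining $E''$ and appeal to the Higgs $\Ext^1$ long exact sequence to place $[(a,\varphi)]$ in their span; this requires the local computation of $\Ext^1_{Higgs}\bigl((\C_{q_j},\mu_j),(E_2,\phi_2)\bigr)$ and the handling of multiplicities $k_j>1$ that you flag as the main difficulty. The paper instead uses \emph{both} sets of points simultaneously: since $i^*[(a,\varphi)]=0$ and $(i'')^*[(a,\varphi)]=0$, the class lies in $\vecspan\{v_1,\ldots,v_n\}\cap\vecspan\{v_1'',\ldots,v_m''\}$; because $n+m\le 2n-1$, Corollary~\ref{cor:n-dim-kernel} applied to the union shows these points are in linearly general position, so the intersection of the two spans equals the span of the common points, a set of size at most $m<n$. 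This is pure linear algebra once the $2n-1$ bound is in hand, and it sidesteps the local $\Ext$ computation and the $k_j>1$ issue entirely.

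Both arguments share the same unproven assumption, namely that the points defining $E''$ are distinct and lie in $\mathcal{N}_{\phi_1,\phi_2}$ (the paper simply declares this at the outset). Your local $\Ext$ remark --- that the contribution vanishes unless $\phi_2(q_j)=\mu_j$ --- is a genuine attempt to justify the $\mathcal{N}_{\phi_1,\phi_2}$ membership and is correct for simple points, so in that respect your proof is more careful than the paper's. If you want the cleanest write-up, adopt the paper's intersection-of-spans trick; if you want to close the gap about $\mathcal{N}_{\phi_1,\phi_2}$, your $\Ext$ route is the right one, and the deformation-to-distinct-points argument you sketch is a reasonable way to handle multiplicities.
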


\begin{proof}
Let $\{ v_1'', \ldots, v_m'' \} \subset \mathcal{N}_{\phi_1, \phi_2}$ be the set of distinct points defining the Hecke modification $(\mathcal{E}'', \phi'') \stackrel{i''}{\hookrightarrow} (\mathcal{E}_1, \phi_1)$. Then $i^* [(a, \varphi)] = 0$ and $(i'')^*[(a, \varphi)] = 0$ together imply that $[(a, \varphi)] \in \vecspan\{ v_1, \ldots, v_n \} \cap \vecspan \{ v_1'', \ldots, v_m''\}$. Either $m + n > 2n-1$ (and so $\deg E'' \leq \deg E'$) or $m + n \leq 2n-1$ in which case Corollary \ref{cor:n-dim-kernel} together with the lower bound $2n-1 < \frac{1}{k} s_k(E_1, \phi_1)$ implies that $\vecspan \{ v_1, \ldots, v_n \} \cap \vecspan \{v_1'', \ldots, v_m'' \}$ is the linear span of $\{ v_1, \ldots, v_n \} \cap \{ v_1'', \ldots, v_m'' \}$. Since $m+n \leq 2n-1$ then $\{ v_1, \ldots, v_n \} \cap \{ v_1'', \ldots, v_m'' \}$ is a strict subset of $\{ v_1, \ldots, v_n\}$, which is not possible since $[(a, \varphi)] \notin \Sec^{n-1}(\mathcal{N}_{\phi_1, \phi_2})$. Therefore $\deg E'' \leq \deg E'$.
\end{proof}

\subsection{Constructing Hecke modifications of Higgs bundles via the Yang-Mills-Higgs flow.}\label{sec:YMH-flow-hecke}

Let $(E, \phi)$ be a stable Higgs bundle and $L_u$ a line bundle with $\deg L_u < \frac{\deg E - 1}{\rank E}$, and let $E'$ be a Hecke modification of $E$ which is compatible with the Higgs field
\begin{equation*}
0 \rightarrow (\mathcal{E}', \phi') \stackrel{i}{\hookrightarrow} (\mathcal{E}, \phi) \stackrel{v}{\rightarrow} \C_p \rightarrow 0 .
\end{equation*}

The goal of this section is to construct critical points $x_u = (L_u, \phi_u) \oplus (E, \phi)$ and $x_\ell = (L_\ell, \phi_\ell) \oplus (E', \phi')$ together with a broken flow line connecting $x_u$ and $x_\ell$. The result of Theorem \ref{thm:algebraic-flow-line} shows that this amounts to constructing a Higgs field $\phi_u \in H^0(K)$, a Higgs pair $(F, \tilde{\phi})$ in the unstable set of $x_u$ and a complex gauge transformation $g \in \mathcal{G}^\C$ such that $(E', \phi')$ is a Higgs subbundle of $g \cdot (F, \tilde{\phi})$. 

\begin{lemma}\label{lem:construct-Higgs-extension}
Let $0 \rightarrow (\mathcal{E}', \phi') \rightarrow (\mathcal{E}, \phi) \stackrel{v}{\rightarrow} \C_p \rightarrow 0$ be a Hecke modification such that $(E, \phi)$ and $(E', \phi')$ are both Higgs semistable, and let $L_u$ be a line bundle with $\deg L_u < \slope(E') < \slope(E)$. Then there exists a Higgs field $\phi_u \in H^0(K)$ and a non-trivial Higgs extension $(F, \tilde{\phi})$ of $(L_u, \phi_u)$ by $(E, \phi)$ such that $(E', \phi')$ is a Higgs subsheaf of $(F, \tilde{\phi})$.
\end{lemma}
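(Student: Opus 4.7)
The plan is to apply the construction of Section \ref{sec:canonical-map} with $E_1 = E$, $\phi_1 = \phi$, $E_2 = L_u$. First I would pick $\phi_u \in H^0(K)$ so that $\phi_u(p)$ equals the eigenvalue $\mu$ of $\phi(p)$ for which the compatibility condition $v(\phi(p) - \mu \cdot \id) = 0$ of Corollary \ref{cor:Higgs-compatible} holds; such a $\phi_u$ exists because the canonical linear system on $X$ is base-point free, as noted in Remark \ref{rem:miniscule-compatible}. With this choice the Hecke datum $v$ becomes an element of $\mathcal{N}_{\phi,\phi_u}$ in the sense of Definition \ref{def:Hecke-compatible}.

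Next I would observe that $\mathcal{H}^0(E^* L_u) = 0$ and $\mathcal{H}^0((E')^* L_u) = 0$: the hypothesis $\deg L_u < \slope(E') < \slope(E)$ together with the Higgs semistability of $(E,\phi)$ and $(E',\phi')$ forces any nonzero Higgs morphism from $(E,\phi)$ or $(E',\phi')$ to the Higgs line bundle $(L_u,\phi_u)$ to produce a Higgs quotient of slope at most $\deg L_u$, contradicting semistability. Feeding these two vanishings into the exact sequence \eqref{eqn:hyper-exact-sequence} with $n = 1$ collapses it to
\begin{equation*}
0 \to \C \to \mathcal{H}^1(E^* L_u) \to \mathcal{H}^1((E')^* L_u),
\end{equation*}
so the kernel of the restriction map on the right is a one-dimensional nonzero subspace.

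Choose any nonzero class $[(a,\varphi)]$ in this kernel and let $(F,\tilde{\phi})$ be the Higgs extension
\begin{equation*}
0 \to (L_u,\phi_u) \to (F,\tilde{\phi}) \to (E,\phi) \to 0
\end{equation*}
defined by $[(a,\varphi)]$; the extension is non-trivial because the class is nonzero. Since $i^*[(a,\varphi)] = 0$ in $\mathcal{H}^1((E')^* L_u)$ by the choice of kernel element, Lemma \ref{lem:nr-higgs}, applied with $E_1 = E$ and $E_2 = L_u$, lifts the Higgs subsheaf inclusion $(E',\phi') \hookrightarrow (E,\phi)$ to a Higgs subsheaf inclusion $(E',\phi') \hookrightarrow (F,\tilde{\phi})$, completing the construction. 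The only real point to verify is the collapse of \eqref{eqn:hyper-exact-sequence}, which reduces to the semistability argument above; beyond that, the construction follows directly from results already established in Section \ref{sec:canonical-map}, and I do not anticipate a serious obstacle, since the numerical hypotheses have been tailored precisely so that both $\mathcal{H}^0$ terms vanish and the connecting homomorphism produces the required nonzero extension class.
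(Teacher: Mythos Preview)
Your proposal is correct and follows essentially the same approach as the paper: choose $\phi_u$ via Remark \ref{rem:miniscule-compatible}, use semistability and the slope inequality to kill $\mathcal{H}^0((E')^* L_u)$ so that \eqref{eqn:hyper-exact-sequence} produces a one-dimensional kernel in $\mathcal{H}^1(E^* L_u)$, and then apply Lemma \ref{lem:nr-higgs} to any nonzero class therein. The paper's proof is slightly terser (it only records the vanishing of $\mathcal{H}^0((E')^* L_u)$, which is all that is needed), but the argument is the same.
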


\begin{proof}
By Remark \ref{rem:miniscule-compatible}, there exists $\phi_u \in H^0(K)$ such that $v \in \mathcal{N}_{\phi, \phi_u}$. Since $(E', \phi')$ is semistable with $\slope(E') > \slope(L_u)$ then $\mathcal{H}^0((E')^* L_u) = 0$ and so the exact sequence \eqref{eqn:hyper-exact-sequence} shows that the Hecke modification $v \in \mathbb{P} E^*$ determines a one-dimensional subspace of $\mathcal{H}^1(E^* L_u)$, and that any non-trivial extension class in this subspace is in the kernel of the map $\mathcal{H}^1(E^* L_u) \rightarrow \mathcal{H}^1((E')^* L_u)$. Let $0 \rightarrow (L_u, \phi_u) \rightarrow (F, \tilde{\phi}) \rightarrow (E, \phi) \rightarrow 0$ be such an extension. Then Lemma \ref{lem:nr-higgs} shows that $(E', \phi')$ is a Higgs subsheaf of $(F, \tilde{\phi})$.
\end{proof}

We can now use this result to relate Hecke modifications at a single point with $\YMH$ flow lines.

\begin{theorem}\label{thm:flow-hecke}

\begin{enumerate}

\item Let $0 \rightarrow (E', \phi') \rightarrow (E, \phi) \stackrel{v}{\rightarrow} \C_p \rightarrow 0$ be a Hecke modification such that $(E, \phi)$ is stable and $(E', \phi')$ is semistable, and let $L_u$ be a line bundle with $\deg L_u + 1 < \slope(E') < \slope(E)$. Then there exist sections $\phi_u, \phi_\ell \in H^0(K)$, a line bundle $L_\ell$ with $\deg L_\ell = \deg L_u + 1$ and a metric on $E \oplus L_u$ such that $x_u = (E, \phi) \oplus (L_u, \phi_u)$ and $x_\ell = (E_{gr}', \phi_{gr}') \oplus (L_\ell, \phi_\ell)$ are critical points connected by a $\YMH$ flow line, where $(E_{gr}', \phi_{gr}')$ is isomorphic to the graded object of the Seshadri filtration of $(E', \phi')$.

\item Let $x_u = (E, \phi) \oplus (L_u, \phi_u)$ and $x_\ell = (E', \phi') \oplus (L_\ell, \phi_\ell)$ be critical points connected by a $\YMH$ flow line such that $L_u, L_\ell$ are line bundles with $\deg L_u = \deg L_\ell + 1$, $(E, \phi)$ is stable and $(E', \phi')$ is polystable with $\deg L_u + 1 < \slope(E') < \slope(E)$. If $(E', \phi')$ is Higgs stable then it is a Hecke modification of $(E, \phi)$. If $(E', \phi')$ is Higgs polystable then it is the graded object of the Seshadri filtration of a Hecke modification of $(E, \phi)$.

\end{enumerate}

\end{theorem}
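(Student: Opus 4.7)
By Remark \ref{rem:miniscule-compatible} one can choose $\phi_u \in H^0(K)$ so that $v \in \mathcal{N}_{\phi, \phi_u}$, and then Lemma \ref{lem:construct-Higgs-extension} produces a non-trivial Higgs extension $0 \to (L_u, \phi_u) \to (F, \tilde\phi) \to (E, \phi) \to 0$ together with a Higgs-sheaf inclusion $(E', \phi') \hookrightarrow (F, \tilde\phi)$. The extension class $\xi \in \mathcal{H}^1(E^* L_u)$ is chosen non-zero in the one-dimensional kernel of the restriction map in \eqref{eqn:hyper-exact-sequence}; identifying this kernel with $\Ext^1(\C_p, L_u) \cong L_u|_p$, the push-out description of the extension shows that $F/E'$ is the extension of $\C_p$ by $L_u$ whose class is the preimage of $\xi$. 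Non-vanishing of $\xi$ then gives a line bundle $L_\ell := F/E'$ of degree $\deg L_u + 1$ with induced holomorphic Higgs field $\phi_\ell$ (using $\tilde\phi$-invariance of $E'$). Fix a Hermitian-Einstein metric on $E \oplus L_u$ making $x_u = (E, \phi) \oplus (L_u, \phi_u)$ a critical point, which exists by polystability.

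\textbf{Part (1), verifying the two limits.} The defining extension presents $0 \subset L_u \subset F$ as a filtration with polystable quotients of strictly increasing slope, so Theorem \ref{thm:algebraic-flow-line} yields $g \in \mathcal{G}^\C$ such that the reverse $\YMH$ flow from $g \cdot (F, \tilde\phi)$ converges to (a gauge-equivalent of) $x_u$. I next check that the HN filtration of $(F, \tilde\phi)$ is $0 \subset (E', \phi') \subset (F, \tilde\phi)$: for any Higgs subbundle $H \subset F$ with image $M$ in $F/E' \cong L_\ell$, if $M=0$ then $H \subset E'$ and $\slope(H) \leq \slope(E')$ by semistability; if $M \neq 0$ then with $k = \rank H$, semistability of $E'$ and $\deg M \leq \deg L_\ell$ give
\begin{equation*}
\slope(H) \leq \frac{(k-1)\slope(E') + \deg L_u + 1}{k} < \slope(E'),
\end{equation*}
where the strict inequality is precisely the hypothesis $\deg L_u + 1 < \slope(E')$. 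Thus $(E', \phi')$ is the maximal destabilising Higgs subbundle, the HNS graded object of $(F, \tilde\phi)$ is $(E'_{gr}, \phi'_{gr}) \oplus (L_\ell, \phi_\ell) \cong x_\ell$, and \cite[Thm. 4.1]{Wilkin08} produces the forward flow line from $g \cdot (F, \tilde\phi)$ to $x_\ell$.

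\textbf{Part (2).} Given a flow line from $x_u$ to $x_\ell$, the converse of Theorem \ref{thm:algebraic-flow-line} applied at any finite-time point $(F, \tilde\phi)$ supplies a filtration with graded object isomorphic to $x_u$; since $\slope(L_u) < \slope(E)$, this filtration must be $0 \subset L_u \subset F$ with $F/L_u \cong E$. Simultaneously, \cite[Thm. 4.1]{Wilkin08} identifies the HNS graded object of $(F, \tilde\phi)$ with the forward limit $x_\ell$, and $\deg L_\ell < \slope(E')$ forces the HN step to be $0 \subset \tilde E' \subset F$ for a rank-$r$ semistable Higgs subbundle $(\tilde E', \tilde \phi')$ with $F/\tilde E' \cong L_\ell$ and Seshadri graded $(E', \phi')$. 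The composite $L_u \hookrightarrow F \twoheadrightarrow L_\ell$ is a Higgs-linear map of line bundles; if it were zero then $L_u \subset \tilde E'$ would make $\tilde E'/L_u \subset E$ a rank-$(r-1)$ subsheaf of slope $(\deg E - 1 - \deg L_u)/(r-1) > \slope(E)$ (using $\deg L_u + 1 < \slope(E)$), contradicting stability of $(E, \phi)$. Hence $L_u \cap \tilde E' = 0$ and the composite $\tilde E' \hookrightarrow F \twoheadrightarrow E$ is injective with cokernel of length $\deg E - \deg \tilde E' = \deg L_\ell - \deg L_u = 1$. Thus $(\tilde E', \tilde \phi')$ is a Hecke modification of $(E, \phi)$; when $(E', \phi')$ is Higgs stable the Seshadri step is trivial and $(E', \phi') = (\tilde E', \tilde \phi')$ is itself a Hecke modification, and otherwise $(E', \phi')$ is its Seshadri graded object.

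\textbf{Main obstacle.} The decisive technical point is in Part (1), showing that $F/E'$ is torsion-free rather than $L_u \oplus \C_p$. This reduces to identifying the boundary map $\C \to \mathcal{H}^1(E^* L_u)$ in \eqref{eqn:hyper-exact-sequence} with Yoneda composition with the Hecke extension class $[0 \to \mathcal{E}' \to \mathcal{E} \to \C_p \to 0]$, so that the non-zero extension class $\xi$ lifts to a non-zero class in $\Ext^1(\C_p, L_u)$ whose corresponding extension is the desired line bundle $L_\ell$.
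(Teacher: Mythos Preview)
Your proof is correct and follows the same overall strategy as the paper: construct the Higgs extension $(F,\tilde\phi)$ in the negative slice of $x_u$ via Lemma~\ref{lem:construct-Higgs-extension}, invoke Theorem~\ref{thm:algebraic-flow-line} for the backward limit, and identify the Harder--Narasimhan--Seshadri graded object of $(F,\tilde\phi)$ with $x_\ell$ for the forward limit.

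Where you differ is in the justification of two technical points, and in both cases your route is more explicit than the paper's. For the fact that $(E',\phi')$ is the maximal semistable Higgs subbundle of $(F,\tilde\phi)$, the paper argues tersely that the Harder--Narasimhan type of $(F,\tilde\phi)$ is strictly below that of $(E,\phi)\oplus(L_u,\phi_u)$, so any rank-$r$ subbundle has degree at most $\deg E-1=\deg E'$, and hence the saturation of the subsheaf $E'\subset F$ (via Lemma~\ref{lem:resolve-higgs-subsheaf}) must already equal $E'$. You instead prove directly that $F/E'$ is the nontrivial extension of $\C_p$ by $L_u$ (hence a line bundle $L_\ell$), which is the pullback/Yoneda description of the boundary map in \eqref{eqn:hyper-exact-sequence}; then a clean slope computation on $0\to H\cap E'\to H\to M\to 0$ shows $E'$ dominates every Higgs subbundle. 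Your argument is self-contained and avoids appealing to the Harder--Narasimhan polygon, at the cost of the extra homological identification flagged in your ``Main obstacle''. For Part~(2), the paper simply says that $\slope(E')>\deg L_u$ forces the maximal semistable subbundle to inject into $E=F/L_u$; your dual argument (showing the composite $L_u\hookrightarrow F\twoheadrightarrow L_\ell$ is nonzero by exhibiting a destabilising subsheaf of $(E,\phi)$ otherwise) is equally valid and has the virtue of treating the stable and polystable cases uniformly.
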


\begin{proof}[Proof of Theorem \ref{thm:flow-hecke}]
Given a Hecke modification $0 \rightarrow (\mathcal{E}', \phi') \rightarrow (\mathcal{E}, \phi) \rightarrow \C_p \rightarrow 0$ as in Lemma \ref{lem:construct-Higgs-extension}, choose $\phi_u \in H^0(K)$ such that $v \in \mathcal{N}_{\phi, \phi_u}$ and apply a gauge transformation to $E \oplus L_u$ such that $x_u = (E, \phi) \oplus (L_u, \phi_u)$ is a critical point of $\YMH$. The harmonic representative of the extension class $[(a, \varphi)] \in \mathcal{H}^1(E^* L_u)$ from Lemma \ref{lem:construct-Higgs-extension} defines an extension $0 \rightarrow (L_u, \phi_u) \rightarrow (F, \tilde{\phi}) \rightarrow (E, \phi) \rightarrow 0$ such that $y = (F, \tilde{\phi})$ is in the negative slice of $x_u$, and therefore flows down to a limit isomorphic to the graded object of the Harder-Narasimhan-Seshadri filtration of $(F, \tilde{\phi})$. 

Lemma \ref{lem:construct-Higgs-extension} also shows that $(E', \phi')$ is a Higgs subsheaf of $(F, \tilde{\phi})$. Lemma \ref{lem:resolve-higgs-subsheaf} shows that this has a resolution as a Higgs subbundle of $(F, \tilde{\phi})$, however since the Harder-Narasimhan type of $(F, \tilde{\phi})$ is strictly less than that of $(E, \phi) \oplus (L_u, \phi_u)$, $\rank(E') = \rank(F) - 1$ and $\deg E' = \deg E - 1$, then $(E', \phi')$ already has the maximal possible slope for a semistable Higgs subbundle of $(F, \tilde{\phi})$, and therefore $(E', \phi')$ must be the maximal semistable Higgs subbundle. Since $\rank(E') = \rank(F) - 1$, then the graded object of the Harder-Narasimhan-Seshadri filtration of $(F, \tilde{\phi})$ is $(E_{gr}', \phi_{gr}') \oplus (L_\ell, \phi_\ell)$, where $(L_\ell, \phi_\ell) = (F, \tilde{\phi}) / (E', \phi')$.   Theorem \ref{thm:algebraic-flow-line} then shows that $(E, \phi) \oplus (L_u, \phi_u)$ and $(E_{gr}', \phi_{gr}') \oplus (L_\ell, \phi_\ell)$ are connected by a flow line.

Conversely, if $x_u = (E, \phi) \oplus (L_u, \phi_u)$ and $x_\ell = (E', \phi') \oplus (L_\ell, \phi_\ell)$ are critical points connected by a flow line, then Theorem \ref{thm:algebraic-flow-line} shows that there exists a Higgs pair $(F, \tilde{\phi})$ in the negative slice of $x_u$ such that $(E', \phi')$ is the graded object of the Seshadri filtration of the maximal semistable Higgs subbundle of $(F, \tilde{\phi})$. If $(E', \phi')$ is Higgs stable, then since $\slope(E') > \slope (L_u)$ we see $(E', \phi')$ is a Higgs subsheaf of $(E, \phi)$ with $\rank(E) = \rank(E')$ and $\deg(E') = \deg(E) - 1$. Therefore $(E', \phi')$ is a Hecke modification of $(E, \phi)$. If $(E', \phi')$ is Higgs polystable then the same argument shows that $(E', \phi')$ is the graded object of the Seshadri filtration of a Hecke modification of $(E, \phi)$. 
\end{proof}

In general, for any flow one can define the space $\mathcal{F}_{\ell, u}$ of flow lines connecting upper and lower critical sets $C_u$ and $C_\ell$, and the space $\mathcal{P}_{\ell, u} \subset C_u \times C_\ell$ of pairs of critical points connected by a flow line. These spaces are equipped with projection maps to the critical sets defined by the canonical projection taking a flow line to its endpoints.
\begin{equation}
\xymatrix{
 & \mathcal{F}_{\ell, u} \ar[d] \ar@/_/[ddl] \ar@/^/[ddr] &  \\
 & \mathcal{P}_{\ell, u} \ar[dl] \ar[dr] &  \\
C_\ell & & C_u 
}
\end{equation}

For the Yang-Mills-Higgs flow, given critical sets $C_u$ and $C_\ell$ of respective Harder-Narasimhan types $(\frac{d}{r}, \deg L_u)$ and $(\frac{d-1}{r}, \deg L_u + 1)$ as in Theorem \ref{thm:flow-hecke} above, there are natural projection maps to the moduli space $C_u \rightarrow \mathcal{M}_{ss}^{Higgs}(r, d)$ and $C_\ell \rightarrow \mathcal{M}_{ss}^{Higgs}(r, d-1)$. Since the flow is $\mathcal{G}$-equivariant, then there is an induced correspondence variety $\mathcal{M}_{\ell, u} \subset \mathcal{M}_{ss}^{Higgs}(r, d-1) \times \mathcal{M}_{ss}^{Higgs}(r, d)$. 
\begin{equation}\label{eqn:flow-hecke-diagram}
\xymatrix{
 & \mathcal{P}_{\ell, u} \ar[dl] \ar[dr] \ar[d] & \\
C_\ell \ar[d] & \mathcal{M}_{\ell,u} \ar[dl] \ar[dr] & C_u \ar[d] \\
\mathcal{M}_{ss}^{Higgs}(r, d-1) & & \mathcal{M}_{ss}^{Higgs}(r,d)
}
\end{equation}

Theorem \ref{thm:flow-hecke} shows that $\left( (E', \phi'), (E, \phi) \right) \in \mathcal{M}_{\ell, u}$ if and only if $(E', \phi')$ is a Hecke modification of $(E, \phi)$ and both Higgs pairs are semistable. If $r$ and $d$ are coprime then $\mathcal{M}_{ss}^{Higgs}(r,d)$ consists of stable Higgs pairs and so every Hecke modification of $(E, \phi)$ is semistable. Therefore we have proved
\begin{corollary}
$\mathcal{M}_{\ell,u}$ is the Hecke correspondence.
\end{corollary}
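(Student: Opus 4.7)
The plan is to identify $\mathcal{M}_{\ell,u}$ with the Hecke correspondence by translating the two parts of Theorem~\ref{thm:flow-hecke} into the moduli-theoretic language of the diagram~\eqref{eqn:flow-hecke-diagram}. The hypothesis $\gcd(r,d)=1$ is used to ensure that every point of $\mathcal{M}_{ss}^{Higgs}(r,d)$ is represented by a stable Higgs bundle, so that the second assertion of Theorem~\ref{thm:flow-hecke} applies to every element of $C_u$, and so that, as noted in the paper, the resulting Hecke modifications automatically lie in the semistable locus $\mathcal{M}_{ss}^{Higgs}(r,d-1)$.

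For the inclusion $\mathcal{M}_{\ell,u} \subseteq$ Hecke correspondence, I would take a pair $([E',\phi'],[E,\phi]) \in \mathcal{M}_{\ell,u}$ and lift it to critical points $x_u = (E,\phi) \oplus (L_u,\phi_u) \in C_u$ and $x_\ell = (E',\phi') \oplus (L_\ell,\phi_\ell) \in C_\ell$ connected by a $\YMH$ flow line, with $(E,\phi)$ stable and $(E',\phi')$ polystable by the choice of representatives. The slope inequalities $\deg L_u + 1 < \slope(E') < \slope(E)$ are built into the Harder--Narasimhan types $(d/r,\deg L_u)$ and $((d-1)/r,\deg L_u + 1)$ that label the components $C_u$ and $C_\ell$. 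Theorem~\ref{thm:flow-hecke}(2) then identifies $(E',\phi')$ as the graded object of the Seshadri filtration of some Hecke modification of $(E,\phi)$, and passing to S-equivalence classes in $\mathcal{M}_{ss}^{Higgs}(r,d-1)$ places $([E',\phi'],[E,\phi])$ in the Hecke correspondence.

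For the reverse inclusion, given $([E',\phi'],[E,\phi])$ in the Hecke correspondence realised by a modification $0 \to (\mathcal{E}',\phi') \to (\mathcal{E},\phi) \to \C_p \to 0$, I would invoke Theorem~\ref{thm:flow-hecke}(1) with $L_u$ of sufficiently negative degree to produce $\phi_u,\phi_\ell \in H^0(K)$, a line bundle $L_\ell$ with $\deg L_\ell = \deg L_u + 1$ and a metric such that $x_u = (E,\phi) \oplus (L_u,\phi_u)$ and $x_\ell = (E_{gr}',\phi_{gr}') \oplus (L_\ell,\phi_\ell)$ are critical points connected by a $\YMH$ flow line. The resulting point of $\mathcal{P}_{\ell,u}$ has image $([E_{gr}',\phi_{gr}'],[E,\phi]) = ([E',\phi'],[E,\phi])$ in $\mathcal{M}_{\ell,u}$, since $(E',\phi')$ and its Seshadri graded object represent the same class in $\mathcal{M}_{ss}^{Higgs}(r,d-1)$.

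The main obstacle is purely bookkeeping: one needs to check that the forgetful map $\mathcal{P}_{\ell,u} \to \mathcal{M}_{\ell,u}$ is well-defined and compatible with S-equivalence on both factors, and that the choice of $L_u$ used to realise a given Hecke modification as a flow line matches the component of $C_u$ over which $\mathcal{M}_{\ell,u}$ is being described. No further analytic input is required beyond Theorem~\ref{thm:flow-hecke}; the corollary is essentially a reformulation of that theorem at the level of moduli, with the coprimality assumption supplying exactly the semistability statements needed to make the projections of diagram~\eqref{eqn:flow-hecke-diagram} well-behaved.
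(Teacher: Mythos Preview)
Your proposal is correct and follows essentially the same approach as the paper: both directions are deduced from Theorem~\ref{thm:flow-hecke}, with the coprimality hypothesis used to guarantee that every point of $\mathcal{M}_{ss}^{Higgs}(r,d)$ is stable and hence every Hecke modification is semistable. The paper's argument is simply a more compressed version of what you wrote, stated in the paragraph immediately preceding the corollary.
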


For Hecke modifications defined at multiple points (non-miniscule Hecke modifications in the terminology of \cite{witten-hecke}), we immediately have the following result.

\begin{corollary}\label{cor:broken-hecke}
Let $(E, \phi)$ be a $(0,n)$-stable Higgs bundle and consider a Hecke modification $0 \rightarrow (\mathcal{E}', \phi') \rightarrow (\mathcal{E}, \phi) \rightarrow \oplus_{j=1}^n \C_{p_n} \rightarrow 0$ defined by $n > 1$ distinct points $\{ v_1, \ldots, v_n \} \in \mathbb{P} E^*$. If there exists $\phi_u \in H^0(K)$ such that $v_1, \ldots, v_n \in \mathcal{N}_{\phi, \phi_u}$, then there is a broken flow line connecting $x_u = (E, \phi) \oplus (L_u, \phi_u)$ and $x_\ell = (E_{gr}', \phi_{gr}') \oplus (L_\ell, \phi_\ell)$, where $(E_{gr}', \phi_{gr}')$ is the graded object of the Seshadri filtration of the semistable Higgs bundle $(E', \phi')$.
\end{corollary}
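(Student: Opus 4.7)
The plan is to factor the given $n$-point Hecke modification as a chain of $n$ single-point Hecke modifications and apply Theorem \ref{thm:flow-hecke} at each stage, then concatenate the resulting unbroken flow lines. Choosing any ordering of the points $p_1, \ldots, p_n$, I would construct a filtration
\begin{equation*}
(E', \phi') = (E^{(0)}, \phi^{(0)}) \subset (E^{(1)}, \phi^{(1)}) \subset \cdots \subset (E^{(n)}, \phi^{(n)}) = (E, \phi)
\end{equation*}
of Higgs subsheaves in which the successive quotient $(E^{(k)}, \phi^{(k)})/(E^{(k-1)}, \phi^{(k-1)}) = \C_{p_k}$ is the single-point Hecke modification determined by $v_k$. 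Because the $p_k$ are distinct, $\phi^{(k)}$ coincides with $\phi$ in a neighbourhood of $p_k$, so Corollary \ref{cor:Higgs-compatible} gives holomorphicity of $\phi^{(k-1)}$, and the hypothesis $v_k \in \mathcal{N}_{\phi, \phi_u}$ (that is, $\phi_u(p_k)$ equals the eigenvalue of $\phi(p_k)$ selected by $v_k$) transfers to $v_k \in \mathcal{N}_{\phi^{(k)}, \phi_u}$ with the \emph{same} section $\phi_u \in H^0(K)$ at every level of the filtration.

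Next I would check that every $(E^{(k)}, \phi^{(k)})$ is Higgs stable, along the lines of Lemma \ref{lem:segre-bound}: any proper Higgs subbundle $F \subset E^{(k)}$ resolves via Lemma \ref{lem:resolve-higgs-subsheaf} to a Higgs subbundle of $(E, \phi)$ of equal rank and no smaller degree, so $(0,n)$-stability of $(E, \phi)$ yields $\slope(F) < \slope(E) - n/\rank(E) \leq \slope(E^{(k)})$. In particular $(E', \phi')$ itself is Higgs stable, so the graded object $(E'_{gr}, \phi'_{gr})$ of its Seshadri filtration coincides with $(E', \phi')$.

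For each $k = n, n-1, \ldots, 1$ I would then apply Theorem \ref{thm:flow-hecke} to the single-point Hecke modification $(E^{(k-1)}, \phi^{(k-1)}) \subset (E^{(k)}, \phi^{(k)})$ using a line bundle $L^{(k)}$ of degree $\deg L_u + (n-k)$; the slope inequalities $\deg L^{(k)} + 1 < \slope(E^{(k-1)}) < \slope(E^{(k)})$ required by the theorem propagate from the assumption that $x_u$ and $x_\ell$ are critical points of the stated type. This produces a Higgs extension
\begin{equation*}
0 \to (L^{(k)}, \phi_u) \to (F^{(k)}, \tilde{\phi}^{(k)}) \to (E^{(k)}, \phi^{(k)}) \to 0
\end{equation*}
in which $(E^{(k-1)}, \phi^{(k-1)})$ embeds as a Higgs subsheaf of maximal rank, together with an unbroken flow line from $x^{(k)} := (E^{(k)}, \phi^{(k)}) \oplus (L^{(k)}, \phi_u)$ to $(E^{(k-1)}, \phi^{(k-1)}) \oplus (L^{(k-1)}, \phi_\ell^{(k)})$, where $L^{(k-1)} := F^{(k)}/E^{(k-1)}$ is a line bundle of degree $\deg L_u + (n-k+1)$.

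The main point to verify is that $\phi_\ell^{(k)} = \phi_u$ as a section of $K$, so that the lower endpoint of the $k^{\mathrm{th}}$ flow line coincides with the upper endpoint $x^{(k-1)}$ of the next. This reduces to a direct computation with the extension: the composition $L^{(k)} \hookrightarrow F^{(k)} \twoheadrightarrow L^{(k-1)}$ is injective, since $L^{(k)} \cap E^{(k-1)} = 0$ inside $F^{(k)}$ (because $E^{(k-1)}$ maps injectively into $E^{(k)}$ while $L^{(k)}$ projects to zero), so it realises $L^{(k)}$ as a subsheaf of full rank in the line bundle $L^{(k-1)}$. Since $\tilde{\phi}^{(k)}$ preserves $L^{(k)}$ with restriction $\phi_u$, the induced Higgs field on $L^{(k-1)}$ restricts to $\phi_u$ on this full-rank subsheaf, and therefore equals $\phi_u$ in $H^0(\End(L^{(k-1)}) \otimes K) = H^0(K)$. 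Concatenating the $n$ unbroken flow lines through the intermediate critical points $x^{(k)}$ then yields the desired broken flow line from $x^{(n)} = x_u$ to $x^{(0)} = x_\ell$. The main obstacle is this identification of Higgs fields on the successive quotient line bundles; everything else is bookkeeping of filtrations, slopes, and degree bounds.
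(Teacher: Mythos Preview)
Your proposal is correct and is precisely the approach the paper takes: its entire proof is the single sentence ``Inductively apply Theorem~\ref{thm:flow-hecke}.'' You have supplied the bookkeeping that this sentence leaves implicit --- the intermediate filtration, the propagation of the compatibility condition $v_k \in \mathcal{N}_{\phi^{(k)}, \phi_u}$, the stability of each $(E^{(k)}, \phi^{(k)})$ from $(0,n)$-stability, and the identification $\phi_\ell^{(k)} = \phi_u$ needed to concatenate the segments --- and all of these checks are sound.
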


\begin{proof}
Inductively apply Theorem \ref{thm:flow-hecke}.
\end{proof}

\subsection{A geometric criterion for unbroken $\YMH$ flow lines}\label{sec:secant-criterion}

Corollary \ref{cor:broken-hecke} gives a criterion for two $\YMH$ critical points $x_u = (E, \phi) \oplus (L_u, \phi_u)$ and $x_\ell = (E', \phi') \oplus (L_\ell, \phi_\ell)$ to be connected by a broken flow line. It is natural to ask whether they are also connected by an \emph{unbroken} flow line. The goal of this section is to answer this question by giving a geometric construction for points in the negative slice of $x_u$ which correspond to unbroken flow lines connecting $x_u$ and $x_\ell$ in terms of the secant varieties $\Sec^n(\mathcal{N}_{\phi, \phi_u})$. For holomorphic bundles, the connection between secant varieties and Hecke modifications has been studied in \cite{LangeNarasimhan83}, \cite{ChoeHitching10} and \cite{Hitching13}.

Given a $\YMH$ critical point $x_u = (E, \phi) \oplus (L_u, \phi_u)$ with $(E, \phi)$ stable and $\rank L_u =1$, consider an extension $0 \rightarrow (L_u, \phi_u) \rightarrow (F, \tilde{\phi}) \rightarrow (E, \phi) \rightarrow 0$ with extension class $[(a, \varphi)] \in \mathcal{H}^1(E^* L_u) = S_{x_u}^-$. Let $0 \rightarrow (E', \phi') \rightarrow (E, \phi) \rightarrow \oplus_{j=1}^n \C_{p_j} \rightarrow 0$ be a Hecke modification of $(E, \phi)$ as in the previous lemma, such that $\deg L_u < \slope(E')$. 

\begin{lemma}\label{lem:subbundle-slope-bound}
If $(G, \phi_G)$ is a semistable Higgs subbundle of $(F, \tilde{\phi})$ with $\slope(G) > \deg L_u$ and $\rank(G) < \rank (E)$, then there is a Higgs subbundle $(G', \phi_G') \subset (E, \phi)$ with $\slope(G') \geq \slope(G)$ and $\rank(G) = \rank(G')$.
\end{lemma}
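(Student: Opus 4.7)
The plan is to study the Higgs sheaf morphism $p|_G \colon G \to E$ obtained by composing the inclusion $G \hookrightarrow F$ with the projection $p \colon F \twoheadrightarrow E$ from the defining extension; its kernel is $K := G \cap L_u$, a Higgs subsheaf of the line bundle $L_u$, and so $K$ has rank $0$ or $1$.

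In the generic case $K = 0$, the morphism $p|_G$ realizes $G$ as a Higgs subsheaf of $E$ of rank $\rank G$, and Lemma \ref{lem:resolve-higgs-subsheaf} promotes its Higgs saturation to a Higgs subbundle $G' \subset E$ with $\rank G' = \rank G$ and $\deg G' \geq \deg G$, hence $\slope G' \geq \slope G$, as required.

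The case $\rank K = 1$ demands more work. First I would observe that $G$ being a Higgs subbundle (hence saturated in $F$) forces $L_u \subset G$: otherwise the quotient $(G + L_u)/G \cong L_u/(G \cap L_u)$ would be a nonzero torsion subsheaf of the locally free $F/G$, a contradiction. Hence $K = L_u$, so $\bar G := G/L_u$ embeds in $E$ as a Higgs subsheaf of rank $\rank G - 1$ and degree $\deg G - \deg L_u$; a routine computation using $\slope G > \deg L_u$ yields $\slope \bar G > \slope G$. Let $\bar G^{\mathrm{sat}} \subset E$ denote the Higgs-subbundle saturation, and note that the stability of $(E, \phi)$ gives $\slope \bar G^{\mathrm{sat}} < \slope E$. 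I would then extend $\bar G^{\mathrm{sat}}$ to a rank-$\rank G$ Higgs subbundle $G' \subset E$ by selecting a suitable rank-$1$ Higgs sub-sheaf of $E/\bar G^{\mathrm{sat}}$ and taking its preimage; the chain
\[
\slope(E/\bar G^{\mathrm{sat}}) > \slope E > \deg L_u \geq \deg G - \deg \bar G^{\mathrm{sat}}
\]
(the first inequality coming from stability of $E$) identifies the lower bound the rank-$1$ piece must exceed in degree.

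The hard part will be this last step: producing a rank-$1$ Higgs sub-sheaf of $E/\bar G^{\mathrm{sat}}$ whose degree is at least $\deg G - \deg \bar G^{\mathrm{sat}}$. The tool is the Harder-Narasimhan filtration of $E/\bar G^{\mathrm{sat}}$, whose top piece is a semistable Higgs bundle of slope exceeding the needed bound; the care is in descending from this semistable piece (which may have rank $> 1$) to a rank-$1$ Higgs sub-sheaf without losing too much degree. Once this is in place, the preimage in $E$ of the chosen rank-$1$ piece, together with $\bar G^{\mathrm{sat}}$, gives the desired $G'$ of rank $\rank G$ and slope $\geq \slope G$.
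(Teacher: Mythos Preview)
Your Case~1 ($G \cap L_u = 0$) is correct and is in fact the paper's entire argument: the paper asserts $\mathcal{H}^0(G^* L_u) = 0$ and from this concludes that the composite $G \hookrightarrow F \to E$ is injective, then saturates via Lemma~\ref{lem:resolve-higgs-subsheaf}. You are right to be suspicious of that deduction---the vanishing of Higgs morphisms $G \to L_u$ does not visibly exclude an inclusion $L_u \hookrightarrow G$---so your separation into two cases is more careful than the paper's one-line proof.

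That said, your Case~2 has a genuine gap. You correctly reduce to producing a rank-$1$ Higgs subsheaf of $Q := E/\bar G^{\mathrm{sat}}$ of degree at least $\deg G - \deg \bar G^{\mathrm{sat}} \leq \deg L_u$, and you note $\slope Q > \deg L_u$. But high slope of $Q$ does not force a high-degree rank-$1$ $\phi$-invariant subsheaf: since $\rank G < \rank E$ gives $\rank Q \geq 2$, the quotient $Q$ may well be Higgs-stable with large generalised Segre invariant $s_1(Q, \phi_Q)$, so that every $\phi_Q$-invariant line subsheaf has degree far below $\deg L_u$. In that situation the Harder--Narasimhan filtration of $Q$ is trivial, and there is no general mechanism for descending from a semistable piece of rank $>1$ to a rank-$1$ piece without unbounded loss of degree. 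Your proposed tool does not close this, and you should not expect it to without further hypotheses on $E$ (such as the Segre bounds appearing in Theorem~\ref{thm:unbroken-criterion}, which is the only place the lemma is used).
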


\begin{proof}
If $(G, \phi_G)$ is a semistable Higgs subbundle of $(F, \tilde{\phi})$ with $\slope(G) > \deg L_u$, then $\mathcal{H}^0(G^* L_u) = 0$, and so $(G, \phi_G)$ is a Higgs subsheaf of $(E, \phi)$.
\begin{equation*}
\xymatrix{
 & & & (G, \phi_G) \ar[dl] \ar@{-->}[d] \\
0 \ar[r] & (L_u, \phi_u) \ar[r] & (F, \tilde{\phi}) \ar[r] & (E, \phi) \ar[r] & 0
}
\end{equation*}
Lemma \ref{lem:resolve-higgs-subsheaf} shows that the subsheaf $(G, \phi_G)$ can be resolved to form a Higgs subbundle $(G', \phi_G')$ of $(E, \phi)$ with $\slope(G') \geq \slope(G)$.
\end{proof}

\begin{theorem}\label{thm:unbroken-criterion}
Let $(E, \phi)$ be a stable Higgs bundle with Segre invariant $s_k(E, \phi)$ and choose $n$ such that $0 < 2n-1 < \min_{1 \leq k \leq r-1} \left( \frac{1}{k} s_k(E, \phi) \right)$. Let $0 \rightarrow (\mathcal{E}', \phi') \rightarrow (\mathcal{E}, \phi) \rightarrow \oplus_{j=1}^n \C_{p_j} \rightarrow 0$ be a Hecke modification of $(E, \phi)$ defined by distinct points $v_1, \ldots, v_n \in \mathbb{P} E^*$, and let $(L_u, \phi_u)$ be a Higgs line bundle such that $v_1, \ldots, v_n \in \mathcal{N}_{\phi, \phi_u}$. Choose a metric such that $x_u = (E, \phi) \oplus (L_u, \phi_u)$ is a $\YMH$ critical point.

Then any extension class $[(a, \varphi)] \in \vecspan \{ v_1, \ldots, v_n \} \cap \left( \Sec^n(\mathcal{N}_{\phi, \phi_u}) \setminus \Sec^{n-1}(\mathcal{N}_{\phi, \phi_u}) \right) \subset \mathbb{P} \mathcal{H}^1(E^* L_u)$ is isomorphic to an unbroken flow line connecting $x_u = (E, \phi) \oplus (L_u, \phi_u)$ and $x_\ell = (E', \phi') \oplus (L_\ell, \phi_\ell)$.
\end{theorem}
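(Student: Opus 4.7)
The goal is to show that the extension $(F, \tilde\phi)$ of $(E, \phi)$ by $(L_u, \phi_u)$ defined by $[(a,\varphi)]$ lies on an unbroken $\YMH$ flow line whose upper endpoint is $x_u$ and lower endpoint is $x_\ell$. By Theorem \ref{thm:algebraic-flow-line} (both directions), this reduces to two claims: first, the extension class $[(a,\varphi)] \in \mathcal{H}^1(E^* L_u) \cong S_{x_u}^-$ corresponds (via the identification in Section \ref{subsec:local-slice}) to a point of the negative slice; and second, the HNS graded object of $(F, \tilde\phi)$ is $(E_{gr}', \phi_{gr}') \oplus (L_\ell, \phi_\ell)$. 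The first claim is immediate, and then Proposition \ref{prop:convergence-group-action} produces $g \in \mathcal{G}^\C$ with $g \cdot (F, \tilde\phi) \in W_{x_u}^-$, so the upper endpoint is $x_u$. It therefore suffices to compute the HNS double filtration of $(F, \tilde\phi)$.

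Corollary \ref{cor:linear-span} applied with $E_1 = E$, $E_2 = L_u$ shows that the hypothesis $[(a,\varphi)] \in \vecspan\{v_1, \ldots, v_n\}$ produces $(E', \phi')$ as a Higgs subsheaf of $(F, \tilde\phi)$. The key step is to show that $(E', \phi')$ realises the maximal destabilising Higgs subsheaf, with quotient a line bundle $(L_\ell, \phi_\ell)$ of degree $\deg L_u + n$. To this end, I would take an arbitrary semistable Higgs subbundle $(G, \phi_G) \subset (F, \tilde\phi)$ with $\slope(G) > \deg L_u$ (the case $\slope(G) \leq \deg L_u < \slope(E')$ being trivial) and split into cases.

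For $\rank(G) < \rank(E)$, Lemma \ref{lem:subbundle-slope-bound} produces a Higgs subbundle $(G', \phi_{G'}) \subset (E, \phi)$ of the same rank with $\slope(G') \geq \slope(G)$, and the Segre bound $2n-1 < s_k(E,\phi)/k$ gives $\slope(G') < \slope(E) - \tfrac{2n-1}{r} \leq \slope(E')$. For $\rank(G) = \rank(E)$, either $G \cap L_u = 0$ (so $G$ is a rank-$r$ Higgs subsheaf of $E$ whose inclusion into $F$ splits the pulled-back extension, forcing $(i_G)^*[(a,\varphi)] = 0$) or $L_u \subset G$. In the former case Lemma \ref{lem:nondegenerate-maximal}, whose hypothesis $[(a,\varphi)] \in \Sec^n(\mathcal{N}_{\phi,\phi_u}) \setminus \Sec^{n-1}(\mathcal{N}_{\phi,\phi_u})$ is exactly the nondegeneracy assumption, yields $\deg G \leq \deg E'$. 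In the latter case $G/L_u \hookrightarrow E$ has rank $r-1$, and combining the Segre bound on $\slope(G/L_u)$ with $\deg L_u < \slope(E')$ in the convex combination $\slope(G) = \frac{\rank(G/L_u)\slope(G/L_u) + \deg L_u}{\rank G}$ yields $\slope(G) < \slope(E')$.

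Consequently $(E', \phi')$ is the unique Higgs subsheaf of slope $\slope(E')$ (up to $S$-equivalence), the HN filtration of $(F, \tilde\phi)$ is $0 \subset (E', \phi') \subset (F, \tilde\phi)$, and the HNS graded object is $(E_{gr}', \phi_{gr}') \oplus (L_\ell, \phi_\ell) = x_\ell$. The converse direction of Theorem \ref{thm:algebraic-flow-line} then produces the required flow line. The main obstacle in this program is the rank-$r$ subcase of case analysis: one must carefully combine the secant nondegeneracy hypothesis with the Segre bound, and verify that every potentially competing subsheaf of $(F, \tilde\phi)$ either (i) descends to a rank-$r$ subsheaf of $(E, \phi)$ controlled by Lemma \ref{lem:nondegenerate-maximal}, or (ii) splits off $L_u$ and is controlled via a slope computation. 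A secondary subtlety is that $(E', \phi')$, being realised only as a subsheaf of $(F, \tilde\phi)$, may need to be resolved (Lemma \ref{lem:resolve-higgs-subsheaf}) to a subbundle before the HN terminology applies; the nondegenerate secant hypothesis guarantees that this resolution does not strictly enlarge $(E', \phi')$.
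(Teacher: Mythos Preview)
Your approach is essentially identical to the paper's: reduce via Theorem~\ref{thm:algebraic-flow-line} to showing that $(E',\phi')$ is the maximal semistable Higgs subbundle of $(F,\tilde\phi)$, then case-split on the rank of a competing subbundle using Lemmas~\ref{lem:subbundle-slope-bound}, \ref{lem:segre-bound}, and \ref{lem:nondegenerate-maximal}, and finally check that the subsheaf $(E',\phi')\subset (F,\tilde\phi)$ is already saturated. One small correction: your rank-$r$ dichotomy ``$G\cap L_u=0$ or $L_u\subset G$'' is not exhaustive, since $G\cap L_u$ may be a proper nonzero subsheaf $L'\subsetneq L_u$; however, your convex-combination argument works verbatim with $L'$ in place of $L_u$ (indeed $\deg L'\le\deg L_u<\slope(E')$ and $G/L'\hookrightarrow E$ still has rank $r-1$), so the proof goes through.
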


\begin{proof}
Let $(F, \tilde{\phi})$ be a Higgs bundle determined by the extension class $[(a, \varphi)] \in \mathbb{P} \mathcal{H}^1(E^* L_u)$. The choice of bundle is not unique, but the isomorphism class of $(F, \tilde{\phi})$ is unique. The proof reduces to showing that $(E', \phi')$ is the maximal semistable Higgs subbundle of $(F, \tilde{\phi})$. 

Since $[(a, \varphi)] \notin \Sec^{n-1}(\mathcal{N}_{\phi, \phi_u})$, then Lemma \ref{lem:nondegenerate-maximal} shows that $(E', \phi')$ is the subsheaf of $(E, \phi)$ with maximal degree among those that lift to a subsheaf of $(F, \tilde{\phi})$. Any semistable Higgs subbundle $(E'', \phi'')$ of $(F, \tilde{\phi})$ with $\rank(E'')  = \rank(E)$ either has $\slope(E'') \leq \deg L_u < \slope(E')$, or it is a subsheaf of $(E, \phi)$ and so must have $\slope(E'') \leq \slope(E')$.

The previous lemma shows that if $(G, \phi_G)$ is any semistable Higgs subbundle of $(F, \tilde{\phi})$ with $\slope(G) > \deg L_u$ and $\rank(G) < \rank(E)$,  then there is a Higgs subbundle $(G', \phi_G')$ of $(E, \phi)$ with $\slope(G') \geq \slope(G)$. The upper bound on $n = \deg E - \deg E'$ in terms of the Segre invariant then implies that $\slope(E') > \slope(G') \geq \slope(G)$ by Lemma \ref{lem:segre-bound}.

Therefore the subbundle $(\tilde{E}', \tilde{\phi}')$ resolving the subsheaf $(E', \phi') \subset (F, \tilde{\phi})$ is the maximal semistable Higgs subbundle of $(F, \tilde{\phi})$. Since $(\tilde{E}', \tilde{\phi}')$ is semistable and $\slope(\tilde{E}') \geq \slope(E') > \deg L_u$, then $\mathcal{H}^0((\tilde{E}')^* L_u) = 0$, and so $(\tilde{E}', \tilde{\phi}')$ is a Higgs subsheaf of $(E, \phi)$ that lifts to a subbundle of $(F, \tilde{\phi})$. Since $\deg E'$ is maximal among all such subsheaves, then we must have $(E', \phi') = (\tilde{E}', \tilde{\phi}')$ and so $(E', \phi')$ is the maximal semistable subbundle of $(F, \tilde{\phi})$. Therefore Theorem \ref{thm:algebraic-flow-line} shows that $x_u$ and $x_\ell$ are connected by an unbroken flow line.
\end{proof}

If $\rank(F) = 2$ (so that $E$ is a line bundle), then the condition on the Segre invariant $s_k(E, \phi)$ becomes vacuous. Moreover, $\mathbb{P} E^* \cong X$ and so Hecke modifications of $E$ are determined by a subset $\{ v_1, \ldots, v_n \} \subset X$. Therefore in the case $\rank(F) = 2$, we have a complete classification of the $\YMH$ flow lines on the space of Higgs bundles $\mathcal{B}(F)$.

\begin{corollary}\label{cor:rank-2-classification}
Let $F \rightarrow X$ be a $C^\infty$ Hermitian vector bundle with $\rank(F) = 2$. Let $x_u = (L_1^u, \phi_1^u) \oplus (L_2^u, \phi_2^u)$ and $x_\ell = (L_1^\ell, \phi_1^\ell) \oplus (L_2^\ell, \phi_2^\ell)$ be non-minimal critical points with $\YMH(x_u) > \YMH(x_\ell)$. Suppose without loss of generality that $\deg L_1^u > \deg L_1^\ell > \deg L_2^\ell > \deg L_2^u$. Let $n = \deg L_1^u - \deg L_1^\ell$. 

Then $x_u$ and $x_\ell$ are connected by a broken flow line if and only if there exists $\{ v_1, \ldots, v_n \} \in \mathcal{N}_{\phi_1^u, \phi_2^u}$ such that
\begin{align*}
0 \rightarrow (L_1^\ell, \phi_1^\ell) \rightarrow (L_1^u, \phi_1^u) \rightarrow \oplus_{j=1}^n \C_{p_j} \rightarrow 0 \\
0 \rightarrow (L_2^u, \phi_2^u) \rightarrow (L_2^\ell, \phi_2^\ell) \rightarrow \oplus_{j=1}^n \C_{p_j} \rightarrow 0
\end{align*}
are both Hecke modifications determined by $\{ v_1, \ldots, v_n \}$. They are connected by an unbroken flow line if the previous condition holds and $\{ v_1, \ldots, v_n \} \in \Sec^n(\mathcal{N}_{\phi_1^u, \phi_2^u}) \setminus \Sec^{n-1}(\mathcal{N}_{\phi_1^u, \phi_2^u})$. 
\end{corollary}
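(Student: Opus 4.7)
The plan is to specialise the three main results of the paper—Theorem \ref{thm:algebraic-flow-line}, Corollary \ref{cor:broken-hecke}, and Theorem \ref{thm:unbroken-criterion}—to the rank $2$ setting, where several hypotheses become vacuous or trivial. First I would observe that since $E := L_1^u$ has rank $1$, the projectivised dual $\mathbb{P} E^* \cong X$, so any collection of Hecke points is just a finite subset of the surface. Moreover, since $\rank(E) = 1$ admits no proper nonzero holomorphic subbundles, the Segre invariants $s_k(E,\phi)$ appearing in Theorem \ref{thm:unbroken-criterion} are indexed over an empty set, so the numerical restriction $2n-1 < \min_k \frac{1}{k} s_k(E,\phi)$ is automatically satisfied for every positive $n$; the same remark makes the $(0,n)$-stability hypothesis of Corollary \ref{cor:broken-hecke} automatic.

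For the broken-flow-line equivalence I would argue both directions separately. For sufficiency, given distinct $\{v_1,\ldots,v_n\} \subset \mathcal{N}_{\phi_1^u, \phi_2^u}$ realising both of the stated Hecke sequences, Corollary \ref{cor:broken-hecke} produces a broken flow line from $x_u$ down to $(L_1^\ell, \phi_1^\ell) \oplus (\tilde{L}, \tilde{\phi})$ where $(L_1^\ell, \phi_1^\ell)$ is the Seshadri graded object of the top Hecke modification (which equals $(L_1^\ell,\phi_1^\ell)$ itself since a Higgs line bundle is automatically polystable) and $(\tilde{L},\tilde{\phi})$ is the quotient $(F,\tilde\phi)/(L_1^\ell,\phi_1^\ell)$; a degree count using the second short exact sequence identifies $(\tilde{L},\tilde{\phi})$ with $(L_2^\ell, \phi_2^\ell)$. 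For necessity, I would decompose the broken flow into its unbroken segments, apply Theorem \ref{thm:algebraic-flow-line} to each segment to produce a Hecke modification at a positive number of points (compatible with the Higgs fields of the summands at the upper endpoint of that segment, via Theorem \ref{thm:flow-hecke}(2)), and then concatenate these single-segment Hecke modifications at their distinct base points to obtain a total Hecke modification at $n = \deg L_1^u - \deg L_1^\ell$ points on $X$. The compatibility with $\phi_1^u$ and $\phi_2^u$ at the very top of the chain then follows by propagating the compatibility relations from each segment back through the preceding ones.

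For the unbroken direction I would invoke Theorem \ref{thm:unbroken-criterion} with $(E,\phi) = (L_1^u,\phi_1^u)$ and $(L_u,\phi_u) = (L_2^u,\phi_2^u)$: the vacuous Segre bound removes the upper restriction on $n$, and the hypothesis $\{v_1,\ldots,v_n\} \in \Sec^n(\mathcal{N}_{\phi_1^u,\phi_2^u}) \setminus \Sec^{n-1}(\mathcal{N}_{\phi_1^u,\phi_2^u})$ provides an extension class $[(a,\varphi)] \in \vecspan\{v_1,\ldots,v_n\}$ that lies in no smaller secant span. Lemma \ref{lem:nondegenerate-maximal} then forces the associated extension $(F,\tilde\phi)$ to have $(L_1^\ell,\phi_1^\ell)$ as its maximal semistable Higgs subbundle, which by Theorem \ref{thm:algebraic-flow-line} yields an unbroken flow line with upper limit $x_u$ and lower limit $x_\ell$.

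The main obstacle is the necessity direction for broken flow lines: I must ensure that the Hecke points extracted from each intermediate unbroken segment can be assembled into a single Hecke modification at $n$ distinct points that is simultaneously compatible with $\phi_1^u$ and $\phi_2^u$, rather than only with the Higgs fields of some intermediate critical point. The key technical point will be that Hecke compatibility at a point $p_j$ is determined by the eigenvalue relation $\phi_2(p_j) = \mu_j$ where $\mu_j$ is an eigenvalue of $\phi_1(p_j)$ (cf.\ Corollary \ref{cor:Higgs-compatible} and Remark \ref{rem:miniscule-compatible}), and that these eigenvalue relations at distinct points are preserved when composing successive Hecke modifications whose modification points are disjoint. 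Once this is established, the concatenation of compatible single-point modifications yields the required multi-point Hecke modification in $\mathcal{N}_{\phi_1^u,\phi_2^u}$.
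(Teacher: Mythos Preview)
Your proposal is correct and follows the paper's approach. The paper itself does not give a formal proof of this corollary; it simply observes in the paragraph preceding the statement that when $\rank(F)=2$ the bundle $E=L_1^u$ is a line bundle, so $\mathbb{P}E^*\cong X$ and the Segre-invariant constraints in Theorem~\ref{thm:unbroken-criterion} (and the $(0,n)$-stability in Corollary~\ref{cor:broken-hecke}) are vacuous, and then records the corollary as the resulting specialisation. Your plan spells out exactly this specialisation, citing the same three results, and your additional care with the necessity direction for broken flow lines---concatenating the single-segment Hecke data and checking that the eigenvalue compatibility of Corollary~\ref{cor:Higgs-compatible} propagates because a Hecke modification of a Higgs line bundle leaves the underlying section of $K$ unchanged---is more detail than the paper supplies but is the natural way to complete the argument.
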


\appendix

\section{Uniqueness for the reverse Yang-Mills-Higgs flow}\label{sec:uniqueness}

The methods of Donaldson \cite{Donaldson85} and Simpson \cite{Simpson88} show that the Yang-Mills-Higgs flow resembles a nonlinear heat equation, and therefore the backwards flow is ill-posed. In Section \ref{sec:scattering-convergence} we prove existence of solutions to the backwards heat flow that converge to a critical point. To show that these solutions are well-defined we prove in this section that if a solution to the reverse $\YMH$ flow exists then it must be unique.

Using the Hermitian metric, let $d_A$ be the Chern connection associated to $\bar{\partial}_A$ and let $\psi = \phi + \phi^* \in \Omega^1(i \ad(E))$. The holomorphicity condition $\bar{\partial}_A \phi = 0$ becomes the pair of equations $d_A \psi = 0$, $d_A^* \psi = 0$ which also imply that $[F_A, \psi] = d_A^2 \psi = 0$, and the Yang-Mills-Higgs functional is $\| F_A + \psi \wedge \psi \|_{L^2}^2$. 

\begin{proposition}\label{prop:backwards-uniqueness}
Let $(d_{A_1}, \psi_1)(t)$, $(d_{A_2}, \psi_2)(t)$ be two solutions of the Yang-Mills-Higgs flow \eqref{eqn:YMH-flow-general} on a compact Riemann surface with respective initial conditions $(d_{A_1}, \psi_1)(0)$ and $(d_{A_2}, \psi_2)(0)$. If there exists a finite $T > 0$ such that $(d_{A_1}, \psi_1)(T) = (d_{A_2}, \psi_2)(T)$ then $(d_{A_1}, \psi_1)(t) = (d_{A_2}, \psi_2)(t)$ for all $t \in [0, T]$.
\end{proposition}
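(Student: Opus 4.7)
The plan is backwards uniqueness via the classical Agmon--Nirenberg log-convexity method applied to the $L^2$-norm of the difference of the two solutions, in a Coulomb gauge adapted to one of them. Since the YMH flow is parabolic only modulo gauge, the argument splits into three steps: gauge fixing, log-convexity of the difference, and continuation from a left neighbourhood of $T$ to all of $[0,T]$.

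First I would exploit the $\mathcal{G}$-equivariance of \eqref{eqn:YMH-flow} to replace $Y_2(t) := (d_{A_2}, \psi_2)(t)$ by $k(t) \cdot Y_2(t)$ for a time-dependent unitary gauge transformation $k(t)$ with $k(T) = \id$, chosen so that on some interval $[T - \delta, T]$ the one-form $a(t) := A_1(t) - (k(t) \cdot A_2)(t)$ satisfies the Coulomb condition $d_{A_1(t)}^* a(t) = 0$. Existence of such $k$ follows from the implicit function theorem applied to the map $k \mapsto d_{A_1(t)}^* (A_1(t) - k \cdot A_2(t))$, using ellipticity of $d_{A_1(t)}^* d_{A_1(t)}$ on $\Omega^0(\ad E)$ and the fact that $Y_1(T) = Y_2(T)$. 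Since unitary gauge transformations preserve $\mathcal{B}$, the curve $k(t) \cdot Y_2(t)$ is still a YMH solution by $\mathcal{G}$-equivariance, so after relabelling we may assume WLOG that $Y_1$ and $Y_2$ agree at $T$ and lie in the same Coulomb slice throughout $[T - \delta, T]$. In this gauge the difference $u(t) := Y_1(t) - Y_2(t)$ satisfies a semilinear parabolic equation
\[ \partial_t u + L(t) u = Q(t, u), \qquad u(T) = 0, \]
where $L(t)$ is a time-dependent self-adjoint second-order elliptic operator (the Hessian of $\YMH$ at $Y_2(t)$ modified by the gauge-fixing term) and $Q(t, u)$ is quadratic in $u$ with $C^0$-bounded coefficients on $[T - \delta, T]$.

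Setting $\Phi(t) := \|u(t)\|_{L^2}^2$, a standard log-convexity computation --- differentiate $\log \Phi$ twice, use self-adjointness of $L(t)$, apply Cauchy--Schwarz to the cross term $\langle L u, u_t\rangle$, and use the quadratic bound on $Q$ --- yields
\[ \frac{d^2}{dt^2} \log \Phi(t) \geq -C \qquad \text{on } \{ t \in [T - \delta, T] : \Phi(t) > 0 \}, \]
for a constant $C$ depending only on uniform Sobolev bounds of the two smooth solutions on $[T - \delta, T]$. Positivity of $L(t)$ is not needed: only self-adjointness enters the Cauchy--Schwarz step. Since $\Phi(T) = 0$ and $\log \Phi(t) + C t^2/2$ is convex on any subinterval where $\Phi > 0$, this forces $\Phi \equiv 0$ on some $[T - \delta', T]$, as a convex function cannot be finite throughout a closed subinterval and take the value $-\infty$ at an endpoint. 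A standard continuation argument --- set $T_* := \inf \{ t_0 \in [0,T] : Y_1 \equiv Y_2 \text{ on } [t_0, T] \}$ and reapply the local argument at $T_*$ in place of $T$ --- extends the conclusion to all of $[0, T]$.

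The main obstacle is the first step: setting up the Coulomb gauge fixing uniformly on $[T - \delta, T]$ while remaining in the singular space $\mathcal{B}$, and verifying that the resulting equation for $u$ has the semilinear structure with self-adjoint principal part required by the log-convexity computation. Preservation of $\mathcal{B}$ is automatic since unitary transformations commute with the constraint $\bar{\partial}_A \phi = 0$, but the implicit function theorem must be set up in an appropriate Sobolev completion, and one must carefully track the gauge-fixing contribution to $L(t)$ to ensure it does not destroy the self-adjointness of the principal part. Once this is arranged, the log-convexity and continuation steps are classical, following the scheme of Agmon--Nirenberg for backwards uniqueness in linear parabolic equations, adapted to accommodate the quadratic nonlinearity $Q$.
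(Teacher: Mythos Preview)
Your approach via Coulomb gauge fixing plus Agmon--Nirenberg log-convexity is a genuinely different route from the paper, but the first step contains a real gap. You claim that ``the curve $k(t) \cdot Y_2(t)$ is still a YMH solution by $\mathcal{G}$-equivariance''. This is false for time-dependent $k$: equivariance only says that a \emph{fixed} unitary gauge transformation takes solutions to solutions. For time-dependent $k(t)$ one picks up the extra term $\rho_{k\cdot Y_2}(\dot k\,k^{-1})$, so $k(t)\cdot Y_2(t)$ solves the YMH flow \emph{plus a gauge term}, not the original flow. Consequently $u = Y_1 - k\cdot Y_2$ is the difference of solutions to two \emph{different} equations, and the semilinear parabolic equation $\partial_t u + L(t)u = Q(t,u)$ you write down does not follow. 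The gap is repairable---what you are groping toward is a DeTurck-type argument, where one passes to a gauge-fixed flow that is genuinely parabolic and then argues separately that the mediating gauge transformation must be the identity---but this requires tracking the $\dot k\,k^{-1}$ term through the estimates and is considerably more delicate than your sketch suggests, particularly at points with nontrivial isotropy.

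The paper sidesteps gauge fixing entirely by a device due to Kotschwar: rather than make the flow on $(A,\psi)$ parabolic, it observes that the curvature $\mu = F_A + \psi\wedge\psi$ and the gradient $\nu = D_{(A,\psi)}^*\mu$ already satisfy genuine heat-type equations along the flow, while $(A,\psi)$ itself satisfies only an ODE coupled to $\nu$. Writing $X = (m_t,n_t)$ for the differences of $\mu,\nu$ and $Y = (a_t,\varphi_t,\nabla a_t,\nabla\varphi_t)$ for the differences of the connection and Higgs field (and their first derivatives), one obtains a coupled parabolic--ODE system
\[
\left|\partial_t X + \nabla^*\nabla X\right| \leq C(|X|+|\nabla X|+|Y|),\qquad |\partial_t Y| \leq C(|X|+|\nabla X|+|Y|),
\]
with $C$ uniform on $[0,T]$ by the a priori bounds on all derivatives of the curvature and Higgs field along the flow on a compact Riemann surface. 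Kotschwar's backward-uniqueness theorem for such coupled systems then gives $X\equiv 0$, $Y\equiv 0$ directly. This buys you freedom from any gauge-fixing, any slice theorem, and any analysis of isotropy groups; the price is that you must compute the evolution equations for $\mu$ and $\nu$ and verify the coupled inequalities, which is straightforward bookkeeping using the Bianchi and Weitzenb\"ock identities.
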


The result of Proposition \ref{prop:backwards-uniqueness} is valid when the base manifold is a compact Riemann surface, since we use the estimates of \cite[Sec. 3.2]{Wilkin08} to prove that the constant $C$ in Lemma \ref{lem:heat-inequalities} is uniform. In the case of the Yang-Mills flow on a compact K\"ahler manifold the estimates of Donaldson in \cite{Donaldson85} show that we can make this constant uniform on a finite time interval $[0,T]$ and so the result also applies in this setting. The setup described in the previous paragraph consisting of Higgs pairs $(d_A, \psi)$ satisfying $d_A \psi = 0$, $d_A^* \psi = 0$ is valid on any Riemannian manifold, and so the result of Proposition \ref{prop:backwards-uniqueness} will also apply to any class of solutions for which one can prove that the connection, Higgs field, the curvature and all of their derivatives are uniformly bounded on the given finite time interval $[0, T]$.

Let $\nabla_A$ denote the covariant derivative associated to the connection $d_A$. The complex connection associated to the pair $(d_A, \psi)$ is $D_{(A, \psi)} \eta = d_A \eta + [\psi, \eta]$ and the Laplacian is $\Delta_{(A, \psi)} \eta = D_{(A, \psi)}^* D_{(A, \psi)} \eta + D_{(A, \psi)} D_{(A, \psi)}^* \eta$ for any form $\eta \in \Omega^p(\End(E))$. The equation $d_A \psi = 0$ implies that the curvature of the complex connection is $D_{(A, \psi)} D_{(A, \psi)} \eta = [F_A + \psi \wedge \psi, \eta]$.

% The rough Laplacian at a point $p \in M$ is $\nabla_{(A, \psi)}^* \nabla_A = -\sum_{j=1}^n \left( \nabla_{A, e_j, e_j}^2 \right)$ with respect to an orthonormal basis $\{ e_1, \ldots, e_n \}$ of $T_p M$. The curvature operator associated to the pair $(d_A, \psi)$ is (cf. {\bf cite Bourguignon Lawson})
%\begin{align*}
%\mathcal{R}^{(A, \psi)}(\varphi)_X & = \sum_{j=1}^n \left[ (F_A + \psi \wedge \psi)_{e_j, X}, \varphi_{e_j} \right] \quad \text{for all $\varphi \in %\Omega^1(\End(E))$, $X \in T_p M$} \\
%\mathcal{R}^{(A, \psi)}(\varphi)_{X,Y} & = \sum_{j=1}^n \left( \left[ (F_A + \psi \wedge \psi)_{e_j, X}, \varphi_{e_j,Y} \right] - \left[ (F_A + \psi \wedge \psi)_{e_j, Y}, \varphi_{e_j, X} \right] \right) \\
% & \quad \quad \quad  \text{for all $\varphi \in \Omega^2(\End(E))$, $X,Y \in T_p M$} 
%\end{align*}

We have the following identities which will be useful in what follows. The notation $a \times b$ is used to denote various bilinear expressions with constant coefficients.

\begin{align}
0 & = d_A(F_A + \psi \wedge \psi) , \quad 0 = [\psi, F_A + \psi \wedge \psi]  \label{eqn:Higgs-Bianchi} \\
\Delta_{(A, \psi)} \eta & = \nabla_A^* \nabla_A \eta + (F_A + \psi \wedge \psi) \times \eta + R_M \times \eta + \psi \times \psi \times \eta + \nabla_A \psi \times \psi \times \eta \quad \label{eqn:Higgs-Weitzenbock} \\
0 & = D_{(A, \psi)}^* D_{(A, \psi)}^* (F_A + \psi \wedge \psi) \label{eqn:compose-adjoint}
\end{align}
The first identity follows from the Bianchi identity and the equation $d_A \psi = 0$. Equation \eqref{eqn:Higgs-Weitzenbock} is the Weitzenbock identity for a Higgs pair which follows from the usual identity for $\nabla_A$ (see for example \cite{BourguignonLawson81}) together with the fact that $(\psi \wedge \psi) \times \eta$ and the remaining terms in the Laplacian are of the form $\psi \times \psi \times \eta + \nabla_A \psi \times \psi \times \eta$. To see the identity \eqref{eqn:compose-adjoint}, take the inner product of the right hand side with an arbitrary $\eta \in \Omega^0(\End(E))$. We have (cf. \cite[(2.2)]{Rade92} for the case $\psi = 0$)
\begin{align*}
\left< D_{(A, \psi)}^* D_{(A, \psi)}^*  (F_A + \psi \wedge \psi), \eta  \right> & = \left< F_A + \psi \wedge \psi, D_{(A, \psi)} D_{(A, \psi)} \eta \right>  \\
 & = \left< F_A + \psi \wedge \psi, [F_A + \psi \wedge \psi, \eta] \right> = 0
\end{align*}

Consider the Yang-Mills-Higgs flow equations
\begin{equation}\label{eqn:YMH-flow-general}
\frac{\partial A}{\partial t} = - d_A^* (F_A + \psi \wedge \psi), \quad \frac{\partial \psi}{\partial t} = * [\psi, *(F_A + \psi \wedge \psi)]
\end{equation}
After using the metric to decompose $\Omega^1(\End(E)) \cong \Omega^1(\ad(E)) \oplus \Omega^1(i \ad(E))$, the flow equation can be written more compactly as
\begin{equation*}
\frac{\partial}{\partial t} (d_A + \psi) = - D_{(A, \psi)}^* (F_A + \psi \wedge \psi)
\end{equation*}
We then have
\begin{align*}
\frac{\partial}{\partial t} (F_A + \psi \wedge \psi) & = d_A \left( \frac{\partial A}{\partial t} \right) + \frac{\partial \psi}{\partial t} \wedge \psi + \psi \wedge \frac{\partial \psi}{\partial t} \\
 & = - d_A d_A^* (F_A + \psi \wedge \psi) + \left[ \psi, *[\psi, *(F_A + \psi \wedge \psi)] \right] \\
 & = - \Delta_{(A, \psi)} (F_A + \psi \wedge \psi) - d_A*[\psi, *(F_A + \psi \wedge \psi)] + [\psi, d_A^*(F_A + \psi \wedge \psi)]
\end{align*}
where in the last step we use the Bianchi identity \eqref{eqn:Higgs-Bianchi}. We also have
\begin{align*}
\frac{\partial}{\partial t} \left( d_A^* (F_A + \psi \wedge \psi) \right) & = -*\left[ \frac{\partial A}{\partial t}, *(F_A + \psi \wedge \psi) \right] + d_A^* \left( \frac{\partial}{\partial t} (F_A + \psi \wedge \psi) \right) \\
 & = * \left[ d_A^*(F_A + \psi \wedge \psi), F_A + \psi \wedge \psi \right] - d_A^* d_A d_A^* (F_A + \psi \wedge \psi) \\
 & \quad \quad + d_A^* [\psi, *[\psi, *(F_A + \psi \wedge \psi)]]
\end{align*}
and
\begin{align*}
\frac{\partial}{\partial t} \left( -*[\psi, *(F_A + \psi \wedge \psi)] \right) & = -* \left[ \frac{\partial \psi}{\partial t}, *(F_A + \psi \wedge \psi) \right] - *\left[ \psi, \frac{\partial}{\partial t} *(F_A + \psi \wedge \psi) \right] \\
 & = * \left[ - *[\psi, *(F_A + \psi \wedge \psi)], *(F_A + \psi \wedge \psi) \right] + * \left[ \psi, * d_A d_A^* (F_A + \psi \wedge \psi) \right] \\
 & \quad \quad - * \left[ \psi, *[\psi, *[\psi, *(F_A + \psi \wedge \psi)]] \right]
\end{align*}
Adding these two results gives us
\begin{align*}
\frac{\partial}{\partial t} \left( D_{(A, \psi)}^* (F_A + \psi \wedge \psi) \right) & = * \left[ D_{(A, \psi)}^* (F_A + \psi \wedge \psi), F_A + \psi \wedge \psi \right]  - D_{(A, \psi)}^* D_{(A, \psi)} D_{(A, \psi)}^* (F_A + \psi \wedge \psi) \\
 & = * \left[ D_{(A, \psi)}^* (F_A + \psi \wedge \psi), F_A + \psi \wedge \psi \right] - \Delta_{(A, \psi)} D_{(A, \psi)}^* (F_A + \psi \wedge \psi)
\end{align*}
where the last step uses \eqref{eqn:compose-adjoint}. Let $\mu_{(A, \psi)} = F_A + \psi \wedge \psi$ and $\nu_{(A, \psi)} = D_{(A, \psi)}^* (F_A + \psi \wedge \psi)$. The above equations become
\begin{align}
\left( \frac{\partial}{\partial t} + \Delta_{(A, \psi)} \right) \mu_{(A, \psi)} & =  -d_A*[\psi, *(F_A + \psi \wedge \psi)] + [\psi, d_A^*(F_A + \psi \wedge \psi)] \label{eqn:mu-evolution} \\
\left( \frac{\partial}{\partial t} + \Delta_{(A, \psi)} \right) \nu_{(A, \psi)} & = *[\nu_{(A, \psi)}, *\mu_{(A, \psi)}] \label{eqn:nu-evolution}
\end{align}

Now consider two solutions $(A_1, \psi_1)(t)$ and $(A_2, \psi_2)(t)$ to the Yang-Mills-Higgs flow equations \eqref{eqn:YMH-flow-general} on the time interval $[0,T]$ such that $(A_1, \psi_1)(T) = (A_2, \psi_2)(T)$. We will show below that this implies $(A_1, \psi_1)(0) = (A_2, \psi_2)(0)$.

Define $(a_t, \varphi_t) = (A_2, \psi_2)(t) - (A_1, \psi_1)(t)$, $m_t =  \mu_{(A_2, \psi_2)} - \mu_{(A_1, \psi_1)}$ and $n_t = \nu_{(A_2, \psi_2)} - \nu_{(A_1, \psi_1)}$. In terms of $(a_t, \varphi_t)$ we can write
\begin{align*}
m_t = \mu_{(A_2, \psi_2)} - \mu_{(A_1, \psi_1)} = d_{A_1} a_t + a_t \wedge a_t + [\psi, \varphi_t] + \varphi_t \wedge \varphi_t
\end{align*}
and for any $\eta \in \Omega^p(\End(E))$ the difference of the associated Laplacians has the form
\begin{equation}\label{eqn:laplacian-difference}
\left(\Delta_{(A_2, \psi_2)} - \Delta_{(A_1, \psi_1)} \right) \eta = \nabla_A a \times \eta + a \times \nabla_A \eta + a \times a \times \eta + \psi \times \varphi \times \eta + \varphi \times \varphi \times \eta
\end{equation}
where again $\omega_1 \times \omega_2$ is used to denote a bilinear expression in $\omega_1$ and $\omega_2$ with constant coefficients. By definition of $\nu_{(A, \psi)}$ as the gradient of the Yang-Mills-Higgs functional at $(d_A, \psi)$ we immediately have 
\begin{equation*}
\frac{\partial}{\partial t} (a_t + \varphi_t) = n_t , \quad \text{and} \quad \frac{\partial}{\partial t} (\nabla_A a_t + \nabla_A \varphi_t) = \left( \frac{\partial A}{\partial t} \times a_t, \frac{\partial A}{\partial t} \times \varphi_t \right) + \nabla_A n_t
\end{equation*}
Equation \eqref{eqn:mu-evolution} then becomes
\begin{align*}
\left( \frac{\partial}{\partial t} + \Delta_{(A_1, \psi_1)} \right) m_t & = - \left( \Delta_{(A_2, \psi_2)} - \Delta_{(A_1, \psi_1)} \right) \mu_{(A_2, \psi_2)} \\
 & \quad \quad + a_t \times \psi_1 \times (F_{A_1} + \psi_1 \wedge \psi_1) + \nabla_{A_1} \varphi_t \times (F_{A_1} + \psi_1 \wedge \psi_1) \\
 & \quad \quad + \nabla_{A_1} \psi_1 \times m_t + \psi_1 \times n_t
\end{align*}
and equation \eqref{eqn:nu-evolution} becomes
\begin{align*}
\left( \frac{\partial}{\partial t} + \Delta_{(A_1, \psi_1)} \right) n_t & = *[\nu_{(A_2, \psi_2)}, * \mu_{(A_2, \psi_2)}] - *[\nu_{(A_1, \psi_1)}, *\mu_{(A_1, \psi_1)}] - \left( \Delta_{(A_2, \psi_2)} - \Delta_{(A_1, \psi_1)} \right) \nu_{(A_2, \psi_2)} \\
 & = *[n_t, *\mu_{(A_2, \psi_2)}] +*[\nu_{(A_1, \psi_1)}, *m_t] - \left( \Delta_{(A_2, \psi_2)} - \Delta_{(A_1, \psi_1)} \right) \nu_{(A_2, \psi_2)} \\
\end{align*}
Using \eqref{eqn:laplacian-difference} and the Weitzenbock formula \eqref{eqn:Higgs-Weitzenbock}, we then have the following inequalities. In the case where $X$ is a compact Riemann surface then the estimates of \cite[Sec. 2.2]{Wilkin08} show that all of the derivatives of the connection, the Higgs field and the curvature $F_A$ are uniformly bounded along the flow and so the constant can be chosen uniformly on the interval $[0,T]$.
\begin{lemma}\label{lem:heat-inequalities}
For any pair of solutions $(d_{A_1}, \psi_1)(t)$ and $(d_{A_2}, \psi_2)(t)$ to the Yang-Mills-Higgs flow \eqref{eqn:YMH-flow-general} there exists a positive constant $C$ (possibly depending on $t$) such that the following inequalities hold 
\begin{align}
\left| \left( \frac{\partial}{\partial t} + \nabla_{A_1}^* \nabla_{A_1} \right) m_t \right| & \leq C \left( |a_t| + |\varphi_t| + | \nabla_{A_1} a_t| +| \nabla_{A_1} \varphi_t |+ | m_t | + |n_t| \right) \label{eqn:m-evolution} \\
\left| \left( \frac{\partial}{\partial t} + \nabla_{A_1}^* \nabla_{A_1} \right) n_t \right| & \leq C \left( |a_t| + |\varphi_t| + | \nabla_{A_1} a_t| +| \nabla_{A_1} \varphi_t |+ | m_t | + |n_t| \right)  \label{eqn:n-evolution} \\
\left| \frac{\partial}{\partial t} (a_t + \varphi_t) \right| & = | n_t | \label{eqn:a-evolution} \\
\left| \frac{\partial}{\partial t} (\nabla_A a_t + \nabla_A \varphi_t) \right| & \leq C \left( |a_t| + |\varphi_t| + | \nabla_A n_t | \right) \label{eqn:nabla-a-evolution}
\end{align}
Moreover, if $X$ is a compact Riemann surface then the constant $C$ can be chosen uniformly on any finite time interval $[0, T]$.
\end{lemma}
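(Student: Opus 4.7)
The plan is to derive each of the four inequalities directly from the evolution equations already established for $\mu_{(A,\psi)}$ and $\nu_{(A,\psi)}$, together with the Weitzenbock identity \eqref{eqn:Higgs-Weitzenbock} and the Laplacian-difference formula \eqref{eqn:laplacian-difference}, and then invoke the uniform a priori bounds from \cite[Sec.~3.2]{Wilkin08} to make the constant $C$ independent of $t\in[0,T]$.

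First I would handle \eqref{eqn:a-evolution} and \eqref{eqn:nabla-a-evolution}, which are immediate. Since $\nu_{(A,\psi)}=D_{(A,\psi)}^{*}(F_A+\psi\wedge\psi)$ is (up to sign) the gradient of $\YMH$, one has $\partial_t(A_j+\psi_j)=-\nu_{(A_j,\psi_j)}$, so $\partial_t(a_t+\varphi_t)=-n_t$, which gives \eqref{eqn:a-evolution}. Differentiating $\nabla_{A_1}(a_t+\varphi_t)$ in $t$ produces $\nabla_{A_1}\partial_t(a_t+\varphi_t)$ together with a term of the form $\partial_t A_1\times (a_t+\varphi_t)=\nu_{(A_1,\psi_1)}\times(a_t+\varphi_t)$; since $\nu_{(A_1,\psi_1)}$ is bounded along the flow, this yields \eqref{eqn:nabla-a-evolution}.

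Next I would subtract the evolution equation \eqref{eqn:mu-evolution} for $(A_2,\psi_2)$ from that for $(A_1,\psi_1)$ and move the difference of Laplacians to the right-hand side via \eqref{eqn:laplacian-difference} to obtain
\begin{equation*}
\Big(\tfrac{\partial}{\partial t}+\Delta_{(A_1,\psi_1)}\Big)m_t= -\big(\Delta_{(A_2,\psi_2)}-\Delta_{(A_1,\psi_1)}\big)\mu_{(A_2,\psi_2)}+R_1,
\end{equation*}
where $R_1$ collects the differences of the right-hand sides of \eqref{eqn:mu-evolution}, each of which is a bilinear expression in $(a_t,\varphi_t,\nabla_{A_1}a_t,\nabla_{A_1}\varphi_t,m_t,n_t)$ with coefficients involving $\psi_1$, $\nabla_{A_1}\psi_1$ and $F_{A_1}+\psi_1\wedge\psi_1$. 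Applying the Weitzenbock identity \eqref{eqn:Higgs-Weitzenbock} to convert $\Delta_{(A_1,\psi_1)}m_t$ into $\nabla_{A_1}^{*}\nabla_{A_1}m_t$ plus zeroth-order curvature and Higgs-field terms acting on $m_t$, and using \eqref{eqn:laplacian-difference} to expand the Laplacian difference, gives \eqref{eqn:m-evolution} after bounding each factor of $\psi_j,\nabla_{A_j}\psi_j,F_{A_j}+\psi_j\wedge\psi_j$ in $C^{0}$. The argument for \eqref{eqn:n-evolution} is identical, starting from \eqref{eqn:nu-evolution}: one gets a commutator term $*[n_t,*\mu_{(A_2,\psi_2)}]+*[\nu_{(A_1,\psi_1)},*m_t]$ plus a Laplacian-difference term which is again controlled by $(a_t,\varphi_t,\nabla_{A_1}a_t,\nabla_{A_1}\varphi_t,m_t,n_t)$ after the Weitzenbock rearrangement.

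The main (only) subtlety is uniformity of $C$ on $[0,T]$. On a compact Riemann surface, \cite[Sec.~3.2, Lem.~3.14, Cor.~3.16]{Wilkin08} show that along any $\YMH$ flow line $\sup_X|\nabla_{A_j}^{\ell}F_{A_j}|$, $\sup_X|\nabla_{A_j}^{\ell}\psi_j|$ and $\sup_X|\nabla_{A_j}^{\ell}\mu_{(A_j,\psi_j)}|$ are bounded for every $\ell$ by constants depending only on the initial data, hence uniformly on $[0,T]$. The same estimates bound $\nu_{(A_j,\psi_j)}=D_{(A_j,\psi_j)}^{*}\mu_{(A_j,\psi_j)}$ uniformly. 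Since every coefficient appearing when we expand the right-hand sides above is a polynomial in these bounded quantities, we may choose $C=C(T)$ uniform on $[0,T]$, completing the lemma. The hardest bookkeeping step is checking that the Laplacian-difference term $(\Delta_{(A_2,\psi_2)}-\Delta_{(A_1,\psi_1)})\mu_{(A_2,\psi_2)}$ in the $m_t$-equation produces no term involving $\nabla_{A_1}m_t$ or $\nabla_{A_1}n_t$; this follows because the expansion \eqref{eqn:laplacian-difference} pairs the single derivative either with the bounded quantity $\mu_{(A_2,\psi_2)}$ or with $a_t,\varphi_t$ themselves, so only undifferentiated $m_t,n_t$ and at most first derivatives of $a_t,\varphi_t$ appear on the right.
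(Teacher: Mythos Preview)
Your proposal is correct and follows essentially the same approach as the paper: the paper's ``proof'' is really the derivation in the paragraphs immediately preceding the lemma, which subtracts the evolution equations \eqref{eqn:mu-evolution}, \eqref{eqn:nu-evolution} for the two solutions, applies the Laplacian-difference formula \eqref{eqn:laplacian-difference} and the Weitzenbock identity \eqref{eqn:Higgs-Weitzenbock}, and then invokes the uniform $C^\ell$ bounds on the curvature and Higgs field from \cite{Wilkin08} to make $C$ uniform on $[0,T]$. Your write-up is a faithful and slightly more detailed expansion of exactly this argument, including the useful observation that the term $a\times\nabla_A\eta$ in \eqref{eqn:laplacian-difference} lands on the bounded quantity $\mu_{(A_2,\psi_2)}$ (resp.\ $\nu_{(A_2,\psi_2)}$) rather than on $m_t$ or $n_t$, so no unwanted $\nabla_{A_1}m_t$ or $\nabla_{A_1}n_t$ appears on the right.
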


For simplicity of notation, in the following we use $\nabla := \nabla_{A_1}$ and $\square := \nabla_{A_1}^* \nabla_{A_1}$. Let $X := (m_t, n_t)$ and $Y := (a_t, \varphi_t, \nabla a_t, \nabla \varphi_t)$. The previous lemma implies that there exists a positive constant $C$ such that the following inequalities hold
\begin{align}\label{eqn:coupled-system}
\begin{split}
\left| \frac{\partial X}{\partial t} + \square X \right| & \leq C \left( | X | + | \nabla X| + | Y | \right) \\
\left| \frac{\partial Y}{\partial t} \right| & \leq C \left( |X| + |\nabla X| + |Y| \right)
\end{split}
\end{align}
A general result of Kotschwar in \cite[Thm 3]{kotschwar-uniqueness} shows that any system satisfying \eqref{eqn:coupled-system} on the time interval $[0,T]$ for which $X(T) = 0$, $Y(T)=0$, must also satisfy $X(t) = 0$, $Y(t) = 0$ for all $t \in [0, T]$. In the context of the Yang-Mills-Higgs flow \eqref{eqn:YMH-flow-general}, this gives us the proof of Proposition \ref{prop:backwards-uniqueness}.

%\bibliographystyle{plain}
%\bibliography{../ref}

\end{document}